\documentclass[11pt]{article}
\usepackage{amssymb, amsmath, amsthm, mathrsfs, ascmac, color}
\usepackage{graphicx}
\usepackage[subrefformat=parens]{subcaption}
\usepackage[top=3cm, bottom=3cm, left=3.25cm,right=3.25cm]{geometry}
\usepackage{comment}
\usepackage{tikz}
\usepackage[hidelinks]{hyperref}
\theoremstyle{plain}
\newtheorem{thm}{Theorem}[section]
\newtheorem{prop}[thm]{Proposition}
\newtheorem{lem}{Lemma}
\newtheorem{subthm}{Lemma}

\newtheorem{rmk}{Remark}
\makeatletter

\@addtoreset{equation}{section}

\makeatother
\newcommand{\ind}{\bs 1}
\newcommand{\dd}{\mathrm d}
\newcommand{\pd}{\partial}
\newcommand{\ee}{\mathrm e}
\newcommand{\EE}{\mathsf{E}}
\newcommand{\PP}{\mathsf{P}}
\newcommand{\OO}{\mathsf{O}}
\newcommand{\oo}{\mathsf{o}}
\newcommand{\Op}{\mathsf{O}_p}
\newcommand{\op}{\mathsf{o}_p}

\newcommand{\VV}{\mathsf{V}}
\newcommand{\cov}{\mathsf{Cov}}

\newcommand{\GG}{\mathscr G}

\newcommand{\pto}{\stackrel{p}{\to}}
\newcommand{\dto}{\stackrel{d}{\to}}
\newcommand{\TT}{\mathsf T}
\newcommand{\bs}{\boldsymbol}
\newcommand{\mb}{\mathbf}
\newcommand{\mf}{\mathfrak}
\newcommand{\lb}{\langle}
\newcommand{\rb}{\rangle}
\newcommand{\tand}{\widetilde \land}

\allowdisplaybreaks
\title{\textbf{Small noise asymptotics for linear parabolic SPDEs 
in two space dimensions with unknown damping factors}}
\date{}

\author{\textbf{Yozo Tonaki}\thanks{Graduate School of Engineering Science, The University of Osaka, Toyonaka, Japan}
\thanks{Center for Mathematical Modeling and Data Science (MMDS), The University of Osaka, Toyonaka, Japan}
\footnote{email: \texttt{y.tonaki.es@osaka-u.ac.jp}}
\and \textbf{Yusuke Kaino}\thanks{Graduate School of Maritime Sciences, Kobe University, Kobe, Japan}
\and \textbf{Masayuki Uchida}$^{* \dag}$\thanks{CREST, Japan Science and Technology Agency, Kawaguchi, Japan}
\thanks{Graduate School of Mathematical Sciences, The University of Tokyo, Meguro, Japan}}
\begin{document}
\maketitle
\begin{abstract}
We study parametric estimation for 
second order linear parabolic stochastic partial differential equations
in two space dimensions with a small volatility parameter
driven by a $Q$-Wiener process with an unknown damping parameter
using high frequency spatio-temporal data.
We first provide an estimator for the damping parameter of the $Q$-Wiener process
utilizing realized quadratic variations based on spatial and temporal increments.
We next propose minimum contrast estimators of the diffusive and advective parameters 
in the SPDE 
using a contrast function with the proposed estimator 
of the damping parameter.
We then construct a quasi-maximum likelihood estimator of the reaction parameter in the SPDE
using the approximate coordinate process 
derived from the estimators of the diffusive and advective parameters.
We also provide simulation results of the proposed estimators.

\begin{center}
\textbf{Keywords and phrases}
\end{center}
Damping parameter, 
high frequency spatio-temporal data,
linear parabolic stochastic partial differential equations,
parametric estimation, 
$Q$-Wiener process,
reaction parameter,
small volatility parameter 
\end{abstract}

\section{Introduction}
Stochastic partial differential equations (SPDEs) have become indispensable tools 
for modeling complex spatio-temporal phenomena across 
a wide range of scientific disciplines, including geophysical fluid dynamics, 
environmental science, mathematical finance, and biology. 
Among them, parabolic SPDEs play a particularly prominent role 
due to their ability to capture diffusive dynamics and spatial interactions inherent 
in many natural and engineered systems.
Such models have been successfully employed in diverse applications: for example, 
the analysis of sea surface temperature fluctuations \cite{Piterbarg_Ostrovskii1997}, 
the study of climate energy balance dynamics \cite{North_etal2011}, 
and the assessment of plutonium contamination in environmental processes 
\cite{Jones_Zhang1997, Mohapl2000}. 
These examples illustrate the versatility of parabolic SPDEs as statistical 
models capable of representing both the stochastic variability 
and the spatial structure observed in real-world data.

Let $D =(0,1)^2$. 
We consider the following linear parabolic SPDE in two space dimensions
\begin{equation}\label{2d_spde}
\left\{
\begin{split}
\dd X_t(y,z) &= -A_\theta X_t(y,z) \dd t +\epsilon \dd W_t^{Q}(y,z), 
\quad (t,y,z) \in [0,1] \times D,
\\
X_t(y,z) &= 0, \quad (t,y,z) \in [0,1] \times \pd D,
\end{split}
\right.
\end{equation}
with a deterministic initial value $X_0$, 
where the operator $A_{\theta}$ is defined by
\begin{equation}\label{op_A}
-A_\theta = 
\theta_2\biggl(\frac{\pd^2}{\pd y^2} + \frac{\pd^2}{\pd z^2} \biggr)
+ \theta_1\frac{\pd}{\pd y} + \eta_1\frac{\pd}{\pd z} + \theta_0,
\end{equation}
$\theta = (\theta_0, \theta_1, \eta_1, \theta_2) \in \mathbb R^3 \times (0,\infty)$ are unknown parameters, 
$\theta^* = (\theta_0^*, \theta_1^*, \eta_1^*, \theta_2^*)$ denote the true values of $\theta$,
and $\{ W_t^Q \}_{t \ge 0}$ is a $Q$-Wiener process in a Sobolev space on $D$ given by
\begin{equation}\label{QW}
W_t^{Q} = 
\sum_{l_1,l_2 \in \mathbb N} \mu_{l_1,l_2}^{-\alpha/2} 
e_{l_1,l_2} w_{l_1,l_2}(t)
\end{equation}
with an unknown damping parameter $\alpha \in (0,3)$, 
an unknown parameter $\mu_0 \in (-2\pi^2, \infty)$, 
$\mu_{l_1,l_2} = \pi^2(l_1^2 +l_2^2) +\mu_0$, 
the eigenfunctions $\{ e_{l_1,l_2} \}_{l_1,l_2 \in \mathbb N}$ 
defined by \eqref{eigen} below, and independent real-valued standard Wiener processes 
$\{ w_{l_1,l_2} \}_{l_1,l_2 \in \mathbb N}$.
Let $\epsilon \in (0,1)$ be a known small volatility parameter.

Statistical inference for parabolic SPDEs has been studied by many researchers, see 
H\"ubner \textit{et al}.\cite{Hubner_etal1993},
Huebner and Rozovskii \cite{Huebner_Rozovskii1995}, 
Piterbarg and Rozovskii \cite{Piterbarg_Rozovskii1997},
Lototsky \cite{Lototsky2003,Lototsky2009}, 
Markussen \cite{Markussen2003},
Lototsky and Rozovsky \cite{Lototsky_Rozovsky2017},
Cialenco \cite{Cialenco2018},
Mahdi Khalil and Tudor \cite{MahdiKhalil_Tudor2019},
Avetisian and Ralchenko \cite{Avetisian_Ralchenko2020},
Bibinger and Trabs \cite{Bibinger_Trabs2020},
Chong \cite{Chong2020},
Cialenco and Huang \cite{Cialenco_Huang2020},
Altmeyer and Reiss \cite{Altmeyer_Reiss2021},
Hildebrandt and Trabs \cite{Hildebrandt_Trabs2021,Hildebrandt_Trabs2023},
Kaino and Uchida \cite{Kaino_Uchida2021a,Kaino_Uchida2021b},
Cialenco and Kim \cite{Cialenco_Kim2022},
Tudor \cite{Tudor2022},
Bibinger and Bossert \cite{Bibinger_Bossert2023},
Gamain and Tudor \cite{Gamain_Tudor2023},
Gaudlitz and Reiss \cite{Gaudlitz_Reiss2023}
Baltazar-Larios \textit{et al.}\cite{Baltazar-Larios_etal2024},
Jan\'{a}k and Reiss \cite{Janak_Reiss2024},
Strauch and Tiepner \cite{Strauch_Tiepner2024},
Andersson \textit{et al.}\cite{Andersson_etal2025}
and Tonaki \textit{et al}.\cite{TKU2025b}.

Tonaki \textit{et al}.\cite{TKU2023} considered parametric estimation for 
linear parabolic SPDEs in two space dimensions driven by the $Q$-Wiener process \eqref{QW}
with a known damping parameter $\alpha \in (0,1)$ based on high frequency spatio-temporal data, 
and proposed estimators of the coefficient parameters $\theta_2$, $\theta_1$, $\eta_1$ and the volatility parameter 
using realized quadratic variations with respect to temporal increments. 
In order to construct a consistent estimator for $\theta_0$ in SPDE \eqref{2d_spde} 
based on high frequency spatio-temporal data, 
Tonaki \textit{et al}.\cite{TKU2024a} considered small noise asymptotics 
and addressed parametric estimation based on the method presented by \cite{TKU2023} 
and statistical inference for diffusion processes with a small dispersion parameter. 
For statistical inference for diffusion processes with a small dispersion parameter,  
see S{\o}rensen and Uchida \cite{Sorensen_Uchida2003}, Uchida \cite{Uchida2004}, 
Gloter and S{\o}rensen \cite{Gloter_Sorensen2009}, Guy \textit{et al}.\cite{Guy_etal2014}, 
Kaino and Uchida \cite{Kaino_Uchida2018}, and Appendix A in \cite{TKU2024a}.
Furthermore, Tonaki \textit{et al}.\cite{TKU2025a} established parametric estimation 
using realized quadratic variations with respect to temporal and spatial increments 
for linear parabolic SPDEs in two space dimensions driven by the $Q$-Wiener process \eqref{QW} 
with a known damping parameter $\alpha \in (0,2)$. 
As a result, the convergence rate of the estimators for the coefficient parameters in the SPDE was greatly improved 
compared with that of the estimators proposed by \cite{TKU2023}.
Tonaki \textit{et al}.\ \cite{TKU2026} studied parametric estimation for SPDE \eqref{2d_spde} 
driven by the $Q$-Wiener process \eqref{QW} with a known damping parameter $\alpha \in (0,3)$. 
Their approach was based on the method introduced in \cite{TKU2025a} and was motivated 
by the finding that the bias of the estimator of $\theta_2$ significantly affects that of $\theta_0$, 
which was suggested by numerical simulations in \cite{TKU2024a}.

In previous studies, the damping parameter $\alpha$ in the $Q$-Wiener process was assumed to be known. 
However, Bossert \cite{Bossert2024} considered parametric estimation for linear parabolic SPDEs in $d \ (\ge 2)$ space dimensions 
and, in particular, introduced an estimator for the damping parameter $\alpha \in (d/2-1, d/2)$ 
based on realized quadratic variations with respect to temporal increments.
Thereafter, Tonaki \textit{et al}.\cite{TKU2025arXiv1} treated liner parabolic SPDEs in two space dimensions
driven by the $Q$-Wiener process with an unknown damping parameter $\alpha \in (0,2)$, 
proposed an estimator of $\alpha$ based on realized quadratic variations with respect to temporal and spatial increments, 
and constructed estimators of the coefficient parameters using the contrast function obtained by substituting the estimator 
of $\alpha$ into the contrast function proposed by \cite{TKU2025a}.
They showed that, when the damping parameter $\alpha$ is unknown, 
the convergence rate of the estimators of the coefficient parameters is decreased by $1/\log(N)$ compared to when $\alpha$ is known,
where $N$ denotes the number of temporal observations.
They also observed that when the number of spatial observations is small, 
the bias of the estimator of $\alpha$ has a significant effect on the biases of the estimators of the coefficient parameters 
through numerical simulations.

In this paper, we consider SPDE \eqref{2d_spde} driven by the $Q$-Wiener process \eqref{QW}
with an unknown damping parameter $\alpha$ and propose a new estimator of $\alpha$ 
in order to address the issue with the estimator of $\alpha \in (0,3)$ discovered by \cite{TKU2025arXiv1}. 
We then construct estimators for the coefficient parameters $\theta_2$, $\theta_1$, $\eta_1$ and $\theta_0$
based on the estimator of $\alpha$ and the method presented by \cite{TKU2026}.
The main purpose of this paper is to investigate how the asymptotic properties of the estimators of coefficient parameters differ 
when $\alpha$ is unknown compared to when $\alpha$ is known, 
and to examine the effect of the proposed estimator for $\alpha$ on estimators for the coefficient parameters.
The results of this paper show that when $\alpha$ is unknown, 
the convergence rate of the estimators of the coefficient parameters decreases by $1/\log(N)$ compared to 
when $\alpha$ is known as in \cite{TKU2025arXiv1}.
Although the asymptotic property of the proposed estimator of $\alpha$ 
is similar to that of \cite{TKU2025arXiv1}, 
numerical simulations reveal that a significant difference in bias arises when the number of spatial observations is small. 
This finding is important
when dealing with real data, for which a sufficient number of spatial observations may not be available.

This paper is organized as follows. 
In Section \ref{sec2}, we give some notation used in our analysis.
Section \ref{sec3} provides the main results.
We first propose an estimator of the damping parameter $\alpha$ 
of the $Q$-Wiener process given in \eqref{QW}.
We next provide minimum contrast estimators of the diffusive and advective parameters 
$\theta_2$, $\theta_1$ and $\eta_1$ 
in SPDE \eqref{2d_spde}
using a contrast function with the proposed estimator of $\alpha$.
Finally, we construct quasi-maximum likelihood estimators 
of the parameters $\theta_0$ and $\mu_0$ using the approximate coordinate process 
derived from the estimators of $\theta_2$, $\theta_1$ and $\eta_1$.
In Section \ref{sec4}, we present simulation results of the proposed estimators.
Section \ref{sec5} is devoted to the proofs of the results in Section \ref{sec3}.

\section{Preliminaries}\label{sec2}
We here introduce some notation used in our analysis.

\subsection{Eigenfunctions and eigenvalues}
The eigenfunctions $\{ e_{l_1,l_2} \}_{l_1,l_2 \in \mathbb N}$ 
of the operator $A_{\theta}$ given in \eqref{op_A}
and the corresponding eigenvalues $\{ \lambda_{l_1,l_2} \}_{l_1,l_2 \in \mathbb N}$
are given by
\begin{equation}\label{eigen}
e_{l_1,l_2}(y,z) = e_{l_1}^{(1)}(y) e_{l_2}^{(2)}(z) ,
\quad
\lambda_{l_1,l_2} = \theta_2(\pi^2(l_1^2+l_2^2)+\Gamma)
\end{equation}
for $(y,z) \in \overline D$, where
\begin{equation*}
e_{l_1}^{(1)}(y) = e_{l_1}^{(1)}(y; \kappa) 
= \sqrt 2 \sin(\pi l_1 y) \ee^{-\kappa y/2},
\quad
e_{l_2}^{(2)}(z) = e_{l_2}^{(2)}(z; \eta)
= \sqrt 2 \sin(\pi l_2 z) \ee^{-\eta z/2},
\end{equation*}
\begin{equation*}
\kappa = \frac{\theta_1}{\theta_2}, 
\quad
\eta = \frac{\eta_1}{\theta_2},
\quad
\Gamma = -\frac{\theta_0}{\theta_2} +\frac{\kappa^2+\eta^2}{4}.
\end{equation*}
Let $L^2(D) := L_{\kappa,\eta}^2(D)$ be the weighted $L^2$-space equipped with
the inner product
\begin{equation}\label{inner_prod}
\langle u, v \rangle = \langle u, v \rangle_{\kappa,\eta}
= \iint_D u(y,z)v(y,z) \exp(\kappa y +\eta z ) \dd y \dd z, 
\quad
\| u \| = \sqrt{\langle u, u \rangle}
\end{equation}
for $u,v \in L^2(D)$.
The eigenfunctions $\{e_{l_1,l_2}\}_{l_1,l_2 \in \mathbb N}$ 
are orthonormal on $L^2(D)$.
We assume $\lambda_{1,1}^* = 2 \pi^2 \theta_2^* 
+\frac{(\theta_1^*)^2+(\eta_1^*)^2}{4 \theta_2^*} -\theta_0^* > 0$ 
so that $A_\theta$ is a positive definite and self-adjoint operator
with respect to the weighted $L^2$-inner product $\langle \cdot , \cdot \rangle$.

\subsection{Driving noise}
Let $\{ w_{l_1,l_2} \}_{l_1,l_2 \in \mathbb N}$ be independent real-valued standard Wiener processes.
We suppose that the damping parameter $\alpha$ is unknown, 
the parameter space $\Xi_{\alpha}$ of $\alpha$ is a compact convex subset 
of $(0,3)$ and the true value $\alpha^*$ of $\alpha$ belongs to the interior $\Xi_{\alpha}^\circ$ of $\Xi_{\alpha}$. 
Let $\mu_0 \in (-2 \pi^2, \infty)$ be an unknown parameter, and $\mu_0^*$ denotes the true value of $\mu_0$.
The sequence $\{ \mu_{l_1,l_2} \}_{l_1,l_2 \in \mathbb N}$ with $\mu_{l_1,l_2} = \pi^2(l_1^2 +l_2^2) + \mu_0$ is positive.
We consider the $Q$-Wiener process $\{ W_t^{Q} \}_{t \ge 0}$ given in \eqref{QW}, that is, 
\begin{equation*}
W_t^{Q} = \sum_{l_1,l_2 \in \mathbb N} \mu_{l_1,l_2}^{-\alpha/2} e_{l_1,l_2} w_{l_1,l_2}(t).
\end{equation*}

\begin{rmk}
\begin{itemize}
\item[(i)]
The $Q$-Wiener process given in \eqref{QW} is well-defined 
in a Hilbert space larger than $L^2(D)$ which satisfies 
$A_{\theta}^{-1/2} u \in L^2(D)$ for 
$u= \sum_{l_1,l_2 \in \mathbb N} u_{l_1,l_2} e_{l_1,l_2}$ and 
$u_{l_1,l_2} \in \mathbb R$. 
See Chapter 4 in \cite{DaPrato_Zabczyk2014} or Remark 2 in \cite{TKU2023} for details.

\item[(ii)]
We restrict $\alpha > 0$ in order to guarantee that the mild solution $X_t$ 
of SPDE \eqref{2d_spde} satisfies $\sup_{t \in [0,1]} \EE[\| X_t \|^2 ] < \infty$, 
which makes our analysis possible (see Remark 2 in \cite{TKU2023}).
$\alpha < 3$ is a necessary condition for the estimators of the parameters 
constructed through our approach to be consistent.
In fact, we see that for $\alpha \in [3, \infty)$, 
the convergence rate $R_{\alpha,\alpha_0}^{\mathrm{damp}}$ given in \eqref{R_damp} below satisfies 
$R_{\alpha,\alpha_0}^{\mathrm{damp}} \le \sqrt{m N} \cdot N^{2 -\alpha} = \OO(N^{3 -\alpha})$
and $R_{\alpha,\alpha_0}^{\mathrm{damp}} \nrightarrow \infty$,
and that for $\alpha \in (0,3)$, $\phi_{r,\alpha}(\theta_2)$ given in \eqref{phi} below is finite (see \eqref{est_q} below).

\item[(iii)]
The reason Tonaki \textit{et al}.\cite{TKU2025a} (or \cite{TKU2025arXiv1}) restricts $\alpha < 2$ is 
to establish $\sqrt{m N}$-consistent estimators of the coefficient (or damping) parameters.
This arises from the condition $\beta = \alpha < 2$ required in 
Lemma 1-(2) of \cite{TKU2026} with $\gamma = \gamma_1 +\gamma_2$ and $\gamma_1 = \gamma_2 = 1$
in order to ensure the remainder term in Lemma 1-(2) of \cite{TKU2026} is $\OO(h^2)$.
Since one needs only to consider the condition on $\beta = \alpha$ that the remainder term in Lemma 1-(2) of \cite{TKU2026} is $\oo(1)$
in order to obtain consistency of the estimators, one can relax the restriction to $\beta = \alpha < \gamma +1 = 3$.

\end{itemize}
\end{rmk}

\subsection{Mild solution and coordinate processes}

There exists a unique mild solution of SPDE \eqref{2d_spde}, which is given by 
\begin{equation*}
X_t = \ee^{-t A_\theta} X_0 
+\epsilon \int_0^t \ee^{-(t-s)A_\theta} \dd W_s^{Q}
\quad
\mathrm{a.s.}
\end{equation*}
for any $t \ge 0$, where $\ee^{-t A_\theta} u 
= \sum_{l_1,l_2 \in \mathbb N} \ee^{-\lambda_{l_1,l_2}t}
\langle u, e_{l_1,l_2}\rangle e_{l_1,l_2}$ for $u \in L^2(D)$.
The random field $X_t$ is decomposed as follows. 
\begin{equation*}
X_t(y,z) = \sum_{l_1,l_2 \in \mathbb N} 
x_{l_1,l_2}(t) e_{l_1}^{(1)}(y) e_{l_2}^{(2)}(z),
\quad
t \ge 0, \ (y,z) \in \overline{D}
\end{equation*}
with the coordinate process
\begin{equation*}
x_{l_1,l_2}(t) = \langle X_t, e_{l_1,l_2} \rangle 
= \ee^{-\lambda_{l_1,l_2} t} \langle X_0, e_{l_1,l_2} \rangle
+ \epsilon \mu_{l_1,l_2}^{-\alpha/2} \int_0^t \ee^{-\lambda_{l_1,l_2}(t-s)} 
\dd w_{l_1,l_2}(s).
\end{equation*}
The coordinate process $x_{l_1,l_2}(t)$ satisfies the Ornstein-Uhlenbeck process
\begin{equation}\label{OU-process}
\left\{
\begin{split}
\dd x_{l_1,l_2}(t) &= -\lambda_{l_1,l_2} x_{l_1,l_2}(t) \dd t 
+\epsilon \mu_{l_1,l_2}^{-\alpha/2} \dd w_{l_1,l_2}(t),
\\
x_{l_1,l_2}(0) &= \langle X_0, e_{l_1,l_2} \rangle.
\end{split}
\right.
\end{equation}

\subsection{Observations}
Let $N \ge 4$, $M_1 \ge 2$ and $M_2 \ge 2$ be integers and define $\mathrm{M} = (M_1, M_2)$.
We suppose that a mild solution of SPDE \eqref{2d_spde}
is discretely observed on the grid $(t_i, y_j, z_k) \in [0,1] \times [0,1]^2$ with
\begin{equation*}
t_i = t_i^N = i\Delta = \frac{i}{N},
\quad
y_j = y_j^{M_1} = \frac{j}{M_1}, 
\quad
z_k = z_k^{M_2} = \frac{k}{M_2}
\end{equation*}
for $i \in \{0, \ldots, N \}$, 
$j \in \{0, \ldots, M_1 \}$ and $k \in \{0, \ldots, M_2 \}$.
We set the original data
\begin{equation*}
\mathbb X_{\mathrm{M},N} = 
\bigl\{ X_{t_i}(y_j,z_k) \bigr\}_{i \in \{0, \ldots, N \}, j \in \{0, \ldots, M_1 \}, 
k \in \{0, \ldots, M_2 \}}.
\end{equation*}

Our approach relies on the relation between thinned temporal and spatial data.
As in the existing literature
on statistical inference for SPDEs using high frequency observations, 
estimating the parameters $(\kappa, \eta)$ requires spatial subsampling (see, for example, 
\cite{Bibinger_Bossert2023, Hildebrandt_Trabs2021, Bossert2024, TKU2025arXiv1}).
In addition, we control the approximate coordinate process \eqref{approx_cp} below 
by reducing the number of temporal data (see the proof of Proposition 5.2 in \cite{TKU2025arXiv2} for details).
For this reason, we will introduce the following spatial or temporal thinned data.

For $b \in (0,1/2)$ and 
$\mathrm{m} = (m_1, m_2) \in \{1, \ldots, M_1 \} \times \{1, \ldots, M_2 \}$, 
we choose spatial subsampling $(y_j^{b,m_1}, z_k^{b,m_2}) 
\in \{ y_0,\ldots, y_{M_1} \} \times \{ z_0,\ldots, z_{M_2} \}$ such that
\begin{equation*}
b \le y_{0}^{b,m_1} < y_{1}^{b,m_1} < \cdots < y_{m_1}^{b,m_1} \le 1-b,
\quad
b \le z_{0}^{b,m_2} < z_{1}^{b,m_2} < \cdots < z_{m_2}^{b,m_2} \le 1-b.
\end{equation*}
For simplicity, we set $m_1 = m_2 $, $m = m_1 m_2 = \OO(N)$, $N = \OO(m)$
and the thinned spatial resolution 
$\delta = \delta_{m} = \frac{1-2b}{m_1} 
= \frac{1-2b}{m_2} = \frac{1-2b}{\sqrt{m}}$.
We then have $\widetilde y_j = y_j^{b,m_1} = b+j \delta$ and $\widetilde z_k = b+k \delta$,
and set the spatial thinned data $\mathbb X_{\mathrm{m},N}^{b}$ 
of $\mathbb X_{\mathrm{M},N}$ as follows.
\begin{equation}\label{space_thin_data}
\mathbb X_{\mathrm{m},N}^{b} = 
\bigl\{ X_{t_i}(\widetilde y_j, \widetilde z_k) \bigr\}_{i \in \{0, \ldots, N \}, 
j \in \{0, \ldots, m_1 \}, k \in \{0, \ldots, m_2 \}}.
\end{equation}

For $n \in \{ 1, \ldots, N \}$, 
we define the temporal thinned data $\mathbb X_{\mathrm{M},n}^{0}$ 
of $\mathbb X_{\mathrm{M},N}$ with the thinned temporal resolution 
$\Delta_n = \frac{1}{N} \lfloor \frac{N}{n} \rfloor$ as follows.
\begin{equation}\label{time_thin_data}
\mathbb X_{\mathrm{M},n}^{0} = 
\bigl\{ X_{t_i^n}(y_j, z_k) \bigr\}_{i \in \{0, \ldots, n \}, 
j \in \{0, \ldots, m_1 \}, k \in \{0, \ldots, m_2 \}}
\end{equation}
with $t_i^n = i \Delta_n$, where $t_i^n \in \{ t_0, \ldots, t_N \}$.

\begin{rmk}
As it is mentioned in the discussion section of \cite{TKU2026}, for SPDE \eqref{2d_spde}, 
the balance conditions $m = \OO(N)$ and $N = \OO(m)$ are not necessary for parameter identification. 
However, we assume these balance conditions in this paper in order to apply the results of \cite{TKU2025a} and \cite{TKU2026}.
\end{rmk}

\subsection{Some notation}
For a sequence $\{a_n\}$, 
we will write $a_n \equiv a$ if $a_n = a$ for some $a \in \mathbb R$ and all $n$.
For $h \in (0,1)$, $a, b, c \in (0, \infty)$ and $d \in \mathbb R$, we define
\begin{equation*}
h^{a \tand b} =  
\begin{cases}
h^{a}, & a < b,
\\
-h^{b} \log (h), & a = b,
\\
h^{b}, & a > b,
\end{cases}
\end{equation*}
and $h^{d +c(a \tand b)} = h^d \cdot (h^c)^{a \tand b}$.
Furthermore, for $L \in (1,\infty)$, we write
\begin{equation*}
L^{a \tand b} = \frac{1}{(1/L)^{a \tand b}} =
\begin{cases}
L^{a}, & a < b,
\\
L^{b} /\log (L), & a = b,
\\
L^{b}, & a > b.
\end{cases}
\end{equation*}

\section{Main results}\label{sec3}

We set the fractional power $A_{\theta}^{\beta}$ for $\beta \in \mathbb R$ by
\begin{equation*}
A_{\theta}^{\beta} u = \sum_{l_1,l_2 \in \mathbb N} 
\lambda_{l_1,l_2}^{\beta} u_{l_1,l_2} e_{l_1,l_2}
\end{equation*}
for $u = \sum_{l_1,l_2 \in \mathbb N} u_{l_1,l_2} e_{l_1,l_2}$ 
and $u_{l_1,l_2} \in \mathbb R$ such that $A_{\theta}^{\beta} u \in L^2(D)$.

Let $\alpha_0 \in (0,3)$ be a constant that characterizes the regularity of the initial value $X_0$ of SPDE (1.1),
and impose the following condition.
\begin{description}
\item[\textbf{[A1]}$_{\alpha_0}$]
The initial value $X_0 \in L^2(D)$ is deterministic and
$\| A_\theta^{(1+\alpha_0)/2} X_0 \| < \infty$. 

\item[\textbf{[A2]}]
$X_0 \neq 0$, that is, there exists $(\mf l_1, \mf l_2) \in \mathbb N^2$
such that $\langle X_0, e_{\mf l_1, \mf l_2} \rangle \neq 0$.
\end{description}

In order to exclude situations where the initial value dominates 
and to consider situations where our method is beneficial, 
we introduce the following balance condition on $N$ and $\epsilon$.
\begin{description}
\item[{[B]$_{\alpha, \alpha_0}$}]
$\frac{\log(N)}{\epsilon^2 N^{1+\alpha_0 -\alpha}} = \oo(1)$.
\end{description}
This condition guarantees that the convergence rate 
$R_{\alpha,\alpha_0}^{\mathrm{coef}} = R_{\alpha,\alpha_0}^{\mathrm{damp}}/\log(N)$ 
of the estimators for the coefficient parameters $\theta_2$, $\theta_1$ and $\eta_1$ 
satisfies $R_{\alpha,\alpha_0}^{\mathrm{coef}} \to \infty$ and the estimators for the coefficient parameters are consistent. 
In other words, we will make a statistical inference under a balance condition for $\epsilon$ and $N$ 
such that the estimators for the coefficient parameters are consistent.
Additionally, the balance condition in this paper is stricter 
than the balance condition $\frac{1}{\epsilon^2 N^{1+\alpha_0 -\alpha}} = \oo(1)$ given in \cite{TKU2026} 
since $\alpha$ is unknown in this paper, whereas \cite{TKU2026} assumes $\alpha$ is known.

\subsection{Estimation of $\alpha$}
First, we consider estimation for the damping parameter $\alpha$ 
in the $Q$-Wiener process $\{ W_t^Q \}_{t \ge 0}$ given in \eqref{QW}.
The idea behind the estimator of $\alpha$ proposed by Tonaki \textit{et al}.\cite{TKU2025arXiv1} 
was to prepare two types of thinned data that retain 
the ratio $r = \frac{\delta}{\sqrt{\Delta}}$ 
with the spatial resolution $\delta$ and the temporal resolution $\Delta$, 
and to construct a consistent estimator of $\alpha$ 
from the ratio of the realized quadratic variations 
based on triple increments derived from two types of thinned data.
However, this method wastes a significant number of observations
since one of the two types of thinned data contains only
$(\frac{1}{2})^2 \times \frac{1}{4} = \frac{1}{16}$ 
of the observations of the other type in order to maintain the ratio $r$ when constructing triple increments.
As a result, in practical applications where it is not possible to obtain a large number of spatial observations, 
this may cause significant biases in the estimators of $\alpha$ and coefficient parameters.
See Case 3 of Tables 1 and 2 in \cite{TKU2025arXiv1}.

In this study, for the spatial thinned data $\mathbb X_{\mathrm{m},N}^{b}$ 
given in \eqref{space_thin_data}, we define the triple increments
\begin{equation}\label{Old_TX}
\begin{split}
T_{i,j,k} X 
&= X_{t_{i}}(\widetilde y_{j},\widetilde z_{k})
-X_{t_{i}}(\widetilde y_{j-1},\widetilde z_{k})
-X_{t_{i}}(\widetilde y_{j},\widetilde z_{k-1})
+X_{t_{i}}(\widetilde y_{j-1},\widetilde z_{k-1})
\\
&\quad
-X_{t_{i-1}}(\widetilde y_{j},\widetilde z_{k})
+X_{t_{i-1}}(\widetilde y_{j-1},\widetilde z_{k})
+X_{t_{i-1}}(\widetilde y_{j},\widetilde z_{k-1})
-X_{t_{i-1}}(\widetilde y_{j-1},\widetilde z_{k-1})
\\
&= \sum_{l_1,l_2 \in \mathbb N}
(x_{l_1,l_2}(t_i) -x_{l_1,l_2}(t_{i-1}))
(e_{l_1}^{(1)}(\widetilde y_{j}) -e_{l_1}^{(1)}(\widetilde y_{j-1}))
(e_{l_2}^{(2)}(\widetilde z_{k}) -e_{l_2}^{(2)}(\widetilde z_{k-1}))
\end{split}
\end{equation}
with the temporal and spatial resolutions $(\Delta, \delta)$
for $i \in \{1,\ldots,N \}$, $j \in \{1,\ldots, m_1\}$
and $k \in \{1,\ldots, m_2\}$
and
\begin{equation}\label{New_TX}
\widehat T_{i,j,k} X
= \sum_{k'=0}^1 \sum_{j'=0}^1 \sum_{i'=0}^3 T_{i+i',j+j',k+k'} X
\end{equation}
for $i \in \{1,\ldots,N-3\}$, $j \in \{1,\ldots, m_1-1\}$
and $k \in \{1,\ldots, m_2-1\}$. 
We then obtain 
\begin{align*}
\widehat T_{i,j,k} X 
&= X_{t_{i+3}}(\widetilde y_{j+1},\widetilde z_{k+1})
-X_{t_{i+3}}(\widetilde y_{j-1},\widetilde z_{k+1})
-X_{t_{i+3}}(\widetilde y_{j+1},\widetilde z_{k-1})
+X_{t_{i+3}}(\widetilde y_{j-1},\widetilde z_{k-1})
\\
&\quad
-X_{t_{i-1}}(\widetilde y_{j+1},\widetilde z_{k+1})
+X_{t_{i-1}}(\widetilde y_{j-1},\widetilde z_{k+1})
+X_{t_{i-1}}(\widetilde y_{j+1},\widetilde z_{k-1})
-X_{t_{i-1}}(\widetilde y_{j-1},\widetilde z_{k-1})
\\
&= \sum_{l_1,l_2 \in \mathbb N}
(x_{l_1,l_2}(t_{i+3}) -x_{l_1,l_2}(t_{i-1}))
(e_{l_1}^{(1)}(\widetilde y_{j+1}) -e_{l_1}^{(1)}(\widetilde y_{j-1}))
(e_{l_2}^{(2)}(\widetilde z_{k+1}) -e_{l_2}^{(2)}(\widetilde z_{k-1}))
\end{align*}
and note that the temporal and spatial resolutions $(\Delta', \delta')$ 
of the triple increments $\widehat T_{i,j,k} X$ satisfy
\begin{equation*}
\Delta' = t_{i+3} -t_{i-1} = 4\Delta,
\quad
\delta' = \widetilde y_{j+1} -\widetilde y_{j-1} = \widetilde z_{k+1} -\widetilde z_{k-1} = 2\delta.
\end{equation*}
Let
\begin{equation}\label{R_damp}
R_{\alpha,\alpha_0}^{\mathrm{damp}} = 
\frac{\sqrt{m N}}{\Delta^{(\alpha \tand 2) -\alpha} 
\lor \epsilon^{-2} \Delta^{\alpha_0 -\alpha}}
= \sqrt{m N} (N^{(\alpha \tand 2) -\alpha} \land \epsilon^{2} N^{\alpha_0 -\alpha}).
\end{equation}
We have for $\delta'/\sqrt{\Delta'} 
= \delta/\sqrt{\Delta} \equiv r \in (0,\infty)$,
\begin{equation*}
\frac{1}{\epsilon^2 m N (4\Delta)^\alpha} 
\sum_{k = 1}^{m_2 -1} \sum_{j = 1}^{m_1 -1} \sum_{i=1}^{N -3} 
(\widehat T_{i,j,k} X)^2
= g_{r,\alpha}^{(\mathrm{m})}(\vartheta) 
+\Op \bigl( (R_{\alpha,\alpha_0}^{\mathrm{damp}})^{-1} \bigr),
\end{equation*}
where 
\begin{equation}\label{g_r}
g_{r,\alpha}^{(\mathrm{m})}(\vartheta) = 
\frac{\phi_{r,\alpha}(\theta_2)}{4(1-2b)^2}
\sum_{k=1}^{m_2-1} \sum_{j=1}^{m_1-1}
\int_{\widetilde z_{k-1}}^{\widetilde z_{k+1}}
\int_{\widetilde y_{j-1}}^{\widetilde y_{j+1}} 
\ee^{-\kappa y -\eta z} \dd y \dd z
\end{equation}
for $\vartheta = (\kappa, \eta, \theta_2)$ and 
$\phi_{r,\alpha}(\theta_2)$ is given in \eqref{phi} below.
Since we also have 
\begin{align*}
\frac{1}{\epsilon^2 m N \Delta^{\alpha}}
\sum_{k = \underline{k}+1}^{\overline{k}} 
\sum_{j = \underline{j}+1}^{\overline{j}} \sum_{i=1}^N (T_{i,j,k}X)^2
= \frac{\phi_{r,\alpha}(\theta_2)}{(1-2b)^2}
\int_{\widetilde z_{\underline{k}}}^{\widetilde z_{\overline{k}}}
\int_{\widetilde y_{\underline{j}}}^{\widetilde y_{\overline{j}}}
\ee^{-\kappa y -\eta z} \dd y \dd z
+\Op \bigl( (R_{\alpha,\alpha_0}^{\mathrm{damp}})^{-1} \bigr)
\end{align*}
for $0 \le \underline{j} < \overline{j} \le m_1$ 
and $0 \le \underline{k} < \overline{k} \le m_2$,
we obtain
\begin{align*}
&\frac{1}{\epsilon^2 m N \Delta^\alpha} 
\biggl(
\sum_{k=1}^{m_2} \sum_{j=1}^{m_1} \sum_{i=1}^N (T_{i,j,k}X)^2
+\sum_{k=2}^{m_2-1} \sum_{j=1}^{m_1} \sum_{i=1}^N (T_{i,j,k}X)^2
\\
&\quad+\sum_{k=1}^{m_2} \sum_{j=2}^{m_1-1} \sum_{i=1}^N (T_{i,j,k}X)^2
+\sum_{k=2}^{m_2-1} \sum_{j=2}^{m_1-1} \sum_{i=1}^N (T_{i,j,k}X)^2
\biggr)
\\
&= 4g_{r,\alpha}^{(\mathrm{m})}(\vartheta) 
+\Op \bigl( (R_{\alpha,\alpha_0}^{\mathrm{damp}})^{-1} \bigr).
\end{align*}
The derivation of the above results can be found in Subsection \ref{sec5-1} below. 
We then set
\begin{align*}
S_1 &=
\sum_{k=1}^{m_2-1} \sum_{j=1}^{m_1-1} \sum_{i=1}^{N-3} (\widehat T_{i,j,k} X)^2,
\\
S_2 &=
\sum_{k=1}^{m_2} \sum_{j=1}^{m_1} \sum_{i=1}^N (T_{i,j,k}X)^2
+\sum_{k=2}^{m_2-1} \sum_{j=1}^{m_1} \sum_{i=1}^N (T_{i,j,k}X)^2
\\
&\quad+\sum_{k=1}^{m_2} \sum_{j=2}^{m_1-1} \sum_{i=1}^N (T_{i,j,k}X)^2
+\sum_{k=2}^{m_2-1} \sum_{j=2}^{m_1-1} \sum_{i=1}^N (T_{i,j,k}X)^2
\end{align*}
and have
\begin{equation*}
\frac{S_1}{S_2/4}
= 4^\alpha \cdot \frac{S_1/\epsilon^2 m N (4\Delta)^\alpha}{S_2/4\epsilon^2 m N \Delta^\alpha}
= 4^\alpha \bigl(1 +\Op \bigl( (R_{\alpha,\alpha_0}^{\mathrm{damp}})^{-1} \bigr) \bigr).
\end{equation*}
Therefore, we define the estimator of the damping parameter $\alpha$ as follows.
\begin{equation}\label{est_alpha}
\widehat \alpha = 
\frac{\log(S_1) -\log(S_2/4)}{\log(4)}
= \frac{\log(S_1) -\log(S_2)}{\log(4)} +1.
\end{equation}
We obtain the following theorem.
\begin{thm}\label{th1}
Let $\alpha_0 \in (0,3)$ and $\delta/\sqrt{\Delta} \equiv r \in (0,\infty)$.
It holds that under [A1]$_{\alpha_0}$ and [B]$_{\alpha^*,\alpha_0}$, 
\begin{equation*}
R_{\alpha^*,\alpha_0}^{\mathrm{damp}} (\widehat \alpha -\alpha^*) = \Op(1)
\end{equation*}
as $m \to \infty$, $N \to \infty$ and $\epsilon \to 0$.
\end{thm}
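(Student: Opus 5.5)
The argument is a delta-method reduction on the log-ratio in \eqref{est_alpha}. Writing $\widehat\alpha - \alpha^* = \{\log(S_1/(S_2/4)) - \log 4^{\alpha^*}\}/\log 4$, it suffices to show
\begin{equation*}
\frac{S_1}{S_2/4} = 4^{\alpha^*}\bigl(1 + \Op((R_{\alpha^*,\alpha_0}^{\mathrm{damp}})^{-1})\bigr).
\end{equation*}
Under [B]$_{\alpha^*,\alpha_0}$ we have $R_{\alpha^*,\alpha_0}^{\mathrm{damp}} \to \infty$. Then $\log(1+x) = x + \OO(x^2)$ gives $R^{\mathrm{damp}}_{\alpha^*,\alpha_0}(\widehat\alpha-\alpha^*) = \Op(1)$. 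So the work is to justify the two displayed expansions preceding \eqref{est_alpha}, evaluated at the true parameters, with a common limit $g_{r,\alpha^*}^{(\mathrm m)}(\vartheta^*)$ that stays bounded away from zero.

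\textbf{Step 1 (expansion of $T_{i,j,k}X$ sums).} I would invoke the moment results from \cite{TKU2026}, in particular Lemma 1 there with $\beta=\alpha<3$, together with the results of \cite{TKU2025a} for temporal–spatial increments. These give the following for any rectangle of spatial indices:
\begin{itemize}
\item the deterministic part of $\EE[(T_{i,j,k}X)^2]/(\epsilon^2\Delta^{\alpha})$ equals $\phi_{r,\alpha}(\theta_2)\,\delta^{-2}(1-2b)^{-2}\cdot(\text{cell integral of } \ee^{-\kappa y-\eta z})$, up to a relative error $\OO(\Delta^{(\alpha\tand 2)-\alpha})$;
\item the initial value contributes at most $\OO(\epsilon^{-2}\Delta^{\alpha_0-\alpha})$ in relative terms under [A1]$_{\alpha_0}$;
\item the variance of the centered sum is $\OO(1/(mN))$, using the decay of temporal and spatial correlations of the increments, as already established in those papers.
\end{itemize}
Summing over cells and using $\delta^2 = (1-2b)^2/m$ turns the Riemann sum into $\int\int$. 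The four sums in $S_2$ are counted so that each interior cell appears four times and boundary cells fewer, which matches $4\sum_{j,k}\int_{\widetilde y_{j-1}}^{\widetilde y_{j+1}}\int_{\widetilde z_{k-1}}^{\widetilde z_{k+1}}$ exactly. Hence $S_2/(4\epsilon^2 mN\Delta^{\alpha^*}) = g^{(\mathrm m)}_{r,\alpha^*}(\vartheta^*) + \Op(R^{-1})$.

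\textbf{Step 2 (expansion of $S_1$).} $\widehat T_{i,j,k}X$ is itself a triple increment with resolutions $(4\Delta, 2\delta)$ and the same ratio $r$. I would rerun the Step 1 argument with $(\Delta',\delta')$, taking the spatial cells $[\widetilde y_{j-1},\widetilde y_{j+1}]\times[\widetilde z_{k-1},\widetilde z_{k+1}]$. Since $\delta'^2 = 4\delta^2$ and the number of terms is about $mN$, the normalization $\epsilon^2 mN(4\Delta)^{\alpha}$ yields $g^{(\mathrm m)}_{r,\alpha^*}$ with its factor $1/4$. The overlapping windows only create $\OO(1)$-range dependence in $(i,j,k)$, which I would control by the same covariance bounds.

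\textbf{Step 3 (the ratio).} I would show $g^{(\mathrm m)}_{r,\alpha^*}(\vartheta^*)$ converges to $\phi_{r,\alpha^*}(\theta_2^*)(1-2b)^{-2}\int_{[b,1-b]^2}\ee^{-\kappa y-\eta z}\,\dd y\,\dd z > 0$. Here $\phi$ is finite and positive for $\alpha\in(0,3)$ by \eqref{est_q}. Dividing the two expansions then gives the claim.

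The main obstacle is the variance bound $\OO(1/(mN))$ for the overlapping increments $\widehat T$ across the whole range $\alpha<3$. For $\alpha\ge 2$ the spatial correlations decay slowly, and the bias term $\Delta^{(\alpha\tand2)-\alpha}$ dominates. I would first try to reduce the problem to covariance estimates for $T_{i,j,k}X$ by expanding $\widehat T$ as a sum of 16 $T$'s and applying the known covariance bounds termwise.
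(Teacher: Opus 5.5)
Your architecture matches the paper's. You reduce $R(\widehat\alpha-\alpha^*)$ to $R\log(\mathcal Z_1/\mathcal Z_2)/\log 4$, with $\mathcal Z_1=S_1/(\epsilon^2mN(4\Delta)^{\alpha^*})$ and $\mathcal Z_2=S_2/(4\epsilon^2mN\Delta^{\alpha^*})$. You then show $R(\mathcal Z_l-g^{(\mathrm m)}_{r,\alpha^*}(\vartheta))=\Op(1)$ from a bias bound (via \eqref{TKU26_2.8} and midpoint Riemann sums) plus a second-moment bound. For $S_1$ you expand $\widehat T$ into 16 $T$'s and use \eqref{cov-1} termwise, which is exactly Lemma \ref{lem1}.

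The gap is the rate bookkeeping in Step 1: as written, it does not yield $\Op(R^{-1})$. The ``relative errors'' you list, $\OO(\Delta^{(\alpha\tand 2)-\alpha})$ and $\OO(\epsilon^{-2}\Delta^{\alpha_0-\alpha})$, do not vanish. The first is $\ge 1$ for every $\alpha$ (it equals $1$, $\log N$, $N^{\alpha-2}$ for $\alpha<2$, $\alpha=2$, $\alpha>2$). The second need not tend to zero under [B]$_{\alpha^*,\alpha_0}$. What \eqref{TKU26_2.8} gives, for the time-averaged second moments, is a remainder $\OO(\Delta^{1-\alpha+(\alpha\tand 2)}\lor\epsilon^{-2}\Delta^{1+\alpha_0-\alpha})$, which carries an extra factor $\Delta$. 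This is $\OO(R^{-1})$ only because the balance $m\asymp N$ gives $\Delta\lesssim(mN)^{-1/2}$. The same fact, $\Delta R\lesssim 1$, is what absorbs the $\OO(\delta^2)=\OO(\Delta)$ midpoint-rule error and the factor $(N-3)/N$ in $S_1$.

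The variance target $\OO(1/(mN))$ is also wrong; the right target is $\OO(R^{-2})$. Set $a=\Delta^{(\alpha^*\tand 2)-\alpha^*}\ (\ge 1)$ and $c=\epsilon^{-2}\Delta^{\alpha_0-\alpha^*}$, so $R=\sqrt{mN}/(a\lor c)$. Then \eqref{cov-2} and its analogue in Lemma \ref{lem1} give $\VV[\mathcal Z_l]\lesssim a(a\lor c)/(mN)$, hence $R^2\VV[\mathcal Z_l]\lesssim a/(a\lor c)\le 1$.

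The factor $a^2$ arises because the covariance bound $\epsilon^2\Delta^{\alpha\tand 2}$ exceeds $\epsilon^2\Delta^{\alpha}$ when $\alpha\ge 2$. So the denominator of $R$ is not a pure bias effect: bias and standard deviation are both bounded by $(a\lor c)/\sqrt{mN}=R^{-1}$.

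The factor $(1\lor c/a)$ comes from the non-centred Isserlis term $4\EE[\widehat T_{i,j,k}X]\,\EE[\widehat T_{i',j',k'}X]\cov[\widehat T_{i,j,k}X,\widehat T_{i',j',k'}X]$ produced by the initial value. Applying covariance bounds termwise does not address this term. The paper handles it with Cauchy--Schwarz, the bound $\sum_i(\EE[\widehat T_{i,j,k}X])^2=\OO(\Delta^{\alpha_0})$ uniformly in $j,k$, and $m=\OO(N)$.

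With this corrected accounting, your Steps 1--3 go through as in the paper. One minor point: \eqref{est_q} only gives finiteness of $\phi_{r,\alpha}$. Positivity, which you need for $g^{(\mathrm m)}_{r,\alpha^*}$ to stay away from zero, comes from $J_0(\sqrt2 u)-2J_0(u)+1$ being an angular average of the nonnegative function $(1-\cos(u\cos\theta))(1-\cos(u\sin\theta))$.
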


\begin{rmk}
\begin{itemize}
\item[(i)]
In a similar manner to the estimator of $\alpha$ given in (3.1) of \cite{TKU2025arXiv1},
we define the estimator of $\alpha$ in the following simple form
\begin{equation*}
\widetilde \alpha = \frac{\log(S_1) -\log(\widetilde S_2)}{\log(4)},
\quad
\widetilde S_2 = \sum_{k=1}^{m_2} \sum_{j=1}^{m_1} \sum_{i=1}^N (T_{i,j,k}X)^2.
\end{equation*}
We then find from Lemma \ref{lem_tilde_alpha} below that
$\bigl( R_{\alpha^*,\alpha_0}^{\mathrm{damp}} \land \sqrt{m} \bigr) (\widetilde \alpha -\alpha^*) = \Op(1)$,
which implies that $\widetilde \alpha$ is the inferior estimator compared to $\widehat \alpha$. 
Therefore, we employ the estimator of the form \eqref{est_alpha}.

\item[(ii)]
Since the true value $\alpha^*$ is known, we have no way to directly check the sufficient condition [B]$_{\alpha^*, \alpha_0}$ 
in Theorem \ref{th1}. In practice, it is natural to assume that [B]$_{\alpha^*, \alpha_0}$ holds for all $\alpha \in \Xi_{\alpha}$,
in which case we can check [B]$_{\sup(\Xi_{\alpha}), \alpha_0}$. 
Similarly, for the conditions involving the true value $\alpha^*$ that appear below, 
we can check the conditions obtained by replacing true value $\alpha^*$ with $\inf(\Xi_{\alpha})$ or $\sup(\Xi_{\alpha})$.

\item[(iii)]
If there exist sequences $\{ r_{j,n} \}_{n \in \mathbb N}$ ($j=1,2$) and $\{ c_n \}_{n \in \mathbb N}$ such that 
$\lim_{n \to \infty} r_{j,n} = \infty$, $\lim_{n \to \infty} c_n \in (0, \infty)$ and $r_{j,n}(\mathcal Y_{j,n} - c_n) = \Op(1)$, 
then $\mathcal Y_n = \log(\mathcal Y_{1,n}) -\log(\mathcal Y_{2,n})$ satisfies $(r_{1,n} \land r_{2,n}) \mathcal Y_n = \Op(1)$
(see \eqref{taylor} and \eqref{eq-z1/z2} below).
For simplicity, we consider the case where the convergence rate given in \eqref{R_damp} 
achieves $R_{\alpha^*,\alpha_0}^{\mathrm{damp}} = \sqrt{m N}$. 
We then see that the convergence rate of the estimator of $\alpha$ proposed by (3.1) in \cite{TKU2025arXiv1} is, 
more precisely, $\sqrt{m N} \land \sqrt{m' N'} = \sqrt{m N}/4$.
On the other hand, we find that the convergence rate of the estimator $\widehat \alpha$ given in \eqref{est_alpha} is, more precisely,
\begin{equation*}
\sqrt{m N} \land \sqrt{(m_1-1)(m_2-1)(N-3)} = \frac{\sqrt{m N}}{q_{\mathrm{m},N}}
\end{equation*}
with
\begin{equation*}
q_{\mathrm{m},N} =\frac{\sqrt{m N}}{\sqrt{(m_1-1)(m_2-1)(N-3)}} = \frac{1}{\sqrt{(1-\frac{1}{m_1})(1-\frac{1}{m_2})(1-\frac{3}{N})}}.
\end{equation*}
Since $(1 \le)\ q_{\mathrm{m},N} \le 4$ holds for any $N \ge 4$ and $m_1, m_2 \ge 2$, 
we have $\sqrt{m N}/q_{\mathrm{m},N} \ge \sqrt{m N}/4$. 
By using this approach, we can reduce the bias of the estimator for the damping parameter $\alpha$ in practical applications, 
where we cannot obtain a large number of spatial observations.
We will provide numerical simulations on the differences between these two estimators in Section \ref{sec4}.
\end{itemize}

\end{rmk}

\subsection{Estimation of $\theta_1$, $\eta_1$ and $\theta_2$}
We next consider estimation for the coefficient parameters 
$\theta_1$, $\eta_1$ and $\theta_2$ based on the thinned data 
$\mathbb X_{\mathrm{m},N}^{b}$ given in \eqref{space_thin_data}.
Let $J_0$ be the Bessel function of the first kind of order $0$:
\begin{equation}\label{Bessel}
J_0(x) = 1 +\sum_{k =1}^{\infty} \frac{(-1)^k}{(k!)^2} \Bigl( \frac{x}{2} \Bigr)^{2k}.
\end{equation}
For $r \in (0,\infty)$, $\alpha \in (0,3)$ and $\theta_2 \in (0,\infty)$, we define
\begin{equation}\label{phi}
\phi_{r,\alpha}(\theta_2)
=\frac{2}{\theta_2^{1-\alpha} \pi}
\int_0^\infty 
\frac{1-\ee^{-x^2}}{x^{1+2\alpha}}
\biggl(
J_0\Bigl(\frac{\sqrt{2}r x}{\sqrt{\theta_2}}\Bigr)
-2J_0\Bigl(\frac{r x}{\sqrt{\theta_2}}\Bigr)+1
\biggr) \dd x.
\end{equation}
Tonaki \textit{et al}.\cite{TKU2026} supposed that the damping parameter $\alpha$ is known
and showed 
\begin{equation}\label{TKU26_2.8}
\frac{1}{\epsilon^2 N \Delta^{\alpha}}\sum_{i=1}^{N} \EE[(T_{i,j,k}X)^2]
= \ee^{-\kappa \overline y_j -\eta \overline z_k} \phi_{r,\alpha}(\theta_2)
+ \OO(\Delta^{1-\alpha +(\alpha \tand 2)} 
\lor \epsilon^{-2} \Delta^{1+\alpha_0 -\alpha})
\end{equation}
where $\overline y_j = (\widetilde y_{j-1} +\widetilde y_j)/2$
and $\overline z_k = (\widetilde z_{k-1} +\widetilde z_k)/2$ (see (2.8) in \cite{TKU2026}). 
Using the above result, they defined the contrast function as 
\begin{equation}\label{contrast_U}
U_{m,N}^{\epsilon}(\kappa, \eta, \theta_2;\alpha) = 
\frac{1}{m}\sum_{k=1}^{m_2}\sum_{j=1}^{m_1} 
\Biggl\{
\frac{1}{\epsilon^2 N \Delta^{\alpha}}\sum_{i=1}^{N} (T_{i,j,k}X)^2
-\ee^{-\kappa \overline y_j -\eta \overline z_k} \phi_{r,\alpha}(\theta_2)
\Biggr\}^2,
\end{equation}
and established the minimum contrast estimator
\begin{equation*}
(\widehat \kappa^{\mathrm{s}}, \widehat \eta^{\mathrm{s}}, \widehat \theta_2^{\mathrm{s}}) 
= \underset{(\kappa, \eta, \theta_2) \in \Xi_{\vartheta}}{\mathrm{argmin}}\,
U_{m,N}^{\epsilon}(\kappa, \eta, \theta_2;\alpha^*)
\end{equation*}
with the known damping parameter $\alpha^*$, 
where  the parameter space $\Xi_{\kappa,\eta}$ of $(\kappa, \eta)$ 
is a compact convex subset of $\mathbb R^2$,  
the parameter space $\Xi_{\theta_2} \subset (0,\infty)$ of $\theta_2$ 
is a compact convex subset of $(0,\infty)$, and 
$\Xi_{\vartheta} = \Xi_{\kappa, \eta} \times \Xi_{\theta_2}$.
They further showed that the estimators 
$\widehat \theta_1^{\mathrm{s}} = \widehat \kappa^{\mathrm{s}} \widehat \theta_2^{\mathrm{s}}$,
$\widehat \eta_1^{\mathrm{s}} = \widehat \eta^{\mathrm{s}} \widehat \theta_2^{\mathrm{s}}$
and $\widehat \theta_2^{\mathrm{s}}$ have
$R_{\alpha^*,\alpha_0}^{\mathrm{damp}}$-consistency
(see Theorem 2.2 in \cite{TKU2026}).

In this study, we consider the contrast function obtained 
by substituting the estimator $\widehat \alpha$ given in \eqref{est_alpha} 
into \eqref{contrast_U}, and define the estimators of 
$(\theta_1, \eta_1, \theta_2)$ as follows.
\begin{equation}\label{est_coef}
\left\{
\begin{split}
(\widehat \kappa, \widehat \eta, \widehat \theta_2) 
&= \underset{(\kappa, \eta, \theta_2) \in \Xi_{\vartheta}}{\mathrm{argmin}}\,
U_{m,N}^{\epsilon}(\kappa, \eta, \theta_2;\widehat \alpha),
\\
\widehat \theta_1 &= \widehat \kappa \widehat \theta_2,
\\
\widehat \eta_1 &= \widehat \eta \widehat \theta_2.
\end{split}
\right.
\end{equation}
We then obtain the following result.
\begin{thm}\label{th2}
Let $\alpha_0 \in (0,3)$ and $\delta/\sqrt{\Delta} \equiv r \in(0,\infty)$.
It holds that under [A1]$_{\alpha_0}$ and [B]$_{\alpha^*, \alpha_0}$, 
\begin{equation*}
\frac{R_{\alpha,\alpha_0}^{\mathrm{damp}}}{\log(N)}
\begin{pmatrix}
\widehat \theta_1 -\theta_1^*
\\
\widehat \eta_1 -\eta_1^* 
\\
\widehat \theta_2 -\theta_2^*
\end{pmatrix}
= \Op(1)
\end{equation*}
as $m \to \infty$, $N \to \infty$ and $\epsilon \to 0$.
\end{thm}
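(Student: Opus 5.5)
We will treat the plug-in contrast $U_{m,N}^{\epsilon}(\kappa,\eta,\theta_2;\widehat\alpha)$ as a perturbation of the known-$\alpha$ contrast $U_{m,N}^{\epsilon}(\kappa,\eta,\theta_2;\alpha^*)$ studied in \cite{TKU2026}. We then run the standard minimum contrast argument: first consistency, then a Taylor expansion of the score around $\vartheta^*=(\kappa^*,\eta^*,\theta_2^*)$. The only new ingredient compared with Theorem 2.2 of \cite{TKU2026} is the error created by replacing $\alpha^*$ with $\widehat\alpha$. This enters in two places. The first is the normalisation $\Delta^{-\widehat\alpha}$ in the first term of \eqref{contrast_U}. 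The second is $\phi_{r,\widehat\alpha}(\theta_2)$ in the second term.

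\textbf{Step 1 (effect of the normalisation).} Write
\[
\frac{1}{\epsilon^2N\Delta^{\widehat\alpha}}\sum_{i}(T_{i,j,k}X)^2
=\Delta^{\alpha^*-\widehat\alpha}\,\frac{1}{\epsilon^2N\Delta^{\alpha^*}}\sum_{i}(T_{i,j,k}X)^2 .
\]
Since $\Delta^{\alpha^*-\widehat\alpha}=\exp\bigl((\widehat\alpha-\alpha^*)\log N\bigr)$, Theorem \ref{th1} gives
\[
\Delta^{\alpha^*-\widehat\alpha}-1=\Op\bigl(\log(N)/R^{\mathrm{damp}}_{\alpha^*,\alpha_0}\bigr).
\]
Condition [B]$_{\alpha^*,\alpha_0}$, together with $(\alpha\tand2)-\alpha$ being bounded below appropriately, ensures that this quantity is $\op(1)$. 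This is exactly where the factor $\log N$ is lost.

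\textbf{Step 2 (effect on $\phi$).} We show that $(\alpha,\theta_2)\mapsto\phi_{r,\alpha}(\theta_2)$ is $C^2$ on a neighbourhood of $\{\alpha^*\}\times\Xi_{\theta_2}$. The integrand in \eqref{phi} behaves like $x^{1-2\alpha}\cdot x^{4}$ near $0$, because $J_0(\sqrt2 u)-2J_0(u)+1=\OO(u^4)$, and like $x^{-1-2\alpha}$ at infinity. Differentiating in $\alpha$ only introduces $\log x$ factors, so dominated convergence applies uniformly on compacts of $(0,3)$. Hence
\[
\sup_{\theta_2}\,|\phi_{r,\widehat\alpha}(\theta_2)-\phi_{r,\alpha^*}(\theta_2)|=\Op\bigl(1/R^{\mathrm{damp}}_{\alpha^*,\alpha_0}\bigr),
\]
and the same bound holds for the $\theta_2$-derivatives. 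This error is of smaller order than the one in Step 1.

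\textbf{Step 3 (consistency and rate).} Combining Steps 1 and 2 with the uniform results of \cite{TKU2026} for $U(\cdot;\alpha^*)$, we get
\[
\sup_{\Xi_\vartheta}|U(\cdot;\widehat\alpha)-U(\cdot;\alpha^*)|=\op(1).
\]
Consistency then follows from the identifiability of the limit contrast used in \cite{TKU2026}.

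For the rate, we expand $0=\partial_\vartheta U(\widehat\vartheta;\widehat\alpha)$ around $\vartheta^*$. The Hessian converges to the positive definite matrix of \cite{TKU2026}; the perturbation does not change the limit, since it is $\op(1)$ uniformly. The score splits into two parts:
\begin{itemize}
\item $\partial_\vartheta U(\vartheta^*;\alpha^*)=\Op\bigl(1/R^{\mathrm{damp}}_{\alpha^*,\alpha_0}\bigr)$, as in \cite{TKU2026};
\item the difference $\partial_\vartheta U(\vartheta^*;\widehat\alpha)-\partial_\vartheta U(\vartheta^*;\alpha^*)$.
\end{itemize}
The difference is a product of the form $\frac{1}{m}\sum(\text{bounded terms})\times\bigl[(\Delta^{\alpha^*-\widehat\alpha}-1)\cdot\Op(1)+\Op(|\widehat\alpha-\alpha^*|)\bigr]$, and hence is $\Op\bigl(\log N/R^{\mathrm{damp}}_{\alpha^*,\alpha_0}\bigr)$. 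Therefore $\widehat\vartheta-\vartheta^*=\Op\bigl(\log N/R^{\mathrm{damp}}\bigr)$. The delta method, using $\theta_1=\kappa\theta_2$ and $\eta_1=\eta\theta_2$, then transfers this rate to $(\widehat\theta_1,\widehat\eta_1,\widehat\theta_2)$.

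\textbf{Main obstacle.} The delicate point is Step 2: the regularity of $\phi_{r,\alpha}$ in $\alpha$ has to be uniform, and $\widehat\alpha$ must stay in a compact subset of $(0,3)$ with probability tending to one, which Theorem \ref{th1} provides. A secondary issue is keeping the $\log N$ factor from multiplying the $\Op(1)$ averaged realized variations. This is handled by the first-moment bound \eqref{TKU26_2.8}, which makes those averages $\OO_p(1)$.
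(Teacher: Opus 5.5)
Your proposal is correct and follows essentially the same route as the paper. You split the $\widehat\alpha$-perturbation into two parts. The first is the normalisation factor $\Delta^{\alpha^*-\widehat\alpha}-1=\Op\bigl(\log(N)/R_{\alpha^*,\alpha_0}^{\mathrm{damp}}\bigr)$, which is the source of the $\log N$ loss, as in Lemma \ref{lem_XYZ}-(3). The second is the $\phi_{r,\widehat\alpha}$ error of order $(R_{\alpha^*,\alpha_0}^{\mathrm{damp}})^{-1}$, as in Lemmas \ref{lem_phi} and \ref{lem_XYZ}-(1). Consistency, the score bound and Hessian convergence then follow exactly as in (F1)--(F3).

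One correction concerns the moment bound. The first-moment bound \eqref{TKU26_2.8} is enough for the score and Hessian differences, because these are linear in the realized variations. The uniform comparison of the contrasts themselves is not linear: it contains the $\vartheta$-free term
\[
\bigl(\Delta^{2(\alpha^*-\widehat\alpha)}-1\bigr)\frac{1}{m}\sum_{j,k}\Bigl(\frac{1}{\epsilon^2 N\Delta^{\alpha^*}}\sum_{i}(T_{i,j,k}X)^2\Bigr)^2 .
\]
To control it, you need one of the following:
\begin{itemize}
\item the second-moment bound \eqref{est_TX};
\item the equivalent bound from $U_{m,N}^{\epsilon}(\vartheta^*;\alpha^*)=\op(1)$ in (R3);
\item comparing $U_{m,N}^{\epsilon}(\vartheta;\cdot)-U_{m,N}^{\epsilon}(\vartheta^*;\cdot)$ instead, in which this term cancels.
\end{itemize}
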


\begin{rmk}
As in Theorem 4.3 in \cite{TKU2025arXiv1}, we find from Theorem \ref{th2} that 
for SPDE \eqref{2d_spde}, when the damping parameter $\alpha$ is unknown, 
the convergence rates of the estimators for the coefficient parameters $\theta_1$, $\eta_1$ and $\theta_2$
are slower by a factor of $\frac{1}{\log(N)}$ compared with the case where $\alpha$ is known (\cite{TKU2026}).
This decay in the convergence rate is due to
\begin{equation*}
\Delta^{\alpha^*} \biggl| \frac{1}{\Delta^{\widehat \alpha}} -\frac{1}{\Delta^{\alpha^*}} \biggr|
= \Op \biggl( \frac{\log(N)}{R_{\alpha,\alpha_0}^{\mathrm{damp}}} \biggr)
\end{equation*}
in the proof of Lemma \ref{lem_XYZ}-(3) below.
\end{rmk}

\subsection{Estimation of $\theta_0$ and $\mu_0$}
Finally, we consider parametric estimation for $\theta_0$ and $\mu_0$ 
using the thinned data $\mathbb X_{\mathrm{M},n}^{0}$ 
given in \eqref{time_thin_data}.
Let $D_{j,k} = (y_{j-1}, y_{j}] \times (z_{k-1}, z_{k}]$ 
for $(j,k) \in \{1,\ldots, M_1 \} \times \{1,\ldots, M_2 \}$. 
Although we cannot directly observe the coordinate process $x_{l_1,l_2}(t)$, 
we can obtain by \eqref{inner_prod}, 
\begin{align*}
x_{l_1,l_2}(t) 
&= \iint_{D} X_t(y, z) e_{l_1}^{(1)}(y;\kappa^*)e_{l_2}^{(2)}(z;\eta^*) 
\exp(\kappa^* y +\eta^* z) \dd y \dd z
\\
&= \sum_{k = 1}^{M_2} \sum_{j = 1}^{M_1} 
\iint_{D_{j,k}} X_t(y, z) 
e_{l_1}^{(1)}(y;\kappa^*)e_{l_2}^{(2)}(z;\eta^*)
\exp(\kappa^* y +\eta^* z) \dd y \dd z.
\end{align*}
Since the estimators $\widehat \kappa$ and $\widehat \eta$ 
obtained from \eqref{est_coef} and the observations $X_{t_i} (y_j,z_k)$ 
on the grid $(t_i, y_j,z_k)$ are available, 
we define the approximate coordinate process as follows.
\begin{equation}\label{approx_cp0}
\widehat x_{l_1,l_2}(t_i) 
= \sum_{k = 1}^{M_2} \sum_{j = 1}^{M_1} 
\iint_{D_{j,k}} X_{t_i}(y_{j}, z_{k}) 
e_{l_1}^{(1)}(y;\widehat \kappa)
e_{l_2}^{(2)}(z;\widehat \eta) 
\exp(\widehat \kappa y +\widehat \eta z) \dd y \dd z.
\end{equation}
Since it follows that 
\begin{equation*}
\int \ee^{a x} \sin(b x) \dd x
= \frac{\ee^{a x}}{a^2 +b^2} (a \sin(b x) -b \cos(b x))
=: \mathrm{ES}(x;a,b),
\end{equation*}
we have
\begin{align}
\widehat x_{l_1,l_2}(t_i) 
&= \sum_{k = 1}^{M_2} \sum_{j = 1}^{M_1} 
X_{t_i}(y_{j}, z_{k}) \iint_{D_{j,k}} 
e_{l_1}^{(1)}(y;\widehat \kappa)
e_{l_2}^{(2)}(z;\widehat \eta) 
\exp(\widehat \kappa y +\widehat \eta z) \dd y \dd z
\nonumber
\\
&= \sum_{k = 1}^{M_2} \sum_{j = 1}^{M_1}  X_{t_i}(y_{j}, z_{k})
\int_{y_{j-1}}^{y_{j}} 
\sqrt{2} \exp \Bigl(\frac{\widehat \kappa y}{2} \Bigr) \sin(\pi l_1 y) \dd y
\nonumber
\\
&\qquad \times
\int_{z_{k-1}}^{z_{k}} 
\sqrt{2} \exp \Bigl(\frac{\widehat \eta z}{2} \Bigr) \sin(\pi l_2 z) \dd z
\nonumber
\\
&= \sum_{k = 1}^{M_2} \sum_{j = 1}^{M_1}  X_{t_i}(y_{j}, z_{k})
\delta_{j}^{(1)} g_{l_1}(\widehat \kappa) 
\delta_{k}^{(2)} g_{l_2}(\widehat \eta),
\label{approx_cp}
\end{align}
where 
\begin{align*}
\delta_j^{(1)} g_l(\widehat \kappa) &= g_l(y_{j};\widehat \kappa) -g_l(y_{j-1};\widehat \kappa),
\quad
\delta_k^{(2)} g_l(\widehat \eta) = g_l(z_{k};\widehat \eta) - g_l(z_{k-1};\widehat \eta),
\\
g_l(x;a) &= \sqrt{2} \mathrm{ES} \Bigl(x; \frac{a}{2}, \pi l \Bigr),
\quad a, x \in \mathbb R, \ l \in \mathbb N.
\end{align*}

Tonaki \textit{et al}.\cite{TKU2024a} considered parameter estimation for the Ornstein-Uhlenbeck process 
\begin{equation}\label{process}
\dd x(t) = -\lambda x(t) \dd t +\epsilon \mu^{-\alpha/2} \dd w(t)
\end{equation}
with a deterministic initial value $x(0) \neq 0$ and a small volatility parameter $\epsilon \in (0,1)$, 
where $(\lambda, \mu) \in \mathbb R \times (0,\infty)$ are unknown parameters, 
$\alpha \in (0, 3)$ and $\{ w(t) \}_{t \ge 0}$ is a real-valued standard Wiener process.
For discrete observations $\mb x = \{ x(t_{i}^n ) \}_{i=1}^n$ obtained from the process \eqref{process},
they proposed the quasi-maximum likelihood estimators of $(\lambda, \mu)$ based on the quasi log-likelihood function
\begin{equation}\label{contrast_V}
V_n^{\epsilon} (\lambda, \mu; \mb x, \alpha)
=\sum_{i=1}^n
\frac{(x(t_i^n)- \ee^{-\lambda \Delta_n} x(t_{i-1}^n))^2}
{\frac{\epsilon^2 (1 -\ee^{-2\lambda \Delta_n})}{2 \lambda \mu^\alpha}}
+n\log \biggl( \frac{1-\ee^{-2\lambda \Delta_n}}{2\lambda \mu^\alpha \Delta_n} \biggr).
\end{equation}
See the appendix section in \cite{TKU2024a} for details.

In this study, for $(\mf l_1, \mf l_2) \in \mathbb N^2$ obtained from [A2], 
we construct the approximate coordinate process 
$\widehat {\mb x}_{\mf l_1,\mf l_2} 
= \{ \widehat x_{\mf l_1, \mf l_2}(t_{i}^n) \}_{i=1}^n$ 
by \eqref{approx_cp}, 
and consider the quasi log-likelihood function obtained 
by substituting the process $\widehat {\mb x}_{\mf l_1,\mf l_2}$ 
and the estimator $\widehat \alpha$ given in \eqref{est_alpha} 
into \eqref{contrast_V}, and define the estimators of 
$(\theta_0, \mu_0)$ as follows.
\begin{equation}\label{estimator_lam_mu}
\left\{
\begin{split}
(\widehat \lambda_{\mf l_1,\mf l_2}, \widehat \mu_{\mf l_1,\mf l_2})
&= \underset{(\lambda,\mu) \in \Xi_{\nu}}{\mathrm{arginf}}\, 
V_n^{\epsilon} (\lambda,\mu; 
\widehat {\mb x}_{\mf l_1,\mf l_2}, \widehat \alpha),
\\
\widehat \theta_0 = \widehat \theta_{0,\mf l_1, \mf l_2}
&= -\widehat \lambda_{\mf l_1,\mf l_2}
+\widehat \theta_2 \biggl( \frac{\widehat \kappa^2 + \widehat \eta^2}{4} 
+\pi^2(\mf l_1^2 +\mf l_2^2) \biggr),
\\
\widehat \mu_0 = \widehat \mu_{0,\mf l_1, \mf l_2} 
&= \widehat \mu_{\mf l_1,\mf l_2}- \pi^2(\mf l_1^2 +\mf l_2^2),
\end{split}
\right.
\end{equation}
where $\Xi_{\lambda}, \Xi_{\mu}$ are compact convex subsets of $(0,\infty)$ and 
$\Xi_{\nu} = \Xi_{\lambda} \times \Xi_{\mu}$. 
For $(\lambda, \mu) \in (0,\infty)^2$, 
$x \in \mathbb R \setminus \{0\}$ and $\alpha \in (0,3)$, we define 
\begin{equation*}
G(\lambda,\mu;x,\alpha) = 
\frac{1 -\ee^{-2\lambda}}{2\lambda} \mu^\alpha x^2,
\quad 
H(\mu;\alpha) = \frac{\alpha^2}{2\mu^2},
\quad
I(\lambda, \mu; x,\alpha) = 
\begin{pmatrix}
G(\lambda, \mu; x,\alpha) & 0
\\
0 & H(\mu;\alpha)
\end{pmatrix}
\end{equation*}
and
\begin{equation*}
\mathcal I = I(\lambda_{\mf l_1, \mf l_2}^*,
\mu_{\mf l_1, \mf l_2}^*; x_{\mf l_1, \mf l_2}(0), \alpha^*)^{-1}.
\end{equation*}
Let $M_{(1)} = M_1 \land M_2$.
For the rate $R_{\alpha,\alpha_0}^{\mathrm{damp}}$ given in \eqref{R_damp}, we define
\begin{equation*}
R_{\alpha,\alpha_0}^{\mathrm{coef}} = \frac{R_{\alpha,\alpha_0}^{\mathrm{damp}}}{\log(N)}.
\end{equation*}
In order to control the approximate process $\widehat {\mb x}_{l_1,l_2}$, we consider the following conditions. 
\begin{description}
\item[{[C1]$_{\alpha,\alpha_0}$}]
The following balance conditions hold for $n$, $\epsilon$ and $R_{\alpha,\alpha_0}^{\mathrm{coef}}$. 
\begin{enumerate}
\item[(i)]
$\frac{n^3 \epsilon^2 \Delta_n^{\alpha_0 \tand 2}}{(R_{\alpha,\alpha_0}^{\mathrm{coef}})^2} = \oo(1)$
and $\frac{\epsilon^{-4} \Delta_n^{\alpha_0 \tand 2}}{(R_{\alpha,\alpha_0}^{\mathrm{coef}})^2} = \oo(1)$.

\item[(ii)]
$\frac{n^3 \epsilon^4 \Delta_n^{\alpha \tand 1}}{(R_{\alpha,\alpha_0}^{\mathrm{coef}})^2} = \oo(1)$, 
$\frac{\epsilon^{-2} \Delta_n^{\alpha \tand 1}}{(R_{\alpha,\alpha_0}^{\mathrm{coef}})^2} = \oo(1)$
and $\frac{n^2 \Delta_n^{\alpha \tand 1}}{(R_{\alpha,\alpha_0}^{\mathrm{coef}})^2} = \oo(1)$.

\item[(iii)]
$\frac{\epsilon^{-1}}{R_{\alpha,\alpha_0}^{\mathrm{coef}}} = \oo(1)$.
\end{enumerate}

\item[{[C2]$_{\alpha,\alpha_0}$}]
The following balance conditions hold for $n$, $\epsilon$ and $M_{(1)}$.
\begin{enumerate}
\item[(i)]
$\frac{n^3 \epsilon^2}{M_{(1)}^{2\alpha_0 \tand 2}} = \oo(1)$ 
and $\frac{\epsilon^{-4}}{M_{(1)}^{2\alpha_0 \tand 2}} = \oo(1)$.

\item[(ii)]
$\frac{n^3 \epsilon^4}{M_{(1)}^{2\alpha \tand 2}} = \oo(1)$ 
and $\frac{\epsilon^{-2}}{M_{(1)}^{2\alpha \tand 2}} = \oo(1)$.

\item[(iii)]
$\frac{n \sqrt{n}}{M_{(1)}^{2\alpha \tand 2}} = \oo(1)$.
\end{enumerate}

\item[{[C3]$_{\alpha,\alpha_0}$}]
The following balance conditions hold for $n$, $\epsilon$ and $R_{\alpha,\alpha_0}^{\mathrm{damp}}$.
\begin{enumerate}
\item[(i)]
$\frac{n \epsilon^2}{R_{\alpha,\alpha_0}^{\mathrm{damp}}} = \oo(1)$ 
and $\frac{(n \epsilon^2)^{-1}}{R_{\alpha,\alpha_0}^{\mathrm{damp}}} = \oo(1)$. 

\item[(ii)]
$\frac{\sqrt{n}}{R_{\alpha,\alpha_0}^{\mathrm{damp}}} = \oo(1)$.
\end{enumerate}
\end{description}

We then obtain the following theorem.
\begin{thm}\label{th3}
Let $\alpha_0 \in (0,3)$ and $\delta/\sqrt{\Delta} \equiv r \in (0,\infty)$.
Assume that [A1]$_{\alpha_0}$, [A2] and [B]$_{\alpha^*,\alpha_0}$ hold. 
As $n \to \infty$ and $\epsilon \to 0$, it holds that
under [C1]$_{\alpha^*, \alpha_0}$-(i), (ii), [C2]$_{\alpha^*, \alpha_0}$-(i), (ii) 
and [C3]$_{\alpha^*, \alpha_0}$-(i), 
\begin{equation}\label{cons}
(\widehat \theta_0, \widehat \mu_0) \pto (\theta_0^*, \mu_0^*),
\end{equation}
and that under [C1]$_{\alpha^*, \alpha_0}$--[C3]$_{\alpha^*, \alpha_0}$,  
\begin{equation}\label{asym_norm}
\begin{pmatrix}
\epsilon^{-1}(\widehat \theta_0 - \theta_0^*)
\\
\sqrt n(\widehat \mu_0-\mu_0^*)
\end{pmatrix}
\dto \mathrm{N} (0, \mathcal I).
\end{equation}
\end{thm}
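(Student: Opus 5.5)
Theorem \ref{th3} follows by reduction to the small-noise Ornstein--Uhlenbeck theory of \cite{TKU2024a} (its appendix), applied to the approximate coordinate process $\widehat{\mb x}_{\mf l_1,\mf l_2}$ and the plug-in exponent $\widehat\alpha$. The asymptotics of the arginf of $V_n^\epsilon(\lambda,\mu;\mb x,\alpha)$ for the exact process $\mb x=\mb x_{\mf l_1,\mf l_2}$ with the true $\alpha^*$ are available there. The work is to show that replacing $\mb x$ by $\widehat{\mb x}$ and $\alpha^*$ by $\widehat\alpha$ changes the normalized contrast and its score only by negligible terms.

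\emph{Decomposition of the approximation error.} Write $\widehat x(t_i^n)-x(t_i^n)=\mathcal E^{\mathrm{disc}}_i+\mathcal E^{\mathrm{par}}_i$.
\begin{itemize}
\item $\mathcal E^{\mathrm{disc}}_i$ is the Riemann-sum error from replacing $X_{t}(y,z)$ by $X_t(y_j,z_k)$ on $D_{j,k}$, with the true $(\kappa^*,\eta^*)$.
\item $\mathcal E^{\mathrm{par}}_i$ comes from replacing $(\kappa^*,\eta^*)$ by $(\widehat\kappa,\widehat\eta)$ in the weights $\delta^{(1)}_jg_{l}\,\delta^{(2)}_kg_l$.
\end{itemize}
For $\mathcal E^{\mathrm{par}}$, a Taylor expansion of $g_l(\cdot;a)$ in $a$ gives $|\mathcal E^{\mathrm{par}}_i|\lesssim(|\widehat\kappa-\kappa^*|+|\widehat\eta-\eta^*|)\,\bigl(\sum|X_{t_i}(y_j,z_k)|\,|D_{j,k}|\bigr)$. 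Theorem \ref{th2} then yields the bound $\Op((R^{\mathrm{coef}})^{-1})$ uniformly in $i$. Increments of this error also appear, and they carry an extra $\Delta_n^{(\alpha_0\tand2)/2}$ or $\epsilon\Delta_n^{(\alpha\tand1)/2}$ factor from the temporal regularity of $X$.

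For $\mathcal E^{\mathrm{disc}}$, the spatial regularity of $X_t$ under [A1]$_{\alpha_0}$ gives $\OO(M_{(1)}^{-(\alpha_0\tand1)})$ for the deterministic part. The stochastic part has second moment $\epsilon^2 M_{(1)}^{-(2\alpha\tand2)}$.

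These bounds are inserted into the expansions of $\partial_\lambda V_n^\epsilon$ and $\partial_\mu V_n^\epsilon$, normalized by $\epsilon^{-1}$ and $n^{-1/2}$ respectively. The quadratic terms $\sum_i(\widehat x_i-\ee^{-\lambda\Delta_n}\widehat x_{i-1})^2/(\epsilon^2\Delta_n)$ involve $n$ summands. This produces the factors $n^3\epsilon^2$, $n^3\epsilon^4$, $\epsilon^{-4}$, $\epsilon^{-2}$, $n^2$ and $n\sqrt n$ that appear exactly in [C1] and [C2]. I would verify term by term that each cross and square term is $\op(1)$ after normalization, under (i),(ii) for consistency and under (iii) for the CLT.

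\emph{Effect of $\widehat\alpha$.} The function $\alpha$ enters $V_n^\epsilon$ only via $\mu^\alpha$. We have $\mu^{\widehat\alpha}-\mu^{\alpha^*}=\mu^{\alpha^*}\log(\mu)(\widehat\alpha-\alpha^*)(1+\op(1))=\Op((R^{\mathrm{damp}})^{-1})$ by Theorem \ref{th1}. In the first term this is multiplied by the quantity $n\epsilon^2\cdot\epsilon^{-2}$ of order $n$. In the $\mu$-score, normalized by $\sqrt n$, it contributes $\sqrt n/R^{\mathrm{damp}}$, which is controlled by [C3]-(ii). In the consistency step it contributes $n\epsilon^2/R^{\mathrm{damp}}$ and $(n\epsilon^2)^{-1}/R^{\mathrm{damp}}$, which are controlled by [C3]-(i). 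The $\widehat\theta_0$ formula also involves $\widehat\theta_2,\widehat\kappa,\widehat\eta$. By Theorem \ref{th2}, their contribution to $\epsilon^{-1}(\widehat\theta_0-\theta_0^*)$ is $\Op(\epsilon^{-1}/R^{\mathrm{coef}})=\op(1)$ by [C1]-(iii).

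\emph{Assembly and main obstacle.} Consistency of $(\widehat\lambda,\widehat\mu)$ follows from uniform convergence of the suitably normalized contrast in $(\lambda,\mu)\in\Xi_\nu$ to the limit in \cite{TKU2024a}. Here [A2] makes $x_{\mf l_1,\mf l_2}(0)\neq0$, so $G>0$ and the limit is identifiable. Asymptotic normality follows from the usual score/Hessian expansion. The exact-process score converges to $\mathrm N(0,I)$ with the block structure $G,H$, and the Hessian converges to $I$, giving covariance $\mathcal I$. Then $\widehat\theta_0-\theta_0^*=-(\widehat\lambda-\lambda^*)+\op(\epsilon)$ and $\widehat\mu_0-\mu_0^*=\widehat\mu-\mu^*$.

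The main obstacle is the uniform-in-$i$ control of the increments of $\mathcal E^{\mathrm{par}}$ and $\mathcal E^{\mathrm{disc}}$ inside the $\epsilon^{-2}\Delta_n^{-1}$-weighted quadratic sums. There the random estimation error of $(\widehat\kappa,\widehat\eta)$ is not independent of the data. I would handle it by bounding sup norms deterministically through Cauchy--Schwarz in $(j,k)$ and taking expectations only of the $X$-dependent factors, following the proof of Proposition 5.2 in \cite{TKU2025arXiv2}.
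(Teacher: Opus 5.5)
Your architecture matches the paper's, step for step:
\begin{itemize}
\item You separate replacing $\mb x$ by $\widehat{\mb x}$ (at $\alpha^*$) from replacing $\alpha^*$ by $\widehat\alpha$.
\item You split $\widehat x-x$ into a $\widehat{\bs\kappa}$-error part and a spatial-discretization part. These are the paper's $T_{1,i},T_{2,i}$, and $S_{1,i},S_{2,i}$ for the increments $\mathcal M_i(\lambda^*;\widehat{\mb x})-\mathcal M_i(\lambda^*;\mb x)$.
\item You get [C3] from $|\mu^{\widehat\alpha}-\mu^{\alpha^*}|\lesssim|\widehat\alpha-\alpha^*|$, with the same factors $n\epsilon^2$, $(n\epsilon^2)^{-1}$ and $\sqrt n$.
\item You use [C1]-(iii) for $\widehat\theta_0$.
\end{itemize}
Your treatment of the $\widehat{\bs\kappa}$-dependent part is also the paper's treatment of $S_{1,i}$ and $T_{1,i}$: Cauchy--Schwarz in $(j,k)$, a uniform bound on $\pd_{\bs\kappa}h_\ell$, expectations only of the $X$-factors, then Theorem~\ref{th2}.

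\textbf{The gap} is in the step you leave as ``verify term by term''. It concerns the cross terms between the \emph{parameter-free} discretization error and the innovations or the lagged state, namely $\sum_i S_{2,i}\mathcal M_{i,\ell}$ and $\sum_i S_{2,i}x(t_{i-1}^n)$, where $\mathcal M_{i,\ell}=\mathcal M_i(\lambda_\ell^*;\mb x_\ell)$. Second-moment bounds plus Cauchy--Schwarz in $i$ cannot close these; that route loses a factor $\sqrt n$.
\begin{itemize}
\item Cauchy--Schwarz gives $\epsilon^{-1}|\sum_iS_{2,i}x(t_{i-1}^n)|=\Op(n(M_{(1)}^{2\alpha^*\tand2})^{-1/2})$. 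This needs $n^2/M_{(1)}^{2\alpha^*\tand2}=\oo(1)$, which does not follow from [C2]-(i),(ii). For instance, with $\alpha^*=1/2$, $\alpha_0>1$, $n\epsilon^2=1$ and $M_{(1)}=n^{3/2}$, both conditions hold but $n^2/M_{(1)}=n^{1/2}\to\infty$.
\item The term $\frac{\sqrt n}{\epsilon^2}|\sum_iS_{2,i}\mathcal M_{i,\ell}|$ enters the $\mu$-score. By the same route it would need $n^2/M_{(1)}^{2\alpha^*\tand2}=\oo(1)$ instead of [C2]-(iii).
\end{itemize}
So the claim that your term-by-term check produces exactly the factors in [C2] does not hold with the tools you describe.

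\textbf{The missing idea} is that $S_{2,i}=\sum_l c_l\mathcal M_{i,l}$, with deterministic weights $c_l$ and innovations independent across $l$ and of the past. The paper therefore treats these sums as (near-)martingale sums, using Lemma 5.1 of \cite{TKU2025arXiv2}. It controls their conditional means given $\GG_{i-1,\ell}$: zero against $x(t_{i-1}^n)$, and against $\mathcal M_{i,\ell}$ only the $l=\ell$ term survives, giving $\OO(\epsilon^2\Delta_n/M_{(1)})$ per step. It also controls their conditional second moments. This only requires conditions like $n/M_{(1)}^{2\alpha^*\tand2}=\oo(1)$, which do follow from [C2]. The $n\sqrt n$ in [C2]-(iii) comes from the square term $\frac{\sqrt n}{\epsilon^2}\sum_iS_{2,i}^2$, not from a cross term.

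Relatedly, consistency is not a single uniform-convergence step in $(\lambda,\mu)$. When $n\epsilon^2\to0$, which the theorem allows, the $\frac1n$-normalized $\mu$-contrast converges only along $\lambda$ with $\epsilon^{-1}(\lambda-\lambda^*)=\Op(1)$, cf.\ (R9). The paper therefore proceeds in three steps:
\begin{itemize}
\item first $\widehat\lambda\pto\lambda^*$;
\item then $\epsilon^{-1}(\widehat\lambda-\lambda^*)=\Op(1)$, from the $\lambda$-score and $\lambda$-Hessian;
\item only then $\widehat\mu\pto\mu^*$.
\end{itemize}
This is why the $\lambda$-score cross term $\epsilon^{-1}\sum_iS_{2,i}x(t_{i-1}^n)$ is already needed for \eqref{cons}. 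The martingale argument cannot be avoided even for the consistency part.
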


\begin{rmk}
\begin{itemize}
\item[(i)]
The error between the approximate coordinate process given in \eqref{approx_cp}
and the true coordinate process given in \eqref{OU-process} consists of three factors: 
the estimation error of $\alpha$, the estimation errors of $\kappa$ and $\eta$, and the spatial discretization error.
[C1]$_{\alpha, \alpha_0}$--[C3]$_{\alpha, \alpha_0}$ are the conditions that control 
the estimation error of $\kappa$ and $\eta$, the spatial discretization error, 
and the estimation error of $\alpha$, respectively. See Remarks \ref{rmk_W} and \ref{rmk_AB} below for details.

\item[(ii)]
In this paper, we allow for less restrictive initial conditions.
Consequently, the effect of the initial value may propagate to the conditions 
[C1]$_{\alpha^*, \alpha_0}$, [C2]$_{\alpha^*, \alpha_0}$ and [C3]$_{\alpha^*, \alpha_0}$.
However, under ideal conditions (for example, the case where $\alpha_0 > 2$ and 
$R_{\alpha,\alpha_0}^{\mathrm{damp}} = \sqrt{m N}$ are achieved), 
[C1]$_{\alpha^*, \alpha_0}$, [C2]$_{\alpha^*, \alpha_0}$ and [C3]$_{\alpha^*, \alpha_0}$ are independent of the value of $\alpha_0$.

\item[(iii)]
It is difficult to determine the optimal choice of $(\mathfrak l_1,\mathfrak l_2)$ in [A2] since $G(\lambda, \mu; x, \alpha)$ 
is decreasing with respect to $\lambda \in (0,\infty)$ for any $\mu, x \in (0,\infty)$ and $\alpha \in (0,3)$, 
while is increasing with respect to $\mu,x \in (0,\infty)$ for any $\lambda \in (0,\infty)$ and $\alpha \in (0,\infty)$.  
Nevertheless, selecting excessively large values of $\mf l_1$ and $\mf l_2$ is not desirable 
because there exists a constant $C>0$ 
such that $\mu_{l_1, l_2}^{\alpha^*} x_{l_1, l_2}(0)^2 \le C \lambda_{l_1,l_2}^{1+\alpha_0} x_{l_1,l_2}(0)^2$,
and hence 
\begin{equation*}
G(\lambda_{l_1,l_2}, \mu_{l_1,l_2};x_{l_1,l_2}(0), \alpha^*) 
\le C \frac{1 -\ee^{-2\lambda_{l_1,l_2}}}{2\lambda_{l_1,l_2}} \cdot \lambda_{l_1,l_2}^{1+\alpha_0} x_{l_1,l_2}(0)^2
\to 0
\end{equation*}
as $l_1 ,l_2 \to \infty$, provided that $[A1]_{\alpha_0}$ with $\alpha_0 > \alpha^* - 1$ is satisfied.
It is also a direction for future work to improve the estimators in the case where there are 
multiple pairs $(\mf l_1,\mf l_2)\in\mathbb{N}^2$ satisfying [A2].

\item[(iv)]
We consider the following condition, which is stricter than [A2].
\begin{description}
\item[{[A2]$'$}]
There exist two indices $(\mf p_1, \mf q_1), (\mf p_2, \mf q_2) \in \mathbb N^2$ 
such that $(\mf p_1, \mf q_1) \neq (\mf p_2, \mf q_2)$ and $\langle X_0, e_{\mf p_j, \mf q_j} \rangle \neq 0$ for $j \in \{ 1, 2 \}$.
\end{description}
We now define the estimators $\widehat \theta^{\mathrm{w}} = (\widehat \theta_0^{\mathrm{w}}, \widehat \theta_1^{\mathrm{w}}, 
\widehat \eta_1^{\mathrm{w}}, \widehat \theta_2^{\mathrm{w}})^\TT$ 
and $\widehat \mu_0^{\mathrm{w}}$ of $\theta = (\theta_0, \theta_1, \eta_1, \theta_2)^\TT$ and $\mu_0$ by
\begin{equation*}
\widehat \theta_2^{\mathrm{w}} = \frac{\widehat \lambda_{\mf p_1, \mf q_1} -\widehat \lambda_{\mf p_2, \mf q_2}}
{\pi^2 (\mf p_1^2 +\mf q_1^2 -\mf p_2^2 -\mf q_2^2)},
\quad
\widehat \theta_1^{\mathrm{w}} = \widehat \theta_1 \frac{\widehat \theta_2^{\mathrm{w}}}{\widehat \theta_2},
\quad
\widehat \eta_1^{\mathrm{w}} = \widehat \eta_1 \frac{\widehat \theta_2^{\mathrm{w}}}{\widehat \theta_2},
\end{equation*}
$\widehat \theta_0^{\mathrm{w}} = \widehat \theta_{0,\mf p_1, \mf q_1}$
and $\widehat \mu_0^{\mathrm{w}} = \widehat \mu_{0,\mf p_1, \mf q_1}$
based on the estimators $(\widehat \lambda_{\mf p_1, \mf q_1}, \widehat \lambda_{\mf p_2, \mf q_2})$ 
obtained from \eqref{estimator_lam_mu}, where $a^\TT$ denotes the transpose of $a$.
It then holds that as $n \to \infty$ and $\epsilon \to 0$, under [C1]$_{\alpha^*, \alpha_0}$, [C2]$_{\alpha^*, \alpha_0}$
and [C3]$_{\alpha^*, \alpha_0}$, 
\begin{equation*}
\begin{pmatrix}
\epsilon^{-1} (\widehat \theta^{\mathrm{w}} -\theta^*)
\\
\sqrt{n} (\widehat \mu_0^{\mathrm{w}} -\mu_0^*)
\end{pmatrix}
\dto \mathrm{N} (0, \mathcal J)
\end{equation*}
with the degenerate variance-covariance matrix
\begin{equation*}
\mathcal J = 
\begin{pmatrix}
\mb G_{\mf p_1, \mf q_1:\mf p_2, \mf q_2} & O
\\
O & H(\mu_{\mf p_1, \mf q_1}; \alpha^*)^{-1}
\end{pmatrix}
\in \mathbb R^{5 \otimes 5},
\end{equation*}
where
\begin{equation*}
\mb v = \frac{1}{\pi^2 (\mf p_1^2 +\mf q_1^2 -\mf p_2^2 -\mf q_2^2) \theta_2^*} (\theta_1^*, \eta_1^*, \theta_2^*),
\quad
G_{j} = G(\lambda_{\mf p_j, \mf q_j}, \mu_{\mf p_j, \mf q_j}; x_{\mf p_j, \mf q_j}(0), \alpha^*)
\end{equation*}
for $j \in \{ 1, 2 \}$ and
\begin{equation*}
\quad
\mb G_{\mf p_1, \mf q_1:\mf p_2, \mf q_2} = 
\begin{pmatrix}
G_1^{-1} & -G_1^{-1} \mb v
\\
-G_1^{-1} \mb v^\TT & (G_1^{-1} +G_2^{-1}) \mb v^\TT \mb v
\end{pmatrix}.
\end{equation*}
For this proof, see the proof of Theorem 3.3 or 3.6 in \cite{TKU2024a}.
\end{itemize}
\end{rmk}

We conclude this section by providing the following quick reference, which yields the dominant conditions 
for the sufficient conditions [C1]$_{\alpha^*,\alpha_0}$, [C2]$_{\alpha^*,\alpha_0}$ and [C3]$_{\alpha^*,\alpha_0}$ 
under the balance conditions between each $n$ and $\epsilon$.

For $a > b \ge 0$, we label the situation where the balance conditions 
$n \epsilon^a = \oo(1)$ and $\frac{1}{n \epsilon^b} = \OO(1)$ are satisfied as $\mathrm{Bal}(a,b)$.
For convenience, we denote $\mathrm{Bal}(\infty,b)$ as the situation where $\frac{1}{n \epsilon^b} = \OO(1)$ is satisfied.
From Lemma \ref{lem_bal} below, we can summarize parts of the sufficient conditions for \eqref{cons} and \eqref{asym_norm} 
in Theorem \ref{th3} as shown in Table \ref{tab0}.
\begin{table}[h]
\caption{Quick reference of the sufficient conditions for Theorem \ref{th3}.}
\label{tab0}
\begin{center}
\begin{tikzpicture}
  \def\cellw{2.5}
  \def\cellh{0.8}

  \draw (0,0) rectangle (6*\cellw,-5*\cellh);

  \foreach \i in {1}{
    \draw (0,-\i*\cellh) -- (6*\cellw,-\i*\cellh);
  }
  \foreach \i in {2,3,4}{
    \draw (0,-\i*\cellh) -- (0.98*\cellw,-\i*\cellh);
  }
  \foreach \i in {3}{
    \draw (\cellw,-\i*\cellh) -- (2*\cellw,-\i*\cellh);
  }
  \foreach \i in {2,4}{
    \draw (2*\cellw,-\i*\cellh) -- (3*\cellw,-\i*\cellh);
  }
  \foreach \i in {3}{
    \draw (3*\cellw,-\i*\cellh) -- (6*\cellw,-\i*\cellh);
  }
  \foreach \j in {0.98,1,2,3,4,5}{
    \draw (\j*\cellw,0) -- (\j*\cellw,-5*\cellh);
  }
  \node at ({(2-0.5)*\cellw},{-(1-0.5)*\cellh}) {[C1]$_{\alpha,\alpha_0}$-(i)};
  \node at ({(3-0.5)*\cellw},{-(1-0.5)*\cellh}) {[C1]$_{\alpha,\alpha_0}$-(ii)};
  \node at ({(4-0.5)*\cellw},{-(1-0.5)*\cellh}) {[C2]$_{\alpha,\alpha_0}$-(i)};
  \node at ({(5-0.5)*\cellw},{-(1-0.5)*\cellh}) {[C2]$_{\alpha,\alpha_0}$-(ii)};
  \node at ({(6-0.5)*\cellw},{-(1-0.5)*\cellh}) {[C3]$_{\alpha,\alpha_0}$-(i)};

  \node at ({(1-0.5)*\cellw},{-(2-0.5)*\cellh}) {$\mathrm{Bal}(\infty,4)$};
  \node at ({(1-0.5)*\cellw},{-(3-0.5)*\cellh}) {$\mathrm{Bal}(4,2)$};
  \node at ({(1-0.5)*\cellw},{-(4-0.5)*\cellh}) {$\mathrm{Bal}(2,1)$};
  \node at ({(1-0.5)*\cellw},{-(5-0.5)*\cellh}) {$\mathrm{Bal}(1,0)$};

  \node at ({(2-0.5)*\cellw},{-(2.5-0.5)*\cellh}) 
  {{\scriptsize$\frac{n^3 \epsilon^2 \Delta_n^{\alpha_0 \tand 2}}{(R_{\alpha,\alpha_0}^{\mathrm{coef}})^2} = \oo(1)$}};
  \node at ({(2-0.5)*\cellw},{-(4.5-0.5)*\cellh}) 
  {{\scriptsize$\frac{\epsilon^{-4} \Delta_n^{\alpha_0 \tand 2}}{(R_{\alpha,\alpha_0}^{\mathrm{coef}})^2} = \oo(1)$}};

  \node at ({(3-0.5)*\cellw},{-(2-0.5)*\cellh}) 
  {{\scriptsize$\frac{n^3 \epsilon^4 \Delta_n^{\alpha \tand 1}}{(R_{\alpha,\alpha_0}^{\mathrm{coef}})^2} = \oo(1)$}};
  \node at ({(3-0.5)*\cellw},{-(3.5-0.5)*\cellh}) 
  {{\scriptsize$\frac{n^2 \Delta_n^{\alpha \tand 1}}{(R_{\alpha,\alpha_0}^{\mathrm{coef}})^2} = \oo(1)$}};
  \node at ({(3-0.5)*\cellw},{-(5-0.5)*\cellh}) 
  {{\scriptsize$\frac{\epsilon^{-2} \Delta_n^{\alpha \tand 1}}{(R_{\alpha,\alpha_0}^{\mathrm{coef}})^2} = \oo(1)$}};

  \node at ({(4-0.5)*\cellw},{-(2.5-0.5)*\cellh}) 
  {{\scriptsize$\frac{n^3 \epsilon^2}{M_{(1)}^{2\alpha_0 \tand 2}} = \oo(1)$}};
  \node at ({(4-0.5)*\cellw},{-(4.5-0.5)*\cellh}) 
  {{\scriptsize$\frac{\epsilon^{-4}}{M_{(1)}^{2\alpha_0 \tand 2}} = \oo(1)$}};

  \node at ({(5-0.5)*\cellw},{-(2.5-0.5)*\cellh}) 
  {{\scriptsize$\frac{n^3 \epsilon^4}{M_{(1)}^{2\alpha \tand 2}} = \oo(1)$}};
  \node at ({(5-0.5)*\cellw},{-(4.5-0.5)*\cellh}) 
  {{\scriptsize$\frac{\epsilon^{-2}}{M_{(1)}^{2\alpha \tand 2}} = \oo(1)$}};

  \node at ({(6-0.5)*\cellw},{-(2.5-0.5)*\cellh}) 
  {{\scriptsize$\frac{n \epsilon^2}{R_{\alpha,\alpha_0}^{\mathrm{damp}}} = \oo(1)$}};
  \node at ({(6-0.5)*\cellw},{-(4.5-0.5)*\cellh}) 
  {{\scriptsize$\frac{(n \epsilon^2)^{-1}}{R_{\alpha,\alpha_0}^{\mathrm{damp}}} = \oo(1)$}};
\end{tikzpicture}
\end{center}

\begin{center}
\begin{tikzpicture}
  \def\cellw{2.5}
  \def\cellh{0.8}

  \draw (0,0) rectangle (6*\cellw,-7*\cellh);

  \foreach \i in {1}{
    \draw (0,-\i*\cellh) -- (6*\cellw,-\i*\cellh);
  }
  \foreach \i in {2,3,4,5,6}{
    \draw (0,-\i*\cellh) -- (0.98*\cellw,-\i*\cellh);
  }
  \foreach \i in {4}{
    \draw (\cellw,-\i*\cellh) -- (2*\cellw,-\i*\cellh);
  }
  \foreach \i in {2,6}{
    \draw (2*\cellw,-\i*\cellh) -- (3*\cellw,-\i*\cellh);
  }
  \foreach \i in {4}{
    \draw (3*\cellw,-\i*\cellh) -- (4*\cellw,-\i*\cellh);
  }
  \foreach \i in {3,5}{
    \draw (4*\cellw,-\i*\cellh) -- (5*\cellw,-\i*\cellh);
  }
  \foreach \i in {2,5}{
    \draw (5*\cellw,-\i*\cellh) -- (6*\cellw,-\i*\cellh);
  }
  \foreach \j in {0.98,1,2,3,4,5}{
    \draw (\j*\cellw,0) -- (\j*\cellw,-7*\cellh);
  }
  \node at ({(2-0.5)*\cellw},{-(1-0.5)*\cellh}) {[C1]$_{\alpha,\alpha_0}$-(i)};
  \node at ({(3-0.5)*\cellw},{-(1-0.5)*\cellh}) {[C1]$_{\alpha,\alpha_0}$-(ii)};
  \node at ({(4-0.5)*\cellw},{-(1-0.5)*\cellh}) {[C2]$_{\alpha,\alpha_0}$-(i)};
  \node at ({(5-0.5)*\cellw},{-(1-0.5)*\cellh}) {{\small [C2]$_{\alpha,\alpha_0}$-(ii),(iii)}};
  \node at ({(6-0.5)*\cellw},{-(1-0.5)*\cellh}) {[C3]$_{\alpha,\alpha_0}$};

  \node at ({(1-0.5)*\cellw},{-(2-0.5)*\cellh}) {$\mathrm{Bal}(\infty,4)$};
  \node at ({(1-0.5)*\cellw},{-(3-0.5)*\cellh}) {$\mathrm{Bal}(4,8/3)$};
  \node at ({(1-0.5)*\cellw},{-(4-0.5)*\cellh}) {$\mathrm{Bal}(8/3,2)$};
  \node at ({(1-0.5)*\cellw},{-(5-0.5)*\cellh}) {$\mathrm{Bal}(2,4/3)$};
  \node at ({(1-0.5)*\cellw},{-(6-0.5)*\cellh}) {$\mathrm{Bal}(4/3,1)$};
  \node at ({(1-0.5)*\cellw},{-(7-0.5)*\cellh}) {$\mathrm{Bal}(1,0)$};

  \node at ({(2-0.5)*\cellw},{-(3-0.5)*\cellh}) 
  {{\scriptsize$\frac{n^3 \epsilon^2 \Delta_n^{\alpha_0 \tand 2}}{(R_{\alpha,\alpha_0}^{\mathrm{coef}})^2} = \oo(1)$}};
  \node at ({(2-0.5)*\cellw},{-(6-0.5)*\cellh}) 
  {{\scriptsize$\frac{\epsilon^{-4} \Delta_n^{\alpha_0 \tand 2}}{(R_{\alpha,\alpha_0}^{\mathrm{coef}})^2} = \oo(1)$}};

  \node at ({(3-0.5)*\cellw},{-(2-0.5)*\cellh}) 
  {{\scriptsize$\frac{n^3 \epsilon^4 \Delta_n^{\alpha \tand 1}}{(R_{\alpha,\alpha_0}^{\mathrm{coef}})^2} = \oo(1)$}};
  \node at ({(3-0.5)*\cellw},{-(4.5-0.5)*\cellh}) 
  {{\scriptsize$\frac{n^2 \Delta_n^{\alpha \tand 1}}{(R_{\alpha,\alpha_0}^{\mathrm{coef}})^2} = \oo(1)$}};
  \node at ({(3-0.5)*\cellw},{-(7-0.5)*\cellh}) 
  {{\scriptsize$\frac{\epsilon^{-2} \Delta_n^{\alpha \tand 1}}{(R_{\alpha,\alpha_0}^{\mathrm{coef}})^2} = \oo(1)$}};

  \node at ({(4-0.5)*\cellw},{-(3-0.5)*\cellh}) 
  {{\scriptsize$\frac{n^3 \epsilon^2}{M_{(1)}^{2\alpha_0 \tand 2}} = \oo(1)$}};
  \node at ({(4-0.5)*\cellw},{-(6-0.5)*\cellh}) 
  {{\scriptsize$\frac{\epsilon^{-4}}{M_{(1)}^{2\alpha_0 \tand 2}} = \oo(1)$}};

  \node at ({(5-0.5)*\cellw},{-(2.5-0.5)*\cellh}) 
  {{\scriptsize$\frac{n^3 \epsilon^4}{M_{(1)}^{2\alpha \tand 2}} = \oo(1)$}};
  \node at ({(5-0.5)*\cellw},{-(4.5-0.5)*\cellh}) 
  {{\scriptsize$\frac{n \sqrt{n}}{M_{(1)}^{2\alpha \tand 2}} = \oo(1)$}};
  \node at ({(5-0.5)*\cellw},{-(6.5-0.5)*\cellh}) 
  {{\scriptsize$\frac{\epsilon^{-2}}{M_{(1)}^{2\alpha \tand 2}} = \oo(1)$}};

  \node at ({(6-0.5)*\cellw},{-(2-0.5)*\cellh}) 
  {{\scriptsize$\frac{n \epsilon^2}{R_{\alpha,\alpha_0}^{\mathrm{damp}}} = \oo(1)$}};
  \node at ({(6-0.5)*\cellw},{-(4-0.5)*\cellh}) 
  {{\scriptsize$\frac{\sqrt{n}}{R_{\alpha,\alpha_0}^{\mathrm{damp}}} = \oo(1)$}};
  \node at ({(6-0.5)*\cellw},{-(6.5-0.5)*\cellh}) 
  {{\scriptsize$\frac{(n \epsilon^2)^{-1}}{R_{\alpha,\alpha_0}^{\mathrm{damp}}} = \oo(1)$}};
\end{tikzpicture}
\end{center}
\end{table}

\section{Simulation results}\label{sec4}
A numerical solution of SPDE \eqref{2d_spde} is generated by using the truncation method as follows.
\begin{equation*}
\widetilde X_{t_{i}}(y_j, z_k)
= \sum_{l_1=1}^{L_1} \sum_{l_2=1}^{L_2} 
\widetilde x_{l_1,l_2}(t_{i}) e_{l_1,l_2}(y_j, z_k)
\end{equation*}
for $i \in \{ 1, \ldots, N \}$, $j \in \{ 1, \ldots, M_1 \}$ 
and $k \in \{ 1, \ldots, M_2 \}$ with
\begin{equation*}
\left\{
\begin{split}
\widetilde x_{l_1,l_2}(0) &= \langle X_0, e_{l_1,l_2} \rangle,
\\
\widetilde x_{l_1,l_2}(t_i) &= 
\ee^{-\lambda_{l_1,l_2}/N} \widetilde x_{l_1,l_2}(t_{i-1})
+ \epsilon \sqrt{\frac{1-\ee^{-2\lambda_{l_1,l_2}/N}}
{2 \lambda_{l_1,l_2} \mu_{l_1,l_2}^{\alpha}}} Z_{i,l_1,l_2},
\quad 
i \in \{ 1, \ldots, N \},
\end{split}
\right.
\end{equation*}
where $\{ Z_{i,l_1,l_2} \}$ are independent standard normal random variables.
We set $N = 10^3$, $M_1 = M_2 = 200$, $L_1 = L_2 = 10^4$, 
$X_0 = 3 e_{1,1}$, $\epsilon \in \{0.1, 0.5 \}$ and the true values of parameters 
$(\theta_0^*, \theta_1^*, \eta_1^*, \theta_2^*) = (0, 0.2, 0.2, 0.2)$, 
$\mu_0^* = -19.5$ and $\alpha^* = 0.5$.
We use R language to compute the estimators of Theorems \ref{th1}--\ref{th3}.
The number of Monte Carlo iterations is $200$.

We conduct numerical simulations according to the following steps. 
First, we estimate the damping parameter $\alpha$ using the estimator $\widehat \alpha$ given in \eqref{est_alpha} 
constructed from the spatial thinned data $\mathbb X_{\mathrm{m},N}^{b}$ with $m_1 = m_2 = m_0$, $b = b_0$, 
\begin{equation*}
(m_0, b_0) \in \mathbf{S} = \{ (200, 0.005), (100, 0.01), (50, 0.02), (30, 0.033) \}
\end{equation*}
and $N = 10^3$.
Next, we estimate the parameters $\theta_1$, $\eta_1$ and $\theta_2$ 
by the estimators $\widehat \theta_1$, $\widehat \eta_1$ and $\widehat \theta_2$ given in \eqref{est_coef}
based on the spatial thinned data $\mathbb X_{\mathrm{m},N}^{b}$ with $m_1 = m_2 = m_0$, $b = b_0$, 
$(m_0, b_0) \in \mathbf{S}$ and $N = 10^3$.
Finally, we calculate the estimators $\widehat \theta_0 = \widehat \theta_{0,1,1}$ and $\widehat \mu_0 = \widehat \mu_{0,1,1}$
given in \eqref{estimator_lam_mu} based on the temporal thinned data $\mathbb X_{\mathrm{M},n}^{0}$ 
with $M_1 = M_2 = 200$ and $n = 100$.
Tables \ref{tab1} and \ref{tab3} show the simulation results of the means and the standard deviations of 
$\widehat \alpha$, $\widehat \theta_1$, $\widehat \eta_1$, $\widehat \theta_2$, 
$\widehat \theta_0$ and $\widehat \mu_0$ for $\epsilon \in \{ 0.1, 0.5\}$, respectively.

We also calculate the estimators $\widehat \alpha^{\mathrm{p}}$, 
$\widehat \theta_1^{\mathrm{p}}$, $\widehat \eta_1^{\mathrm{p}}$, $\widehat \theta_2^{\mathrm{p}}$, 
$\widehat \theta_0^{\mathrm{p}}$ and $\widehat \mu_0^{\mathrm{p}}$
of $\alpha$, $\theta_1$, $\eta_1$, $\theta_2$, $\theta_0$ and $\mu_0$ given below according to the same steps as above
in order to compare them with the proposed estimators. 
We first compute the estimator $\widehat \alpha^{\mathrm{p}}$ of $\alpha$ 
proposed by \cite{TKU2025arXiv1} based on two thinned data 
$\mathbb X_{\mathrm{m},N}^{b}$ and $\mathbb X_{\mathrm{m}/2,N/4}^{b}$ with $m_1 = m_2 = m_0$, $b = b_0$,
$(m_0, b_0) \in \mathbf{S}$ and $N = 10^3$.
Next, by using the spatial thinned data $\mathbb X_{\mathrm{m},N}^{b}$ with $m_1 = m_2 = m_0$, $b = b_0$, 
$(m_0, b_0) \in \mathbf{S}$ and $N = 10^3$,  
we calculate the estimators $\widehat \theta_1^{\mathrm{p}}$, $\widehat \eta_1^{\mathrm{p}}$ and 
$\widehat \theta_2^{\mathrm{p}}$ of $\theta_1$, $\eta_1$ and $\theta_2$ 
based on the contrast function obtained by substituting $\widehat \alpha^{\mathrm{p}}$ instead of $\widehat \alpha$ 
into \eqref{contrast_U} as follows.
\begin{equation*}
(\widehat \kappa^{\mathrm{p}}, \widehat \eta^{\mathrm{p}}, \widehat \theta_2^{\mathrm{p}})
= \underset{(\kappa, \eta, \theta_2) \in \Xi_{\vartheta}}{\mathrm{argmin}}\,
U_{m,N}^{\epsilon}(\kappa, \eta, \theta_2; \widehat \alpha^{\mathrm{p}}),
\quad
\widehat \theta_1^{\mathrm{p}} = \widehat \kappa^{\mathrm{p}} \widehat \theta_2^{\mathrm{p}},
\quad
\widehat \eta_1^{\mathrm{p}} = \widehat \eta^{\mathrm{p}} \widehat \theta_2^{\mathrm{p}}.
\end{equation*}
We further construct the approximate coordinate process 
$\widehat {\mb x}_{1,1}^{\mathrm{p}} = \{ \widehat x_{1,1}^{\mathrm{p}}(t_{i}^n) \}_{i=1}^n$ 
by using \eqref{approx_cp}, $\widehat \kappa^{\mathrm{p}}$ and $\widehat \eta^{\mathrm{p}}$, that is, we define
\begin{equation*}
\widehat x_{1,1}^{\mathrm{p}}(t_i^n) 
= \sum_{k = 1}^{M_2} \sum_{j = 1}^{M_2} \widetilde X_{t_i^n}(y_{j}, z_{k})
\delta_{j}^{(1)} g_{1}(\widehat \kappa^{\mathrm{p}}) \delta_{k}^{(2)} g_{2}(\widehat \eta^{\mathrm{p}})
\end{equation*}
based on the temporal thinned data $\mathbb X_{\mathrm{M},n}^{0}$ with $M_1 = M_2 = 200$ and $n = 100$
and set the estimators $\widehat \theta_0^{\mathrm{p}}$ and $\widehat \mu_0^{\mathrm{p}}$ of $\theta_0$ and $\mu_0$ by
\begin{equation*}
(\widehat \lambda_{1,1}^{\mathrm{p}}, \widehat \mu_{1,1}^{\mathrm{p}})
= \underset{(\lambda,\mu) \in \Xi_{\nu}}{\mathrm{arginf}}\, 
V_n^{\epsilon} (\lambda,\mu; 
\widehat {\mb x}_{1,1}^{\mathrm{p}}, \widehat \alpha^{\mathrm{p}}),
\end{equation*}
\begin{equation*}
\widehat \theta_0^{\mathrm{p}} 
= -\widehat \lambda_{1,1}^{\mathrm{p}}
+\widehat \theta_2^{\mathrm{p}} \biggl( \frac{(\widehat \kappa^{\mathrm{p}})^2 + (\widehat \eta^{\mathrm{p}})^2}{4} 
+2\pi^2 \biggr),
\quad
\widehat \mu_0^{\mathrm{p}} 
= \widehat \mu_{1,1}^{\mathrm{p}} -2\pi^2.
\end{equation*}
Tables \ref{tab2} and \ref{tab4} show the simulation results of the means and the standard deviations of 
$\widehat \alpha^{\mathrm{p}}$, $\widehat \theta_1^{\mathrm{p}}$, $\widehat \eta_1^{\mathrm{p}}$, 
$\widehat \theta_2^{\mathrm{p}}$, $\widehat \theta_0^{\mathrm{p}}$ and $\widehat \mu_0^{\mathrm{p}}$ 
for $\epsilon \in \{ 0.1, 0.5 \}$, respectively.

We see from Tables \ref{tab1}-\ref{tab4} that for all $\epsilon \in \{ 0.1, 0.5 \}$, 
the biases of both the estimators $\widehat \alpha$ and $\widehat \alpha^{\mathrm{p}}$ decrease as $m_0$ increases, 
and that these biases affect the estimators of $\theta_1$, $\eta_1$, and $\theta_2$, as well as the estimator of $\theta_0$.
We also observe that for all scenarios $(m_0, b_0) \in \mathbf{S}$ and $\epsilon \in \{0.1, 0.5\}$,
the biases of $\widehat \alpha$, $\widehat \theta_1$, $\widehat \eta_1$, $\widehat \theta_2$
and $\widehat \theta_0$ are smaller than those of 
$\widehat \alpha^{\mathrm{p}}$, $\widehat \theta_1^{\mathrm{p}}$, $\widehat \eta_1^{\mathrm{p}}$, 
$\widehat \theta_2^{\mathrm{p}}$ and $\widehat \theta_0^{\mathrm{p}}$, respectively.
In particular, it appears that the proposed method substantially suppresses the increase in biases of the estimators 
for the coefficient parameters when $m_0$ is small.
Furthermore, we find from Tables \ref{tab1} and \ref{tab3} that
for all $(m_0, b_0) \in \mathbf{S}$, the mean squared errors of the estimator $\widehat \theta_0$ get smaller as $\epsilon$ decreases.

\begin{table}[h]
\caption{Means and standard deviations of the proposed estimators with $\epsilon = 0.1$.}
\label{tab1}
\begin{center}
\begin{tabular}{ccccccc} \hline
&$\widehat \alpha$ & $\widehat \theta_0$ & $\widehat \theta_1$ & $\widehat \eta_1$ & $\widehat \theta_2$ & $\widehat \mu_0$
\rule[0mm]{0cm}{4.5mm} \\ \cline{2-7}
$(m_0,b_0)$ 
& $\alpha^* = 0.5$ & $\theta_0^* = 0$ & $\theta_1^* = 0.2$ & $\eta_1^* = 0.2$ & $\theta_2^* = 0.2$ & $\mu_0^* = -19.5$ 
\rule[0mm]{0cm}{4.5mm} \\ \hline
$(200, 0.005)$ & $0.499$ & $0.077$ & $0.204$ & $0.204$ & $0.204$ & $-19.487$
 \\
& ($0.0004$) & ($0.115$) & ($0.001$) & ($0.001$) & ($0.001$) & ($0.064$)
 \\   \hline
$(100, 0.01)$ & $0.493$ & $0.401$ & $0.220$ & $0.220$ & $0.220$ & $-19.486$
 \\
& ($0.0007$) & ($0.128$) & ($0.002$) & ($0.002$) & ($0.002$) & ($0.064$)
 \\   \hline
$(50, 0.02)$ & $0.487$ & $0.667$ & $0.234$ & $0.234$ & $0.233$ & $-19.492$
 \\
& ($0.0017$) & ($0.203$) & ($0.008$) & ($0.008$) & ($0.008$) & ($0.064$)
 \\   \hline
$(30, 0.033)$ & $0.484$ & $1.046$ & $0.253$ & $0.253$ & $0.252$ & $-19.495$
 \\
& ($0.0031$) & ($0.200$) & ($0.009$) & ($0.009$) & ($0.009$) & ($0.064$)
 \\   \hline
\end{tabular}
\end{center}
\end{table}

\begin{table}[h]
\caption{Means and standard deviations of the comparative estimators with $\epsilon = 0.1$.}
\label{tab2}
\begin{center}
\begin{tabular}{ccccccc} \hline
&$\widehat \alpha^{\mathrm{p}}$ & $\widehat \theta_0^{\mathrm{p}}$ & $\widehat \theta_1^{\mathrm{p}}$ & $\widehat \eta_1^{\mathrm{p}}$ & $\widehat \theta_2^{\mathrm{p}}$ & $\widehat \mu_0^{\mathrm{p}}$
\rule[0mm]{0cm}{4.5mm} \\ \cline{2-7}
$(m_0,b_0)$ 
& $\alpha^* = 0.5$ & $\theta_0^* = 0$ & $\theta_1^* = 0.2$ & $\eta_1^* = 0.2$ & $\theta_2^* = 0.2$ & $\mu_0^* = -19.5$ 
\rule[0mm]{0cm}{4.5mm} \\ \hline
$(200, 0.005)$ & $0.497$ & $0.235$ & $0.211$ & $0.211$ & $0.211$ & $-19.484$
 \\
& ($0.001$) & ($0.144$) & ($0.004$) & ($0.004$) & ($0.004$) & ($0.077$)
 \\   \hline
$(100, 0.01)$ & $0.483$ & $1.023$ & $0.251$ & $0.251$ & $0.250$ & $-19.495$
 \\
& ($0.002$) & ($0.192$) & ($0.007$) & ($0.007$) & ($0.007$) & ($0.076$)
 \\   \hline
$(50, 0.02)$ & $0.467$ & $2.120$ & $0.303$ & $0.303$ & $0.304$ & $-19.503$
 \\
& ($0.005$) & ($0.382$) & ($0.015$) & ($0.015$) & ($0.018$) & ($0.076$)
 \\   \hline
$(30, 0.033)$ & $0.447$ & $4.327$ & $0.387$ & $0.387$ & $0.415$ & $-19.485$
 \\
& ($0.008$) & ($0.847$) & ($0.033$) & ($0.033$) & ($0.042$) & ($0.085$)
 \\   \hline
\end{tabular}
\end{center}
\end{table}

\begin{table}[h]
\caption{Means and standard deviations of the proposed estimators with $\epsilon = 0.5$.}
\label{tab3}
\begin{center}
\begin{tabular}{ccccccc} \hline
&$\widehat \alpha$ & $\widehat \theta_0$ & $\widehat \theta_1$ & $\widehat \eta_1$ & $\widehat \theta_2$ & $\widehat \mu_0$
\rule[0mm]{0cm}{4.5mm} \\ \cline{2-7}
$(m_0,b_0)$ 
& $\alpha^* = 0.5$ & $\theta_0^* = 0$ & $\theta_1^* = 0.2$ & $\eta_1^* = 0.2$ & $\theta_2^* = 0.2$ & $\mu_0^* = -19.5$ 
\rule[0mm]{0cm}{4.5mm} \\ \hline
$(200, 0.005)$ & $0.498$ & $0.105$ & $0.208$ & $0.208$ & $0.208$ & $-19.488$
 \\
& ($0.0017$) & ($0.715$) & ($0.017$) & ($0.017$) & ($0.017$) & ($0.069$)
 \\   \hline
$(100, 0.01)$ & $0.492$ & $0.410$ & $0.223$ & $0.223$ & $0.223$ & $-19.492$
 \\
& ($0.0069$) & ($0.711$) & ($0.016$) & ($0.016$) & ($0.017$) & ($0.069$)
 \\   \hline
$(50, 0.02)$ & $0.486$ & $0.656$ & $0.236$ & $0.236$ & $0.235$ & $-19.496$
 \\
& ($0.0046$) & ($0.669$) & ($0.012$) & ($0.012$) & ($0.012$) & ($0.069$)
 \\   \hline
$(30, 0.033)$ & $0.483$ & $1.076$ & $0.257$ & $0.257$ & $0.256$ & $-19.499$
 \\
& ($0.0041$) & ($0.660$) & ($0.011$) & ($0.011$) & ($0.011$) & ($0.069$)
 \\   \hline
\end{tabular}
\end{center}
\end{table}

\begin{table}[h]
\caption{Means and standard deviations of the comparative estimators with $\epsilon = 0.5$.}
\label{tab4}
\begin{center}
\begin{tabular}{ccccccc} \hline
&$\widehat \alpha^{\mathrm{p}}$ & $\widehat \theta_0^{\mathrm{p}}$ & $\widehat \theta_1^{\mathrm{p}}$ & $\widehat \eta_1^{\mathrm{p}}$ & $\widehat \theta_2^{\mathrm{p}}$ & $\widehat \mu_0^{\mathrm{p}}$
\rule[0mm]{0cm}{4.5mm} \\ \cline{2-7}
$(m_0,b_0)$ 
& $\alpha^* = 0.5$ & $\theta_0^* = 0$ & $\theta_1^* = 0.2$ & $\eta_1^* = 0.2$ & $\theta_2^* = 0.2$ & $\mu_0^* = -19.5$ 
\rule[0mm]{0cm}{4.5mm} \\ \hline
$(200, 0.005)$& $0.496$ & $0.239$ & $0.214$ & $0.214$ & $0.213$ & $-19.485$
 \\
& ($0.006$) & ($0.703$) & ($0.015$) & ($0.015$) & ($0.015$) & ($0.077$)
 \\   \hline
$(100, 0.01)$ & $0.482$ & $1.027$ & $0.253$ & $0.253$ & $0.252$ & $-19.495$
 \\
& ($0.006$) & ($0.721$) & ($0.016$) & ($0.016$) & ($0.018$) & ($0.076$)
 \\   \hline
$(50, 0.02)$ & $0.466$ & $2.101$ & $0.304$ & $0.304$ & $0.305$ & $-19.503$
 \\
& ($0.006$) & ($0.772$) & ($0.018$) & ($0.018$) & ($0.021$) & ($0.077$)
 \\   \hline
$(30, 0.033)$ & $0.446$ & $4.437$ & $0.394$ & $0.394$ & $0.422$ & $-19.485$
 \\
& ($0.008$) & ($1.038$) & ($0.036$) & ($0.036$) & ($0.042$) & ($0.091$)
 \\   \hline
\end{tabular}
\end{center}
\end{table}

\section{Proofs}\label{sec5}

We set the following notation.
\begin{enumerate}
\item
For families $\{ a_\lambda \}, \{ b_\lambda \} \subset \mathbb R$, 
we write $a_\lambda \lesssim b_\lambda$ 
if $|a_\lambda| \le C |b_\lambda|$ for some universal constant $C > 0$ 
and any $\lambda$, and we write $a_\lambda \sim b_\lambda$ 
if $a_\lambda \lesssim b_\lambda$ and $b_\lambda \lesssim a_\lambda$.

\item
For two functions $f, g: \mathbb R^d \to \mathbb R$, we write
$f(x) \lesssim g(x)$ ($x \to a$) if $f(x) \lesssim g(x)$ in a neighborhood of $x = a$.

\item
For $x = (x_1, \ldots, x_d) \in \mathbb R^d$ and 
$f:\mathbb R^d \to \mathbb R$, 
we write $\pd_{x_i} f(x) = \frac{\pd}{\pd x_i}f(x)$,
$\pd_x f(x) = (\pd_{x_1}f(x), \ldots, \pd_{x_d}(x))$
and $\pd_x^2 f(x) = (\pd_{x_j} \pd_{x_i} f(x))_{i,j = 1}^d$.

\item
For a subset $S$ of $\mathbb R$, 
we define $C^{1,0}(\Xi_{\alpha} \times S)$ as the set of all functions 
on $\Xi_{\alpha} \times S$ that are differentiable with respect to $\alpha \in \Xi_{\alpha}$
for all $s \in S$ and whose partial derivatives are continuous on $\Xi_{\alpha} \times S$.

\item
Let $\ind_A$ be the indicator function of $A$.

\item
Let $\GG_{i,\ell} = \GG_{i,\ell}^n = \sigma[\{ w_{\ell}(s) \}_{s \le t_i^n}]$
be the $\sigma$-algebra generated by $\{w_{\ell}(s)\}_{s \le t_i^n}$ with $\ell = (\mf l_1, \mf l_2) \in \mathbb N^2$ obtained from [A2].
\end{enumerate}

For simplicity, we set $R_1 = R_{\alpha^*,\alpha_0}^{\mathrm{damp}}$
and $R_2 = R_{\alpha^*,\alpha_0}^{\mathrm{coef}}$.

\subsection{Proof of Theorem \ref{th1}}\label{sec5-1}

Proposition 4.1 in \cite{TKU2026} established
the following results on the covariance of $T_{i,j,k} X$ given in \eqref{Old_TX}.
Under [A1]$_{\alpha_0}$ with $\alpha_0 \in (0,3)$, 
it follows that for $\alpha \in (0,3)$ and $\delta = r \sqrt{\Delta}$,
\begin{align}
\cov \bigl[T_{i,j,k}X, T_{i',j',k'}X \bigr]
&= \OO \Biggl(
\frac{\epsilon^2 \Delta^{\alpha \tand 2}}{|i-i'| +1}
\biggl( \Delta + \frac{1}{(|j-j'|+1)(|k-k'|+1)} \biggr)
\Biggr)
\nonumber
\\
&\quad+ \OO \Biggl(
\frac{\epsilon^2 \Delta^{1/2+(\alpha \tand 2)}}{|i-i'| +1}
\biggl( \frac{\ind_{\{ j \neq j' \}}}{|j-j'|+1} 
+ \frac{\ind_{\{ k \neq k' \}}}{|k-k'|+1} \biggr) \Biggr)
\label{cov-1}
\end{align}
and 
\begin{equation}\label{cov-2}
\small
\left\{
\begin{split}
\sum_{k,k'=1}^{m_2} \sum_{j,j'=1}^{m_1} \sum_{i,i'=1}^{N} 
\cov \bigl[T_{i,j,k}X, T_{i',j',k'}X \bigr]^2
&= \OO \bigl( \epsilon^4 m N (\Delta^{\alpha \tand 2})^2 \bigr),
\\
\sum_{k,k'=1}^{m_2} \sum_{j,j'=1}^{m_1} \sum_{i,i'=1}^{N} 
\cov \bigl[(T_{i,j,k}X)^2, (T_{i',j',k'}X)^2 \bigr]
&= \OO \bigl( \epsilon^4 m N (\Delta^{\alpha \tand 2})^2 
(\epsilon^{-2} \Delta^{\alpha_0 -(\alpha_0 \tand 2)} \lor 1)
\bigr).
\end{split}
\right.
\end{equation}

We obtain the following results on the covariance of $\widehat T_{i,j,k}X$
given in \eqref{New_TX}.
\begin{lem}\label{lem1}
Let $\alpha \in (0,3)$.
Assume that [A1]$_{\alpha_0}$ with $\alpha_0 \in (0,3)$ holds.
Then, it holds that for $\delta = r \sqrt{\Delta}$,  
\begin{align*}
\cov \bigl[\widehat T_{i,j,k}X, \widehat T_{i',j',k'}X \bigr]
&= \OO \Biggl(
\frac{\epsilon^2 \Delta^{\alpha \tand 2}}{|i-i'| +1}
\biggl( \Delta + \frac{1}{(|j-j'|+1)(|k-k'|+1)} \biggr)
\Biggr)
\\
&\quad+ \OO \Biggl(
\frac{\epsilon^2 \Delta^{1/2+(\alpha \tand 2)}}{|i-i'| +1}
\biggl( \frac{1}{|j-j'|+1} + \frac{1}{|k-k'|+1} \biggr) \Biggr).
\end{align*}
In particular, we have
\begin{equation*}
\sum_{k,k'=1}^{m_2-1} \sum_{j,j'=1}^{m_1-1} \sum_{i,i'=1}^{N-3} 
\cov \bigl[\widehat T_{i,j,k}X, \widehat T_{i',j',k'}X \bigr]^2
= \OO \bigl( \epsilon^4 m N (\Delta^{\alpha \tand 2})^2 \bigr),
\end{equation*}
\begin{equation*}
\sum_{k,k'=1}^{m_2-1} \sum_{j,j'=1}^{m_1-1} \sum_{i,i'=1}^{N-3} 
\cov \bigl[(\widehat T_{i,j,k}X)^2, (\widehat T_{i',j',k'}X)^2 \bigr]
= \OO \bigl( \epsilon^4 m N (\Delta^{\alpha \tand 2})^2 
(\epsilon^{-2} \Delta^{\alpha_0 -(\alpha_0 \tand 2)} \lor 1)
\bigr).
\end{equation*}
\end{lem}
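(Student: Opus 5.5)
The plan is to reduce everything to the covariance bound \eqref{cov-1} of Proposition 4.1 in \cite{TKU2026}, using the definition \eqref{New_TX}. By bilinearity,
\begin{equation*}
\cov \bigl[\widehat T_{i,j,k}X, \widehat T_{i',j',k'}X \bigr]
= \sum_{k_1,k_2=0}^{1} \sum_{j_1,j_2=0}^{1} \sum_{i_1,i_2=0}^{3}
\cov \bigl[T_{i+i_1,j+j_1,k+k_1}X, T_{i'+i_2,j'+j_2,k'+k_2}X \bigr],
\end{equation*}
which is a sum of $2^4\cdot 4^2 = 256$ terms. Each term is bounded by \eqref{cov-1} with index differences $|i-i'+i_1-i_2|$, $|j-j'+j_1-j_2|$ and $|k-k'+k_1-k_2|$. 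These are perturbed by at most $3$, $1$ and $1$ respectively. Since $\frac{1}{|a+s|+1} \le \frac{C_s}{|a|+1}$ for any fixed shift $s$, with $C_s = |s|+1$, each factor $\frac{1}{|i-i'+i_1-i_2|+1}$ etc. is bounded by a constant times the unshifted one.

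The only subtlety is the indicator terms $\ind_{\{ j+j_1 \neq j'+j_2 \}}$. After shifting, these can be nonzero even when $j = j'$. That is exactly why the statement drops the indicators and keeps $\frac{1}{|j-j'|+1} + \frac{1}{|k-k'|+1}$ unconditionally: bounding each indicator by $1$ gives the first claimed bound directly.

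For the summed bounds I would not just square the pointwise bound, since the indicator-free term $\Delta^{1/2}/(|j-j'|+1)$ summed over $k,k'$ might look dangerous. The cleaner route is to expand $\cov[\widehat T,\widehat T']^2 \le 256 \sum \cov[T_{\cdot},T_{\cdot}]^2$ by Cauchy--Schwarz, and then sum over $(i,j,k,i',j',k')$. Each shifted configuration of $T$-indices is hit at most a bounded number of times, and the index ranges are contained in $\{1,\ldots,N\}\times\{1,\ldots,m_1\}\times\{1,\ldots,m_2\}$. Hence the total is at most a constant times the full sum in the first line of \eqref{cov-2}, which is $\OO(\epsilon^4 mN(\Delta^{\alpha\tand2})^2)$.

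For the squares, write $\widehat T_{i,j,k}X = \sum_{a} T_a X$ with $16$ summands, so that
\begin{equation*}
(\widehat T_{i,j,k}X)^2 = \sum_{a,b} T_a X\, T_b X.
\end{equation*}
Then $\cov[(\widehat T)^2,(\widehat T')^2] = \sum_{a,b,c,d} \cov[T_aT_b, T_cT_d]$. For the Gaussian part, Isserlis' formula (with means $\mu_a = \EE T_a X$ coming from $X_0$) gives
\begin{equation*}
\cov[T_aT_b,T_cT_d] = C_{ac}C_{bd}+C_{ad}C_{bc} + \mu_a\mu_c C_{bd}+\mu_a\mu_d C_{bc}+\mu_b\mu_c C_{ad}+\mu_b\mu_d C_{ac}.
\end{equation*}
I would then reproduce the argument behind the second line of \eqref{cov-2}. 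Products of covariances are handled by $|C_{ac}C_{bd}| \le \frac12(C_{ac}^2+C_{bd}^2)$, and since $a,b$ are within bounded distance of each other, this reduces to the first line of \eqref{cov-2}. The mean terms are treated as in \cite{TKU2026}, using $\sum_i \mu_{a}^2$-type bounds from [A1]$_{\alpha_0}$. These contribute the factor $\epsilon^{-2}\Delta^{\alpha_0-(\alpha_0\tand2)}$, since the mean contributions enter multiplied by a single covariance.

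I expect the main obstacle to be this last step. The cited result is stated only as a summed bound for the squares, not termwise, so one either needs the intermediate bounds on $\mu_a$ from the proof of Proposition 4.1 in \cite{TKU2026}, or must argue via the decomposition above that cross terms $\mu_a\mu_c C_{bd}$ are controlled by $(\sum\mu^2)$ times the covariance sums. I would try the latter first, bounding $|\mu_a\mu_cC_{bd}| \le \frac12(\mu_a^2+\mu_c^2)|C_{bd}|$ and using that $\sup$ over one index of $\sum$ over the other of $|C|$ is $\OO(\epsilon^2\Delta^{\alpha\tand2}\log)$-free by \eqref{cov-1}.
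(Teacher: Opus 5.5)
\textbf{Verdict.} Your proposal is correct. The pointwise bound follows the paper; both summed bounds take a different route, and one asserted exponent should be corrected.

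\textbf{Pointwise bound.} This is the paper's argument: expand by bilinearity into $256$ terms, apply \eqref{cov-1} to each, absorb the bounded shifts with \eqref{ineq-tri}, and bound the indicators by $1$.

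\textbf{Sum of squared covariances.} The paper simply squares the pointwise bound and sums. Your worry about the indicator-free term is unfounded: after dividing by $\epsilon^4 mN(\Delta^{\alpha\tand 2})^2$ it contributes $\OO(m_2\Delta)=\OO(\Delta^{1/2})$, and the $\Delta$-term contributes $\OO(m\Delta^2)$. Your own reduction is also correct, and is the quicker black-box argument: Cauchy--Schwarz over the $256$ terms, plus injectivity of each fixed shift into $\{1,\ldots,N\}\times\{1,\ldots,m_1\}\times\{1,\ldots,m_2\}$. It inherits the first line of \eqref{cov-2} without recomputation.

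\textbf{Fourth-order bound.} The $16\times 16$ expansion is unnecessary, and the obstacle you anticipate does not arise. Since $\widehat T_{i,j,k}X$ is itself Gaussian with mean $\widehat\mu_{i,j,k}=\EE[\widehat T_{i,j,k}X]$, the paper applies Isserlis directly:
\begin{equation*}
\cov\bigl[(\widehat T_{i,j,k}X)^2,(\widehat T_{i',j',k'}X)^2\bigr]
=2\cov\bigl[\widehat T_{i,j,k}X,\widehat T_{i',j',k'}X\bigr]^2
+4\widehat\mu_{i,j,k}\widehat\mu_{i',j',k'}\cov\bigl[\widehat T_{i,j,k}X,\widehat T_{i',j',k'}X\bigr].
\end{equation*}
\begin{itemize}
\item The first part is the bound you just proved.
\item The second part is controlled by one Schwarz inequality, giving $\bigl(\sum\cov[\widehat T,\widehat T']^2\bigr)^{1/2}\sum_{i,j,k}\widehat\mu_{i,j,k}^2$.
\item This uses the external input you had not pinned down: $\sum_i\widehat\mu_{i,j,k}^2=\OO(\Delta^{\alpha_0})$ uniformly in $j,k$, from the proof of Proposition 2.1 in \cite{TKU2026}, together with $m=\OO(N)$.
\end{itemize}
Your alternative bounds $\sum\mu^2$ times the maximal row sum of $|C|$. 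This is a Schur-test bound on the quadratic form, in place of the paper's Hilbert--Schmidt bound, and it works and is sharper. However, the row sums obtained from \eqref{cov-1} are $\OO(\epsilon^2\Delta^{\alpha\tand 2}\log(N)\log(m)^2)$, not log-free. This is harmless, and you can read them off your pointwise bound on $\cov[\widehat T,\widehat T']$ without returning to the $T$-level.

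\textbf{The exponent.} Do not assert that the mean terms produce the factor $\epsilon^{-2}\Delta^{\alpha_0-(\alpha_0\tand 2)}$.
\begin{itemize}
\item Your computation gives a mean contribution $\OO(\epsilon^2 m\Delta^{\alpha_0+(\alpha\tand 2)}\log(N)\log(m)^2)$.
\item The paper's gives $\OO(\epsilon^2\sqrt{mN}\,m\,\Delta^{\alpha_0+(\alpha\tand 2)})$.
\item Both are $\OO\bigl(\epsilon^4 mN(\Delta^{\alpha\tand 2})^2\cdot\epsilon^{-2}\Delta^{\alpha_0-(\alpha\tand 2)}\bigr)$. This is exactly what the paper's proof concludes, and it is the form used in the proof of (E1).
\end{itemize}
That conclusion is strictly weaker than the displayed $(\alpha_0\tand 2)$ version whenever $\alpha_0<\min(\alpha,2)$. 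So your sentence claiming the $(\alpha_0\tand 2)$ factor is unsupported; state and prove the $(\alpha\tand 2)$ form instead.
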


\begin{proof}
For $a, b \in \mathbb R$, we have
\begin{equation}\label{ineq-tri}
\frac{1}{|a+b| +1} \le \frac{|b|+1}{|a|+1}.
\end{equation}
Indeed, we have by the triangle inequality, 
\begin{align*}
|a| +1 &= |a+b-b| +1 
\\
&\le |a+b| +|b| +1 
\\
&\le (|b|+1)|a+b| +|b|+1
\\
&=
(|b|+1)(|a+b| +1).
\end{align*}
Therefore, we obtain by \eqref{New_TX}, \eqref{cov-1} and \eqref{ineq-tri},
\begin{align*}
&\cov \bigl[\widehat T_{i,j,k}X, \widehat T_{i',j',k'}X \bigr]
\\
&= \sum_{k_1, k_2 =0}^1 \sum_{j_1, j_2 = 0}^1 \sum_{i_1, i_2 = 0}^3 
\cov \bigl[T_{i+i_1,j+j_1,k+k_1}X, T_{i'+i_2,j'+j_2,k'+k_2}X \bigr]
\\
&= \OO \Biggl(
\sum_{k_1, k_2 =0}^1 \sum_{j_1, j_2 = 0}^1 \sum_{i_1, i_2 = 0}^3
\frac{\epsilon^2 \Delta^{\alpha \tand 2}}{|i+i_1-(i'+i_2)| +1}
\\
&\qquad \times
\biggl( \Delta + \frac{1}{(|j+j_1-(j'+j_2)|+1)(|k+k_1-(k'+k_2)|+1)} \biggr)
\Biggr)
\\
&\quad+ \OO \Biggl(
\sum_{k_1, k_2 =0}^1 \sum_{j_1, j_2 = 0}^1 \sum_{i_1, i_2 = 0}^3
\frac{\epsilon^2 \Delta^{1/2+(\alpha \tand 2)}}{|i+i_1-(i'+i_2)| +1}
\\
&\qquad \times
\biggl( 
\frac{\ind_{\{j+j_1 \neq j'+j_2\}}}{|j+j_1-(j'+j_2)|+1}
+ \frac{\ind_{\{k+k_1 \neq k'+k_2\}}}{|k+k_1-(k'+k_2)|+1} 
\biggr) \Biggr)
\\
&= \OO \Biggl(
\sum_{k_1, k_2 =0}^1 \sum_{j_1, j_2 = 0}^1 \sum_{i_1, i_2 = 0}^3
\frac{\epsilon^2 \Delta^{\alpha \tand 2}(|i_1-i_2|+1)}{|i-i'| +1}
\\
&\qquad \times
\biggl( \Delta + \frac{(|j_1-j_2|+1)(|k_1-k_2|+1)}{(|j-j'|+1)(|k-k'|+1)} \biggr)
\Biggr)
\\
&\quad+ \OO \Biggl(
\sum_{k_1, k_2 =0}^1 \sum_{j_1, j_2 = 0}^1 \sum_{i_1, i_2 = 0}^3
\frac{\epsilon^2 \Delta^{1/2+(\alpha \tand 2)}(|i_1-i_2|+1)}{|i-i'| +1}
\\
&\qquad \times
\biggl( 
\frac{\ind_{\{j+j_1 \neq j'+j_2\}}(|j_1-j_2|+1)}{|j-j'|+1}
+ \frac{\ind_{\{k+k_1 \neq k'+k_2\}}(|k_1-k_2|+1)}{|k-k'|+1} 
\biggr) \Biggr)
\\
&= \OO \Biggl(
\frac{\epsilon^2 \Delta^{\alpha \tand 2}}{|i-i'| +1}
\biggl( \Delta + \frac{1}{(|j-j'|+1)(|k-k'|+1)} \biggr)
\Biggr)
\\
&\quad+ \OO \Biggl(
\frac{\epsilon^2 \Delta^{1/2+(\alpha \tand 2)}}{|i-i'| +1}
\biggl( \frac{1}{|j-j'|+1} + \frac{1}{|k-k'|+1} \biggr) \Biggr).
\end{align*}
Moreover, we have
\begin{align*}
&\frac{1}{\epsilon^4 m N (\Delta^{\alpha \tand 2})^2}
\sum_{k,k'=1}^{m_2-1}\sum_{j,j'=1}^{m_1-1}\sum_{i,i' =1}^{N-3} 
\cov \bigl[ \widehat T_{i,j,k}X, \widehat T_{i',j',k'}X \bigr]^2
\\
&=
\OO \Biggl( 
\frac{1}{N}\sum_{i,i'=1}^{N-3} \frac{1}{(|i-i'|+1)^2}
\\
&\qquad \times
\biggl(
m\Delta^2
+\frac{1}{m_1}\sum_{j,j'=1}^{m_1-1} \frac{1}{(|j-j'|+1)^2}
\times \frac{1}{m_2}\sum_{k,k'=1}^{m_2-1} \frac{1}{(|k-k'|+1)^2}
\biggr)
\Biggr)
\\
&\quad
+\OO \Biggl( \frac{1}{N}\sum_{i,i'=1}^{N-3} \frac{1}{(|i-i'|+1)^2}
\biggl(
m_2\Delta
\times \frac{1}{m_1}\sum_{j,j'=1}^{m_1-1} \frac{1}{(|j-j'|+1)^2}
\\
&\qquad
+m_1\Delta
\times \frac{1}{m_2}\sum_{k,k'=1}^{m_2-1} \frac{1}{(|k-k'|+1)^2}
\biggr)
\Biggr)
\\
&=\OO(1)+ \OO(\Delta^{1/2})
\\
&=\OO(1).
\end{align*}
In the same manner as in the proof of (6) of Proposition 4.1 in \cite{TKU2026}, 
we obtain by Isserlis' theorem, 
\begin{align*}
&\cov \bigl[ (\widehat T_{i,j,k}X)^2, (\widehat T_{i',j',k'}X)^2 \bigr] 
\\
&=4\cov \bigl[\widehat T_{i,j,k} X, \widehat T_{i',j',k'} X \bigr] 
\\
&\qquad \times 
\sum_{ l_1, l_1' \in \mathbb N} \widehat A_{i, l_1, l_1'} 
(e_{l_1}^{(1)}(\widetilde y_{j+1})-e_{ l_1}^{(1)}(\widetilde y_{j-1}))
(e_{l_1'}^{(2)}(\widetilde z_{k+1})-e_{ l_1'}^{(2)}(\widetilde z_{k-1}))
\\
&\qquad\times \sum_{ l_2, l_2' \in \mathbb N} \widehat A_{i', l_2, l_2'} 
(e_{l_2}^{(1)}(\widetilde y_{j'+1})-e_{l_2}^{(1)}(\widetilde y_{j'-1}))
(e_{l_2'}^{(2)}(\widetilde z_{k'+1})-e_{l_2'}^{(2)}(\widetilde z_{k'-1}))
\\
&\quad+2 \cov \bigl[\widehat T_{i,j,k} X, \widehat T_{i',j',k'} X \bigr]^2,
\end{align*}
where
\begin{equation*}
\widehat A_{i, l_1, l_2}
= - \lb X_0, e_{l_1,l_2} \rb (1-\ee^{-2\lambda_{l_1,l_2} \Delta})
\ee^{-\lambda_{l_1,l_2} (i-1)\Delta}.
\end{equation*}
It follows from the proof of Proposition 2.1 in \cite{TKU2026} that 
under [A1]$_{\alpha_0}$ with $\alpha_0 \in (0,3)$, 
\begin{equation*}
\widehat {\mathbf A}_{N,j,k} = 
\sum_{i=1}^{N-3} 
\biggl(
\sum_{ l_1, l_2 \in \mathbb N} \widehat A_{i, l_1, l_2} 
(e_{l_1}^{(1)}(\widetilde y_{j+1})-e_{l_1}^{(1)}(\widetilde y_{j-1}))
(e_{l_2}^{(2)}(\widetilde z_{k+1})-e_{l_2}^{(2)}(\widetilde z_{k-1}))
\biggr)^2
=\OO(\Delta^{\alpha_0})
\end{equation*}
uniformly in $j,k$.  
Hence, it holds from the Schwarz inequality and $m = \OO(N)$ that
\begin{align*}
&\biggl| \sum_{k,k'=1}^{m_2-1} \sum_{j,j'=1}^{m_1-1} \sum_{i,i'=1}^{N-3} 
\cov \bigl[ (\widehat T_{i,j,k}X)^2, (\widehat T_{i',j',k'}X)^2 \bigr] \biggr|
\\
&\le 4 \biggl( 
\sum_{k,k'=1}^{m_2-1} \sum_{j,j'=1}^{m_1-1} \sum_{i,i'=1}^{N-3}
\cov \bigl[\widehat T_{i,j,k} X, \widehat T_{i',j',k'} X \bigr]^2 \biggr)^{1/2}
\sum_{k=1}^{m_2-1} \sum_{j=1}^{m_1-1} \widehat {\mathbf A}_{N,j,k}
\\
&\quad+2
\sum_{k,k'=1}^{m_2-1} \sum_{j,j'=1}^{m_1-1} \sum_{i,i'=1}^{N-3}
\cov \bigl[\widehat T_{i,j,k} X, \widehat T_{i',j',k'} X \bigr]^2
\\
&= \OO \bigl( 
\epsilon^2 \sqrt{m N} \Delta^{\alpha \tand 2} \cdot m \Delta^{\alpha_0} \bigr) 
+\OO \bigl( \epsilon^4 m N (\Delta^{\alpha \tand 2})^2 \bigr)
\\
&=\OO \bigl( \epsilon^4 m N (\Delta^{\alpha \tand 2})^2 
(\epsilon^{-2} \Delta^{\alpha_0 -(\alpha \tand 2)} \lor 1 ) \bigr).
\end{align*}
\end{proof}

We define
\begin{equation*}
\mathcal Z_1 = \frac{S_1}{\epsilon^2 m N (4\Delta)^{\alpha^*}},
\quad
\mathcal Z_2 = \frac{S_2}{4 \epsilon^2 m N \Delta^{\alpha^*}}.
\end{equation*}
We then obtain
\begin{align*}
R_1 (\widehat \alpha -\alpha^*)
&= R_1 \biggl( \frac{\log(S_1) -\log(S_2)}{\log(4)} +1 -\alpha^* \biggr)
\\
&=
R_1 \biggl( \frac{\log(4^{\alpha^*} \mathcal Z_1) 
-\log(4 \mathcal Z_2)}{\log(4)} +1 -\alpha^* \biggr)
\\
&=
R_1 \Biggl(
\biggl(
\frac{\log(\mathcal Z_1) -\log(\mathcal Z_2)}{\log(4)}
+\alpha^* -1 \biggr)
+1-\alpha^*
\Biggr)
\\
&= \frac{R_1}{\log(4)}
\log \biggl( \frac{\mathcal Z_1}{\mathcal Z_2} \biggr).
\end{align*}
By the Taylor expansion
\begin{equation}\label{taylor}
\log(x) = (x-1) - \int_0^1 \frac{1-u}{(1+u(x-1))^2} \dd u (x-1)^2,
\end{equation}
we have
\begin{equation*}
\begin{split}
R_1 \log \biggl( \frac{\mathcal Z_1}{\mathcal Z_2} \biggr)
&= R_1 \biggl( \frac{\mathcal Z_1}{\mathcal Z_2} -1 \biggr)
- \frac{1}{R_1}
\int_0^1 \frac{1-u}{(1+u(\mathcal Z_1/\mathcal Z_2-1))^2} \dd u
\biggl( 
R_1 \biggl( \frac{\mathcal Z_1}{\mathcal Z_2} -1 \biggr)
\biggr)^2.
\end{split}
\end{equation*}
If $|x-1| \le 1/2$, then it follows that
$|1+u(x-1)| \ge 1 -u|x-1| \ge 1/2$ for any $u \in [0,1]$ and
\begin{equation*}
\int_0^1 \frac{1-u}{(1+u(x-1))^2} \dd u \le 4.
\end{equation*}
Therefore, for any sequence of random variables $\{ Y_n \}$ such that $Y_n \pto 1$ 
and any positive number $\upsilon$, there thus exists 
$N \in \mathbb N$ such that for any integer $n \ge N$, 
$\PP(|Y_n -1| > 1/2) \le \upsilon$. 
By choosing $K > 4$, we have
\begin{align*}
&\PP \biggl( \biggl| \int_0^1 \frac{1-u}{(1+u(Y_n-1))^2} \dd u \biggr| \ge K \biggr)
\\
&\le \PP \biggl( 
\biggl\{ \biggl| \int_0^1 \frac{1-u}{(1+u(Y_n-1))^2} \dd u \biggr| \ge K \biggr\}
\cap \Bigl\{ |Y_n -1| < \frac{1}{2} \Bigr\}
\biggr)
+ \PP \Bigl( |Y_n -1| \ge \frac{1}{2} \Bigr)
\\
&\le \PP ( 4 \ge K ) + \PP \Bigl( |Y_n -1| \ge \frac{1}{2} \Bigr)
\\
&\le \upsilon
\end{align*}
for $n \ge N$, and thus 
$\displaystyle \int_0^1 \frac{1-u}{(1+u(Y_n-1))^2} \dd u = \Op(1)$. 
We hence show
\begin{equation*}
R_1 \biggl( \frac{\mathcal Z_1}{\mathcal Z_2} -1 \biggr) = \Op(1)
\end{equation*}
in order to prove $R_1 \log(\mathcal Z_1/\mathcal Z_2) = \Op(1)$.

Remind the definition of $g_{r,\alpha}^{(\mathrm{m})}(\vartheta)$ given in \eqref{g_r}.
Since it holds that for $f \in C([b,1-b])$, 
\begin{align}
\sum_{j=1}^{m_1-1}
\int_{\widetilde y_{j-1}}^{\widetilde y_{j+1}} f(y) \dd y
&= \sum_{j=1}^{m_1-1} \biggl(
\int_{\widetilde y_{j-1}}^{\widetilde y_j} f(y) \dd y
+\int_{\widetilde y_j}^{\widetilde y_{j+1}} f(y) \dd y
\biggr)
\nonumber
\\
&= \int_{\widetilde y_{0}}^{\widetilde y_{m_1-1}} f(y) \dd y
+\int_{\widetilde y_1}^{\widetilde y_{m_1}} f(y) \dd y
\nonumber
\\
&= \int_{\widetilde y_{0}}^{\widetilde y_{m_1}} f(y) \dd y
+\int_{\widetilde y_1}^{\widetilde y_{m_1-1}} f(y) \dd y
\nonumber
\\
&= \int_{b}^{1-b} f(y) \dd y
+\int_{\widetilde y_1}^{\widetilde y_{m_1-1}} f(y) \dd y,
\label{eq-ff}
\end{align}
we obtain
\begin{align*}
g_{r,\alpha}^{(\mathrm{m})}(\vartheta) &= 
\frac{\phi_{r,\alpha}(\theta_2)}{4(1-2b)^2}
\biggl(
\int_{b}^{1-b} \ee^{-\kappa y} \dd y 
+\int_{\widetilde y_1}^{\widetilde y_{m_1-1}} \ee^{-\kappa y} \dd y
\biggr)
\nonumber
\\
&\quad\times
\biggl(
\int_{b}^{1-b} \ee^{-\eta z} \dd z 
+\int_{\widetilde z_1}^{\widetilde z_{m_2-1}} \ee^{-\eta z} \dd z
\biggr).
\end{align*}
Since we have
\begin{equation}\label{eq-z1/z2}
\biggl| \frac{\mathcal Z_1}{\mathcal Z_2}-1 \biggr|
= \biggl| \frac{\mathcal Z_1 -g_{r,\alpha^*}^{(\mathrm{m})}(\vartheta) 
-(\mathcal Z_2 -g_{r,\alpha^*}^{(\mathrm{m})}(\vartheta))}
{\mathcal Z_2 -g_{r,\alpha^*}^{(\mathrm{m})}(\vartheta) +g_{r,\alpha^*}^{(\mathrm{m})}(\vartheta)} \biggr|
\le 
\frac{|\mathcal Z_1- g_{r,\alpha^*}^{(\mathrm{m})}(\vartheta)| 
+|\mathcal Z_2 -g_{r,\alpha^*}^{(\mathrm{m})}(\vartheta)|}
{|g_{r,\alpha^*}^{(\mathrm{m})}(\vartheta) -|\mathcal Z_2 -g_{r,\alpha^*}^{(\mathrm{m})}(\vartheta)||},
\end{equation}
we obtain 
\begin{equation*}
R_1 \biggl| \frac{\mathcal Z_1}{\mathcal Z_2}-1 \biggr|
\le 
\frac{R_1 |\mathcal Z_1- g_{r,\alpha^*}^{(\mathrm{m})}(\vartheta)| 
+R_1|\mathcal Z_2 -g_{r,\alpha^*}^{(\mathrm{m})}(\vartheta)|}
{|g_{r,\alpha^*}^{(\mathrm{m})}(\vartheta) -|\mathcal Z_2 -g_{r,\alpha^*}^{(\mathrm{m})}(\vartheta)||}.
\end{equation*}
It thus suffices to show the following (E1)--(E2).
\begin{description}
\item[(E1)]
$R_1 \bigl( \mathcal Z_1 -g_{r,\alpha^*}^{(\mathrm{m})}(\vartheta) \bigr) = \Op(1)$,

\item[(E2)]
$R_1 \bigl( \mathcal Z_2 -g_{r,\alpha^*}^{(\mathrm{m})}(\vartheta) \bigr) = \Op(1)$.
\end{description}

\begin{proof}[\bf{Proof of (E2)}]
We first show
\begin{equation}\label{eq-Z1}
\EE [ \mathcal Z_2 ] = g_{r,\alpha^*}^{(\mathrm{m})}(\vartheta) +\OO (R_1^{-1}).
\end{equation}
By \eqref{TKU26_2.8} and $\Delta = \OO(\frac{1}{\sqrt{m N}})$, we have
\begin{align}
\frac{1}{\epsilon^2 N \Delta^{\alpha^*}}
\sum_{i=1}^N \EE \bigl[ (T_{i,j,k}X)^2 \bigr]
&= \ee^{-\kappa \overline y_j -\eta \overline z_k} \phi_{r,\alpha^*}(\theta_2)
+ \OO \bigl(\Delta^{1 -\alpha^* +(\alpha^* \tand 2)} 
\lor \epsilon^{-2} \Delta^{1+\alpha_0 -\alpha^*} \bigr)
\nonumber
\\
&= \ee^{-\kappa \overline y_j -\eta \overline z_k} \phi_{r,\alpha^*}(\theta_2)
+ \OO \biggl( \frac{1}
{\sqrt{m N} (N^{(\alpha^* \tand 2) -\alpha^*} \land \epsilon^2 N^{\alpha_0-\alpha^*})}
\biggr)
\nonumber
\\
&= \ee^{-\kappa \overline y_j -\eta \overline z_k} \phi_{r,\alpha^*}(\theta_2)
+ \OO ( R_1^{-1})
\label{eq-TX1}
\end{align}
with the middle points $\overline y_j = (\widetilde y_{j-1} +\widetilde y_j)/2$
and $\overline z_k = (\widetilde z_{k-1} +\widetilde z_k)/2$
for $j= 1, \ldots, m_1$, $k = 1,\ldots, m_2$.
It follows from the Taylor expansion 
and $\widetilde y_j -\widetilde y_{j-1} = (1-2b)/m_1 = \delta$
that for $f \in C^2([b,1-b])$ and 
$\underline{j}, \overline{j} \in \{ 0, 1,\ldots, m_1\}$ such that 
$\underline{j} < \overline{j}$,
\begin{align*}
&\Biggl| 
\frac{1}{m_1} \sum_{j = \underline{j}+1}^{\overline{j}} f(\overline y_j)
- \frac{1}{1-2b} \int_{\widetilde y_{\underline{j}}}^{\widetilde y_{\overline{j}}} f(y) \dd y
\Biggr|
\\
&=
\Biggl| \frac{1}{1-2b} 
\sum_{j = \underline{j}+1}^{\overline{j}}
\int_{\widetilde y_{j-1}}^{\widetilde y_j} 
(f(\overline y_j) -f(y)) \dd y
\Biggr|
\\
&\le 
\Biggl| \frac{1}{1-2b} 
\sum_{j = \underline{j}+1}^{\overline{j}}
\int_{\widetilde y_{j-1}}^{\widetilde y_j} 
f'(\overline y_j)(y -\overline y_j) \dd y
\Biggr|
\\
&\quad +
\Biggl| \frac{1}{1-2b} 
\sum_{j = \underline{j}+1}^{\overline{j}}
\int_{\widetilde y_{j-1}}^{\widetilde y_j} 
\biggl(
\int_0^1 (1-u) f''(\overline y_j +u(y -\overline y_j)) \dd u 
\biggr)
(y -\overline y_j)^2 \dd y
\Biggr|
\\
&=
0 + \OO \Biggl( \sum_{j = \underline{j}+1}^{\overline{j}}
\int_{\widetilde y_{j-1}}^{\widetilde y_j} (y -\overline y_j)^2 \dd y \Biggr)
\\
&= \OO(\delta^2) = \OO(\Delta).
\end{align*}
Since we obtain
\begin{equation*}
\Delta R_1 = \Delta 
\sqrt{m N} (N^{(\alpha^* \tand 2) -\alpha^*} \land \epsilon^2 N^{\alpha_0-\alpha^*})
\le \Delta \sqrt{m N} N^{\alpha^* -\alpha^*} = \OO(1),
\end{equation*}
and $\Delta = \OO(R_1^{-1})$, 
we find that for 
$\underline{j}, \overline{j} \in \{ 0, 1,\ldots, m_1\}$ such that 
$\underline{j} < \overline{j}$
and $\underline{k}, \overline{k} \in \{ 0, 1,\ldots, m_2\}$ such that
$\underline{k} < \overline{k}$, 
\begin{align}
&\frac{1}{\epsilon^2 m N \Delta^{\alpha^*}} 
\sum_{k = \underline{k}+1}^{\overline{k}} 
\sum_{j = \underline{j}+1}^{\overline{j}} \sum_{i=1}^N 
\EE \bigl[ (T_{i,j,k}X)^2 \bigr]
\nonumber
\\
&= \phi_{r,\alpha^*}(\theta_2) \times
\frac{1}{m} 
\sum_{k = \underline{k}+1}^{\overline{k}} 
\sum_{j = \underline{j}+1}^{\overline{j}} 
\ee^{-\kappa \overline y_j -\eta \overline z_k}
+ \OO (R_1^{-1})
\nonumber
\\
&= \frac{\phi_{r,\alpha^*}(\theta_2)}{(1-2b)^2}
\int_{\widetilde z_{\underline{k}}}^{\widetilde z_{\overline{k}}}
\int_{\widetilde y_{\underline{j}}}^{\widetilde y_{\overline{j}}}
\ee^{-\kappa y -\eta z} \dd y \dd z
+ \OO(\Delta) +\OO (R_1^{-1})
\nonumber
\\
&= \frac{\phi_{r,\alpha^*}(\theta_2)}{(1-2b)^2}
\int_{\widetilde z_{\underline{k}}}^{\widetilde z_{\overline{k}}}
\int_{\widetilde y_{\underline{j}}}^{\widetilde y_{\overline{j}}}
\ee^{-\kappa y -\eta z} \dd y \dd z
+ \OO (R_1^{-1}).
\label{eq-pf-2}
\end{align}
Therefore, we obtain
\begin{align*}
\EE [ \mathcal Z_2 ] &= 
\frac{1}{4\epsilon^2 m N \Delta^{\alpha^*}}
\Biggl(
\sum_{k=1}^{m_2} \sum_{j=1}^{m_1} \sum_{i=1}^N \EE \bigl[ (T_{i,j,k}X)^2 \bigr]
+\sum_{k=2}^{m_2-1} \sum_{j=1}^{m_1} \sum_{i=1}^N \EE \bigl[ (T_{i,j,k}X)^2 \bigr]
\\
&\qquad
+\sum_{k=1}^{m_2} \sum_{j=2}^{m_1-1} \sum_{i=1}^N \EE \bigl[ (T_{i,j,k}X)^2 \bigr]
+\sum_{k=2}^{m_2-1} \sum_{j=2}^{m_1-1} \sum_{i=1}^N \EE \bigl[ (T_{i,j,k}X)^2 \bigr]
\Biggr)
\\
&= 
\frac{\phi_{r,\alpha^*}(\theta_2)}{4(1-2b)^2}
\Biggl(
\int_{\widetilde z_{0}}^{\widetilde z_{m_2}}
\int_{\widetilde y_{0}}^{\widetilde y_{m_1}}
\ee^{-\kappa y -\eta z} \dd y \dd z
+\int_{\widetilde z_{1}}^{\widetilde z_{m_2-1}}
\int_{\widetilde y_{0}}^{\widetilde y_{m_1}}
\ee^{-\kappa y -\eta z} \dd y \dd z
\\ 
&\qquad
+\int_{\widetilde z_{0}}^{\widetilde z_{m_2}}
\int_{\widetilde y_{1}}^{\widetilde y_{m_1-1}}
\ee^{-\kappa y -\eta z} \dd y \dd z
+\int_{\widetilde z_{1}}^{\widetilde z_{m_2-1}}
\int_{\widetilde y_{1}}^{\widetilde y_{m_1-1}}
\ee^{-\kappa y -\eta z} \dd y \dd z
\Biggr)
+\OO (R_1^{-1})
\\
&= 
\frac{\phi_{r,\alpha^*}(\theta_2)}{4(1-2b)^2}
\Biggl(
\int_{b}^{1-b} \ee^{-\kappa y} \dd y
\int_{b}^{1-b} \ee^{-\eta z} \dd z
+\int_{b}^{1-b} \ee^{-\kappa y} \dd y
\int_{\widetilde z_{1}}^{\widetilde z_{m_2-1}} \ee^{-\eta z} \dd z
\\ 
&\qquad
+\int_{\widetilde y_{1}}^{\widetilde y_{m_1-1}} \ee^{-\kappa y} \dd y
\int_{b}^{1-b} \ee^{-\eta z} \dd z
+\int_{\widetilde y_{1}}^{\widetilde y_{m_1-1}} \ee^{-\kappa y} \dd y
\int_{\widetilde z_{1}}^{\widetilde z_{m_2-1}} \ee^{-\eta z} \dd z
\Biggr)
+\OO (R_1^{-1})
\\
&= 
\phi_{r,\alpha^*}(\theta_2) \times
\frac{1}{4(1-2b)^2} \biggl(
\int_{b}^{1-b} \ee^{-\kappa y} \dd y
+ \int_{\widetilde y_1}^{\widetilde y_{m_1-1}} \ee^{-\kappa y} \dd y 
\biggr) 
\\
&\quad \times
\biggl(
\int_{b}^{1-b} \ee^{-\eta z} \dd z
+ \int_{\widetilde z_1}^{\widetilde z_{m_2-1}} \ee^{-\eta z} \dd z 
\biggr)
+\OO (R_1^{-1})
\\
&= g_{r,\alpha^*}^{(\mathrm{m})}(\vartheta) +\OO (R_1^{-1}).
\end{align*}
This concludes the proof of \eqref{eq-Z1}.

We next show 
\begin{equation}\label{eq-Z1-1}
R_1 \bigl( \mathcal Z_2 -\EE [\mathcal Z_2] \bigr) = \Op(1).
\end{equation}
We find from \eqref{cov-2} that 
\begin{align*}
\EE \Bigl[ \bigl( S_2 -\EE [S_2] \bigr)^2 \Bigr]
&\lesssim 
\EE \Biggl[ \biggl(
\sum_{k=1}^{m_2} \sum_{j=1}^{m_1}
\sum_{i=1}^N \Bigl( (T_{i,j,k}X)^2 -\EE[(T_{i,j,k}X)^2] \Bigr)
\biggr)^2 \Biggr]
\\
&\quad+\EE \Biggl[ \biggl(
\sum_{k=2}^{m_2-1} \sum_{j=1}^{m_1}
\sum_{i=1}^N \Bigl( (T_{i,j,k}X)^2 -\EE[(T_{i,j,k}X)^2] \Bigr)
\biggr)^2 \Biggr]
\\
&\quad+
\EE \Biggl[ \biggl(
\sum_{k=1}^{m_2} \sum_{j=2}^{m_1-1}
\sum_{i=1}^N \Bigl( (T_{i,j,k}X)^2 -\EE[(T_{i,j,k}X)^2] \Bigr)
\biggr)^2 \Biggr]
\\
&\quad+\EE \Biggl[ \biggl(
\sum_{k=2}^{m_2-1} \sum_{j=2}^{m_1-1}
\sum_{i=1}^N \Bigl( (T_{i,j,k}X)^2 -\EE[(T_{i,j,k}X)^2] \Bigr)
\biggr)^2 \Biggr]
\\
&= 
\sum_{k,k'=1}^{m_2} \sum_{j,j'=1}^{m_1} \sum_{i,i'=1}^{N}
\cov \Bigl[(T_{i,j,k}X)^2, (T_{i',j',k'}X)^2 \Bigr]
\\
&\quad+
\sum_{k,k'=2}^{m_2-1} \sum_{j,j'=1}^{m_1} \sum_{i,i'=1}^{N}
\cov \Bigl[(T_{i,j,k}X)^2, (T_{i',j',k'}X)^2 \Bigr]
\\
&\quad+
\sum_{k,k'=1}^{m_2} \sum_{j,j'=2}^{m_1-1} \sum_{i,i'=1}^{N}
\cov \Bigl[(T_{i,j,k}X)^2, (T_{i',j',k'}X)^2 \Bigr]
\\
&\quad+
\sum_{k,k'=2}^{m_2-1} \sum_{j,j'=2}^{m_1-1} \sum_{i,i'=1}^{N}
\cov \Bigl[(T_{i,j,k}X)^2, (T_{i',j',k'}X)^2 \Bigr]
\\
&= \OO \Bigl(\epsilon^4 m N (\Delta^{\alpha^* \tand 2})^2 
(1 \lor \epsilon^{-2} \Delta^{\alpha_0 -(\alpha^* \tand 2)}) \Bigr)
\end{align*}
and
\begin{align*}
&R_1^2
\EE \Bigl[ \bigl(\mathcal Z_2 -\EE [\mathcal Z_2] \bigr)^2 \Bigr]
\\
&\lesssim \frac{R_1^2}{\epsilon^4 (m N)^2 \Delta^{2\alpha^*}}
\EE \Bigl[ \bigl(S_2 -\EE [S_2] \bigr)^2 \Bigr]
\\
&= \biggl( \frac{\sqrt{m N}}{\Delta^{(\alpha^* \tand 2) -\alpha^*} 
\lor \epsilon^{-2} \Delta^{\alpha_0 -\alpha^*}} \biggr)^2
\times \frac{\epsilon^4 m N (\Delta^{\alpha^* \tand 2})^2 
(1 \lor \epsilon^{-2} \Delta^{\alpha_0 -(\alpha^* \tand 2)})
}{\epsilon^4 (m N)^2 \Delta^{2\alpha^*}}
\\
&= \biggl( \frac{\Delta^{\alpha^*}}{
\Delta^{\alpha^* \tand 2} 
(1 \lor \epsilon^{-2} \Delta^{\alpha_0 -(\alpha^* \tand 2)})} \biggr)^2
\times \frac{(\Delta^{\alpha^* \tand 2})^2 
(1 \lor \epsilon^{-2} \Delta^{\alpha_0 -(\alpha^* \tand 2)})}{\Delta^{2\alpha^*}}
\\
&= \frac{1}{1 \lor \epsilon^{-2} \Delta^{\alpha_0 -(\alpha^* \tand 2)}}
\\
&\le 1.
\end{align*}
This proves the desired result. 

It thus follows from \eqref{eq-Z1} and \eqref{eq-Z1-1} that 
\begin{align*}
R_1 \bigl( \mathcal Z_2 -g_{r,\alpha^*}^{(\mathrm{m})}(\vartheta) \bigr)
&= R_1 \bigl( \mathcal Z_2 -\EE [\mathcal Z_2] \bigr)
+R_1 \bigl( \EE [\mathcal Z_2] -g_{r,\alpha^*}^{(\mathrm{m})}(\vartheta) \bigr)
\\
&= \Op(1) +\OO(1)
\\
&= \Op(1).
\end{align*}
\end{proof}

\begin{proof}[\bf{Proof of (E1)}]
First, we prove 
\begin{equation}\label{eq-Z2}
\EE [ \mathcal Z_1 ] = g_{r,\alpha^*}^{(\mathrm{m})}(\vartheta) + \OO (R_1^{-1}).
\end{equation}
Since $\widehat T_{i,j,k}X$ given in \eqref{New_TX} is the triple increment 
with the temporal resolution $\Delta' = 4 \Delta$ and 
the spatial resolution $\delta' = 2 \delta$,
$\widetilde y_j$ is the middle point of $\widetilde y_{j-1}$ and $\widetilde y_{j+1}$, 
and $r' = \delta'/\sqrt{\Delta'} = \delta/\sqrt{\Delta} = r \in (0,\infty)$,
we find from \eqref{TKU26_2.8} that 
\begin{align*}
\EE \bigl[ (\widehat T_{i,j,k}X)^2 \bigr]
&= \epsilon^2 
\Bigl( (\Delta')^{\alpha^*}
\ee^{-\kappa (\widetilde y_{j-1} +\widetilde y_{j+1})/2 
-\eta (\widetilde z_{k-1} +\widetilde z_{k+1})/2} \phi_{r',\alpha^*}(\theta_2)
\\
&\qquad +R_{i,j,k}(\alpha^*)
+\OO((\Delta')^{1+(\alpha^* \tand 2)})
\Bigr)
+ R_{i,j,k}(\alpha_0)
\\
&= \epsilon^2 
\Bigl( (4 \Delta)^{\alpha^*}
\ee^{-\kappa \widetilde y_j -\eta \widetilde z_k} \phi_{r,\alpha^*}(\theta_2)
+R_{i,j,k}(\alpha^*)
+\OO(\Delta^{1+(\alpha^* \tand 2)})
\Bigr)
+ R_{i,j,k}(\alpha_0)
\end{align*}
where $R_{i,j,k}(\alpha)$ satisfies 
$\sum_{i=1}^{N-3} R_{i,j,k}(\alpha) = \OO((\Delta')^\alpha) = \OO(\Delta^\alpha)$
uniformly in $j,k$ for $\alpha \in (0,3)$. 
We hence have
\begin{align}
&\frac{1}{\epsilon^2 (N-3) (4\Delta)^{\alpha^*}}
\sum_{i=1}^{N-3} \EE \bigl[ (\widehat T_{i,j,k}X)^2 \bigr]
\nonumber
\\
&= \ee^{-\kappa \widetilde y_j -\eta \widetilde z_k} \phi_{r,\alpha^*}(\theta_2)
+ \OO \bigl(\Delta^{1 -\alpha^* +(\alpha^* \tand 2)} 
\lor \epsilon^{-2} \Delta^{1+\alpha_0 -\alpha^*} \bigr)
\nonumber
\\
&= \ee^{-\kappa \widetilde y_j -\eta \widetilde z_k} \phi_{r,\alpha^*}(\theta_2)
+ \OO (R_1^{-1})
\label{m2_hat_TX}
\end{align}
in the same way as \eqref{eq-TX1}.
It follows from the Taylor expansion and \eqref{eq-ff} that 
for $f \in C^2([b,1-b])$ and 
$\underline{j}, \overline{j} \in \{ 0, 1,\ldots, m_1\}$ such that 
$\underline{j} < \overline{j}$,
\begin{align*}
&\Biggl| 
\frac{1}{m_1} \sum_{j = 1}^{m_1-1} f(\widetilde y_j)
- \frac{1}{2(1-2b)} \biggl(
\int_{b}^{1-b} f(y) \dd y
+ \int_{\widetilde y_1}^{\widetilde y_{m_1-1}} f(y) \dd y 
\biggr)
\Biggr|
\\
&=
\Biggl| \frac{1}{2(1-2b)} 
\sum_{j = 1}^{m_1-1}
\int_{\widetilde y_{j-1}}^{\widetilde y_{j+1}} 
(f(\widetilde y_j) -f(y)) \dd y
\Biggr|
\\
&\le 
\Biggl| \frac{1}{2(1-2b)} 
\sum_{j = 1}^{m_1 -1}
\int_{\widetilde y_{j-1}}^{\widetilde y_{j+1}} 
f'(\widetilde y_j)(y -\widetilde y_j) \dd y
\Biggr|
\\
&\qquad +
\Biggl| \frac{1}{2(1-2b)} 
\sum_{j = 1}^{m_1-1} \int_{\widetilde y_{j-1}}^{\widetilde y_{j+1}} 
\biggl(
\int_0^1 (1-u) f''(\widetilde y_j +u(y -\widetilde y_j)) \dd u 
\biggr)
(y -\widetilde y_j)^2 \dd y
\Biggr|
\\
&=
0 + \OO \Biggl( \sum_{j = 1}^{m_1-1}
\int_{\widetilde y_{j-1}}^{\widetilde y_{j+1}} (y -\widetilde y_j)^2 \dd y \Biggr)
\\
&= \OO(\delta^2) = \OO(\Delta),
\end{align*}
which is involved in the reminder of \eqref{m2_hat_TX}. Therefore, we obtain 
\begin{align*}
\EE [ \mathcal Z_1 ] 
&= \frac{1}{\epsilon^2 m N (4\Delta)^{\alpha^*}}
\sum_{k=1}^{m_2-1} \sum_{j=1}^{m_1-1}
\sum_{i=1}^{N-3} \EE \bigl[ (\widehat T_{i,j,k}X)^2 \bigr]
\\
&= \frac{N-3}{N} \times \phi_{r,\alpha^*}(\theta_2) \times
\frac{1}{m} \sum_{k=1}^{m_2-1} \sum_{j=1}^{m_1-1} 
\ee^{-\kappa \widetilde y_j -\eta \widetilde z_k} +\OO(R_1^{-1})
\\
&= \phi_{r,\alpha^*}(\theta_2) \times
\frac{1}{4(1-2b)^2} \biggl(
\int_{b}^{1-b} \ee^{-\kappa y} \dd y
+ \int_{\widetilde y_1}^{\widetilde y_{m_1-1}} \ee^{-\kappa y} \dd y 
\biggr) 
\\
&\quad \times
\biggl(
\int_{b}^{1-b} \ee^{-\eta z} \dd z
+ \int_{\widetilde z_1}^{\widetilde z_{m_2-1}} \ee^{-\eta z} \dd z 
\biggr) 
+ \OO(\Delta) +\OO(R_1^{-1})
\\
&= g_{r,\alpha^*}^{(\mathrm{m})}(\vartheta)+ \OO(R_1^{-1}).
\end{align*}
This completes the proof of \eqref{eq-Z2}.

Next, we show 
\begin{equation}\label{eq-Z2-2}
R_1 \bigl( \mathcal Z_1 -\EE [\mathcal Z_1] \bigr) = \Op(1).
\end{equation}
Since it follows from Lemma \ref{lem1} that 
\begin{align*}
\EE \Bigl[ \bigl( S_1 -\EE [S_1] \bigr)^2 \Bigr]
&= \EE \Biggl[ \biggl(
\sum_{k=1}^{m_2-1} \sum_{j=1}^{m_1-1}
\sum_{i=1}^{N-3} \Bigl( (\widehat T_{i,j,k}X)^2 -\EE[(\widehat T_{i,j,k}X)^2] \Bigr)
\biggr)^2 \Biggr]
\\
&= 
\sum_{k,k'=1}^{m_2-1} 
\sum_{j,j'=1}^{m_1-1}
\sum_{i,i'=1}^{N-3}
\cov \Bigl[(\widehat T_{i,j,k}X)^2, (\widehat T_{i',j',k'}X)^2 \Bigr]
\\
&= \OO \Bigl(\epsilon^4 m N (\Delta^{\alpha^* \tand 2})^2 
(\epsilon^{-2} \Delta^{\alpha_0 -(\alpha^* \tand 2)} \lor 1) \Bigr),
\end{align*}
we obtain
\begin{align*}
&R_1^2
\EE \Bigl[ \bigl(\mathcal Z_1 -\EE [\mathcal Z_1] \bigr)^2 \Bigr]
\nonumber
\\
&= \frac{R_1^2}{\epsilon^4 (m N)^2 (4\Delta)^{2\alpha^*}}
\EE \Bigl[ \bigl(S_1 -\EE [S_1] \bigr)^2 \Bigr]
\nonumber
\\
&\lesssim \biggl( \frac{\sqrt{m N}}{\Delta^{(\alpha^* \tand 2) -\alpha^*} 
\lor \epsilon^{-2} \Delta^{\alpha_0 -\alpha^*}} \biggr)^2
\times \frac{\epsilon^4 m N (\Delta^{\alpha^* \tand 2})^2 
(1 \lor \epsilon^{-2} \Delta^{\alpha_0 -(\alpha^* \tand 2)})
}{\epsilon^4 (m N)^2 \Delta^{2\alpha^*}}
\\
&= \biggl( \frac{\Delta^{\alpha^*}}{
\Delta^{\alpha^* \tand 2} 
(1 \lor \epsilon^{-2} \Delta^{\alpha_0 -(\alpha^* \tand 2)})} \biggr)^2
\times \frac{(\Delta^{\alpha^* \tand 2})^2 
(1 \lor \epsilon^{-2} \Delta^{\alpha_0 -(\alpha^* \tand 2)})}{\Delta^{2\alpha^*}}
\\
&= \frac{1}{1 \lor \epsilon^{-2} \Delta^{\alpha_0 -(\alpha^* \tand 2)}}
\\
&\le 1,
\end{align*}
which yields the desired result.

We therefore find from \eqref{eq-Z2} and \eqref{eq-Z2-2} that
\begin{align*}
R_1 \bigl(\mathcal Z_1 -g_{r,\alpha^*}^{(\mathrm{m})}(\vartheta) \bigr)
&= R_1 \bigl( \mathcal Z_1 -\EE [\mathcal Z_1] \bigr)
+R_1 \bigl( \EE [\mathcal Z_1] 
-g_{r,\alpha^*}^{(\mathrm{m})}(\vartheta) \bigr)
\\
&= \Op(1) +\OO(1)
\\
&= \Op(1).
\end{align*}
\end{proof}

\subsection{Proof of Theorem \ref{th2}}
We define
$f_{r,\alpha}(\mb{y};\vartheta) = \exp(-{\bs \kappa}^\TT {\mb y}) \phi_{r,\alpha}(\theta_2)$
for $\mb y = (y, z)^\TT \in D$ and $\bs \kappa = (\kappa, \eta)^\TT \in \mathbb R^2$,
$h_{j,k}^{(l)}(\vartheta;\alpha) = \pd_\vartheta^l f_{r,\alpha} (\overline {\bf y}_{j,k}; \vartheta)$
for $\overline {\bf y}_{j,k} = (\overline y_j, \overline z_k)^\TT$ and $l \in \{ 0,1,2 \}$ and
\begin{equation}\label{def_mbU}
\mb{U}_{j,k}^{\epsilon}(\vartheta;\alpha)
= \frac{1}{\epsilon^2 N \Delta^{\alpha}}\sum_{i=1}^{N} (T_{i,j,k}X)^2
-f_{r,\alpha}(\overline {\bf y}_{j,k}; \vartheta).
\end{equation}
We also define
\begin{equation}\label{def_XYZ}
\left\{
\begin{split}
\mb{X}_{m,N}^{\epsilon, l}
&= \frac{1}{m} \sup_{\vartheta \in \Xi_{\vartheta}} \sum_{k=1}^{m_2} \sum_{j=1}^{m_1}
\bigl| h_{j,k}^{(l)}(\vartheta;\widehat \alpha) 
-h_{j,k}^{(l)}(\vartheta;\alpha^*) \bigr|^2,
\quad l \in \{ 0, 1, 2 \},
\\
\mb{Y}_{m,N}^{\epsilon}
&= \frac{1}{m} \sup_{\vartheta \in \Xi_{\vartheta}}
\sum_{k=1}^{m_2}\sum_{j=1}^{m_1} 
\mathbf{U}_{j,k}^{\epsilon}(\vartheta;\alpha^*)^2,
\\
\mb{Z}_{m,N}^{\epsilon}
&= \frac{1}{m} \sup_{\vartheta \in \Xi_{\vartheta}}
\sum_{k=1}^{m_2}\sum_{j=1}^{m_1} 
\bigl(
\mathbf{U}_{j,k}^{\epsilon}(\vartheta;\widehat \alpha) 
-\mathbf{U}_{j,k}^{\epsilon}(\vartheta;\alpha^*)
\bigr)^2.
\end{split}
\right.
\end{equation}
We then obtain the following result.
\begin{lem}\label{lem_XYZ}
Assume that [A1]$_{\alpha_0}$ with $\alpha_0 \in (0,3)$ holds. 
It then holds that
\begin{itemize}
\item[(1)]
$\mb{X}_{m,N}^{\epsilon,l} = \Op( (R_{\alpha^*,\alpha_0}^{\mathrm{damp}})^{-2} )$
for any $l \in \{0, 1, 2 \}$,

\item[(2)]
$\mb{Y}_{m,N}^{\epsilon} = \Op(1)$,

\item[(3)]
$\mb{Z}_{m,N}^{\epsilon} 
= \Op \biggl( 
\Bigl(\frac{R_{\alpha^*,\alpha_0}^{\mathrm{damp}}}{\log(N)} \Bigr)^{-2} 
\biggr) 
= \Op( (R_{\alpha^*,\alpha_0}^{\mathrm{coef}})^{-2} )$.
\end{itemize}
\end{lem}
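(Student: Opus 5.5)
The plan is to reduce all three statements to Theorem \ref{th1} and to moment bounds that have already been established, with one smoothness fact about $\phi_{r,\alpha}$ in $(\alpha,\theta_2)$. I take the hypotheses of Theorem \ref{th1} (in particular [B]$_{\alpha^*,\alpha_0}$) to be in force. Then $R_1(\widehat\alpha-\alpha^*)=\Op(1)$, and $\log(N)/R_1\to0$. Since $\alpha^*\in\Xi_\alpha^\circ$, the event $\{\widehat\alpha\in\Xi_\alpha\}$ has probability tending to one. On this event the segment between $\widehat\alpha$ and $\alpha^*$ lies in the convex set $\Xi_\alpha$, so I can work on it and apply the mean value theorem in $\alpha$.

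\textbf{Step (1).} Since $h^{(l)}_{j,k}(\vartheta;\alpha)=\pd_\vartheta^l\{\ee^{-\kappa\overline y_j-\eta\overline z_k}\phi_{r,\alpha}(\theta_2)\}$, the mean value theorem gives
\begin{equation*}
\sup_{\vartheta\in\Xi_\vartheta}\max_{j,k}\bigl|h^{(l)}_{j,k}(\vartheta;\widehat\alpha)-h^{(l)}_{j,k}(\vartheta;\alpha^*)\bigr|\le C|\widehat\alpha-\alpha^*|,
\end{equation*}
with $C=\sup_{\Xi_\alpha\times\Xi_\vartheta\times\overline D}|\pd_\alpha\pd_\vartheta^l f_{r,\alpha}|$. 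It then follows that $\mb X^{\epsilon,l}_{m,N}\le C^2|\widehat\alpha-\alpha^*|^2=\Op(R_1^{-2})$.

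The real work is to show $C<\infty$, i.e.\ that $\phi_{r,\alpha}(\theta_2)$ and its $\theta_2$-derivatives up to order two are $C^1$ in $\alpha$ on compacts of $(0,3)\times(0,\infty)$. I would differentiate under the integral in \eqref{phi}. Differentiating in $\alpha$ produces factors $\log x$ and $\log\theta_2$. Differentiating in $\theta_2$ produces factors $x J_0'(\cdot)$, $x^2J_0''(\cdot)$, and these are bounded by polynomial growth in $x$ times bounded functions.

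For domination I would use two expansions:
\begin{itemize}
\item[(a)] Near $x=0$, the $x^2$ terms of $J_0(\sqrt2 ax)-2J_0(ax)+1$ cancel, since $-\tfrac{2a^2x^2}{4}+2\cdot\tfrac{a^2x^2}{4}=0$. Hence this bracket, and likewise its $\theta_2$-derivatives, is $\OO(x^4)$. Combined with $1-\ee^{-x^2}=\OO(x^2)$, the integrand is $\OO(x^{5-2\alpha}|\log x|)$, which is integrable uniformly for $\alpha\le\sup\Xi_\alpha<3$.
\item[(b)] Near $x=\infty$, the Bessel terms are bounded (or $\OO(x^{1/2})$ after the $x$-weighted derivatives). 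The integrand is therefore $\OO(x^{-1-2\alpha+1/2}\log x)$. This is integrable for $\alpha\ge\inf\Xi_\alpha>0$; if needed I would use $|J_0'(u)|\lesssim u^{-1/2}$ to avoid any loss.
\end{itemize}
I expect this domination argument to be the main (though routine) obstacle.

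\textbf{Step (2).} Write $\mathcal T_{j,k}=\frac{1}{\epsilon^2N\Delta^{\alpha^*}}\sum_{i=1}^N(T_{i,j,k}X)^2$. Since $\sup_{\Xi_\vartheta\times\overline D}f_{r,\alpha^*}$ is finite, it suffices to show $\frac1m\sum_{j,k}\EE[\mathcal T_{j,k}^2]=\OO(1)$ and then apply Markov's inequality. I would split $\EE[\mathcal T_{j,k}^2]=\mathrm{Var}(\mathcal T_{j,k})+\EE[\mathcal T_{j,k}]^2$.
\begin{itemize}
\item The mean is bounded uniformly in $(j,k)$ by \eqref{TKU26_2.8}, since the remainder there is $\OO(R_1^{-1})=\oo(1)$.
\item For the variance, I would use the Isserlis decomposition exactly as in the proof of Lemma \ref{lem1}, restricted to $j=j'$, $k=k'$. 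The term $2\sum_{i,i'}\cov[T_{i,j,k}X,T_{i',j,k}X]^2$ is $\OO(\epsilon^4N(\Delta^{\alpha^*\tand2})^2)$ by \eqref{cov-1}. The initial-value cross term is handled by the Schwarz inequality together with the bound $\OO(\Delta^{\alpha_0})$ on the $\mathbf A$-type sums.
\end{itemize}
After normalisation the variance is $\OO(N^{-1}\Delta^{2((\alpha^*\tand2)-\alpha^*)}(1\lor\epsilon^{-2}\Delta^{\alpha_0-(\alpha^*\tand2)}))$, which is $\OO(1)$ by the definition of $R_1$ and $R_1\to\infty$.

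\textbf{Step (3).} I would use the identity
\begin{equation*}
\mb U^\epsilon_{j,k}(\vartheta;\widehat\alpha)-\mb U^\epsilon_{j,k}(\vartheta;\alpha^*)=\bigl(\Delta^{\alpha^*-\widehat\alpha}-1\bigr)\mathcal T_{j,k}-\bigl(h^{(0)}_{j,k}(\vartheta;\widehat\alpha)-h^{(0)}_{j,k}(\vartheta;\alpha^*)\bigr).
\end{equation*}
The first factor equals $\exp((\widehat\alpha-\alpha^*)\log N)-1$. Because $(\widehat\alpha-\alpha^*)\log N=\Op(\log(N)/R_1)=\op(1)$, a Taylor expansion of the exponential (handled like \eqref{taylor}) gives $|\Delta^{\alpha^*-\widehat\alpha}-1|=\Op(\log(N)/R_1)$. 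Hence
\begin{equation*}
\mb Z^\epsilon_{m,N}\le2|\Delta^{\alpha^*-\widehat\alpha}-1|^2\,\frac1m\sum_{j,k}\mathcal T_{j,k}^2+2\mb X^{\epsilon,0}_{m,N}.
\end{equation*}
By the moment bound from Step (2), $\frac1m\sum_{j,k}\mathcal T_{j,k}^2=\Op(1)$, and by (1), $\mb X^{\epsilon,0}_{m,N}=\Op(R_1^{-2})$. This yields $\mb Z^\epsilon_{m,N}=\Op((\log(N)/R_1)^2)=\Op(R_2^{-2})$, and the $\log N$ loss enters exactly through the factor $\Delta^{\alpha^*-\widehat\alpha}$.
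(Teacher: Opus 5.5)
Your three-step architecture matches the paper's: the mean value theorem in $\alpha$ on an event of probability tending to one, a uniform second-moment bound for $\mathcal T_{j,k}$, and $|\Delta^{\alpha^*-\widehat\alpha}-1|=\Op(\log(N)/R_{\alpha^*,\alpha_0}^{\mathrm{damp}})$. Step (3) is fine once the other two steps are in place, but two supporting estimates fail as written.

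\textbf{Step (1): the domination fails at $x=\infty$ for small $\alpha$.} You differentiate \eqref{phi} in $\theta_2$ through the Bessel arguments. With $u=cx/\sqrt{\theta_2}$ and $c\in\{r,\sqrt{2}\,r\}$, this produces $uJ_0'(u)=-uJ_1(u)$, whose modulus is of exact order $x^{1/2}$. For $l=2$ it also produces $u^2J_0''(u)=-u^2J_0(u)+uJ_1(u)$, of order $x^{3/2}$. Against $x^{-1-2\alpha}|\log x|$ these are integrable at infinity only when $\alpha>1/4$ and $\alpha>3/4$, respectively. Otherwise the modulus itself is not integrable, so no dominating function exists. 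The bound $|J_0'(u)|\lesssim u^{-1/2}$ does not help, since it is exactly what produces the $x^{1/2}$. Yet $\Xi_\alpha$ may reach below $3/4$ (the simulations use $\alpha^*=0.5$).

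The paper's Lemma \ref{lem_phi} avoids this with the substitution $x=\sqrt{\theta_2}\,u$ in \eqref{phi}:
\begin{equation*}
\phi_{r,\alpha}(\theta_2)=\frac{2}{\theta_2\pi}\int_0^\infty\frac{1-\ee^{-\theta_2u^2}}{u^{1+2\alpha}}\bigl(J_0(\sqrt{2}\,ru)-2J_0(ru)+1\bigr)\dd u .
\end{equation*}
Now $\pd_{\theta_2}^l$ falls only on $1-\ee^{-\theta_2u^2}$. The integrands become $u^{2l-1-2\alpha}\ee^{-\theta_2u^2}$, times the bounded bracket and possibly $|\log u|$. These are trivially dominated at infinity for every $\alpha\in(0,3)$, and your analysis near $0$ carries over unchanged.

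\textbf{Step (2): the final inference is not valid.} Put $a=\Delta^{(\alpha^*\tand 2)-\alpha^*}$ and $b=\epsilon^{-2}\Delta^{\alpha_0-\alpha^*}$. Then $R_{\alpha^*,\alpha_0}^{\mathrm{damp}}=\sqrt{mN}/(a\lor b)$, and your normalized variance bound reads $N^{-1}(a^2\lor ab)$. Divergence of $R_{\alpha^*,\alpha_0}^{\mathrm{damp}}$ only yields $N^{-1}(a\lor b)^2=m/(R_{\alpha^*,\alpha_0}^{\mathrm{damp}})^2=\oo(m)$, not $\OO(1)$. Concretely, for $\alpha^*>2$ one has $a=N^{\alpha^*-2}$:
\begin{itemize}
\item $a^2/N=N^{2\alpha^*-5}\to\infty$ once $\alpha^*>5/2$;
\item $ab/N=N^{\alpha^*-2}\cdot\epsilon^{-2}N^{\alpha^*-\alpha_0-1}$, and [B]$_{\alpha^*,\alpha_0}$ controls only the second factor.
\end{itemize}
The cause is that the single-location bound from \eqref{cov-1} is of order $\epsilon^2\Delta^{\alpha^* \tand 2}\gg\epsilon^2\Delta^{\alpha^*}$ in this range. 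The paper's display \eqref{est_TX} records this computation only in its $\alpha^*<2$ form. Step (3) reuses this moment bound, so it inherits the gap.

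The repair is to bypass variances. Since the equation is linear and $X_0$ is deterministic, $T_{i,j,k}X$ is Gaussian, so $\EE[(T_{i,j,k}X)^4]\le 3\,\EE[(T_{i,j,k}X)^2]^2$. Minkowski's inequality then gives
\begin{equation*}
\EE\bigl[\mathcal T_{j,k}^2\bigr]\le\frac{1}{(\epsilon^2N\Delta^{\alpha^*})^2}\biggl(\sum_{i=1}^N\EE\bigl[(T_{i,j,k}X)^4\bigr]^{1/2}\biggr)^2\le 3\,\EE\bigl[\mathcal T_{j,k}\bigr]^2 .
\end{equation*}
The right-hand side is bounded uniformly in $(j,k)$ by \eqref{TKU26_2.8} under [B]$_{\alpha^*,\alpha_0}$, for every $\alpha^*\in(0,3)$.
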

\begin{proof}
Note that the function $\Xi_{\alpha} \times \Xi_{\theta_2} \ni (\alpha, \theta_2) 
\mapsto \pd_{\theta_2}^l \phi_{r,\alpha}(\theta_2)$
belongs to $C^{1,0}(\Xi_{\alpha} \times \Xi_{\theta_2})$ for $r \in (0, \infty)$ and $l \in \{ 0, 1, 2 \}$ (see Lemma \ref{lem_phi} below).
\begin{enumerate}
\item[(1)]
Let $S$ be a compact subset of $\mathbb R$.
Since $\alpha^* \in \Xi_{\alpha}^{\circ}$ and
there exists $\gamma \in (0,\infty)$ such that
$\widehat \alpha_u = \alpha^* +u(\widehat \alpha -\alpha^*) \in \Xi_{\alpha}$ 
for any $u \in [0,1]$ 
on $\Omega_{\gamma} := \{ |\widehat \alpha -\alpha^*| < \gamma \}$,
it follows from the Taylor expansion that for $f \in C^{1,0}(\Xi_{\alpha} \times S)$, 
\begin{align}
\sup_{s \in S} |f(\widehat \alpha, s) -f(\alpha^*, s)| 
&= \sup_{s \in S} \biggl| \int_0^1 \pd_\alpha f(\widehat \alpha_u, s) \dd u \biggr| 
|\widehat \alpha -\alpha^*|
\nonumber
\\
&\le
\sup_{(\alpha,s) \in \Xi_{\alpha} \times S} |\pd_\alpha f(\alpha,s)| 
\times |\widehat \alpha -\alpha^*|
\nonumber
\\
&\lesssim
|\widehat \alpha -\alpha^*|
\label{est_f_alpha}
\end{align}
on $\Omega_{\gamma}$. 
It also follows from Theorem \ref{th1} that $\PP(\Omega_{\gamma}) \to 1$ 
as $m \to \infty$, $N \to \infty$ and 
$\epsilon \searrow 0$.

Since 
\begin{equation*}
f_{r,\widehat \alpha}(\mb{y};\vartheta) -f_{r,\alpha^*}(\mb{y};\vartheta)
= \exp(-{\bs \kappa}^\TT \mb{y}) 
(\phi_{r,\widehat \alpha}(\theta_2) -\phi_{r,\alpha^*}(\theta_2)),
\end{equation*}
the function $(\alpha, \theta_2) \mapsto \pd_{\theta_2}^{p} \phi_{r,\alpha}(\theta_2)$ 
belongs to $C^{1,0}(\Xi_{\alpha} \times \Xi_{\theta_2})$ for $p \in \{ 0,1,2 \}$
and the function
$\overline D \times \Xi_{\bs \kappa} \ni (\mb y, \bs \kappa) 
\mapsto \pd_{\bs \kappa}^{p} \exp(-\bs \kappa^\TT \mb{y})$ is continuous, 
we see from \eqref{est_f_alpha} that 
there exists $\gamma \in (0,\infty)$ such that
\begin{equation*}
\sup_{\theta_2 \in \Xi_{\theta_2}} 
\bigl| \pd_{\theta_2}^{p} \phi_{r,\widehat \alpha}(\theta_2) 
-\pd_{\theta_2}^{p} \phi_{r,\alpha^*}(\theta_2) \bigr|
\lesssim |\widehat \alpha -\alpha^*|
\end{equation*}
on $\Omega_{\gamma}$, and 
\begin{align*}
\mathbf{X}_{m,N}^{\epsilon, l}
&= \frac{1}{m} \sup_{\vartheta \in \Xi_{\vartheta}} \sum_{k=1}^{m_2} \sum_{j=1}^{m_1}
\bigl| h_{j,k}^{(l)}(\vartheta;\widehat \alpha) 
-h_{j,k}^{(l)}(\vartheta;\alpha^*) \bigr|^2
\\
&= \frac{1}{m} \sup_{\vartheta \in \Xi_{\vartheta}} \sum_{k=1}^{m_2} \sum_{j=1}^{m_1}
\bigl| \pd_\vartheta^l f_{r,\widehat \alpha} (\overline {\bf y}_{j,k}; \vartheta)
-\pd_\vartheta^l f_{r,\alpha^*} (\overline {\bf y}_{j,k}; \vartheta) \bigr|^2
\\
&\le
\sup_{(\mb{y}, \vartheta) \in \overline D \times \Xi_{\vartheta}} \bigl| 
\pd_\vartheta^l f_{r,\widehat \alpha}(\mb{y};\vartheta)
-\pd_\vartheta^l f_{r,\alpha^*}(\mb{y};\vartheta) \bigr|^2
\\
&\le
\sup_{(\mb{y}, \vartheta) \in \overline D \times \Xi_{\vartheta}}
\Biggl(
\sum_{p = 0}^{l} {{l}\choose{p}}
|\pd_{\bs \kappa}^{p} \exp(-\bs \kappa^\TT \mb{y})|
\bigl| \pd_{\theta_2}^{l -p} \phi_{r,\widehat \alpha}(\theta_2) 
-\pd_{\theta_2}^{l-p} \phi_{r,\alpha^*}(\theta_2) \bigr|
\Biggr)^2
\\
&\le
(l!)^2
\Biggl( \sum_{p = 0}^{l} \sup_{(\mb{y}, \bs \kappa) 
\in \overline D \times \Xi_{\bs \kappa}} 
|\pd_{\bs \kappa}^{p} \exp(-\bs \kappa^\TT \mb{y})|^2
\Biggr)
\\
&\quad \times \Biggl(
\sum_{p = 0}^{l}
\sup_{\theta_2 \in \Xi_{\theta_2}} 
\bigl| \pd_{\theta_2}^{p} \phi_{r,\widehat \alpha}(\theta_2) 
-\pd_{\theta_2}^{p} \phi_{r,\alpha^*}(\theta_2) \bigr|^2
\Biggr)
\\
&\lesssim |\widehat \alpha -\alpha^*|^2
\end{align*}
on $\Omega_{\gamma}$ for $l \in \{ 0,1,2 \}$.
Since it follows from Theorem \ref{th1} that
for any $\upsilon \in (0, \infty)$, there exists $M \in (0, \infty)$ such that
$\PP(R_1 |\widehat \alpha -\alpha^*| > M) \le \upsilon/2$
and $\PP(\Omega_\gamma^{\mathrm{c}}) \le \upsilon/2$ 
for large $m$, $N$ and small $\epsilon$, we obtain by setting $M' = C M^2$, 
\begin{align*}
\PP \bigl( R_1^2 |\mathbf{X}_{m,N}^{\epsilon, l}| > M' \bigr)
&\le \PP \bigl( \{ R_1^2 |\mathbf{X}_{m,N}^{\epsilon, l}| > C M^2 \} \cap \Omega_\gamma \bigr)
+\PP( \Omega_\gamma^{\mathrm{c}})
\\
&\le \PP \bigl( (R_1 |\widehat \alpha -\alpha^*|)^2 > M^2 \bigr)
+\PP( \Omega_\gamma^{\mathrm{c}})
\\
&\le \upsilon
\end{align*}
for large $m$, $N$ and small $\epsilon$, and get the desired result.

\item[(2)]
We have by \eqref{def_mbU},
\begin{align*}
\mb{Y}_{m,N}^{\epsilon}
&= \frac{1}{m} \sup_{\vartheta \in \Xi_{\vartheta}}
\sum_{k=1}^{m_2}\sum_{j=1}^{m_1} 
\mathbf{U}_{j,k}^{\epsilon}(\vartheta;\alpha^*)^2
\\
&\lesssim
\frac{1}{m}\sum_{k=1}^{m_2}\sum_{j=1}^{m_1}
\biggl(
\frac{1}{\epsilon^2 N \Delta^{\alpha^*}}\sum_{i=1}^{N} (T_{i,j,k}X)^2
\biggr)^2
+ \frac{1}{m}
\sup_{\vartheta \in \Xi_{\vartheta}}
\sum_{k=1}^{m_2}\sum_{j=1}^{m_1} f_{r,\alpha^*}(\overline {\bf y}_{j,k}; \vartheta)^2
\\
&=
\frac{1}{m}\sum_{k=1}^{m_2}\sum_{j=1}^{m_1}
\biggl(
\frac{1}{\epsilon^2 N \Delta^{\alpha^*}}\sum_{i=1}^{N} (T_{i,j,k}X)^2
\biggr)^2
+ \frac{1}{m}
\sup_{\vartheta \in \Xi_{\vartheta}}
\sum_{k=1}^{m_2}\sum_{j=1}^{m_1} h_{j,k}^{(0)}(\vartheta;\alpha^*)^2.
\end{align*}
It follows from \eqref{TKU26_2.8} and \eqref{cov-2} that 
under [B]$_{\alpha^*, \alpha_0}$, 
\begin{align}
&\EE \Biggl[
\biggl( \frac{1}{\epsilon^2 N \Delta^{\alpha^*}}\sum_{i=1}^{N} (T_{i,j,k}X)^2 \biggr)^2
\Biggr]
\nonumber
\\
&\lesssim \EE \Biggl[
\biggl( \frac{1}{\epsilon^2 N \Delta^{\alpha^*}}\sum_{i=1}^{N} \bigl( (T_{i,j,k}X)^2 
-\EE[(T_{i,j,k}X)^2] \bigr)
\biggr)^2
\Biggr]
\nonumber
\\
&\quad+ \biggl( \frac{1}{\epsilon^2 N \Delta^{\alpha^*}}\sum_{i=1}^{N} 
\EE[(T_{i,j,k}X)^2]
\biggr)^2
\nonumber
\\
&= \VV \Biggl[\frac{1}{\epsilon^2 N \Delta^{\alpha^*}}\sum_{i=1}^{N} (T_{i,j,k}X)^2 \Biggr]
+\OO(1)
\nonumber
\\
&= \frac{1}{(\epsilon^2 N \Delta^{\alpha^*})^2} \sum_{i,i'=1}^{N} 
\cov \bigl[(T_{i,j,k}X)^2, (T_{i',j,k}X)^2 \bigr]
+\OO(1)
\nonumber
\\
&= \OO \biggl( \frac{\epsilon^{-2} \Delta^{\alpha_0 -\alpha^*+1/2} \lor 1 }{N}
\biggr) +\OO(1)
\nonumber
\\
&= \OO \biggl( \biggl( 
\frac{\log(N)}{\epsilon^2 N^{1+\alpha_0 -\alpha^*}} 
\cdot \frac{1}{N^{1/2} \log(N)} \biggr) \lor \frac{1}{N} \biggr) 
+ \OO(1)
\nonumber
\\
&=\oo(1) +\OO(1) = \OO(1)
\label{est_TX}
\end{align}
uniformly in $j,k$, which together with 
\begin{equation}\label{unif_conv}
\sup_{\vartheta \in \Xi_{\vartheta}} 
\Biggl| \frac{1}{m}\sum_{k=1}^{m_2}\sum_{j=1}^{m_1} 
h_{j,k}^{(0)}(\vartheta;\alpha^*)^2
- \frac{1}{(1-2b)^2} \int_{[b,1-b]^2} 
f_{r,\alpha^*}(\mb{y};\vartheta)^2 \dd \mb{y} \Biggr|
= \oo(1)
\end{equation}
yields the desired result.

\item[(3)]
We have
\begin{align*}
&\bigl| \mathbf{U}_{j,k}^{\epsilon}(\vartheta;\widehat \alpha) 
-\mathbf{U}_{j,k}^{\epsilon}(\vartheta;\alpha^*) \bigr|
\\
&\le
\Delta^{\alpha^*} 
\biggl| \frac{1}{\Delta^{\widehat \alpha}} -\frac{1}{\Delta^{\alpha^*}} \biggr|
\frac{1}{\epsilon^2 N \Delta^{\alpha^*}}\sum_{i=1}^{N} (T_{i,j,k}X)^2
+\bigl| f_{r,\widehat \alpha}(\overline {\mb y}_{j,k}; \vartheta)
-f_{r,\alpha^*}(\overline {\mb y}_{j,k}; \vartheta) \bigr|
\\
&=
\Delta^{\alpha^*} 
\biggl| \frac{1}{\Delta^{\widehat \alpha}} -\frac{1}{\Delta^{\alpha^*}} \biggr|
\frac{1}{\epsilon^2 N \Delta^{\alpha^*}}\sum_{i=1}^{N} (T_{i,j,k}X)^2
+\bigl| h_{j,k}^{(0)}(\widehat \alpha; \vartheta)
-h_{j,k}^{(0)}(\alpha^*; \vartheta) \bigr|.
\end{align*}
By the Taylor expansion for the exponential function $a^x$ ($a \in (0, \infty)$) at $x_0 \in \mathbb{R}$: 
\begin{equation*}
a^x = a^{x_0} + a^{x_0} \log(a) \int_0^1 a^{u(x -x_0)} \dd u (x-x_0),
\end{equation*}
Theorem \ref{th1} and [B]$_{\alpha^*,\alpha_0}$, we estimate 
\begin{align*}
\Delta^{\alpha^*} 
\biggl| \frac{1}{\Delta^{\widehat \alpha}} -\frac{1}{\Delta^{\alpha^*}} \biggr|
&= \log \biggl( \frac{1}{\Delta} \biggr)
\int_0^1 \biggl( \frac{1}{\Delta} \biggr)^{u(\widehat \alpha -\alpha^*)} \dd u
|\widehat \alpha -\alpha^*|
\\
&= \log(N)
\int_0^1 N^{u(\widehat \alpha -\alpha^*)} \dd u |\widehat \alpha -\alpha^*|
\\
&\le 
\log(N) N^{|\widehat \alpha -\alpha^*|} |\widehat \alpha -\alpha^*|
\\
&= 
\log(N) \exp \bigl(|\widehat \alpha -\alpha^*| \log(N) \bigr)
|\widehat \alpha -\alpha^*|
\\
&= 
\frac{\log(N)}{R_1} 
\exp \biggl(R_1 |\widehat \alpha -\alpha^*| \times \frac{\log (N)}{R_1} \biggr)
R_1 |\widehat \alpha -\alpha^*|
\\
&= 
\frac{1}{R_2} 
\exp \biggl(R_1 |\widehat \alpha -\alpha^*| \times \frac{1}{R_2} \biggr)
R_1 |\widehat \alpha -\alpha^*|
\\
&= \Op(R_2^{-1}).
\end{align*}
Therefore, we find from \eqref{est_TX}, (1) 
and $R_1^{-1} = (R_2 \log(N))^{-1} = \oo(R_2^{-1})$ that 
\begin{align*}
\mb{Z}_{m,N}^{\epsilon}
&=
\frac{1}{m} \sup_{\vartheta \in \Xi_{\vartheta}} \sum_{k=1}^{m_2}\sum_{j=1}^{m_1} 
\bigl(
\mathbf{U}_{j,k}^{\epsilon}(\vartheta;\widehat \alpha) 
-\mathbf{U}_{j,k}^{\epsilon}(\vartheta;\alpha^*)
\bigr)^2
\\
&\lesssim
\Delta^{2\alpha^*}
\biggl( \frac{1}{\Delta^{\widehat \alpha}} -\frac{1}{\Delta^{\alpha^*}} \biggr)^2
\times
\frac{1}{m}\sum_{k=1}^{m_2}\sum_{j=1}^{m_1} 
\biggl( \frac{1}{\epsilon^2 N \Delta^{\alpha^*}}\sum_{i=1}^{N} (T_{i,j,k}X)^2 \biggr)^2
\\
&\quad+
\frac{1}{m} \sup_{\vartheta \in \Xi_{\vartheta}} \sum_{k=1}^{m_2}\sum_{j=1}^{m_1} 
\bigl( h_{j,k}^{(0)}(\widehat \alpha; \vartheta)
-h_{j,k}^{(0)}(\alpha^*; \vartheta) \bigr)^2
\\
&= \Op(R_2^{-2} \cdot 1) +\Op(R_1^{-2})
\\
&= \Op(R_2^{-2}).
\end{align*}
\end{enumerate}
\end{proof}

For $b \in (0,1/2)$ and $u,v \in L^2((b,1-b)^2)$, we define
\begin{equation*}
\langle u,v \rangle_b = 
\frac{1}{(1-2b)^2} \int_{(b,1-b)^2} u(\mb{y}) v(\mb{y})\dd \mb{y},
\quad
\| u \|_b = \sqrt{\langle u,u \rangle_b}.
\end{equation*}
Let $\vartheta = (\kappa, \eta ,\theta_2)$ and 
$\vartheta^* = (\bs{\kappa}^*, \theta_2^*) = (\kappa^*, \eta^* ,\theta_2^*)$. We define
\begin{align*}
U(\vartheta,\vartheta^*;\alpha) 
&= \|f_{r,\alpha}(\cdot:\vartheta) -f_{r,\alpha}(\cdot:\vartheta^*) \|_b^2,
\\
\mathcal U(\vartheta; \alpha) &= 
\Bigl(
\bigl\langle 
\pd_{\vartheta_p} f_{r,\alpha}(\cdot:\vartheta), 
\pd_{\vartheta_q} f_{r,\alpha}(\cdot:\vartheta)
\bigr\rangle_b
\Bigr)_{1\le p,q \le 3}.
\end{align*}

Through the proof of Theorem 2.2 in \cite{TKU2026}, 
we have already obtained the following results.
\begin{prop}\label{propR1}
Let $\alpha^*, \alpha_0 \in (0,3)$ and $\delta/\sqrt{\Delta} \equiv r \in (0,\infty)$.
Under [A1]$_{\alpha_0}$ and [B]$_{\alpha^*, \alpha_0}$, it holds that
\begin{description}
\item[(R1)]
$U(\vartheta,\vartheta^*;\alpha^*) = U(\vartheta^*,\vartheta^*;\alpha^*)$
if and only if $\vartheta = \vartheta^*$,

\item[(R2)]
$\mathcal U(\vartheta^*;\alpha^*)$ is positive definite,

\item[(R3)]
$\displaystyle  \sup_{\vartheta \in \Xi_{\vartheta}} 
\bigl| U_{m,N}^{\epsilon}(\vartheta;\alpha^*) -U(\vartheta,\vartheta^*;\alpha^*) \bigr|
= \op(1)$,

\item[(R4)]
$R_1 \pd_{\vartheta} U_{m,N}(\vartheta^*;\alpha^*) = \Op(1)$,

\item[(R5)]
$\displaystyle 
\sup_{\vartheta; |\vartheta -\vartheta^*| \le \delta_{m,N}^\epsilon}
\bigl| \pd_{\vartheta}^2 U_{m,N}^{\epsilon}(\vartheta; \alpha^*) 
- 2\mathcal U(\vartheta^*;\alpha^*) \bigr| = \op(1)$
for $\delta_{m,N}^\epsilon \searrow 0$ as $m, N \to \infty$ and $\epsilon \searrow 0$.
\end{description}
\end{prop}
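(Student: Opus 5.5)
Since $\alpha^*$ is held fixed in every item, $U_{m,N}^{\epsilon}(\cdot;\alpha^*)$ is exactly the contrast of \cite{TKU2026} with the damping parameter known. So the plan is to take (R1)--(R5) directly from the proof of Theorem 2.2 in \cite{TKU2026}, and only check that its hypotheses hold here: [A1]$_{\alpha_0}$ with $\alpha_0\in(0,3)$, [B]$_{\alpha^*,\alpha_0}$ (which is stronger than the balance condition used there), $\alpha^*\in(0,3)$ and $\delta=r\sqrt{\Delta}$. For the reader, I would still outline how each item follows from \eqref{TKU26_2.8}, \eqref{cov-2} and the Riemann-sum estimates already used in the proof of Theorem \ref{th1}.

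For (R1) and (R2), write $f_{r,\alpha^*}(\mb y;\vartheta)=\exp(-\kappa y-\eta z)\phi_{r,\alpha^*}(\theta_2)$.
\begin{itemize}
\item[(R1)] $U(\vartheta,\vartheta^*;\alpha^*)=0$ forces $f_{r,\alpha^*}(\cdot;\vartheta)=f_{r,\alpha^*}(\cdot;\vartheta^*)$ a.e.\ on $(b,1-b)^2$. Taking logarithms gives an identity between affine functions of $(y,z)$, hence $\kappa=\kappa^*$, $\eta=\eta^*$ and $\phi_{r,\alpha^*}(\theta_2)=\phi_{r,\alpha^*}(\theta_2^*)$. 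This reduces identifiability to injectivity of $\theta_2\mapsto\phi_{r,\alpha^*}(\theta_2)$.
\item[(R2)] The derivatives are $\pd_\kappa f=-yf$, $\pd_\eta f=-zf$ and $\pd_{\theta_2}f=(\pd_{\theta_2}\phi/\phi)f$. Positive definiteness of the Gram matrix $\mathcal U(\vartheta^*;\alpha^*)$ is then equivalent to linear independence of $y$, $z$, $1$ on the square together with $\pd_{\theta_2}\phi_{r,\alpha^*}(\theta_2^*)\neq0$.
\end{itemize}
Both items therefore rest on strict monotonicity of $\phi_{r,\alpha}$ in $\theta_2$ for $\alpha\in(0,3)$. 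I would substitute $x\mapsto\sqrt{\theta_2}x$ in \eqref{phi}, which turns the $\theta_2$-dependence into $\theta_2^{\alpha}$ times a positive integral; this is how \cite{TKU2026} obtains the property.

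For (R3), expand
\begin{equation*}
U_{m,N}^{\epsilon}(\vartheta;\alpha^*)=\frac1m\sum_{j,k}\bigl(\mb U_{j,k}^{\epsilon}(\vartheta^*;\alpha^*)+h^{(0)}_{j,k}(\vartheta^*;\alpha^*)-h^{(0)}_{j,k}(\vartheta;\alpha^*)\bigr)^2 .
\end{equation*}
\begin{itemize}
\item The pure $\mb U(\vartheta^*)$ term is $\op(1)$. Its mean is $\OO(R_1^{-2})$ by \eqref{eq-TX1}, and its variance is $\oo(1)$ by \eqref{est_TX}, which uses \eqref{cov-2} and [B]$_{\alpha^*,\alpha_0}$.
\item The cross term is $\op(1)$ uniformly in $\vartheta$ by Cauchy--Schwarz.
\item The deterministic part converges uniformly to $U(\vartheta,\vartheta^*;\alpha^*)$ by the midpoint-rule estimate, as in \eqref{unif_conv}.
\end{itemize}

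For (R4), compute
\begin{equation*}
\pd_\vartheta U_{m,N}^{\epsilon}(\vartheta^*;\alpha^*)=-\frac2m\sum_{j,k}\mb U_{j,k}^{\epsilon}(\vartheta^*;\alpha^*)\,h^{(1)}_{j,k}(\vartheta^*;\alpha^*).
\end{equation*}
Its expectation is $\OO(R_1^{-1})$ by \eqref{eq-TX1}, because the weights $h^{(1)}_{j,k}$ are bounded. Its variance equals $(\epsilon^2mN\Delta^{\alpha^*})^{-2}$ times a weighted double sum of $\cov[(T_{i,j,k}X)^2,(T_{i',j',k'}X)^2]$ with bounded weights. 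The second bound in \eqref{cov-2} bounds this by $R_1^{-2}$ through exactly the computation made for $\mathcal Z_2$ in the proof of (E2).

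For (R5), $\pd_\vartheta^2U_{m,N}^{\epsilon}$ consists of $\frac2m\sum_{j,k} h^{(1)}_{j,k}(h^{(1)}_{j,k})^{\TT}$, which converges uniformly to $2\mathcal U(\vartheta;\alpha^*)$ and is continuous at $\vartheta^*$, plus $-\frac2m\sum_{j,k}\mb U_{j,k}^{\epsilon}h^{(2)}_{j,k}$. The latter is $\op(1)$ uniformly on shrinking balls, by the (R3) argument and the smoothness of $h^{(2)}$.

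I expect the main obstacle to be the analytic fact behind (R1)--(R2), namely strict monotonicity of $\phi_{r,\alpha}$ in $\theta_2$ (and $\pd_{\theta_2}\phi\neq0$) over the full range $\alpha\in(0,3)$. The Bessel combination $J_0(\sqrt2\,\cdot)-2J_0(\cdot)+1$ is not obviously of one sign, so I would first try the scaling argument above and otherwise invoke the corresponding lemma of \cite{TKU2026} directly. The stochastic parts (R3)--(R5) are routine given \eqref{cov-2}.
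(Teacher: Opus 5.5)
Your top-level plan is the paper's own: it gives no separate proof and imports (R1)--(R5) from the proof of Theorem 2.2 in \cite{TKU2026}. This applies because [B]$_{\alpha^*,\alpha_0}$ is stronger than the balance condition used there. Your sketches of (R3)--(R5) are sound, with one small caveat. The second bound in \eqref{cov-2} controls an \emph{unweighted} signed sum, so for the weights $h^{(1)}_{j,k}$ in (R4) you must rerun its Isserlis/Cauchy--Schwarz argument as in the proof of Lemma \ref{lem1}; that does go through.

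The genuine problem is the one piece of independent mathematics you offer for the crux of (R1)--(R2). The substitution $x=\sqrt{\theta_2}\,u$ does not factor $\theta_2$ out of \eqref{phi}. It produces exactly the representation used in Lemma \ref{lem_phi},
\begin{equation*}
\phi_{r,\alpha}(\theta_2)=\frac{2}{\pi}\int_0^\infty \frac{1-\ee^{-\theta_2 u^2}}{\theta_2}\cdot\frac{J_0(\sqrt{2}ru)-2J_0(ru)+1}{u^{1+2\alpha}}\,\dd u,
\end{equation*}
in which $\theta_2$ survives inside the integrand. Your conclusion would even have the wrong direction: $c\,\theta_2^{\alpha}$ with $c>0$ is increasing, whereas $\phi_{r,\alpha}$ is strictly decreasing in $\theta_2$. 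So the argument you would ``first try'' fails.

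The missing idea is the sign fact you doubted: the Bessel combination \emph{is} of one sign. Use $J_0(x)=\frac{1}{2\pi}\int_0^{2\pi}\cos(x\cos\omega)\,\dd\omega=\frac{1}{2\pi}\int_0^{2\pi}\cos(x\sin\omega)\,\dd\omega$ and $\cos\omega\pm\sin\omega=\sqrt{2}\cos(\omega\mp\pi/4)$. Expanding the product then gives
\begin{equation*}
J_0(\sqrt{2}x)-2J_0(x)+1=\frac{1}{2\pi}\int_0^{2\pi}\bigl(1-\cos(x\cos\omega)\bigr)\bigl(1-\cos(x\sin\omega)\bigr)\,\dd\omega>0,\quad x>0.
\end{equation*}
Strict positivity holds because the integrand vanishes only at finitely many $\omega$. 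Since $s\mapsto(1-\ee^{-s})/s$ is strictly decreasing on $(0,\infty)$, the representation above yields $\phi_{r,\alpha}(\theta_2)>0$ and
\begin{equation*}
\pd_{\theta_2}\phi_{r,\alpha}(\theta_2)=-\frac{2}{\pi\theta_2^{2}}\int_0^\infty\bigl(1-\ee^{-\theta_2u^2}-\theta_2u^2\ee^{-\theta_2u^2}\bigr)\frac{J_0(\sqrt{2}ru)-2J_0(ru)+1}{u^{1+2\alpha}}\,\dd u<0
\end{equation*}
for every $\alpha\in(0,3)$. Differentiation under the integral is justified by \eqref{est_q}--\eqref{est_pd_the_q}. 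This closes both items:
\begin{itemize}
\item Positivity legitimizes the logarithm in your (R1) argument, and strict monotonicity gives injectivity of $\theta_2\mapsto\phi_{r,\alpha^*}(\theta_2)$, hence (R1).
\item $\pd_{\theta_2}\phi_{r,\alpha^*}(\theta_2^*)\neq0$, together with the linear independence of $1,y,z$, gives (R2).
\end{itemize}
With this replacement, or by deferring to \cite{TKU2026} as the paper does, your proposal is complete.
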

Since we have (R1) in Proposition \ref{propR1} and 
$U_{m,N}^{\epsilon} (\widehat \vartheta; \widehat \alpha)
\le U_{m,N}^{\epsilon} (\vartheta; \widehat \alpha)$ 
for any $\vartheta \in \Xi_{\vartheta}$,
we see that for any $\upsilon \in (0,\infty)$, 
there exists $\rho \in (0,\infty)$ such that
\begin{align*}
\PP \bigl( |\widehat \vartheta -\vartheta^*| \ge \upsilon \bigr)
&\le \PP \bigl( U(\widehat \vartheta,\vartheta^*;\alpha^*) 
-U(\vartheta^*,\vartheta^*;\alpha^*) \ge \rho \bigr)
\\
&= \PP \bigl( U(\widehat \vartheta,\vartheta^*;\alpha^*) 
-U_{m,N}^{\epsilon} (\widehat \vartheta; \widehat \alpha)
\\
&\qquad
+U_{m,N}^{\epsilon} (\widehat \vartheta; \widehat \alpha)
-U_{m,N}^{\epsilon} (\vartheta^*; \widehat \alpha)
\\
&\qquad
+U_{m,N}^{\epsilon} (\vartheta^*; \widehat \alpha)
-U(\vartheta^*,\vartheta^*;\alpha^*) \ge \rho \bigr)
\\
&\le \PP \biggl( U(\widehat \vartheta,\vartheta^*;\alpha^*) 
-U_{m,N}^{\epsilon} (\widehat \vartheta; \widehat \alpha)
\ge \frac{\rho}{3} \biggr)
\\
&\quad
+\PP \biggl( 
U_{m,N}^{\epsilon} (\widehat \vartheta; \widehat \alpha)
-U_{m,N}^{\epsilon} (\vartheta^*; \widehat \alpha)
\ge \frac{\rho}{3} \biggr)
\\
&\quad
+\PP \biggl( U_{m,N}^{\epsilon} (\vartheta^*; \widehat \alpha)
-U(\vartheta^*,\vartheta^*;\alpha^*) \ge \frac{\rho}{3} \biggr)
\\
&\le 2 \PP \biggl( \sup_{\vartheta \in \Xi_{\vartheta}} 
\bigl| U_{m,N}^{\epsilon}(\vartheta; \widehat \alpha) -U(\vartheta,\vartheta^*;\alpha^*)
\bigr| \ge \frac{\rho}{3} \biggr)
\\
&\quad
+\PP \biggl( 
U_{m,N}^{\epsilon} (\widehat \vartheta, \widehat \alpha)
-U_{m,N}^{\epsilon} (\vartheta^*, \widehat \alpha)
\ge \frac{\rho}{3} \biggr)
\\
&= 2 \PP \biggl( \sup_{\vartheta \in \Xi_{\vartheta}} 
\bigl| U_{m,N}^{\epsilon}(\vartheta; \widehat \alpha) -U(\vartheta,\vartheta^*;\alpha^*)
\bigr| \ge \frac{\rho}{3} \biggr).
\end{align*}
It thus suffices to show the following (F1) in order to prove the consistency 
of $\widehat \vartheta$.

\begin{description}
\item[(F1)]
$\displaystyle \widehat {\mathbb S}_{1,m,N}^\epsilon
= \sup_{\vartheta \in \Xi_{\vartheta}} 
\bigl| 
U_{m,N}^{\epsilon}(\vartheta; \widehat \alpha) -U(\vartheta,\vartheta^*;\alpha^*)
\bigr|
= \op(1)$.
\end{description}

Furthermore, we have by the Taylor expansion,
\begin{equation*}
-R_2 \pd_{\vartheta} U_{m,N}^{\epsilon}(\vartheta^*;\widehat \alpha)
=\int_0^1 \pd_{\vartheta}^2 
U_{m,N}^{\epsilon}(\widehat \vartheta_u;\widehat \alpha) \dd u 
R_2 (\widehat\vartheta-\vartheta^*),
\end{equation*}
where $\widehat \vartheta_u = \vartheta^* +u(\widehat \vartheta -\vartheta^*)$.
Since we have (R2) in Proposition \ref{propR1}, we show
\begin{description}
\item[(F2)]
$R_2 \pd_{\vartheta} U_{m,N}^{\epsilon}(\vartheta^*;\widehat \alpha) = \Op(1)$,

\item[(F3)]
$\displaystyle \widehat {\mathbb S}_{3,m,N}^\epsilon
= \sup_{u \in [0,1]}
\bigl| \pd^2 U_{m,N}^{\epsilon}(\widehat \vartheta_u;\widehat \alpha) 
- 2\mathcal U(\vartheta^*;\alpha^*) \bigr| = \op(1)$
\end{description}
in order to show Theorem \ref{th2}.

We introduce the following notation to show (F1)--(F3).
\begin{equation*}
\left\{
\begin{split}
\widehat {\mathbb T}_{1,m,N}^{\epsilon}
&= \sup_{\vartheta \in \Xi_{\vartheta}} \bigl| 
U_{m,N}^{\epsilon}(\vartheta;\widehat \alpha)
-U_{m,N}^{\epsilon}(\vartheta;\alpha^*)
\bigr|,
\\
\widehat {\mathbb T}_{2,m,N}^{\epsilon}
&= R_2
\bigl(
\pd_{\vartheta} U_{m,N}^{\epsilon}(\vartheta^*;\widehat \alpha)
-\pd_{\vartheta} U_{m,N}^{\epsilon}(\vartheta^*;\alpha^*)
\bigr),
\\
\widehat {\mathbb T}_{3,m,N}^{\epsilon}
&= \sup_{\vartheta \in \Xi_{\vartheta}} \bigl| 
\pd_{\vartheta}^2 U_{m,N}^{\epsilon}(\vartheta;\widehat \alpha)
-\pd_{\vartheta}^2 U_{m,N}^{\epsilon}(\vartheta;\alpha^*)
\bigr|.
\end{split}
\right.
\end{equation*}
Note that
\begin{equation}\label{dU}
\left\{
\begin{split}
U_{m,N}^{\epsilon}(\vartheta;\alpha)
&= \frac{1}{m} \sum_{k=1}^{m_2} \sum_{j=1}^{m_1} 
\mathbf{U}_{j,k}^{\epsilon}(\vartheta;\alpha)^2,
\\
\pd_{\vartheta} U_{m,N}^{\epsilon}(\vartheta;\alpha)
&= -\frac{2}{m} \sum_{k=1}^{m_2}\sum_{j=1}^{m_1} 
\mathbf{U}_{j,k}^{\epsilon}(\vartheta;\alpha) h_{j,k}^{(1)}(\vartheta;\alpha),
\\
\pd_{\vartheta}^2 U_{m,N}^{\epsilon}(\vartheta;\alpha)
&= \frac{2}{m} \sum_{k=1}^{m_2}\sum_{j=1}^{m_1} 
\bigl(
h_{j,k}^{(1)}(\vartheta;\alpha)
h_{j,k}^{(1)}(\vartheta;\alpha)^\TT
- \mathbf{U}_{j,k}^{\epsilon}(\vartheta;\alpha) h_{j,k}^{(2)}(\vartheta;\alpha)
\bigr).
\end{split}
\right.
\end{equation}

\begin{proof}[\bf{Proof of (F1)}]
Let
\begin{equation*}
\mathbb S_{1,m,N}^{\epsilon} = \sup_{\vartheta \in \Xi_{\vartheta}} 
\bigl| U_{m,N}^{\epsilon}(\vartheta;\alpha^*) -U(\vartheta,\vartheta^*;\alpha^*) \bigr|.
\end{equation*}
We then find from (R3) in Proposition \ref{propR1} 
that $\mathbb S_{1,m,N}^{\epsilon} = \op(1)$.
Since
\begin{align*}
\widehat {\mathbb S}_{1,m,N}^{\epsilon} 
&= \sup_{\vartheta \in \Xi_{\vartheta}} 
\bigl| 
U_{m,N}^{\epsilon}(\vartheta; \widehat \alpha) 
-U(\vartheta,\vartheta^*;\alpha^*)
\bigr|
\\
&\le 
\sup_{\vartheta \in \Xi_{\vartheta}} 
\bigl| 
U_{m,N}^{\epsilon}(\vartheta; \widehat \alpha) 
-U_{m,N}^{\epsilon}(\vartheta; \alpha^*)
\bigr|
+\sup_{\vartheta \in \Xi_{\vartheta}} 
\bigl| 
U_{m,N}^{\epsilon}(\vartheta; \alpha^*)
-U(\vartheta,\vartheta^*;\alpha^*)
\bigr|
\\
&= \widehat {\mathbb T}_{1,m,N}^{\epsilon} +{\mathbb S}_{1,m,N}^{\epsilon}
\\
&= \widehat {\mathbb T}_{1,m,N}^{\epsilon} +\op(1),
\end{align*}
we will show $\widehat {\mathbb T}_{1,m,N}^{\epsilon} = \op(1)$.

Note that for matrices $a, \widehat a \in \mathbb R^{d_1} \otimes \mathbb R^{d_2}$ 
and $b, \widehat b \in \mathbb R^{d_2} \otimes \mathbb R^{d_3}$ ($d_1, d_2, d_3 \in \mathbb N$),
\begin{equation}\label{decomp-1}
\widehat a \widehat b -ab = (\widehat a -a)(\widehat b -b)
+a(\widehat b -b) +(\widehat a -a)b.
\end{equation}
By \eqref{dU}, \eqref{decomp-1} and the Schwarz inequality, we obtain
\begin{align*}
&\bigl| U_{m,N}^{\epsilon}(\vartheta;\widehat \alpha) 
-U_{m,N}^{\epsilon}(\vartheta;\alpha^*) \bigr|
\\
&= \Biggl|
\frac{1}{m} \sum_{k=1}^{m_2} \sum_{j=1}^{m_1} 
\bigl(
\mathbf{U}_{j,k}^{\epsilon}(\vartheta;\widehat \alpha)^2 
-\mathbf{U}_{j,k}^{\epsilon}(\vartheta;\alpha^*)^2
\bigr)
\Biggr|
\\
&\le
\frac{1}{m}\sum_{k=1}^{m_2}\sum_{j=1}^{m_1} 
\bigl(
\mathbf{U}_{j,k}^{\epsilon}(\vartheta;\widehat \alpha) -\mathbf{U}_{j,k}^{\epsilon}(\vartheta;\alpha^*)
\bigr)^2
\\
&\quad+
2 \Biggl|
\frac{1}{m}\sum_{k=1}^{m_2}\sum_{j=1}^{m_1} 
\mathbf{U}_{j,k}^{\epsilon}(\vartheta;\alpha^*)
\bigl(
\mathbf{U}_{j,k}^{\epsilon}(\vartheta;\widehat \alpha) -\mathbf{U}_{j,k}^{\epsilon}(\vartheta;\alpha^*)
\bigr)
\Biggr|
\\
&\le
\frac{1}{m}\sum_{k=1}^{m_2}\sum_{j=1}^{m_1} 
\bigl(
\mathbf{U}_{j,k}^{\epsilon}(\vartheta;\widehat \alpha) -\mathbf{U}_{j,k}^{\epsilon}(\vartheta;\alpha^*)
\bigr)^2
\\
&\quad+
2\Biggl\{\frac{1}{m}\sum_{k=1}^{m_2}\sum_{j=1}^{m_1} 
\mathbf{U}_{j,k}^{\epsilon}(\vartheta;\alpha^*)^2 \Biggr\}^{1/2}
\Biggl\{
\frac{1}{m}\sum_{k=1}^{m_2}\sum_{j=1}^{m_1} 
\bigl(
\mathbf{U}_{j,k}^{\epsilon}(\vartheta;\widehat \alpha) -\mathbf{U}_{j,k}^{\epsilon}(\vartheta;\alpha^*)
\bigr)^2 \Biggr\}^{1/2}
\end{align*}
and see from the definitions of $\mathbf{Y}_{m,N}^{\epsilon}$ and 
$\mathbf{Z}_{m,N}^{\epsilon}$ given in \eqref{def_XYZ}, 
Lemma \ref{lem_XYZ} and [B]$_{\alpha^*,\alpha_0}$ that 
\begin{align*}
\widehat {\mathbb T}_{1,m,N}^{\epsilon}
&\le \mathbf{Z}_{m,N}^{\epsilon} 
+2 \sqrt{\mathbf{Y}_{m,N}^{\epsilon} \mathbf{Z}_{m,N}^{\epsilon}}
\\
&= \Op( R_2^{-2} ) +\Op( 1 \cdot R_2^{-1} )
\\
&= \op(1).
\end{align*}
\end{proof}

\begin{proof}[\bf{Proof of (F2)}]
Since we have $R_2/R_1 = 1/\log(N) = \oo(1)$ and (R4) in Proposition \ref{propR1}, we find 
\begin{align*}
R_2 \pd_{\vartheta} U_{m,N}^{\epsilon}(\vartheta^*;\widehat \alpha)
&= R_2 \bigl(
\pd_{\vartheta} U_{m,N}^{\epsilon}(\vartheta^*;\widehat \alpha)
-\pd_{\vartheta} U_{m,N}^{\epsilon}(\vartheta^*;\alpha^*) \bigr)
+ R_2 \pd_{\vartheta} U_{m,N}(\vartheta^*;\alpha^*)
\\
&= \widehat {\mathbb T}_{2,m,N}^{\epsilon}
+ \frac{R_2}{R_1} \times R_1 \pd_{\vartheta} U_{m,N}(\vartheta^*;\alpha^*)
\\
&= \widehat {\mathbb T}_{2,m,N}^{\epsilon} +\op(1).
\end{align*}
We thus prove $\widehat {\mathbb T}_{2,m,N}^{\epsilon} = \Op(1)$.
We see from \eqref{dU}, \eqref{decomp-1} and the Schwarz inequality that
\begin{align*}
&\frac{1}{2}\bigl| \pd_{\vartheta} U_{m,N}(\vartheta;\widehat \alpha)
-\pd_{\vartheta} U_{m,N}(\vartheta;\alpha^*) \bigr|
\\
&=
\Biggl|
\frac{1}{m} \sum_{k=1}^{m_2} \sum_{j=1}^{m_1}
\bigl(
\mathbf{U}_{j,k}^{\epsilon}(\vartheta;\widehat \alpha)
h_{j,k}^{(1)}(\vartheta;\widehat \alpha)
-\mathbf{U}_{j,k}^{\epsilon}(\vartheta;\alpha^*) h_{j,k}^{(1)}(\vartheta;\alpha^*)
\bigr)
\Biggr|
\\
&\le 
\Biggl|
\frac{1}{m} \sum_{k=1}^{m_2} \sum_{j=1}^{m_1}
\bigl(\mathbf{U}_{j,k}^{\epsilon}(\vartheta;\widehat \alpha)
-\mathbf{U}_{j,k}^{\epsilon}(\vartheta;\alpha^*) \bigr)
\bigl( 
h_{j,k}^{(1)}(\vartheta;\widehat \alpha)- h_{j,k}^{(1)}(\vartheta;\alpha^*) 
\bigr) 
\Biggr|
\\
&\quad +\Biggl|
\frac{1}{m} \sum_{k=1}^{m_2} \sum_{j=1}^{m_1}
\mathbf{U}_{j,k}^{\epsilon}(\vartheta;\alpha^*)
\bigl( 
h_{j,k}^{(1)}(\vartheta;\widehat \alpha)- h_{j,k}^{(1)}(\vartheta;\alpha^*) \bigr) 
\Biggr|
\\
&\quad+ \Biggl|
\frac{1}{m} \sum_{k=1}^{m_2} \sum_{j=1}^{m_1}
\bigl(\mathbf{U}_{j,k}^{\epsilon}(\vartheta;\widehat \alpha)
-\mathbf{U}_{j,k}^{\epsilon}(\vartheta;\alpha^*) \bigr)
h_{j,k}^{(1)}(\vartheta;\alpha^*) 
\Biggr|
\\
&\le 
\Biggl\{
\frac{1}{m} \sum_{k=1}^{m_2} \sum_{j=1}^{m_1}
\bigl( \mathbf{U}_{j,k}^{\epsilon}(\vartheta;\widehat \alpha)
-\mathbf{U}_{j,k}^{\epsilon}(\vartheta;\alpha^*) \bigr)^2
\Biggr\}^{1/2}
\\
&\qquad \times
\Biggl\{
\frac{1}{m} \sum_{k=1}^{m_2} \sum_{j=1}^{m_1}
\bigl| h_{j,k}^{(1)}(\vartheta;\widehat \alpha)
-h_{j,k}^{(1)}(\vartheta;\alpha^*) \bigr|^2
\Biggr\}^{1/2}
\\
&\quad +
\Biggl\{
\frac{1}{m} \sum_{k=1}^{m_2} \sum_{j=1}^{m_1}
\mathbf{U}_{j,k}^{\epsilon}(\vartheta;\alpha^*)^2
\Biggr\}^{1/2}
\Biggl\{
\frac{1}{m} \sum_{k=1}^{m_2} \sum_{j=1}^{m_1}
\bigl| h_{j,k}^{(1)}(\vartheta;\widehat \alpha)
-h_{j,k}^{(1)}(\vartheta;\alpha^*) \bigr|^2
\Biggr\}^{1/2}
\\
&\quad + \Biggl\{
\frac{1}{m} \sum_{k=1}^{m_2} \sum_{j=1}^{m_1}
\bigl( \mathbf{U}_{j,k}^{\epsilon}(\vartheta;\widehat \alpha)
-\mathbf{U}_{j,k}^{\epsilon}(\vartheta;\alpha^*) \bigr)^2
\Biggr\}^{1/2}
\Biggl\{
\frac{1}{m} \sum_{k=1}^{m_2} \sum_{j=1}^{m_1}
\bigl| h_{j,k}^{(1)}(\vartheta;\alpha^*) \bigr|^2
\Biggr\}^{1/2}.
\end{align*}
Since
\begin{equation}\label{def_W}
\mathbf{W}_{m,N}^l = \sup_{\vartheta \in \Xi_{\vartheta}} 
\frac{1}{m} \sum_{k=1}^{m_2} \sum_{j=1}^{m_1}
\bigl| h_{j,k}^{(l)}(\vartheta;\alpha^*) \bigr|^2
\end{equation}
satisfies
\begin{equation}\label{est_W}
\mathbf{W}_{m,N}^l = \OO(1)
\end{equation}
by the uniform convergence as in \eqref{unif_conv}, 
it holds from the definitions of $\mathbf{X}_{m,N}^{\epsilon,l}$, 
$\mathbf{Y}_{m,N}^{\epsilon}$ and $\mathbf{Z}_{m,N}^{\epsilon}$ 
given in \eqref{def_XYZ}, Lemma \ref{lem_XYZ}, 
[B]$_{\alpha^*,\alpha_0}$ and $R_2 = \oo(R_1)$ that
\begin{align*}
\frac{1}{2}\widehat {\mathbb T}_{2,m,N}^{\epsilon}
&\le R_2 \Bigl(
\sqrt{\mathbf{Z}_{m,N}^{\epsilon} \mathbf{X}_{m,N}^{\epsilon,1} }
+\sqrt{\mathbf{Y}_{m,N}^{\epsilon} \mathbf{X}_{m,N}^{\epsilon,1}}
+\sqrt{\mathbf{Z}_{m,N}^{\epsilon} \mathbf{W}_{m,N}^{1}}
\Bigr)
\\
&= \Op( R_2 \cdot R_2^{-1} \cdot R_1^{-1} ) 
+\Op( R_2 \cdot 1 \cdot R_1^{-1} ) +\Op( R_2 \cdot R_2^{-1} \cdot 1 )
\\
&= \Op( R_1^{-1} ) 
+\op( R_1 \cdot R_1^{-1} ) +\Op(1)
\\
&= \Op(1).
\end{align*}
\end{proof}

\begin{proof}[\bf{Proof of (F3)}]
Let
\begin{equation*}
{\mathbb S}_{3,m,N}^\epsilon
= \sup_{u \in [0,1]}
\bigl| \pd_\vartheta^2 U_{m,N}^{\epsilon}(\widehat \vartheta_u; \alpha^*) 
- 2\mathcal U(\vartheta^*;\alpha^*) \bigr|.
\end{equation*}
Since it follows from $\widehat \vartheta \pto \vartheta^*$ 
and (R5) in Proposition \ref{propR1} that $\mathbb S_{3,m,N}^{\epsilon} = \op(1)$,
we have 
\begin{align*}
\widehat {\mathbb S}_{3,m,N}^{\epsilon}
&= \sup_{u \in [0,1]}
\bigl| \pd^2 U_{m,N}^{\epsilon}(\widehat \vartheta_u;\widehat \alpha) 
- 2\mathcal U(\vartheta^*;\alpha^*) \bigr|
\\
&\le \sup_{u \in [0,1]}
\bigl| \pd^2 U_{m,N}^{\epsilon}(\widehat \vartheta_u;\widehat \alpha) 
- \pd^2 U_{m,N}^{\epsilon}(\widehat \vartheta_u;\alpha^*) \bigr|
+\sup_{u \in [0,1]}
\bigl| \pd^2 U_{m,N}^{\epsilon}(\widehat \vartheta_u;\alpha^*) 
- 2\mathcal U(\vartheta^*;\alpha^*) \bigr|
\\
&\le \sup_{\vartheta \in \Xi_{\vartheta}}
\bigl| \pd^2 U_{m,N}^{\epsilon}(\vartheta;\widehat \alpha) 
- \pd^2 U_{m,N}^{\epsilon}(\vartheta;\alpha^*) \bigr|
+\sup_{u \in [0,1]}
\bigl| \pd^2 U_{m,N}^{\epsilon}(\widehat \vartheta_u;\alpha^*) 
- 2\mathcal U(\vartheta^*;\alpha^*) \bigr|
\\
&= \widehat {\mathbb T}_{3,m,N}^{\epsilon} +{\mathbb S}_{3,m,N}^{\epsilon}
\\
&= \widehat {\mathbb T}_{3,m,N}^{\epsilon} +\op(1).
\end{align*}
We will show $\widehat {\mathbb T}_{3,m,N}^{\epsilon} = \op(1)$.
Since we find from \eqref{dU}, \eqref{decomp-1} and the Schwarz inequality that
\begin{align*}
&\frac{1}{2} \bigl| \pd_{\vartheta}^2 U_{m,N}(\vartheta;\widehat \alpha)
-\pd_{\vartheta}^2 U_{m,N}(\vartheta;\alpha^*) \bigr|
\\
&=
\Biggl| 
\frac{1}{m} \sum_{k=1}^{m_2} \sum_{j=1}^{m_1}
\Bigl( 
\bigl( h_{j,k}^{(1)}(\vartheta;\widehat \alpha) 
h_{j,k}^{(1)}(\vartheta;\widehat \alpha)^\TT
-h_{j,k}^{(1)}(\vartheta;\alpha^*) h_{j,k}^{(1)}(\vartheta;\alpha^*)^\TT 
\bigr) 
\\
&\qquad -\bigl( 
\mathbf{U}_{j,k}^{\epsilon}(\vartheta;\widehat \alpha)
h_{j,k}^{(2)}(\vartheta;\widehat \alpha)
-\mathbf{U}_{j,k}^{\epsilon}(\vartheta;\alpha^*) h_{j,k}^{(2)}(\vartheta;\alpha^*)
\bigr)
\Bigr)
\Biggr|
\\
&\le 
\Biggl| 
\frac{1}{m} \sum_{k=1}^{m_2} \sum_{j=1}^{m_1}
\bigl( h_{j,k}^{(1)}(\vartheta;\widehat \alpha) 
-h_{j,k}^{(1)}(\vartheta;\alpha^*) \bigr)
\bigl( h_{j,k}^{(1)}(\vartheta;\widehat \alpha) 
-h_{j,k}^{(1)}(\vartheta;\alpha^*) \bigr)^\TT
\Biggr|
\\
&\quad+
2 \Biggl| 
\frac{1}{m} \sum_{k=1}^{m_2} \sum_{j=1}^{m_1}
\bigl( h_{j,k}^{(1)}(\vartheta;\widehat \alpha) 
-h_{j,k}^{(1)}(\vartheta;\alpha^*) \bigr)
h_{j,k}^{(1)}(\vartheta;\alpha^*)^\TT
\Biggr|
\\
&\quad+
\Biggl|
\frac{1}{m} \sum_{k=1}^{m_2} \sum_{j=1}^{m_1}
\bigl(\mathbf{U}_{j,k}^{\epsilon}(\vartheta;\widehat \alpha)
-\mathbf{U}_{j,k}^{\epsilon}(\vartheta;\alpha^*) \bigr)
\bigl( 
h_{j,k}^{(2)}(\vartheta;\widehat \alpha)- h_{j,k}^{(2)}(\vartheta;\alpha^*) 
\bigr) 
\Biggr|
\\
&\quad + \Biggl|
\frac{1}{m} \sum_{k=1}^{m_2} \sum_{j=1}^{m_1}
\mathbf{U}_{j,k}^{\epsilon}(\vartheta;\alpha^*)
\bigl( 
h_{j,k}^{(2)}(\vartheta;\widehat \alpha)- h_{j,k}^{(2)}(\vartheta;\alpha^*) \bigr) 
\Biggr|
\\
&\quad+ \Biggl|
\frac{1}{m} \sum_{k=1}^{m_2} \sum_{j=1}^{m_1}
\bigl(\mathbf{U}_{j,k}^{\epsilon}(\vartheta;\widehat \alpha)
-\mathbf{U}_{j,k}^{\epsilon}(\vartheta;\alpha^*) \bigr)
h_{j,k}^{(2)}(\vartheta;\alpha^*) 
\Biggr|
\\
&\le
\frac{1}{m} \sum_{k=1}^{m_2} \sum_{j=1}^{m_1}
\bigl| h_{j,k}^{(1)}(\vartheta;\widehat \alpha)
-h_{j,k}^{(1)}(\vartheta;\alpha^*) \bigr|^2
\\
&\quad+
2\Biggl\{
\frac{1}{m} \sum_{k=1}^{m_2} \sum_{j=1}^{m_1}
\bigl| h_{j,k}^{(1)}(\vartheta;\widehat \alpha)
-h_{j,k}^{(1)}(\vartheta;\alpha^*) \bigr|^2
\Biggr\}^{1/2}
\Biggl\{
\frac{1}{m} \sum_{k=1}^{m_2} \sum_{j=1}^{m_1}
\bigl| h_{j,k}^{(1)}(\vartheta;\alpha^*) \bigr|^2
\Biggr\}^{1/2}
\\
&\quad + \Biggl\{
\frac{1}{m} \sum_{k=1}^{m_2} \sum_{j=1}^{m_1}
\bigl( \mathbf{U}_{j,k}^{\epsilon}(\vartheta;\widehat \alpha)
-\mathbf{U}_{j,k}^{\epsilon}(\vartheta;\alpha^*) \bigr)^2
\Biggr\}^{1/2}
\\
&\qquad \times
\Biggl\{
\frac{1}{m} \sum_{k=1}^{m_2} \sum_{j=1}^{m_1}
\bigl| h_{j,k}^{(2)}(\vartheta;\widehat \alpha)
-h_{j,k}^{(2)}(\vartheta;\alpha^*) \bigr|^2
\Biggr\}^{1/2}
\\
&\quad +
\Biggl\{
\frac{1}{m} \sum_{k=1}^{m_2} \sum_{j=1}^{m_1}
\mathbf{U}_{j,k}^{\epsilon}(\vartheta;\alpha^*)^2
\Biggr\}^{1/2}
\Biggl\{
\frac{1}{m} \sum_{k=1}^{m_2} \sum_{j=1}^{m_1}
\bigl| h_{j,k}^{(2)}(\vartheta;\widehat \alpha)
-h_{j,k}^{(2)}(\vartheta;\alpha^*) \bigr|^2
\Biggr\}^{1/2}
\\
&\quad + \Biggl\{
\frac{1}{m} \sum_{k=1}^{m_2} \sum_{j=1}^{m_1}
\bigl( \mathbf{U}_{j,k}^{\epsilon}(\vartheta;\widehat \alpha)
-\mathbf{U}_{j,k}^{\epsilon}(\vartheta;\alpha^*) \bigr)^2
\Biggr\}^{1/2}
\Biggl\{
\frac{1}{m} \sum_{k=1}^{m_2} \sum_{j=1}^{m_1}
\bigl| h_{j,k}^{(2)}(\vartheta;\alpha^*) \bigr|^2
\Biggr\}^{1/2},
\end{align*}
it follows from the definitions of $\mathbf{X}_{m,N}^{\epsilon,l}$, 
$\mathbf{Y}_{m,N}^{\epsilon}$, $\mathbf{Z}_{m,N}^{\epsilon}$ 
given in \eqref{def_XYZ} and $\mathbf{W}_{m,N}^l$ given in \eqref{def_W},
Lemma \ref{lem_XYZ}, \eqref{est_W} and [B]$_{\alpha^*,\alpha_0}$ that
\begin{align*}
\frac{1}{2}\widehat {\mathbb T}_{3,m,N}^{\epsilon}
&\le 
\mathbf{X}_{m,N}^{\epsilon,1}
+ 2\sqrt{\mathbf{X}_{m,N}^{\epsilon,1} \mathbf{W}_{m,N}^{1} }
+ \sqrt{\mathbf{Z}_{m,N}^{\epsilon} \mathbf{X}_{m,N}^{\epsilon,2} }
+ \sqrt{\mathbf{Y}_{m,N}^{\epsilon} \mathbf{X}_{m,N}^{\epsilon,2}}
+ \sqrt{\mathbf{Z}_{m,N}^{\epsilon} \mathbf{W}_{m,N}^{2}}
\\
&= \Op( R_1^{-2} )
+ \Op( R_1^{-1} \cdot 1 )
+ \Op( R_2^{-1} \cdot R_1^{-1} )
+ \Op( 1 \cdot R_1^{-1} )
+ \Op( R_2^{-1} \cdot 1 )
\\
&= \op(1).
\end{align*}
\end{proof}

\subsection{Proof of Theorem \ref{th3}}
For $\lambda, \mu \in (0,\infty)$, 
a discretely observed stochastic process 
$\widetilde {\mb x} = \{ \widetilde x(t_i^n) \}_{i=0}^n$ 
and $\alpha \in (0,3)$, we define
\begin{align*}
V_n^{\epsilon} (\lambda,\mu;\widetilde {\mb x}, \alpha)
&= \sum_{i=1}^n
\frac{(\widetilde x(t_i^n)
- \ee^{-\lambda \Delta_n} \widetilde x(t_{i-1}^n))^2}
{\frac{\epsilon^2(1-\ee^{-2\lambda \Delta_n})}{2\lambda \mu^\alpha}}
+n\log \biggl( \frac{1-\ee^{-2\lambda \Delta_n}}{2\lambda \mu^\alpha \Delta_n} \biggr),
\\
L_n^{\epsilon} (\lambda,\mu;\widetilde {\mb x},\alpha)
&=
\begin{pmatrix}
-\epsilon\partial_\lambda V_n^{\epsilon} (\lambda,\mu;\widetilde {\mb x},\alpha)
\\
-\frac{1}{\sqrt n}\partial_\mu V_n^{\epsilon} (\lambda,\mu;\widetilde {\mb x},\alpha)
\end{pmatrix},
\\
K_n^{\epsilon} (\lambda,\mu;\widetilde {\mb x},\alpha)
&=
\begin{pmatrix}
\epsilon^2 \partial_\lambda^2 V_n^{\epsilon} (\lambda,\mu;\widetilde {\mb x},\alpha) 
& \frac{\epsilon}{\sqrt n}
\partial_\lambda \partial_\mu V_n^{\epsilon} (\lambda,\mu;\widetilde {\mb x},\alpha)
\\
\frac{\epsilon}{\sqrt n}
\partial_\mu \partial_\lambda V_n^{\epsilon} (\lambda,\mu;\widetilde {\mb x},\alpha)
& \frac{1}{n} \partial_\mu^2 V_n^{\epsilon} (\lambda,\mu;\widetilde {\mb x},\alpha)
\end{pmatrix}.
\end{align*}

For $\ell = (\mf l_1, \mf l_2) \in \mathbb N^2$ obtained from [A2], 
we set the discretely observed true coordinate process 
$\textbf x = {\mb x}_{\ell} = \{ x_{\ell}(t_i^n) \}_{i=0}^n$
given in \eqref{OU-process} with the initial value $x_0 = x_{\ell}(0)$,
the approximate coordinate process
$\widehat {\mb x} = \widehat {\mb x}_{\ell}
= \{ \widehat x_{\ell}(t_i^n) \}_{i=0}^n$
defined by \eqref{approx_cp0},
and the estimators 
$(\widehat \lambda, \widehat \mu) = (\widehat \lambda_{\ell}, \widehat \mu_{\ell})$
given in \eqref{estimator_lam_mu}, 
and the true values $(\lambda^*,\mu^*) = (\lambda_{\ell}^*, \mu_{\ell}^*)$,
where
\begin{equation}\label{true_values}
\lambda_{\ell}^* = \theta_2^* \biggl( \pi^2 |\ell|^2 + \frac{(\kappa^*)^2 +(\eta^*)^2}{4} \biggr) -\theta_0^*, 
\quad
\mu_{\ell}^* = \mu_0^* +\pi^2 |\ell|^2.
\end{equation}
For $\lambda_1,\lambda_2, \mu_1,\mu_2 \in (0,\infty)$ and $\alpha \in (0,3)$, 
we define
\begin{equation*}
\begin{split}
V_1(\lambda_1, \mu_1, \lambda_2;\alpha) 
&= \mu_1^\alpha (\lambda_1 -\lambda_2)^2 \frac{1- \exp(-2\lambda_2)}{2 \lambda_2} x_0^2,
\\
V_2(\mu_1, \mu_2;\alpha) 
&= \biggl( \frac{\mu_1}{\mu_2} \biggr)^\alpha -1
-\alpha \log \biggl( \frac{\mu_1}{\mu_2} \biggr).
\end{split}
\end{equation*}

Through the proof of Theorem A.2 in \cite{TKU2024a}, 
we have already obtained the following results.
\begin{prop}\label{propR2}
Under $n \to \infty$ and $\epsilon \searrow 0$, it holds that
for estimators $(\widetilde \lambda, \widetilde \mu) \in \Xi_{\nu}$,
\begin{description}
\item[(R6)]
$V_1(\lambda, \mu, \lambda^*;\alpha^*) = V_1(\lambda^*, \mu, \lambda^*;\alpha^*) (= 0)$
if and only if $\lambda = \lambda^*$,

\item[(R7)]
$V_2(\mu, \mu^*;\alpha^*) = V_2(\mu^*, \mu^*;\alpha^*) (= 0)$ 
if and only if $\mu = \mu^*$,

\item[(R8)]
$\displaystyle  
\sup_{(\lambda,\mu) \in \Xi_{\nu}} \Bigl|
\epsilon^2 \bigl( V_n^{\epsilon} (\lambda,\mu; {\mb x},\alpha^*)
-V_n^{\epsilon} (\lambda^*,\mu; {\mb x},\alpha^*) \bigr) 
- V_1(\lambda,\mu,\lambda^*; \alpha^*)
\Bigr| = \op(1)$,

\item[(R9)]
$\displaystyle \sup_{\mu \in \Xi_{\mu}}
\biggl|
\frac{1}{n}
\bigl( 
V_n^{\epsilon} (\widetilde \lambda,\mu; {\mb x},\alpha^*)
-V_n^{\epsilon} (\widetilde \lambda,\mu^*; {\mb x},\alpha^*)
\bigr) 
- V_2(\mu,\mu^*; \alpha^*)
\biggr| = \op(1)$
under $\epsilon^{-1}(\widetilde \lambda -\lambda^*) = \Op(1)$,

\item[(R10)]
$\displaystyle \epsilon \sup_{\mu \in \Xi_{\mu}} 
\bigl| \pd_\lambda V_n^{\epsilon} (\lambda^*, \mu; {\mb x},\alpha^*) \bigr|
= \op(1)$,

\item[(R11)]
$\displaystyle \sup_{(\lambda,\mu) \in \Xi_{\nu}} 
\bigl| \epsilon^2 \pd_\lambda^2 V_n^{\epsilon} (\lambda, \mu; {\mb x},\alpha^*) 
-2 G(\lambda^*,\mu; x_0, \alpha^*) \bigr| = \op(1)$,

\item[(R12)]
$L_n^{\epsilon} (\lambda^*,\mu^*; {\mb x},\alpha^*)
\dto \mathrm{N}(0, 4 \mathcal I^{-1})$,

\item[(R13)]
$\displaystyle \sup_{u \in [0,1]}
\bigl| 
K_n^{\epsilon} (\widetilde \lambda_u, \widetilde \mu_u; {\mb x},\alpha^*) 
-2 \mathcal I^{-1}
\bigr| = \op(1)$
under $\epsilon^{-1}(\widetilde \lambda -\lambda^*) = \Op(1)$ and $\widetilde \mu -\mu^* = \op(1)$,
where $\widetilde \lambda_u = \lambda^* +u(\widetilde \lambda -\lambda^*)$
and $\widetilde \mu_u = \mu^* +u(\widetilde \mu -\mu^*)$ for $u \in [0,1]$.
\end{description}
\end{prop}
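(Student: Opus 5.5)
The plan is to reduce every item to explicit one-dimensional Ornstein--Uhlenbeck computations for the true coordinate process $\mathbf x=\mathbf x_\ell$ with $\alpha=\alpha^*$ fixed. This is the setting of Theorem A.2 in \cite{TKU2024a}, and I would follow that structure. The basic device is the exact discretization of \eqref{OU-process}:
\begin{equation*}
x(t_i^n)-\ee^{-\lambda^*\Delta_n}x(t_{i-1}^n)=\epsilon(\mu^*)^{-\alpha^*/2}\xi_i ,
\end{equation*}
where the $\xi_i$ are i.i.d. $\mathrm N\bigl(0,(1-\ee^{-2\lambda^*\Delta_n})/(2\lambda^*)\bigr)$ and $\xi_i$ is $\GG_{i,\ell}$-measurable and independent of $\GG_{i-1,\ell}$. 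For general $\lambda$ the residual is then
\begin{equation*}
x(t_i^n)-\ee^{-\lambda\Delta_n}x(t_{i-1}^n)=(\ee^{-\lambda^*\Delta_n}-\ee^{-\lambda\Delta_n})\,x(t_{i-1}^n)+\epsilon(\mu^*)^{-\alpha^*/2}\xi_i .
\end{equation*}
Substituting this into $V_n^\epsilon$ splits it into three parts: a drift-squared part, a cross (martingale) part, and a pure-noise part plus the log term. A second tool is the small-noise limit $\sup_i|x(t_i^n)-\ee^{-\lambda^*t_i^n}x_0|=\Op(\epsilon)$. It implies Riemann-sum limits such as
\begin{equation*}
\Delta_n\sum_i x(t_{i-1}^n)^2\pto\int_0^1x_0^2\ee^{-2\lambda^*t}\,\dd t=\frac{1-\ee^{-2\lambda^*}}{2\lambda^*}x_0^2 .
\end{equation*}

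(R6) and (R7) are elementary. $V_1$ vanishes iff $\lambda_1=\lambda_2$, because $\mu_1^{\alpha^*}>0$, $x_0\neq0$ by [A2], and $(1-\ee^{-2\lambda_2})/(2\lambda_2)>0$. $V_2$ is $g(s)=s^{\alpha^*}-1-\alpha^*\log s$ evaluated at $s=\mu_1/\mu_2$; $g$ is strictly convex on $(0,\infty)$ with its unique zero and minimum at $s=1$.

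For (R8), I would multiply the decomposition by $\epsilon^2$ and show the terms behave as follows.
\begin{itemize}
\item[(a)] The drift part is $\mu^{\alpha^*}\frac{2\lambda}{1-\ee^{-2\lambda\Delta_n}}(\ee^{-\lambda^*\Delta_n}-\ee^{-\lambda\Delta_n})^2\sum_i x(t_{i-1}^n)^2$. This equals $\mu^{\alpha^*}(\lambda-\lambda^*)^2\Delta_n\sum_ix(t_{i-1}^n)^2(1+\OO(\Delta_n))$, which tends to $V_1$ uniformly on the compact set $\Xi_\nu$.
\item[(b)] The cross part is of size $\epsilon\,|\lambda-\lambda^*|\,\bigl|\sum_i x(t_{i-1}^n)\xi_i\bigr|=\Op(\epsilon)$.
\item[(c)] The noise and log parts change with $\lambda$ only through the ratio $(1-\ee^{-2\lambda\Delta_n})/\lambda$ at $\lambda$ versus $\lambda^*$. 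That ratio differs by a relative factor $1+\OO(\Delta_n)$, so after multiplying by $\epsilon^2$ their total contribution is $\Op(\epsilon^2 n\Delta_n)=\Op(\epsilon^2)$.
\end{itemize}
Uniformity follows because all coefficients are Lipschitz in $(\lambda,\mu)$ on $\Xi_\nu$.

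For (R9), I would divide by $n$. The pure-noise sum gives $\frac1n\sum_i\xi_i^2/\VV[\xi_i]\cdot(\mu/\mu^*)^{\alpha^*}\pto(\mu/\mu^*)^{\alpha^*}$ by the LLN, and the log terms give $-\alpha^*\log(\mu/\mu^*)$; together these produce $V_2$. The drift part at $\widetilde\lambda$ is $\epsilon^{-2}(\widetilde\lambda-\lambda^*)^2\Op(1)/n=\Op(1/n)$, using $\epsilon^{-1}(\widetilde\lambda-\lambda^*)=\Op(1)$. The cross part is $\Op(n^{-1}\epsilon^{-1}|\widetilde\lambda-\lambda^*|)=\Op(1/n)$. (R10) and (R11) follow from the same decomposition after differentiating in $\lambda$. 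At $\lambda^*$ the score $\epsilon\,\pd_\lambda V_n^\epsilon$ reduces to a martingale term of order $\Op(1)\cdot\epsilon$ plus $\Op(\epsilon^2 n\Delta_n)$ terms. The second derivative is dominated by $2\mu^{\alpha^*}\Delta_n\sum_ix(t_{i-1}^n)^2\to 2G(\lambda^*,\mu;x_0,\alpha^*)$, and the replacement of $\lambda$ by $\lambda^*$ inside $G$ is justified because the $\lambda$-dependence of the remaining terms is $\op(1)$ uniformly.

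The step I expect to be the main obstacle is (R12), together with the joint handling in (R13). I would write
\begin{equation*}
L_n^\epsilon(\lambda^*,\mu^*;\mathbf x,\alpha^*)=\sum_{i=1}^n\zeta_{n,i}+\op(1),\qquad
\zeta_{n,i}=\begin{pmatrix}c_n\,x(t_{i-1}^n)\xi_i\\[1mm] \frac{\alpha^*}{\mu^*\sqrt n}\bigl(\xi_i^2/\VV[\xi_i]-1\bigr)\end{pmatrix},
\end{equation*}
with a deterministic $c_n\asymp(\mu^*)^{\alpha^*/2}\Delta_n^{-1}$ such that the conditional variance of the first component is $\approx 2(\mu^*)^{\alpha^*}\Delta_n\sum_ix(t_{i-1}^n)^2$. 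The array $(\zeta_{n,i})$ is a martingale difference array with respect to $\GG_{i,\ell}$. I would then apply the martingale central limit theorem for triangular arrays.
\begin{itemize}
\item The conditional variance of the first component tends in probability to $4G$, by the Riemann-sum limit above.
\item The conditional variance of the second component is exactly $2(\alpha^*)^2/(\mu^*)^2=4H$.
\item The conditional cross-covariance vanishes, because $\EE[\xi_i^3]=0$.
\item The Lyapunov condition holds from Gaussian fourth moments: each term is $\OO(n^{-2})$ up to the factor $\Delta_n\sum_ix(t_{i-1}^n)^4$, which is bounded.
\end{itemize}
The delicate points are checking that the non-martingale remainders in the score are genuinely $\op(1)$, and identifying the exact normalizing constants so that the limit covariance is $4\mathcal I^{-1}$.

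Finally, (R13) follows by continuity of the normalized second derivatives in $(\lambda,\mu)$, using the same decompositions. The off-diagonal entry is $\frac{\epsilon}{\sqrt n}\pd_\lambda\pd_\mu V_n^\epsilon=\Op(\epsilon/\sqrt n)+\Op(\sqrt{n}\,\epsilon\Delta_n)\to0$. The $\mu\mu$-entry tends to $\alpha^*{}^2/\mu^{*2}\cdot2=2H$ via the LLN at $\widetilde\mu_u\pto\mu^*$. The $\lambda\lambda$-entry converges by (R11), since $\widetilde\mu_u\pto\mu^*$.
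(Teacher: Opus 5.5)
\paragraph{Verdict.} Your overall route (exact OU discretization, LLN, martingale CLT) is the one behind Theorem A.2 of \cite{TKU2024a}, which the paper simply cites. However, your treatment of (R10) is wrong, and in fact (R10) as printed cannot be proved.

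\paragraph{The gap in (R10).} Take \eqref{eq_dV} at $\lambda=\lambda^*$ and write $\mathcal M_i(\lambda^*;\mathbf x)=\epsilon(\mu^*)^{-\alpha^*/2}\xi_i$. The cross term carries $\epsilon^{-2}$ from the denominator of $V_n^\epsilon$ but only one factor $\epsilon$ from $\mathcal M_i$. Hence, uniformly in $\mu$,
\begin{equation*}
\epsilon\,\pd_\lambda V_n^\epsilon(\lambda^*,\mu;\mathbf x,\alpha^*)
= 2\ee^{-\lambda^*\Delta_n}F_n(\lambda^*,\mu;\alpha^*)(\mu^*)^{-\alpha^*/2}\sum_{i=1}^n \xi_i\,x(t_{i-1}^n)+\Op(\epsilon).
\end{equation*}
The martingale sum is $\Op(1)$, not $\Op(\epsilon)$ as you claim. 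It is also not $\op(1)$: its conditional variance is $\approx\Delta_n\sum_i x(t_{i-1}^n)^2$, which stays away from $0$ because $x_0\neq0$. So $\epsilon\sup_\mu|\pd_\lambda V_n^\epsilon(\lambda^*,\mu;\mathbf x,\alpha^*)|$ is $\Op(1)$ but not $\op(1)$.

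Indeed, (R10) with $\op(1)$ contradicts (R12). The first component of $L_n^\epsilon(\lambda^*,\mu^*;\mathbf x,\alpha^*)$ is exactly $-\epsilon\pd_\lambda V_n^\epsilon(\lambda^*,\mu^*;\mathbf x,\alpha^*)$, with limiting variance $4G>0$, and your own (R12) argument relies on precisely this.

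The correct statement is $\Op(1)$. That is all the paper actually uses: $\mathbb P_n^\epsilon=\Op(1)$ in the proof of \eqref{cons}, to obtain $\epsilon^{-1}(\widehat\lambda-\lambda^*)=\Op(1)$. Your decomposition proves this version once the order is fixed. You should flag the misprint rather than ``prove'' it.

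\paragraph{Smaller slips.} The same bookkeeping errors recur elsewhere, though they are repairable.
\begin{itemize}
\item In (R12), the coefficient is $c_n=-2\ee^{-\lambda^*\Delta_n}F(2\lambda^*\Delta_n)(\mu^*)^{\alpha^*/2}\to-2(\mu^*)^{\alpha^*/2}$, which is bounded rather than $\asymp\Delta_n^{-1}$. The conditional variance is $\approx4(\mu^*)^{\alpha^*}\Delta_n\sum_ix(t_{i-1}^n)^2\to4G$, not $2(\mu^*)^{\alpha^*}\Delta_n\sum_i x(t_{i-1}^n)^2$.
\item In (R13), the $\mu\mu$-entry tends to $(\alpha^*)^2/(\mu^*)^2=2H$, without an extra factor $2$.
\item In (R7), $s\mapsto s^{\alpha^*}-1-\alpha^*\log s$ is not convex on $(0,\infty)$ when $\alpha^*<1$. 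Use instead that its derivative $\alpha^*(s^{\alpha^*}-1)/s$ changes sign only at $s=1$.
\item Your Riemann-sum limits over $[0,1]$ tacitly require $n\Delta_n\to1$. For $\Delta_n=\frac1N\lfloor N/n\rfloor$ this holds, e.g.\ when $n$ divides $N$ or $n/N\to0$. The paper makes the same tacit assumption.
\end{itemize}
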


Since we have (R6) and (R7) in Proposition \ref{propR2}, 
$V_1(\lambda^*, \mu, \lambda^*;\alpha^*) = 0$, 
$V_2(\mu^*, \lambda^*;\alpha^*) = 0$ and 
$V_n^{\epsilon} (\widehat \lambda,\widehat \mu; \widehat {\mb x},\widehat \alpha)
\le V_n^{\epsilon} (\lambda, \mu;\widehat {\mb x}, \widehat \alpha)$ 
for the estimators $(\widehat \lambda, \widehat \mu)$
and any $(\lambda,\mu) \in \Xi_{\nu}$, 
we find that for any $\upsilon \in (0,\infty)$, 
there exists $\rho \in (0,\infty)$ such that
\begin{align*}
\PP \bigl( |\widehat \lambda -\lambda^*| \ge \upsilon \bigr)
&\le \PP \bigl( V_1(\widehat \lambda, \widehat \mu, \lambda^*;\alpha^*) 
-V_1(\lambda^*, \widehat \mu, \lambda^*;\alpha^*)
\ge \rho \bigr)
\\
&= \PP \Bigl( 
V_1(\widehat \lambda, \widehat \mu, \lambda^*;\alpha^*) 
- \epsilon^2 
\bigl( 
V_n^{\epsilon} (\widehat \lambda,\widehat \mu; \widehat {\mb x},\widehat \alpha)
-V_n^{\epsilon} (\lambda^*,\widehat \mu; \widehat {\mb x},\widehat \alpha)
\bigr) 
\\
&\qquad
+\epsilon^2 
\bigl( 
V_n^{\epsilon} (\widehat \lambda,\widehat \mu; \widehat {\mb x},\widehat \alpha)
-V_n^{\epsilon} (\lambda^*,\widehat \mu; \widehat {\mb x},\widehat \alpha)
\bigr)
-V_1(\lambda^*, \widehat \mu, \lambda^*;\alpha^*)
\ge \rho \Bigr)
\\
&\le \PP \biggl( 
V_1(\widehat \lambda, \widehat \mu, \lambda^*;\alpha^*) 
- \epsilon^2 
\bigl( 
V_n^{\epsilon} (\widehat \lambda,\widehat \mu; \widehat {\mb x},\widehat \alpha)
-V_n^{\epsilon} (\lambda^*,\widehat \mu; \widehat {\mb x},\widehat \alpha)
\bigr)
\ge \frac{\rho}{2} \biggr)
\\
&\quad
+\PP \biggl( \epsilon^2 
\bigl( 
V_n^{\epsilon} (\widehat \lambda,\widehat \mu; \widehat {\mb x},\widehat \alpha)
-V_n^{\epsilon} (\lambda^*,\widehat \mu; \widehat {\mb x},\widehat \alpha)
\bigr)
\ge \frac{\rho}{2} \biggr)
\\
&\le \PP \biggl( 
\sup_{(\lambda,\mu) \in \Xi_{\nu}}
\Bigl|
V_1(\lambda,\mu,\lambda^*; \alpha^*)
- \epsilon^2 
\bigl( 
V_n^{\epsilon} (\lambda,\mu; \widehat {\mb x},\widehat \alpha)
-V_n^{\epsilon} (\lambda^*,\mu; \widehat {\mb x},\widehat \alpha)
\bigr) 
\Bigr|
\ge \frac{\rho}{2} \biggr),
\end{align*}
\begin{align*}
\PP \bigl( |\widehat \mu -\mu^*| \ge \upsilon \bigr)
&\le \PP \bigl( V_2(\widehat \mu, \mu^*;\alpha^*) -V_2(\mu^*, \mu^*;\alpha^*)
\ge \rho \bigr)
\\
&= \PP \biggl( 
V_2(\widehat \mu, \mu^*;\alpha^*) 
- \frac{1}{n}
\bigl( 
V_n^{\epsilon} (\widehat \lambda,\widehat \mu; \widehat {\mb x},\widehat \alpha)
-V_n^{\epsilon} (\widehat \lambda,\mu^*; \widehat {\mb x},\widehat \alpha)
\bigr) 
\\
&\qquad
+\frac{1}{n}
\bigl( 
V_n^{\epsilon} (\widehat \lambda,\widehat \mu; \widehat {\mb x},\widehat \alpha)
-V_n^{\epsilon} (\widehat \lambda,\mu^*; \widehat {\mb x},\widehat \alpha)
\bigr)
-V_2(\mu^*, \mu^*;\alpha^*)
\ge \rho \Bigr)
\\
&\le \PP \biggl( 
V_2(\widehat \mu, \mu^*;\alpha^*) 
- \frac{1}{n}
\bigl( 
V_n^{\epsilon} (\widehat \lambda,\widehat \mu; \widehat {\mb x},\widehat \alpha)
-V_n^{\epsilon} (\widehat \lambda,\mu^*; \widehat {\mb x},\widehat \alpha)
\bigr) 
\ge \frac{\rho}{2} \biggr)
\\
&\quad
+\PP \biggl( 
\frac{1}{n}
\bigl( 
V_n^{\epsilon} (\widehat \lambda,\widehat \mu; \widehat {\mb x},\widehat \alpha)
-V_n^{\epsilon} (\widehat \lambda,\mu^*; \widehat {\mb x},\widehat \alpha)
\bigr)
\ge \frac{\rho}{2} \biggr)
\\
&\le \PP \biggl( 
\sup_{\mu \in \Xi_{\mu}}
\biggl|
V_2(\mu,\mu^*; \alpha^*)
-\frac{1}{n}
\bigl( 
V_n^{\epsilon} (\widehat \lambda,\mu; \widehat {\mb x},\widehat \alpha)
-V_n^{\epsilon} (\widehat \lambda,\mu^*; 
\widehat {\mb x},\widehat \alpha)
\bigr) 
\biggr| 
\ge \frac{\rho}{2} \biggr).
\end{align*}
Therefore, the estimators $\widehat \lambda$ and $\widehat \mu$ satisfy
$\widehat \lambda \pto \lambda^*$ and $\widehat \mu \pto \mu^*$
if the following (G1)--(G2) hold. 
\begin{description}
\item[(G1)]
$\displaystyle
\widehat {\mathbb U}_{1,n}^\epsilon
= \sup_{(\lambda,\mu) \in \Xi_{\nu}}
\Bigl|
\epsilon^2 
\bigl( 
V_n^{\epsilon} (\lambda,\mu; \widehat {\mb x},\widehat \alpha)
-V_n^{\epsilon} (\lambda^*,\mu; \widehat {\mb x},\widehat \alpha)
\bigr) 
- V_1(\lambda,\mu,\lambda^*; \alpha^*)
\Bigr| 
= \op(1)$,

\item[(G2)]
$\displaystyle
\widehat {\mathbb U}_{2,n}^\epsilon
= \sup_{\mu \in \Xi_{\mu}}
\biggl|
\frac{1}{n}
\bigl( 
V_n^{\epsilon} (\widehat \lambda,\mu; \widehat {\mb x},\widehat \alpha)
-V_n^{\epsilon} (\widehat \lambda,\mu^*; 
\widehat {\mb x},\widehat \alpha)
\bigr) 
- V_2(\mu,\mu^*; \alpha^*)
\biggr| 
= \op(1)$.
\end{description}
Moreover, it follows from the Taylor expansion that
\begin{equation*}
L_n^{\epsilon} (\lambda^*, \mu^*; \widehat {\mb x}, \widehat \alpha)
= \int_0^1 
K_n^{\epsilon} (\widehat \lambda_u,\widehat \mu_u; \widehat {\mb x},\widehat \alpha)
\dd u
\begin{pmatrix}
\epsilon^{-1} (\widehat \lambda -\lambda^*)
\\
\sqrt{n} (\widehat \mu -\mu^*)
\end{pmatrix}
\end{equation*}
with $\widehat \lambda_u = \lambda^* +u(\widehat \lambda -\lambda^*)$ and $\widehat \mu_u = \mu^* +u(\widehat \mu -\mu^*)$.
We thus obtain the desired results under (G1)--(G2) and the following (G3)--(G4).
\begin{description}
\item[(G3)]
$L_n^{\epsilon} (\lambda^*,\mu^*; \widehat {\mb x},\widehat \alpha)
\dto \mathrm{N}(0, 4 \mathcal I^{-1})$,

\item[(G4)]
$\displaystyle
\widehat {\mathbb U}_{4,n}^\epsilon
= \sup_{u \in [0,1]}
\bigl| 
K_n^{\epsilon} (\widehat \lambda_u, \widehat \mu_u; 
\widehat {\mb x},\widehat \alpha) -2 \mathcal I^{-1}
\bigr| = \op(1)$.
\end{description}

We introduce the following notation to prove (G1)--(G4).
\begin{equation}\label{hat_V}
\left\{ 
\begin{split}
\widehat {\mathbb V}_{1,n}^\epsilon
&=\epsilon^{2}\sup_{(\lambda,\mu) \in \Xi_{\nu}}
\bigl|
V_n^{\epsilon} (\lambda,\mu;\widehat {\mb x}, \alpha^*)
-V_n^{\epsilon} (\lambda,\mu;{\mb x}, \alpha^*)
\bigr|,
\\
\widehat {\mathbb V}_{2,n}^\epsilon
&=\frac{1}{n}\sup_{(\lambda,\mu) \in \Xi_{\nu}}
\bigl| V_n^{\epsilon} (\lambda,\mu;\widehat {\mb x}, \alpha^*)
-V_n^{\epsilon} (\lambda,\mu;{\mb x}, \alpha^*) \bigr|,
\\
\widehat {\mathbb V}_{3,n}^\epsilon
&=\sup_{\mu \in \Xi_{\mu}}
\bigl| L_n^{\epsilon} (\lambda^*,\mu;\widehat {\mb x}, \alpha^*)
- L_n^{\epsilon} (\lambda^*,\mu;{\mb x}, \alpha^*) \bigr|,
\\
\widehat {\mathbb V}_{4,n}^\epsilon
&=\sup_{(\lambda,\mu) \in \Xi_{\nu}}
\bigl| K_n^{\epsilon} (\lambda,\mu;\widehat {\mb x}, \alpha^*)
-K_n^{\epsilon} (\lambda,\mu;{\mb x}, \alpha^*) \bigr|
\end{split}
\right.
\end{equation}
and
\begin{equation}\label{hat_W}
\left\{ 
\begin{split}
\widehat {\mathbb W}_{1,n}^\epsilon
&=\epsilon^{2}\sup_{(\lambda,\mu) \in \Xi_{\nu}}
\bigl|
V_n^{\epsilon} (\lambda,\mu;\widehat {\mb x},\widehat \alpha)
-V_n^{\epsilon} (\lambda,\mu;\widehat {\mb x}, \alpha^*)
\bigr|,
\\
\widehat {\mathbb W}_{2,n}^\epsilon
&=\frac{1}{n}\sup_{(\lambda,\mu) \in \Xi_{\nu}}
\bigl| V_n^{\epsilon} (\lambda,\mu;\widehat {\mb x},\widehat \alpha)
-V_n^{\epsilon} (\lambda,\mu;\widehat {\mb x}, \alpha^*) \bigr|,
\\
\widehat {\mathbb W}_{3,n}^\epsilon
&=\sup_{\mu \in \Xi_{\mu}}
\bigl| L_n^{\epsilon} (\lambda^*,\mu;\widehat {\mb x},\widehat \alpha)
-L_n^{\epsilon} (\lambda^*,\mu;\widehat {\mb x}, \alpha^*) \bigr|,
\\
\widehat {\mathbb W}_{4,n}^\epsilon
&=\sup_{(\lambda,\mu) \in \Xi_{\nu}}
\bigl| K_n^{\epsilon} (\lambda,\mu;\widehat {\mb x},\widehat \alpha)
-K_n^{\epsilon} (\lambda,\mu;\widehat {\mb x}, \alpha^*) \bigr|.
\end{split}
\right.
\end{equation}
We define $\pd_1 = \epsilon \pd_\lambda$ and $\pd_2 = \frac{1}{\sqrt{n}} \pd_\mu$, 
and set 
\begin{align*}
(\widehat {\mathbb V}_{3,n}^\epsilon)_i 
&= \sup_{\mu \in \Xi_{\mu}} 
\bigl| \pd_i V_n^{\epsilon} (\lambda^*,\mu;\widehat {\mb x}, \alpha^*)
-\pd_i V_n^{\epsilon} (\lambda^*,\mu;{\mb x}, \alpha^*) \bigr|,
\\
(\widehat {\mathbb V}_{4,n}^\epsilon)_{i,j} 
&= \sup_{(\lambda,\mu) \in \Xi_{\nu}} 
\bigl| \pd_j \pd_i V_n^{\epsilon} (\lambda,\mu; \widehat {\mb x}, \alpha^*)
-\pd_j \pd_i V_n^{\epsilon} (\lambda,\mu; {\mb x}, \alpha^*) \bigr|,
\\
(\widehat {\mathbb W}_{3,n}^\epsilon)_i 
&= \sup_{\mu \in \Xi_{\mu}} 
\bigl| \pd_i V_n^{\epsilon} (\lambda^*,\mu; \widehat {\mb x}, \widehat \alpha)
-\pd_i V_n^{\epsilon} (\lambda^*,\mu; \widehat {\mb x}, \alpha^*) \bigr|,
\\
(\widehat {\mathbb W}_{4,n}^\epsilon)_{i,j} 
&= \sup_{(\lambda,\mu) \in \Xi_{\nu}} 
\bigl| \pd_j \pd_i V_n^{\epsilon} (\lambda,\mu;\widehat {\mb x}, \widehat \alpha)
-\pd_j \pd_i V_n^{\epsilon} (\lambda,\mu;\widehat {\mb x}, \alpha^*) \bigr|
\end{align*}
for $i,j \in \{ 1, 2 \}$. 
We then obtain
\begin{equation*}
\widehat {\mathbb V}_{3,n}^\epsilon 
\le \sum_{i \in \{1,2\}} (\widehat {\mathbb V}_{3,n}^\epsilon)_i,
\quad
\widehat {\mathbb W}_{3,n}^\epsilon 
\le \sum_{i \in \{1,2\}} (\widehat {\mathbb W}_{3,n}^\epsilon)_i,
\end{equation*}
\begin{equation*}
\widehat {\mathbb V}_{4,n}^\epsilon 
\le \sum_{i,j \in \{1,2\}} (\widehat {\mathbb V}_{4,n}^\epsilon)_{i,j},
\quad
\widehat {\mathbb W}_{4,n}^\epsilon 
\le \sum_{i,j \in \{1,2\}} (\widehat {\mathbb W}_{4,n}^\epsilon)_{i,j}.
\end{equation*}

We introduce some notation to estimate the differences 
$\widehat {\mathbb V}_{j,n}^\epsilon$, $\widehat {\mathbb W}_{j,n}^\epsilon$
($j \in \{ 1,\ldots, 4 \}$) given in \eqref{hat_V} and \eqref{hat_W}.
For a discretely observed stochastic process 
$\widetilde {\mb x} = \{ \widetilde x(t_i^n) \}_{i=0}^n$, we define
\begin{equation*}
\mathcal M_i(\lambda;\widetilde {\mb x}) = 
\widetilde x(t_i^n) -\ee^{-\lambda \Delta_n} \widetilde x(t_{i-1}^n).
\end{equation*}
For $\textbf x = \{ x(t_i^n) \}_{i=0}^n = {\mb x}_{\ell}$ and
$\widehat {\mb x} = \{ \widehat x(t_i^n) \}_{i=0}^n
= \widehat {\mb x}_{\ell}$, we set
\begin{equation}\label{def_XY}
\left\{
\begin{split}
\mathcal X_n &= 
\sup_{\lambda \in \Xi_{\lambda}}
\Biggl|
\sum_{i=1}^n 
\bigl( 
\mathcal M_i(\lambda;\widehat {\mb x})^2 -\mathcal M_i(\lambda; {\mb x})^2
\bigr)
\Biggr|,
\\
\mathcal Y_n &=
\sup_{\lambda \in \Xi_{\lambda}}
\Biggl|
\sum_{i=1}^n \bigl( 
\mathcal M_i(\lambda;\widehat {\mathbf x}) \widehat x(t_{i-1}^n)
- \mathcal M_i(\lambda; {\mathbf x}) x(t_{i-1}^n)
\bigr)
\Biggr|,
\end{split}
\right.
\end{equation}
\begin{equation}\label{def_XYZ*}
\left\{
\begin{split}
\mathcal X_n^* &= 
\Biggl|
\sum_{i=1}^n 
\bigl( 
\mathcal M_i(\lambda^*;\widehat {\mb x})^2 -\mathcal M_i(\lambda^*; {\mb x})^2
\bigr)
\Biggr|,
\\
\mathcal Y_n^* &=
\Biggl|
\sum_{i=1}^n \bigl( 
\mathcal M_i(\lambda^*;\widehat {\mathbf x}) \widehat x(t_{i-1}^n)
- \mathcal M_i(\lambda^*; {\mathbf x}) x(t_{i-1}^n)
\bigr)
\Biggr|,
\\
\mathcal Z_n^* &= 
\Biggl|
\sum_{i=1}^n 
\bigl( 
\widehat x(t_{i-1}^n)^2 -x(t_{i-1}^n)^2
\bigr)
\Biggr|.
\end{split}
\right.
\end{equation}
We also introduce 
\begin{equation}\label{def_A--E}
\small
\left\{
\begin{split}
\mathcal A_n &= 
\sup_{\lambda \in \Xi_{\lambda}} \sum_{i=1}^n 
\bigl( 
\mathcal M_i(\lambda;\widehat {\mathbf x}) -\mathcal M_i(\lambda; {\mathbf x})
\bigr)^2,
\\
\mathcal C_n &= \sup_{\lambda \in \Xi_{\lambda}} \sum_{i=1}^n \mathcal M_i(\lambda;{\mb x})^2,
\\
\mathcal D_n &= \sup_{\lambda \in \Xi_{\lambda}} \Biggl| \sum_{i=1}^n 
\mathcal M_i(\lambda;{\mb x}) x(t_{i-1}^n) \Biggr|,
\end{split}
\right.
\quad
\left\{
\begin{split}
\mathcal A_n^* &= 
\sum_{i=1}^n 
\bigl( 
\mathcal M_i(\lambda^*;\widehat {\mathbf x}) -\mathcal M_i(\lambda^*; {\mathbf x})
\bigr)^2,
\\
\mathcal B_n^* &= \sum_{i=1}^n 
\bigl( \widehat x(t_{i-1}^n) -x(t_{i-1}^n) \bigr)^2,
\\
\mathcal C_n^* &= \sum_{i=1}^n \mathcal M_i(\lambda^*;{\mb x})^2,
\\
\mathcal D_n^* &= \Biggl| 
\sum_{i=1}^n \mathcal M_i(\lambda^*;{\mb x}) x(t_{i-1}^n) 
\Biggr|,
\\
\mathcal E_n^* &= \sum_{i=1}^n x(t_{i-1}^n)^2.
\end{split}
\right.
\end{equation}
We then obtain the following lemmas to control 
$\widehat {\mathbb V}_{j,n}^\epsilon$, $\widehat {\mathbb W}_{j,n}^\epsilon$
($j \in \{ 1,\ldots, 4 \}$).

\begin{lem}\label{lem_est_V}
For $\widehat {\mathbb V}_{j,n}^\epsilon (j \in \{1,\ldots, 4\})$ 
given in \eqref{hat_V}, we have the following four assertions.
\begin{enumerate}
\item[(1)]
If $n \mathcal X_n = \op(1)$, then $\widehat {\mathbb V}_{1,n}^\epsilon = \op(1)$.

\item[(2)]
If $\epsilon^{-2} \mathcal X_n = \op(1)$, then 
$\widehat {\mathbb V}_{2,n}^\epsilon = \op(1)$.

\item[(3)]
If $\frac{\sqrt{n}}{\epsilon^2} \mathcal X_n^* = \op(1)$ and
$\epsilon^{-1} \mathcal Y_n^* = \op(1)$, then 
$\widehat {\mathbb V}_{3,n}^\epsilon = \op(1)$.
In particular, 
$(\widehat {\mathbb V}_{3,n}^\epsilon)_{1} = \op(1)$ holds
under $\epsilon^{-1} \mathcal X_n^* = \op(1)$ and
$\epsilon^{-1} \mathcal Y_n^* = \op(1)$.

\item[(4)]
If $\epsilon^{-2} \mathcal X_n = \op(1)$, 
$\frac{1}{\epsilon \sqrt{n}} \mathcal Y_n = \op(1)$ 
and $\frac{1}{n} \mathcal Z_n^* = \op(1)$, then 
$\widehat {\mathbb V}_{4,n}^\epsilon = \op(1)$.
In particular, 
$(\widehat {\mathbb V}_{4,n}^\epsilon)_{1,1} = \op(1)$ holds 
under $\frac{1}{n} \mathcal X_n = \op(1)$, 
$\frac{1}{n} \mathcal Y_n = \op(1)$ and $\frac{1}{n} \mathcal Z_n^* = \op(1)$.

\end{enumerate}
\end{lem}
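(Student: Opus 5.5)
The plan is to write every difference between $V_n^{\epsilon}(\cdot;\widehat{\mb x},\alpha^*)$ and $V_n^{\epsilon}(\cdot;{\mb x},\alpha^*)$, and between their derivatives, in terms of the quantities $\mathcal X_n$, $\mathcal Y_n$, $\mathcal X_n^*$, $\mathcal Y_n^*$, $\mathcal Z_n^*$. The other ingredients are deterministic and bounded uniformly on the compact set $\Xi_\nu$. The key observation is that the logarithmic term $n\log(\cdots)$ in $V_n^\epsilon$ does not depend on the data, so it cancels in every difference. Write
\begin{equation*}
\sigma_n(\lambda,\mu) = \frac{1-\ee^{-2\lambda\Delta_n}}{2\lambda\mu^{\alpha^*}}.
\end{equation*}
Then
\begin{equation*}
V_n^{\epsilon}(\lambda,\mu;\widehat{\mb x},\alpha^*)-V_n^{\epsilon}(\lambda,\mu;{\mb x},\alpha^*)
=\frac{1}{\epsilon^2\sigma_n(\lambda,\mu)}\sum_{i=1}^n\bigl(\mathcal M_i(\lambda;\widehat{\mb x})^2-\mathcal M_i(\lambda;{\mb x})^2\bigr).
\end{equation*}
Since $\Delta_n\sim 1/n$, we have $\sigma_n(\lambda,\mu)\sim\Delta_n\sim n^{-1}$ uniformly on $\Xi_\nu$. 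The same holds for the $\lambda$-derivatives, with $|\pd_\lambda^p\sigma_n|\lesssim\Delta_n^{p+1}$ after normalization, and the $\mu$-derivatives are of the same order as $\sigma_n$ itself. Hence $1/\sigma_n\lesssim n$.

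For (1) we multiply the identity above by $\epsilon^2$ and obtain $\widehat{\mathbb V}_{1,n}^\epsilon\lesssim n\mathcal X_n$. For (2), dividing by $n$ gives $\widehat{\mathbb V}_{2,n}^\epsilon\lesssim\epsilon^{-2}\mathcal X_n$. Both are immediate.

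For (3) we differentiate. Using $\pd_\lambda\mathcal M_i(\lambda;\widetilde{\mb x})=\Delta_n\ee^{-\lambda\Delta_n}\widetilde x(t_{i-1}^n)$, the derivative $\pd_\lambda V$ splits into two parts:
\begin{itemize}
\item a term $\pd_\lambda(1/\sigma_n)\,\epsilon^{-2}\sum\mathcal M_i^2$, whose difference is bounded by $\lesssim n\epsilon^{-2}\cdot\Delta_n\,\mathcal X_n^*$, because $\pd_\lambda(1/\sigma_n)=\OO(1)$ after the expansion $\sigma_n=\Delta_n(1-\lambda\Delta_n+\cdots)/\mu^{\alpha}$;
\item a cross term $2\epsilon^{-2}\sigma_n^{-1}\Delta_n\ee^{-\lambda\Delta_n}\sum\mathcal M_i\widetilde x(t_{i-1}^n)$, whose difference is bounded by $\lesssim\epsilon^{-2}\mathcal Y_n^*$.
\end{itemize}
Multiplying by $\partial_1=\epsilon\pd_\lambda$ then yields $(\widehat{\mathbb V}_{3,n}^\epsilon)_1\lesssim\epsilon^{-1}(\mathcal X_n^*+\mathcal Y_n^*)$. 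For the $\mu$-part we have $\pd_\mu(1/\sigma_n)=\alpha^*\mu^{\alpha^*-1}\cdot(\text{order } n)$, so $(\widehat{\mathbb V}_{3,n}^\epsilon)_2\lesssim\frac{1}{\sqrt n}\,n\,\epsilon^{-2}\mathcal X_n^*=\frac{\sqrt n}{\epsilon^2}\mathcal X_n^*$. Adding the two components gives the claimed sufficient condition. The particular statement about $(\widehat{\mathbb V}_{3,n}^\epsilon)_1$ is the first bound on its own.

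For (4) we take second derivatives. Differentiating the cross term once more in $\lambda$ produces $\sum\widetilde x(t_{i-1}^n)^2$ with coefficient $\Delta_n^2\sigma_n^{-1}\epsilon^{-2}$, so its difference gives the $\mathcal Z_n^*$ term. We obtain:
\begin{itemize}
\item $(\widehat{\mathbb V}_{4,n}^\epsilon)_{1,1}$, after multiplication by $\epsilon^2$, is $\lesssim\Delta_n\mathcal X_n+\Delta_n\mathcal Y_n+\Delta_n\mathcal Z_n^*$, which is the condition with $1/n$;
\item $(\widehat{\mathbb V}_{4,n}^\epsilon)_{1,2}$ is $\lesssim\frac{\epsilon}{\sqrt n}\epsilon^{-2}(\mathcal X_n+\mathcal Y_n)$;
\item $(\widehat{\mathbb V}_{4,n}^\epsilon)_{2,2}$ is $\lesssim\frac1n n\epsilon^{-2}\mathcal X_n$.
\end{itemize}
Under the stated hypotheses each bound is $\op(1)$. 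This uses $\epsilon<1$ and $n\to\infty$, so that for instance $\epsilon^{-1}n^{-1/2}\mathcal X_n\le\epsilon^{-2}\mathcal X_n$ and $\frac1n\mathcal X_n\le\epsilon^{-2}\mathcal X_n$.

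The main obstacle is careful bookkeeping of the orders of $\sigma_n^{-1}$ and its $\lambda$-derivatives, uniformly over $\Xi_\nu$. Specifically, one must verify that $\pd_\lambda\sigma_n^{-1}$ and $\pd_\lambda^2\sigma_n^{-1}$ are $\OO(1)$ rather than $\OO(n)$. I would do this by Taylor expanding $(1-\ee^{-2\lambda\Delta_n})/(2\lambda)=\Delta_n-\lambda\Delta_n^2+\OO(\Delta_n^3)$, which holds uniformly since $\Xi_\lambda$ is compact. The remaining step is to check that the suprema over $\lambda$ of the data-dependent sums are exactly those defining $\mathcal X_n$ and $\mathcal Y_n$.
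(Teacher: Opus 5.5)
Your proposal is correct and follows the paper's proof essentially line by line. The paper writes $V_n^\epsilon=\frac{F_n}{\epsilon^2\Delta_n}\sum_i\mathcal M_i^2-n\log F_n$ with $F_n(\lambda,\mu;\alpha)=\mu^\alpha F(2\lambda\Delta_n)$ and $F(s)=s/(1-\ee^{-s})$, so its $F_n/\Delta_n$ is exactly your $1/\sigma_n$, and it bounds each component by the same combinations of $\mathcal X_n,\mathcal Y_n,\mathcal X_n^*,\mathcal Y_n^*,\mathcal Z_n^*$ using uniform asymptotics of $F_n$ and its derivatives. One small correction to your last paragraph: the particular claim on $(\widehat{\mathbb V}_{4,n}^\epsilon)_{1,1}$ needs $\pd_\lambda^2\sigma_n^{-1}=\OO(\Delta_n)$, not merely $\OO(1)$, but this does follow from your stated bounds $|\pd_\lambda^p\sigma_n|\lesssim\Delta_n^{p+1}$ via the quotient rule.
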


\begin{lem}\label{lem_est_W}
For $\widehat {\mathbb W}_{j,n}^\epsilon (j \in \{1,\ldots, 4\})$ 
given in \eqref{hat_W}, we have the following four assertions.
\begin{enumerate}
\item[(1)]
If $n \mathcal X_n = \op(1)$ and
$\frac{n \epsilon^2}{R_{\alpha^*,\alpha_0}^{\mathrm{damp}}} = \oo(1)$, 
then $\widehat {\mathbb W}_{1,n}^\epsilon = \op(1)$.

\item[(2)]
If $\epsilon^{-2} \mathcal X_n = \op(1)$ 
and $\frac{(n \epsilon^2)^{-1}}{R_{\alpha^*,\alpha_0}^{\mathrm{damp}}} = \oo(1)$, then
$\widehat {\mathbb W}_{2,n}^\epsilon = \op(1)$.

\item[(3)]
If $\frac{\sqrt{n}}{\epsilon^2} \mathcal X_n^* =\op(1)$, 
$\epsilon^{-1} \mathcal Y_n^* = \op(1)$ 
and $\frac{\sqrt{n}}{R_{\alpha^*,\alpha_0}^{\mathrm{damp}}} = \oo(1)$,
then $\widehat {\mathbb W}_{3,n}^\epsilon = \op(1)$.
In particular, $(\widehat {\mathbb W}_{3,n}^\epsilon)_{1} = \op(1)$ holds
under $\epsilon^{-1} \mathcal X_n^* = \op(1)$ 
and $\epsilon^{-1} \mathcal Y_n^* = \op(1)$.

\item[(4)]
If $\epsilon^{-2} \mathcal X_n = \op(1)$, 
$\frac{1}{\epsilon \sqrt{n}} \mathcal Y_n = \op(1)$, 
$\frac{1}{n} \mathcal Z_n^* = \op(1)$, 
and $\frac{(n \epsilon^2)^{-1}}{R_{\alpha^*,\alpha_0}^{\mathrm{damp}}} = \oo(1)$,
then $\widehat {\mathbb W}_{4,n}^\epsilon = \op(1)$. 
In particular, $(\widehat {\mathbb W}_{4,n}^\epsilon)_{1,1} = \op(1)$ holds
under $\frac{1}{n} \mathcal X_n = \op(1)$, 
$\frac{1}{n} \mathcal Y_n = \op(1)$ and $\frac{1}{n} \mathcal Z_n^* = \op(1)$.
\end{enumerate}
\end{lem}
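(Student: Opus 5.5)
The plan is to write out the dependence of $V_n^{\epsilon}$ on $\alpha$ explicitly and to control each $\widehat{\mathbb W}_{j,n}^\epsilon$ by $|\widehat\alpha-\alpha^*|$ multiplied by sums involving the approximate process. Set $c_n(\lambda)=\frac{2\lambda}{1-\ee^{-2\lambda\Delta_n}}$. Then
\begin{equation*}
V_n^{\epsilon}(\lambda,\mu;\widehat{\mb x},\alpha)
=\epsilon^{-2}c_n(\lambda)\mu^{\alpha}\sum_{i=1}^n\mathcal M_i(\lambda;\widehat{\mb x})^2
+n\log\Bigl(\frac{1-\ee^{-2\lambda\Delta_n}}{2\lambda\Delta_n}\Bigr)-n\alpha\log\mu,
\end{equation*}
so the difference at $\widehat\alpha$ and at $\alpha^*$ is
\begin{equation*}
\epsilon^{-2}c_n(\lambda)\bigl(\mu^{\widehat\alpha}-\mu^{\alpha^*}\bigr)\sum_{i=1}^n\mathcal M_i(\lambda;\widehat{\mb x})^2-n(\widehat\alpha-\alpha^*)\log\mu .
\end{equation*}
The same structure holds for $\pd_\lambda$, $\pd_\mu$ and the second derivatives. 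The only $\alpha$-dependence is through $\mu^\alpha$, $\alpha\mu^{\alpha-1}$, $\alpha(\alpha-1)\mu^{\alpha-2}$ and $\alpha$ itself, and all of these lie in $C^{1,0}(\Xi_\alpha\times\Xi_\mu)$. Hence \eqref{est_f_alpha}, the localization on $\Omega_\gamma$, and Theorem \ref{th1} give that each $\alpha$-difference is $\Op(R_1^{-1})$ uniformly in $\mu\in\Xi_\mu$. We also use $c_n(\lambda)\lesssim n$ and $|\pd_\lambda c_n(\lambda)|\lesssim1$, both uniformly on $\Xi_\lambda$, and $\pd_\lambda\mathcal M_i(\lambda;\widehat{\mb x})=\Delta_n\ee^{-\lambda\Delta_n}\widehat x(t_{i-1}^n)$.

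Next, we reduce the sums over $\widehat{\mb x}$ to sums over $\mb x$ plus the error quantities \eqref{def_XY}--\eqref{def_XYZ*}, namely
\begin{equation*}
\sup_\lambda\sum_{i=1}^n\mathcal M_i(\lambda;\widehat{\mb x})^2\le\mathcal C_n+\mathcal X_n,\quad
\sum_{i=1}^n\mathcal M_i(\lambda^*;\widehat{\mb x})^2\le\mathcal C_n^*+\mathcal X_n^*,\quad
\Bigl|\sum_{i=1}^n\mathcal M_i(\lambda^*;\widehat{\mb x})\widehat x(t_{i-1}^n)\Bigr|\le\mathcal D_n^*+\mathcal Y_n^*,
\end{equation*}
with the analogous bound via $\mathcal Z_n^*$ and $\mathcal E_n^*$. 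For the true OU coordinate we use the standard small-noise estimates from the proof of Theorem A.2 in \cite{TKU2024a}: $\mathcal C_n=\Op(n^{-1}+\epsilon^2)$, $\mathcal D_n=\Op(1)$, $\mathcal C_n^*=\Op(\epsilon^2)$, $\mathcal D_n^*=\Op(\epsilon)$ (a martingale sum) and $\mathcal E_n^*=\Op(n)$. These hold because $\mathcal M_i(\lambda;\mb x)=(\ee^{-\lambda^*\Delta_n}-\ee^{-\lambda\Delta_n})x(t_{i-1}^n)$ plus a Gaussian noise of variance of order $\epsilon^2\Delta_n$, and $x$ is uniformly bounded in probability.

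Inserting these bounds gives each item.
\begin{itemize}
\item[(1)] $\widehat{\mathbb W}_{1,n}^\epsilon\lesssim R_1^{-1}\bigl(n(\mathcal C_n+\mathcal X_n)+n\epsilon^2\bigr)=\Op\bigl(R_1^{-1}(1+n\epsilon^2)\bigr)+\op(R_1^{-1})$. This is $\op(1)$ by $n\mathcal X_n=\op(1)$ and $n\epsilon^2/R_1=\oo(1)$.
\item[(2)] $\widehat{\mathbb W}_{2,n}^\epsilon\lesssim R_1^{-1}\bigl(\epsilon^{-2}(\mathcal C_n+\mathcal X_n)+1\bigr)=\Op\bigl(R_1^{-1}((n\epsilon^2)^{-1}+1)\bigr)+\op(R_1^{-1})$.
\item[(3)] The $\pd_1=\epsilon\pd_\lambda$ component is bounded by $\epsilon^{-1}R_1^{-1}\bigl(\mathcal C_n^*+\mathcal X_n^*+\mathcal D_n^*+\mathcal Y_n^*\bigr)=\op(1)$; the $\log$ term does not depend on $\alpha$ after $\pd_\lambda$. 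The $\pd_2=n^{-1/2}\pd_\mu$ component is bounded by $n^{-1/2}R_1^{-1}\bigl(n\epsilon^{-2}(\mathcal C_n^*+\mathcal X_n^*)+n\bigr)=\Op(\sqrt n/R_1)+\op(1)$. This uses $\frac{\sqrt n}{\epsilon^2}\mathcal X_n^*=\op(1)$ and is exactly where $\sqrt n/R_1=\oo(1)$ enters. Under only $\epsilon^{-1}\mathcal X_n^*=\op(1)$ and $\epsilon^{-1}\mathcal Y_n^*=\op(1)$ the $\pd_1$ bound already suffices, which gives the ``in particular'' part.
\item[(4)] This is handled the same way for the four entries of $K_n^\epsilon$, uniformly in $(\lambda,\mu)$. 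For $(1,1)$ we get $\epsilon^2\pd_\lambda^2$, which involves $n^{-1}$-scaled sums of $\mathcal M_i^2$, $\mathcal M_i\widehat x$ and $\widehat x^2$ times $R_1^{-1}$, so $\frac1n\mathcal X_n$, $\frac1n\mathcal Y_n$ and $\frac1n\mathcal Z_n^*$ suffice. For $(1,2)$ we get a term of order $\frac{1}{\epsilon\sqrt n}(\mathcal Y_n+\ldots)R_1^{-1}$. For $(2,2)$ we get $R_1^{-1}\bigl(\epsilon^{-2}(\mathcal C_n+\mathcal X_n)+1\bigr)$, which requires $(n\epsilon^2)^{-1}/R_1=\oo(1)$.
\end{itemize}

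I expect the main obstacle to be obtaining the bounds on $\mathcal C_n$, $\mathcal D_n$ and $\widehat x(t_{i-1}^n)^2$ uniformly in $\lambda\in\Xi_\lambda$, in particular in item (4), where $\lambda$ ranges over the whole compact set rather than sitting at $\lambda^*$. I would handle this by expanding $\mathcal M_i(\lambda;\mb x)$ explicitly as the drift mismatch times $x(t_{i-1}^n)$ plus the noise term and applying Schwarz, so that the $\lambda$-dependence enters only through bounded deterministic factors.
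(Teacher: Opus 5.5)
Your proposal is correct and follows essentially the same route as the paper. In both, the $\alpha$-dependence is isolated in $\mu^{\alpha}$ and $-n\alpha\log\mu$, and each $\alpha$-difference is bounded by $|\widehat\alpha-\alpha^*|=\Op(R_1^{-1})$ via \eqref{est_f_alpha} on $\Omega_\gamma$. The sums over $\widehat{\mb x}$ are then split into $\mathcal C_n,\mathcal D_n,\mathcal C_n^*,\mathcal D_n^*,\mathcal E_n^*$ plus $\mathcal X_n,\mathcal Y_n,\mathcal X_n^*,\mathcal Y_n^*,\mathcal Z_n^*$. The one deviation is your bound $\mathcal D_n=\Op(1)$, obtained from the drift-mismatch decomposition; it is sharper than the paper's Schwarz bound $\Op(\sqrt{n\epsilon^2}\lor 1)$ but does not change the hypotheses needed.
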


\begin{rmk}\label{rmk_W}
$\widehat {\mathbb W}_{j,n}^\epsilon (j \in \{1,\ldots, 4\})$ 
are the differences between 
the quasi log-likelihood function $V_n^{\epsilon}$,
the score function $L_n^{\epsilon}$
and the observed information $K_n^{\epsilon}$
with the estimator $\widehat \alpha$ and those with the true value $\alpha^*$, 
and are quantities that do not appear in \cite{TKU2026} since $\alpha$ is assumed to be known.
We find from Lemmas \ref{lem_est_V} and \ref{lem_est_W} that the sufficient conditions for $\widehat {\mathbb W}_{j,n}^\epsilon = \op(1)$ 
are the conditions that combine those for $\widehat {\mathbb V}_{j,n}^\epsilon = \op(1)$ 
with the condition containing $R_{\alpha^*,\alpha_0}^{\mathrm{damp}}$.
We thus see that the conditions involving $R_{\alpha^*,\alpha_0}^{\mathrm{damp}}$ control the estimation error of $\alpha$.
\end{rmk}

\begin{proof}[\bf{Proof of Lemma \ref{lem_est_V}}]
We define $F(s) = \frac{s}{1-\ee^{-s}}$ and
$F_n(\lambda, \mu;\alpha) = \mu^{\alpha} F(2\lambda \Delta_n)$ 
for $s \in (0,\infty)$, $\lambda, \mu \in (0,\infty)$ and $\alpha \in (0,3)$.
We then have
\begin{equation}\label{eq_V}
V_n^{\epsilon} (\lambda,\mu; \widetilde {\mb x}, \alpha)
= \frac{F_n(\lambda,\mu;\alpha)}{\epsilon^2 \Delta_n} 
\sum_{i=1}^n \mathcal M_i(\lambda;\widetilde {\mb x})^2
-n \log(F_n(\lambda,\mu; \alpha)).
\end{equation}
Since we find
\begin{equation*}
\lim_{s \searrow 0} F(s) = 1,
\quad 
\lim_{s \searrow 0} F'(s) = \frac{1}{2}, 
\quad
\lim_{s \searrow 0} F''(s) = \frac{1}{6}
\end{equation*}
and
\begin{equation*}
\left\{
\begin{split}
\pd_\lambda F_n(\lambda,\mu;\alpha) &= 2 \Delta_n \mu^\alpha F'(2 \lambda \Delta_n),
\\
\pd_\mu F_n(\lambda,\mu;\alpha) &= \alpha \mu^{\alpha-1} F(2 \lambda \Delta_n),
\end{split}
\right.
\quad
\left\{
\begin{split}
\pd_\lambda^2 F_n(\lambda,\mu;\alpha) 
&= (2 \Delta_n)^2 \mu^\alpha F''(2 \lambda \Delta_n),
\\
\pd_\mu^2 F_n(\lambda,\mu;\alpha) 
&= \alpha(\alpha -1) \mu^{\alpha-2} F(2 \lambda \Delta_n),
\\
\pd_\mu \pd_\lambda F_n(\lambda,\mu;\alpha) 
&= 2 \alpha \Delta_n \mu^{\alpha-1} F'(2 \lambda \Delta_n),
\end{split}
\right.
\end{equation*}
we obtain
\begin{equation}\label{asym_Fn}
\left\{
\begin{split}
\lim_{n \to \infty} 
\sup_{(\lambda, \mu, \alpha) \in \Xi_{\nu} \times \Xi_{\alpha}} \bigl| F_n(\lambda, \mu;\alpha) -\mu^{\alpha} \bigr| &= 0,
\\
\lim_{n \to \infty} 
\sup_{(\lambda, \mu, \alpha) \in \Xi_{\nu} \times \Xi_{\alpha}} 
\bigl| \Delta_n^{-1} \pd_\lambda F_n(\lambda, \mu;\alpha) -\mu^{\alpha} \bigr| &= 0,
\\
\lim_{n \to \infty} 
\sup_{(\lambda, \mu, \alpha) \in \Xi_{\nu} \times \Xi_{\alpha}} 
\bigl| \pd_\mu F_n(\lambda, \mu;\alpha) -\alpha \mu^{\alpha-1} \bigr| &= 0,
\\
\lim_{n \to \infty} 
\sup_{(\lambda, \mu, \alpha) \in \Xi_{\nu} \times \Xi_{\alpha}} 
\Bigl| \Delta_n^{-2} \pd_\lambda^2 F_n(\lambda, \mu;\alpha) 
-\frac{2 \mu^\alpha}{3} \Bigr| &= 0,
\\
\lim_{n \to \infty} 
\sup_{(\lambda, \mu, \alpha) \in \Xi_{\nu} \times \Xi_{\alpha}} 
\bigl| \pd_\mu^2 F_n(\lambda, \mu;\alpha) -\alpha(\alpha -1) \mu^{\alpha-2} \bigr| &= 0,
\\
\lim_{n \to \infty} 
\sup_{(\lambda, \mu, \alpha) \in \Xi_{\nu} \times \Xi_{\alpha}} 
\bigl| \Delta_n^{-1} \pd_\mu \pd_\lambda F_n(\lambda, \mu;\alpha) 
-\alpha \mu^{\alpha-1} \bigr| &= 0.
\end{split}
\right.
\end{equation}

\begin{enumerate}
\item[(1)]
By \eqref{eq_V}, we obtain
\begin{align}
&\bigl|
V_n^{\epsilon} (\lambda,\mu;\widehat {\mb x}, \alpha^*)
-V_n^{\epsilon} (\lambda,\mu;{\mb x}, \alpha^*)
\bigr|
\nonumber
\\
&= 
F_n(\lambda,\mu;\alpha^*) \cdot \frac{1}{\epsilon^2 \Delta_n}
\Biggl|
\sum_{i=1}^n 
\bigl( 
\mathcal M_i(\lambda;\widehat {\mb x})^2 -\mathcal M_i(\lambda; {\mb x})^2
\bigr)
\Biggr|.
\label{VV1}
\end{align}
We therefore see from \eqref{asym_Fn} that
\begin{align*}
\widehat {\mathbb V}_{1,n}^\epsilon
&= \epsilon^{2} \sup_{(\lambda,\mu) \in \Xi_{\nu}}
\bigl|
V_n^{\epsilon} (\lambda,\mu;\widehat {\mb x}, \alpha^*)
-V_n^{\epsilon} (\lambda,\mu;{\mb x}, \alpha^*)
\bigr|
\\
&\le 
\sup_{(\lambda,\mu) \in \Xi_{\nu}}
F_n(\lambda,\mu;\alpha^*) 
\cdot \frac{1}{\Delta_n}
\sup_{\lambda \in \Xi_{\lambda}} 
\Biggl|
\sum_{i=1}^n 
\bigl( 
\mathcal M_i(\lambda;\widehat {\mb x})^2 -\mathcal M_i(\lambda; {\mb x})^2
\bigr)
\Biggr|
\\
&\lesssim n \mathcal X_n
\quad (n \to \infty, \epsilon \searrow 0).
\end{align*}

\item[(2)]
We find from \eqref{asym_Fn} and \eqref{VV1} that
\begin{align*}
\widehat {\mathbb V}_{2,n}^\epsilon
&= \frac{1}{n}\sup_{(\lambda,\mu) \in \Xi_{\nu}}
\bigl| V_n^{\epsilon} (\lambda,\mu;\widehat {\mb x}, \alpha^*)
-V_n^{\epsilon} (\lambda,\mu;{\mb x}, \alpha^*) \bigr|
\\
&\le
\sup_{(\lambda,\mu) \in \Xi_{\nu}}
F_n(\lambda,\mu;\alpha^*) 
\cdot \epsilon^{-2} 
\sup_{\lambda \in \Xi_{\lambda}} 
\Biggl|
\sum_{i=1}^n 
\bigl( 
\mathcal M_i(\lambda;\widehat {\mb x})^2 -\mathcal M_i(\lambda; {\mb x})^2
\bigr)
\Biggr|
\\
&\lesssim
\epsilon^{-2} \mathcal X_n
\quad
(n \to \infty, \epsilon \searrow 0).
\end{align*}

\item[(3)]
We have
\begin{equation}\label{eq_dV}
\left\{
\begin{split}
\pd_\lambda V_n^{\epsilon} (\lambda,\mu; \widetilde {\mb x}, \alpha)
&= 
\frac{\pd_\lambda F_n(\lambda,\mu;\alpha)}{\epsilon^2 \Delta_n} 
\sum_{i=1}^n \mathcal M_i(\lambda;\widetilde {\mb x})^2
\\
&\quad 
+\frac{2 \ee^{-\lambda \Delta_n} F_n(\lambda,\mu;\alpha)}{\epsilon^2} 
\sum_{i=1}^n \mathcal M_i(\lambda;\widetilde {\mathbf x}) \widetilde x(t_{i-1}^n)
\\
&\quad 
-n \pd_\lambda \log(F_n(\lambda,\mu; \alpha)),
\\
\pd_\mu V_n^{\epsilon} (\lambda,\mu; \widetilde {\mb x}, \alpha)
&= 
\frac{\pd_\mu F_n(\lambda,\mu;\alpha)}{\epsilon^2 \Delta_n} 
\sum_{i=1}^n \mathcal M_i(\lambda;\widetilde {\mathbf x})^2
\\
&\quad 
-n \pd_\mu \log(F_n(\lambda,\mu; \alpha))
\end{split}
\right.
\end{equation}
and
\begin{equation*}
\left\{
\begin{split}
\pd_\lambda \log(F_n(\lambda,\mu; \alpha))
&= \frac{\dd}{\dd \lambda} \log(F(2\lambda \Delta_n)),
\\
\pd_\mu \log(F_n(\lambda,\mu; \alpha))
&= \alpha \frac{\dd}{\dd \mu}\log(\mu) = \frac{\alpha}{\mu}.
\end{split}
\right.
\end{equation*}
We then see from 
\begin{align*}
&\bigl|
\pd_\lambda V_n^{\epsilon} (\lambda,\mu;\widehat {\mb x}, \alpha^*)
-\pd_\lambda V_n^{\epsilon} (\lambda,\mu;{\mb x}, \alpha^*)
\bigr|
\\
&\le
\frac{|\pd_\lambda F_n(\lambda,\mu;\alpha^*)|}{\Delta_n} \cdot \epsilon^{-2} 
\Biggl|
\sum_{i=1}^n 
\bigl( 
\mathcal M_i(\lambda;\widehat {\mb x})^2 -\mathcal M_i(\lambda; {\mb x})^2
\bigr)
\Biggr|
\\
&\quad+
2 \ee^{-\lambda \Delta_n} F_n(\lambda,\mu;\alpha^*) \cdot \epsilon^{-2}
\Biggl|
\sum_{i=1}^n \bigl( 
\mathcal M_i(\lambda;\widehat {\mathbf x}) \widehat x(t_{i-1}^n)
- \mathcal M_i(\lambda; {\mathbf x}) x(t_{i-1}^n)
\bigr)
\Biggr|
\end{align*}
and \eqref{asym_Fn} that 
\begin{align*}
(\widehat {\mathbb V}_{3,n}^\epsilon)_1
&= \epsilon \sup_{\mu \in \Xi_\mu} \bigl|
\pd_\lambda V_n^{\epsilon} (\lambda^*,\mu; \widehat {\mb x}, \alpha^*)
-\pd_\lambda V_n^{\epsilon} (\lambda^*,\mu; {\mb x}, \alpha^*)
\bigr|
\\
&\le
\sup_{\mu \in \Xi_\mu} 
\frac{|\pd_\lambda F_n(\lambda^*,\mu;\alpha^*)|}{\Delta_n} \cdot \epsilon^{-1} 
\Biggl|
\sum_{i=1}^n 
\bigl( 
\mathcal M_i(\lambda^*;\widehat {\mb x})^2 -\mathcal M_i(\lambda^*; {\mb x})^2
\bigr)
\Biggr|
\\
&\quad+
2 \ee^{-\lambda^* \Delta_n} 
\sup_{\mu \in \Xi_\mu} F_n(\lambda^*,\mu;\alpha^*) \cdot \epsilon^{-1}
\Biggl|
\sum_{i=1}^n \bigl( 
\mathcal M_i(\lambda^*;\widehat {\mathbf x}) \widehat x(t_{i-1}^n)
- \mathcal M_i(\lambda^*; {\mathbf x}) x(t_{i-1}^n)
\bigr)
\Biggr|
\\
&\lesssim \epsilon^{-1} (\mathcal X_n^* +\mathcal Y_n^*)
\quad (n \to \infty, \epsilon \searrow 0),
\end{align*}
and from 
\begin{align*}
&\bigl|
\pd_\mu V_n^{\epsilon} (\lambda,\mu;\widehat {\mb x}, \alpha^*)
-\pd_\mu V_n^{\epsilon} (\lambda,\mu;{\mb x}, \alpha^*)
\bigr|
\\
&\le
|\pd_\mu F_n(\lambda,\mu;\alpha^*)| \cdot \frac{1}{\epsilon^2 \Delta_n} 
\Biggl|
\sum_{i=1}^n 
\bigl( 
\mathcal M_i(\lambda;\widehat {\mb x})^2 -\mathcal M_i(\lambda; {\mb x})^2
\bigr)
\Biggr|
\end{align*}
and \eqref{asym_Fn} that
\begin{align*}
(\widehat {\mathbb V}_{3,n}^\epsilon)_2
&= \frac{1}{\sqrt{n}} \sup_{\mu \in \Xi_\mu} \bigl|
\pd_\mu V_n^{\epsilon} (\lambda^*,\mu; \widehat {\mb x}, \alpha^*)
-\pd_\mu V_n^{\epsilon} (\lambda^*,\mu; {\mb x}, \alpha^*)
\bigr|
\\
&\le
\sup_{\mu \in \Xi_\mu} |\pd_\mu F_n(\lambda^*,\mu;\alpha^*)| \cdot 
\frac{\sqrt{n}}{\epsilon^2} 
\Biggl|
\sum_{i=1}^n 
\bigl( 
\mathcal M_i(\lambda^*;\widehat {\mb x})^2 -\mathcal M_i(\lambda^*; {\mb x})^2
\bigr)
\Biggr|
\\
&\lesssim \frac{\sqrt{n}}{\epsilon^2} \mathcal X_n^*
\quad
(n \to \infty, \epsilon \searrow 0).
\end{align*}
Therefore, we obtain
by $\epsilon^{-1} \lor \frac{\sqrt{n}}{\epsilon^2} = \frac{\sqrt{n}}{\epsilon^2}$,
\begin{align*}
\widehat {\mathbb V}_{3,n}^\epsilon 
\le \sum_{i \in \{1,2\}} (\widehat {\mathbb V}_{3,n}^\epsilon)_i
&\lesssim \epsilon^{-1} (\mathcal X_n^* +\mathcal Y_n^*) 
+\frac{\sqrt{n}}{\epsilon^2} \mathcal X_n^*
\\
&\lesssim \frac{\sqrt{n}}{\epsilon^2} \mathcal X_n^* +\epsilon^{-1} \mathcal Y_n^*
\quad
(n \to \infty, \epsilon \searrow 0).
\end{align*}

\item[(4)]
We have
\begin{equation}\label{eq_d2V}
\left\{
\begin{split}
\pd_\lambda^2 V_n^{\epsilon} (\lambda,\mu; \widetilde {\mb x}, \alpha)
&= 
\frac{\pd_\lambda^2 F_n(\lambda,\mu;\alpha)}{\epsilon^2 \Delta_n} 
\sum_{i=1}^n \mathcal M_i(\lambda;\widetilde {\mathbf x})^2
\\
&\quad 
+\frac{4 \ee^{-\lambda \Delta_n} 
\pd_\lambda F_n(\lambda,\mu;\alpha)}{\epsilon^2} 
\sum_{i=1}^n \mathcal M_i(\lambda;\widetilde {\mathbf x}) \widetilde x(t_{i-1}^n)
\\
&\quad 
+\frac{2 \ee^{-\lambda \Delta_n} 
F_n(\lambda,\mu;\alpha) \Delta_n}{\epsilon^2} 
\sum_{i=1}^n \widetilde x(t_{i-1}^n)^2
\\
&\quad 
-n \pd_\lambda^2 \log(F_n(\lambda,\mu; \alpha)),
\\
\pd_\mu^2 V_n^{\epsilon} (\lambda,\mu; \widetilde {\mb x}, \alpha)
&= 
\frac{\pd_\mu^2 F_n(\lambda,\mu;\alpha)}{\epsilon^2 \Delta_n} 
\sum_{i=1}^n \mathcal M_i(\lambda;\widetilde {\mathbf x})^2
\\
&\quad-n \pd_\mu^2 \log(F_n(\lambda,\mu; \alpha)),
\\
\pd_\mu \pd_\lambda V_n^{\epsilon} (\lambda,\mu; \widetilde {\mb x}, \alpha)
&= 
\frac{\pd_\mu \pd_\lambda F_n(\lambda,\mu;\alpha)}{\epsilon^2 \Delta_n} 
\sum_{i=1}^n \mathcal M_i(\lambda;\widetilde {\mathbf x})^2
\\
&\quad 
+\frac{2 \ee^{-\lambda \Delta_n} \pd_\mu 
F_n(\lambda,\mu;\alpha)}{\epsilon^2} 
\sum_{i=1}^n \mathcal M_i(\lambda;\widetilde {\mathbf x}) \widetilde x(t_{i-1}^n)
\\
&\quad 
-n \pd_\mu \pd_\lambda \log(F_n(\lambda,\mu; \alpha))
\end{split}
\right.
\end{equation}
and
\begin{equation*}
\left\{
\begin{split}
\pd_\lambda^2 \log(F_n(\lambda,\mu; \alpha))
&= \frac{\dd^2}{\dd \lambda^2} \log(F(2\lambda \Delta_n)),
\\
\pd_\mu^2 \log(F_n(\lambda,\mu; \alpha))
&= \alpha \frac{\dd}{\dd \mu} \frac{1}{\mu} = \frac{-\alpha}{\mu^2},
\\
\pd_\mu \pd_\lambda \log(F_n(\lambda,\mu; \alpha))
&= 0.
\end{split}
\right.
\end{equation*}
It then follows from
\begin{align*}
&\bigl|
\pd_\lambda^2 V_n^{\epsilon} (\lambda,\mu;\widehat {\mb x}, \alpha^*)
-\pd_\lambda^2 V_n^{\epsilon} (\lambda,\mu;{\mb x}, \alpha^*)
\bigr|
\\
&\le
\frac{|\pd_\lambda^2 F_n(\lambda,\mu;\alpha^*)|}{\Delta_n^2} 
\cdot \frac{\Delta_n}{\epsilon^2} 
\Biggl|
\sum_{i=1}^n 
\bigl( 
\mathcal M_i(\lambda;\widehat {\mb x})^2 -\mathcal M_i(\lambda; {\mb x})^2
\bigr)
\Biggr|
\\
&\quad+
\frac{4 \ee^{-\lambda \Delta_n} |\pd_\lambda F_n(\lambda,\mu;\alpha^*)|}{\Delta_n} 
\cdot \frac{\Delta_n}{\epsilon^2} 
\Biggl|
\sum_{i=1}^n \bigl( 
\mathcal M_i(\lambda;\widehat {\mathbf x}) \widehat x(t_{i-1}^n)
- \mathcal M_i(\lambda; {\mathbf x}) x(t_{i-1}^n)
\bigr)
\Biggr|
\\
&\quad+
2 \ee^{-\lambda \Delta_n} F_n(\lambda,\mu;\alpha^*) \cdot \frac{\Delta_n}{\epsilon^2} 
\Biggl|
\sum_{i=1}^n \bigl( 
\widehat x(t_{i-1}^n)^2 -x(t_{i-1}^n)^2
\bigr)
\Biggr|
\end{align*}
and \eqref{asym_Fn} that 
\begin{align*}
(\widehat {\mathbb V}_{4,n}^\epsilon)_{1,1} 
&= \epsilon^2 \sup_{(\lambda,\mu) \in \Xi_\nu}
\bigl|
\pd_\lambda^2 V_n^{\epsilon} (\lambda,\mu;\widehat {\mb x}, \alpha^*)
-\pd_\lambda^2 V_n^{\epsilon} (\lambda,\mu;{\mb x}, \alpha^*)
\bigr|
\\
&\le
\sup_{(\lambda,\mu) \in \Xi_\nu} 
\frac{|\pd_\lambda^2 F_n(\lambda,\mu;\alpha^*)|}{\Delta_n^2} 
\cdot \frac{1}{n} 
\sup_{\lambda \in \Xi_\lambda} \Biggl|
\sum_{i=1}^n 
\bigl( 
\mathcal M_i(\lambda;\widehat {\mb x})^2 -\mathcal M_i(\lambda; {\mb x})^2
\bigr)
\Biggr|
\\
&\quad+
4\sup_{(\lambda,\mu) \in \Xi_\nu} 
\frac{\ee^{-\lambda \Delta_n} |\pd_\lambda F_n(\lambda,\mu;\alpha^*)|}{\Delta_n} 
\\
&\qquad \times \frac{1}{n}
\sup_{\lambda \in \Xi_\lambda}
\Biggl|
\sum_{i=1}^n \bigl( 
\mathcal M_i(\lambda;\widehat {\mathbf x}) \widehat x(t_{i-1}^n)
- \mathcal M_i(\lambda; {\mathbf x}) x(t_{i-1}^n)
\bigr)
\Biggr|
\\
&\quad+
2 \sup_{(\lambda,\mu) \in \Xi_\nu} 
\ee^{-\lambda \Delta_n} F_n(\lambda,\mu;\alpha^*) 
\cdot \frac{1}{n}
\Biggl|
\sum_{i=1}^n \bigl( 
\widehat x(t_{i-1}^n)^2 -x(t_{i-1}^n)^2
\bigr)
\Biggr|
\\
&\lesssim \frac{1}{n} (\mathcal X_n +\mathcal Y_n +\mathcal Z_n^*)
\quad
(n \to \infty, \epsilon \searrow 0),
\end{align*}
from
\begin{align*}
&\bigl|
\pd_\mu^2 V_n^{\epsilon} (\lambda,\mu;\widehat {\mb x}, \alpha^*)
-\pd_\mu^2 V_n^{\epsilon} (\lambda,\mu;{\mb x}, \alpha^*)
\bigr|
\\
&\le
\frac{|\pd_\mu^2 F_n(\lambda,\mu;\alpha^*)|}{\epsilon^2 \Delta_n} 
\Biggl|
\sum_{i=1}^n 
\bigl( 
\mathcal M_i(\lambda;\widehat {\mb x})^2 -\mathcal M_i(\lambda; {\mb x})^2
\bigr)
\Biggr|
\end{align*}
and \eqref{asym_Fn} that $(\widehat {\mathbb V}_{4,n}^\epsilon)_{2,2} 
\lesssim \epsilon^{-2} \mathcal X_n$
$(n \to \infty, \epsilon \searrow 0)$, and from 
\begin{align*}
&\bigl|
\pd_\mu \pd_\lambda V_n^{\epsilon} (\lambda,\mu;\widehat {\mb x}, \alpha^*)
-\pd_\mu \pd_\lambda V_n^{\epsilon} (\lambda,\mu;{\mb x}, \alpha^*)
\bigr|
\\
&\le
\frac{|\pd_\mu \pd_\lambda F_n(\lambda,\mu;\alpha^*)|}{\Delta_n} \cdot 
\frac{1}{\epsilon^2} 
\Biggl|
\sum_{i=1}^n 
\bigl( 
\mathcal M_i(\lambda;\widehat {\mb x})^2 -\mathcal M_i(\lambda; {\mb x})^2
\bigr)
\Biggr|
\\
&\quad+
2 \ee^{-\lambda \Delta_n} |\pd_\mu F_n(\lambda,\mu;\alpha^*)|
\cdot \frac{1}{\epsilon^2} 
\Biggl|
\sum_{i=1}^n \bigl( 
\mathcal M_i(\lambda;\widehat {\mathbf x}) \widehat x(t_{i-1}^n)
- \mathcal M_i(\lambda; {\mathbf x}) x(t_{i-1}^n)
\bigr)
\Biggr|
\end{align*}
and \eqref{asym_Fn} that 
\begin{align*}
(\widehat {\mathbb V}_{4,n}^\epsilon)_{1,2} 
&= \frac{\epsilon}{\sqrt{n}} \sup_{(\lambda,\mu) \in \Xi_\nu} \bigl| 
\pd_\mu \pd_\lambda V_n^{\epsilon} (\lambda,\mu;\widehat {\mb x}, \alpha^*)
-\pd_\mu \pd_\lambda V_n^{\epsilon} (\lambda,\mu;{\mb x}, \alpha^*)
\bigr|
\\
&\le
\sup_{(\lambda,\mu) \in \Xi_\nu} 
\frac{|\pd_\mu \pd_\lambda F_n(\lambda,\mu;\alpha^*)|}{\Delta_n} \cdot 
\frac{1}{\epsilon \sqrt{n}}
\sup_{\lambda \in \Xi_\lambda}
\Biggl|
\sum_{i=1}^n 
\bigl( 
\mathcal M_i(\lambda;\widehat {\mb x})^2 -\mathcal M_i(\lambda; {\mb x})^2
\bigr)
\Biggr|
\\
&\quad+
2 \sup_{(\lambda,\mu) \in \Xi_\nu} 
\ee^{-\lambda \Delta_n} |\pd_\mu F_n(\lambda,\mu;\alpha^*)|
\\
&\qquad \times \frac{1}{\epsilon \sqrt{n}}
\sup_{\lambda \in \Xi_\lambda} \Biggl|
\sum_{i=1}^n \bigl( 
\mathcal M_i(\lambda;\widehat {\mathbf x}) \widehat x(t_{i-1}^n)
- \mathcal M_i(\lambda; {\mathbf x}) x(t_{i-1}^n)
\bigr)
\Biggr|
\\
&\lesssim \frac{1}{\epsilon \sqrt{n}} (\mathcal X_n +\mathcal Y_n)
\quad (n \to \infty, \epsilon \searrow 0). 
\end{align*}
We thus see from
$\frac{1}{n} \lor \epsilon^{-2} \lor \frac{1}{\epsilon \sqrt{n}} = \epsilon^{-2}$
and $\frac{1}{n} \lor \frac{1}{\epsilon \sqrt{n}} = \frac{1}{\epsilon \sqrt{n}}$
that
\begin{align*}
\widehat {\mathbb V}_{4,n}^\epsilon 
\le
\sum_{i,j \in \{1,2\}} 
(\widehat {\mathbb V}_{4,n}^\epsilon)_{i,j}
&\lesssim
\frac{1}{n} (\mathcal X_n +\mathcal Y_n +\mathcal Z_n^*)
+\epsilon^{-2} \mathcal X_n
+\frac{1}{\epsilon \sqrt{n}} (\mathcal X_n +\mathcal Y_n)
\\
&\lesssim \epsilon^{-2} \mathcal X_n +\frac{1}{\epsilon \sqrt{n}} \mathcal Y_n
+\frac{1}{n} \mathcal Z_n^*
\quad
(n \to \infty, \epsilon \searrow 0).
\end{align*}
\end{enumerate}
\end{proof}

\begin{proof}[\bf{Proof of Lemma \ref{lem_est_W}}]
It follows from (A.10)--(A.12) in \cite{TKU2024a} that
\begin{equation}\label{est_CDE*}
\mathcal C_n^* = \Op(\epsilon^2),
\quad 
\mathcal D_n^* = \Op(\epsilon),
\quad
\mathcal E_n^* = \Op(n)
\end{equation}
and from $\mathcal M_i(\lambda;{\mb x}) 
= \mathcal M_i(\lambda^*;{\mb x}) 
+(\ee^{-\lambda^* \Delta_n} -\ee^{-\lambda \Delta_n})x(t_{i-1}^n)$ that
\begin{equation}\label{est_CD}
\left\{
\begin{split}
\mathcal C_n &\le 2 \Bigl(\mathcal C_n^* 
+\sup_{\lambda \in \Xi_{\lambda}}(\ee^{-\lambda^* \Delta_n} -\ee^{-\lambda \Delta_n})^2 
\mathcal E_n^* \Bigr)
= \Op(\epsilon^2 \lor n^{-1}),
\\
\mathcal D_n &\le \sqrt{\mathcal C_n \mathcal E_n^*}
= \Op \bigl( (\epsilon \lor n^{-1/2}) n^{1/2} \bigr)
= \Op(\sqrt{n \epsilon^2} \lor 1).
\end{split}
\right.
\end{equation}
\begin{enumerate}
\item[(1)]
By \eqref{eq_V} and the definition 
$F_n(\lambda,\mu;\alpha) = \mu^\alpha F(2 \lambda \Delta_n)$, we obtain
\begin{align*}
& \bigl|
V_n^{\epsilon} (\lambda,\mu; \widehat {\mb x}, \widehat \alpha)
-V_n^{\epsilon} (\lambda,\mu; \widehat {\mb x}, \alpha^*)
\bigr|
\\
&\le
\frac{| F_n(\lambda,\mu;\widehat \alpha) -F_n(\lambda,\mu;\alpha^*) |}
{\epsilon^2 \Delta_n}
\sum_{i=1}^n \mathcal M_i(\lambda;\widehat {\mathbf x})^2
+n \biggl| \log \biggl( \frac{F_n(\lambda,\mu;\widehat \alpha)}
{F_n(\lambda,\mu;\alpha^*)}\biggr) \biggr|
\\
&\le F(2 \lambda \Delta_n) \cdot \frac{| \mu^{\widehat \alpha} -\mu^{\alpha^*} |}
{\epsilon^2 \Delta_n} 
\Biggl( 
\Biggl|
\sum_{i=1}^n 
\bigl( \mathcal M_i(\lambda;\widehat {\mathbf x})^2 
-\mathcal M_i(\lambda; {\mathbf x})^2 \bigr)
\Biggr|
+ \sum_{i=1}^n \mathcal M_i(\lambda; {\mathbf x})^2
\Biggr)
\\
&\quad +n |\widehat \alpha -\alpha^*| |\log (\mu)|.
\end{align*}
It hence holds from \eqref{est_f_alpha} that for some $\gamma_1 \in (0, \infty)$, 
\begin{align*}
\widehat {\mathbb W}_{1,n}^\epsilon
&=\epsilon^{2}\sup_{(\lambda,\mu) \in \Xi_{\nu}}
\bigl|
V_n^{\epsilon} (\lambda,\mu;\widehat {\mb x},\widehat \alpha)
-V_n^{\epsilon} (\lambda,\mu;\widehat {\mb x}, \alpha^*)
\bigr|
\\
&\lesssim 
|\widehat \alpha -\alpha^*|(n\mathcal X_n + n\mathcal C_n)
+n \epsilon^2 |\widehat \alpha -\alpha^*|
\end{align*}

on $\Omega_{\gamma_1}$.
Since it follows from \eqref{est_CD} that
$n \mathcal C_n = \Op(1 \lor n \epsilon^2)$,
we have $\widehat {\mathbb W}_{1,n}^\epsilon = \op(1)$
under $n \mathcal X_n = \op(1)$ and
\begin{equation}\label{alpha-1}
(1 \lor n \epsilon^2) (\widehat \alpha -\alpha^*) = \op(1).
\end{equation}
Since Theorem \ref{th1} yields $\widehat \alpha -\alpha^* = \Op(R_1^{-1})$,
we can translate \eqref{alpha-1} as $\frac{n \epsilon^2}{R_1} = \oo(1)$.

\item[(2)]
We have
\begin{align*}
\widehat {\mathbb W}_{2,n}^\epsilon
&= \frac{1}{n} \sup_{(\lambda,\mu) \in \Xi_{\nu}}
\bigl|
V_n^{\epsilon} (\lambda,\mu;\widehat {\mb x},\widehat \alpha)
-V_n^{\epsilon} (\lambda,\mu;\widehat {\mb x}, \alpha^*)
\bigr|
\\
&\lesssim 
|\widehat \alpha -\alpha^*| (\epsilon^{-2} \mathcal X_n +\epsilon^{-2}  \mathcal C_n)
+|\widehat \alpha -\alpha^*|
\end{align*}
on $\Omega_{\gamma_1}$ as in the proof of (1).
Since it follows from \eqref{est_CD} that
$\epsilon^{-2} \mathcal C_n = \Op(1 \lor (n \epsilon^2)^{-1})$,
it holds that $\widehat {\mathbb W}_{2,n}^\epsilon = \op(1)$
under $\epsilon^{-2} \mathcal X_n = \op(1)$ and
\begin{equation}\label{alpha-2}
(1 \lor (n \epsilon^2)^{-1}) (\widehat \alpha -\alpha^*) = \op(1).
\end{equation}
By Theorem \ref{th1}, we can rewrite \eqref{alpha-2} as 
$\frac{(n \epsilon^2)^{-1}}{R_1} = \oo(1)$.

\item[(3)]
It follows from \eqref{eq_dV} that
\begin{align*}
& \bigl|
\pd_\lambda V_n^{\epsilon} (\lambda,\mu; \widehat {\mb x}, \widehat \alpha)
-\pd_\lambda V_n^{\epsilon} (\lambda,\mu; \widehat {\mb x}, \alpha^*)
\bigr|
\\
&\le 
\frac{|\pd_\lambda (F_n(\lambda,\mu;\widehat \alpha) -F_n(\lambda,\mu;\alpha^*))|}
{\epsilon^2 \Delta_n} 
\sum_{i=1}^n \mathcal M_i(\lambda;\widehat {\mathbf x})^2
\\
&\quad
+\frac{2 \ee^{-\lambda \Delta_n} 
|F_n(\lambda,\mu;\widehat \alpha) -F_n(\lambda,\mu;\alpha^*)|}{\epsilon^2} 
\Biggl|
\sum_{i=1}^n \mathcal M_i(\lambda;\widehat {\mathbf x}) \widehat x(t_{i-1}^n)
\Biggr|
\\
&\quad
+n \biggl| \pd_\lambda \log \biggl( \frac{F_n(\lambda,\mu;\widehat \alpha)}
{F_n(\lambda,\mu;\alpha^*)}\biggr) \biggr|
\\
&\le
\frac{|\frac{\dd}{\dd \lambda} (F(2 \lambda \Delta_n))|}{\Delta_n} 
\cdot
\frac{|\mu^{\widehat \alpha} -\mu^{\alpha^*}|}{\epsilon^2} 
\Biggl( 
\Biggl|
\sum_{i=1}^n 
\bigl( \mathcal M_i(\lambda;\widehat {\mathbf x})^2 
-\mathcal M_i(\lambda; {\mathbf x})^2 \bigr)
\Biggr|
+ \sum_{i=1}^n \mathcal M_i(\lambda; {\mathbf x})^2
\Biggr)
\\
&\quad
+2 \ee^{-\lambda \Delta_n} F(2 \lambda \Delta_n) \cdot
\frac{|\mu^{\widehat \alpha} -\mu^{\alpha^*}|}{\epsilon^2} 
\\
&\qquad \times \Biggl( 
\Biggl|
\sum_{i=1}^n \bigl( 
\mathcal M_i(\lambda;\widehat {\mathbf x}) \widehat x(t_{i-1}^n)
-\mathcal M_i(\lambda; {\mathbf x}) x(t_{i-1}^n) \bigr)
\Biggr|
+\Biggl| \sum_{i=1}^n \mathcal M_i(\lambda; {\mathbf x}) x(t_{i-1}^n) \Biggr|
\Biggr).
\end{align*}
It thus holds from \eqref{est_f_alpha} that on a set $\Omega_{\gamma_1}$,
\begin{align*}
(\widehat {\mathbb W}_{3,n}^\epsilon)_{1}
&= \epsilon \sup_{\mu \in \Xi_\mu}
\bigl|
\pd_\lambda V_n^{\epsilon} (\lambda^*,\mu; \widehat {\mb x}, \widehat \alpha)
-\pd_\lambda V_n^{\epsilon} (\lambda^*,\mu; \widehat {\mb x}, \alpha^*)
\bigr|
\\
&\lesssim |\widehat \alpha -\alpha^*|
(\epsilon^{-1} \mathcal X_n^* +\epsilon^{-1} \mathcal C_n^*
+\epsilon^{-1} \mathcal Y_n^* +\epsilon^{-1} \mathcal D_n^*).
\end{align*}
Since we see from \eqref{est_CDE*} that
$\epsilon^{-1} \mathcal C_n^* = \op(1)$ and $\epsilon^{-1} \mathcal D_n^* = \Op(1)$, 
we obtain $(\widehat {\mathbb W}_{3,n}^\epsilon)_{1} = \op(1)$
under $\epsilon^{-1}\mathcal X_n^* = \Op(1)$, 
$\epsilon^{-1} \mathcal Y_n^* = \Op(1)$ and
$\widehat \alpha -\alpha^* = \op(1)$.
Note that Theorem \ref{th1} implies 
$\widehat \alpha -\alpha^* = \op(1)$ always holds under [B]$_{\alpha^*,\alpha_0}$.

It also follows from \eqref{eq_dV} that
\begin{align*}
&\bigl|
\pd_\mu V_n^{\epsilon} (\lambda,\mu; \widehat {\mb x}, \widehat \alpha)
-\pd_\mu V_n^{\epsilon} (\lambda,\mu; \widehat {\mb x}, \alpha^*)
\bigr|
\\
&\le
\frac{|\pd_\mu (F_n(\lambda,\mu;\widehat \alpha) -F_n(\lambda,\mu;\alpha^*))|}
{\epsilon^2 \Delta_n} 
\sum_{i=1}^n \mathcal M_i(\lambda;\widehat {\mathbf x})^2
+n \biggl| \pd_\mu \log \biggl( \frac{F_n(\lambda,\mu;\widehat \alpha)}
{F_n(\lambda,\mu;\alpha^*)}\biggr) \biggr|
\\
&\le
\frac{F(2 \lambda \Delta_n)|\frac{\dd}{\dd \mu} 
(\mu^{\widehat \alpha} -\mu^{\alpha^*})|}
{\epsilon^2 \Delta_n} 
\Biggl( 
\Biggl|
\sum_{i=1}^n 
\bigl( \mathcal M_i(\lambda;\widehat {\mathbf x})^2 
-\mathcal M_i(\lambda; {\mathbf x})^2 \bigr)
\Biggr|
+ \sum_{i=1}^n \mathcal M_i(\lambda; {\mathbf x})^2
\Biggr)
\\
&\quad
+n \biggl| \frac{\dd}{\dd \mu}\log \bigl(\mu^{\widehat \alpha-\alpha^*} \bigr) \biggr|
\\
&= 
F(2 \lambda \Delta_n) \cdot
\frac{|\widehat \alpha \mu^{\widehat \alpha -1} -\alpha^* \mu^{\alpha^*-1}|}
{\epsilon^2 \Delta_n} 
\Biggl( 
\Biggl|
\sum_{i=1}^n 
\bigl( \mathcal M_i(\lambda;\widehat {\mathbf x})^2 
-\mathcal M_i(\lambda; {\mathbf x})^2 \bigr)
\Biggr|
+ \sum_{i=1}^n \mathcal M_i(\lambda; {\mathbf x})^2
\Biggr)
\\
&\quad
+\frac{n|\widehat \alpha -\alpha^*|}{\mu}.
\end{align*}
By \eqref{est_f_alpha}, there thus exists $\gamma_2 \in (0, \infty)$ 
such that on $\Omega_{\gamma_2}$, 
\begin{align*}
(\widehat {\mathbb W}_{3,n}^\epsilon)_{2}
&= \frac{1}{\sqrt{n}} \sup_{\mu \in \Xi_\mu} 
\bigl|
\pd_\mu V_n^{\epsilon} (\lambda^*,\mu; \widehat {\mb x}, \widehat \alpha)
-\pd_\mu V_n^{\epsilon} (\lambda^*,\mu; \widehat {\mb x}, \alpha^*)
\bigr|
\\
&\lesssim \frac{\sqrt{n}}{\epsilon^2} |\widehat \alpha -\alpha^*|
(\mathcal X_n^* +\mathcal C_n^*)
+\sqrt{n} |\widehat \alpha -\alpha^*|
\\
&= |\widehat \alpha -\alpha^*|
\cdot \frac{\sqrt{n}}{\epsilon^2} \mathcal X_n^* 
+\sqrt{n} |\widehat \alpha -\alpha^*|(\epsilon^{-2} \mathcal C_n^*+1).
\end{align*}
Since we see from \eqref{est_CD} that $\epsilon^{-2} \mathcal C_n^* = \Op(1)$,
we have $(\widehat {\mathbb W}_{3,n}^\epsilon)_{2} = \op(1)$ 
under $\frac{\sqrt{n}}{\epsilon^2} \mathcal X_n^* = \op(1)$ and
\begin{equation}\label{alpha-3}
\sqrt{n} (\widehat \alpha -\alpha^*) = \op(1).
\end{equation}
By Theorem \ref{th1}, 
we can rewrite \eqref{alpha-3} as $\frac{\sqrt{n}}{R_1} = \oo(1)$.

Therefore, we obtain 
\begin{equation*}
\widehat {\mathbb W}_{3,n}^\epsilon 
\le \sum_{i \in \{1,2\}} (\widehat {\mathbb W}_{3,n}^\epsilon)_i
= \op(1)
\end{equation*}
under
$(\epsilon^{-1} \lor \frac{\sqrt{n}}{\epsilon^2}) \mathcal X_n^* = 
\frac{\sqrt{n}}{\epsilon^2} \mathcal X_n^* = \op(1)$,
$\epsilon^{-1} \mathcal Y_n^* = \op(1)$ and $\frac{\sqrt{n}}{R_1} = \oo(1)$.

\item[(4)]
By \eqref{eq_d2V}, we obtain
\begin{align*}
& \bigl|
\pd_\lambda^2 V_n^{\epsilon} (\lambda,\mu; \widehat {\mb x}, \widehat \alpha)
-\pd_\lambda^2 V_n^{\epsilon} (\lambda,\mu; \widehat {\mb x}, \alpha^*)
\bigr|
\\
&\le 
\frac{|\pd_\lambda^2 (F_n(\lambda,\mu;\widehat \alpha) -F_n(\lambda,\mu;\alpha^*))|}
{\epsilon^2 \Delta_n} 
\sum_{i=1}^n \mathcal M_i(\lambda;\widehat {\mathbf x})^2
\\
&\quad
+\frac{4 \ee^{-\lambda \Delta_n} 
|\pd_\lambda (F_n(\lambda,\mu;\widehat \alpha) -F_n(\lambda,\mu;\alpha^*))|}{\epsilon^2} 
\Biggl|
\sum_{i=1}^n \mathcal M_i(\lambda;\widehat {\mathbf x}) \widehat x(t_{i-1}^n)
\Biggr|
\\
&\quad
+\frac{2 \ee^{-\lambda \Delta_n} 
|F_n(\lambda,\mu;\widehat \alpha) -F_n(\lambda,\mu;\alpha^*)| \Delta_n}{\epsilon^2} 
\sum_{i=1}^n \widehat x(t_{i-1}^n)^2 \
\\
&\quad
+n \biggl| \pd_\lambda^2 \log \biggl(
\frac{F_n(\lambda,\mu; \widehat \alpha)}{F_n(\lambda,\mu; \alpha^*)}
\biggr)
\biggr|
\\
&\le
\frac{|\frac{\dd^2}{\dd \lambda^2} (F(2 \lambda \Delta_n))|}{\Delta_n^2} 
\cdot \frac{\Delta_n|\mu^{\widehat \alpha} -\mu^{\alpha^*}|}{\epsilon^2} 
\Biggl( 
\Biggl|
\sum_{i=1}^n 
\bigl( \mathcal M_i(\lambda;\widehat {\mathbf x})^2 
-\mathcal M_i(\lambda; {\mathbf x})^2 \bigr)
\Biggr|
+ \sum_{i=1}^n \mathcal M_i(\lambda; {\mathbf x})^2
\Biggr)
\\
&\quad
+\frac{4 \ee^{-\lambda \Delta_n} 
|\frac{\dd}{\dd \lambda} (F(2 \lambda \Delta_n))|}{\Delta_n} 
\cdot \frac{\Delta_n |\mu^{\widehat \alpha} -\mu^{\alpha^*}|}{\epsilon^2}
\\
&\qquad \times \Biggl( 
\Biggl|
\sum_{i=1}^n \bigl( 
\mathcal M_i(\lambda;\widehat {\mathbf x}) \widehat x(t_{i-1}^n)
-\mathcal M_i(\lambda; {\mathbf x}) x(t_{i-1}^n) \bigr)
\Biggr|
+\Biggl| \sum_{i=1}^n \mathcal M_i(\lambda; {\mathbf x}) x(t_{i-1}^n) \Biggr|
\Biggr)
\\
&\quad
+2 \ee^{-\lambda \Delta_n} F(2 \lambda \Delta_n)
\cdot
\frac{\Delta_n |\mu^{\widehat \alpha} -\mu^{\alpha^*}|}{\epsilon^2}
\Biggl( 
\Biggl|\sum_{i=1}^n \bigl( \widehat x(t_{i-1}^n)^2 -x(t_{i-1}^n)^2 \bigr)
\Biggr|
+\sum_{i=1}^n x(t_{i-1}^n)^2 
\Biggr).
\end{align*}
We thus have
\begin{align*}
(\widehat {\mathbb W}_{4,n}^\epsilon)_{1,1}
&= \epsilon^2 \sup_{(\lambda,\mu) \in \Xi_{\nu}}\bigl|
\pd_\lambda^2 V_n^{\epsilon} (\lambda,\mu; \widehat {\mb x}, \widehat \alpha)
-\pd_\lambda^2 V_n^{\epsilon} (\lambda,\mu; \widehat {\mb x}, \alpha^*)
\bigr|
\\
&\lesssim \frac{1}{n} |\widehat \alpha -\alpha^*|
(\mathcal X_n +\mathcal C_n +\mathcal Y_n +\mathcal D_n +\mathcal Z_n^* +\mathcal E_n^*)
\end{align*}
on $\Omega_{\gamma_1}$.
Since \eqref{est_CDE*} and \eqref{est_CD} yield 
$\frac{1}{n} \mathcal C_n = \op(1)$, $\frac{1}{n} \mathcal D_n = \op(1)$
and $\frac{1}{n}\mathcal E_n^* = \Op(1)$, 
it holds $(\widehat {\mathbb W}_{4,n}^\epsilon)_{1,1} = \op(1)$
under $\frac{1}{n} \mathcal X_n = \op(1)$, $\frac{1}{n}\mathcal Y_n = \op(1)$, 
$\frac{1}{n}\mathcal Z_n^* = \op(1)$ and $\widehat \alpha -\alpha^* = \op(1)$.
We see from Theorem \ref{th1} that 
$\widehat \alpha -\alpha^* = \op(1)$ always holds under [B]$_{\alpha^*,\alpha_0}$.

We also obtain by \eqref{eq_d2V},
\begin{align*}
&\bigl|
\pd_\mu^2 V_n^{\epsilon} (\lambda,\mu; \widehat {\mb x}, \widehat \alpha)
-\pd_\mu^2 V_n^{\epsilon} (\lambda,\mu; \widehat {\mb x}, \alpha^*)
\bigr|
\\
&\le
\frac{|\pd_\mu^2 (F_n(\lambda,\mu;\widehat \alpha) -F_n(\lambda,\mu;\alpha^*))|}
{\epsilon^2 \Delta_n} 
\sum_{i=1}^n \mathcal M_i(\lambda;\widehat {\mathbf x})^2
+n \biggl| \pd_\mu^2 \log \biggl( \frac{F_n(\lambda,\mu;\widehat \alpha)}
{F_n(\lambda,\mu;\alpha^*)}\biggr) \biggr|
\\
&\le
\frac{F(2 \lambda \Delta_n)|\frac{\dd^2}{\dd \mu^2} 
(\mu^{\widehat \alpha} -\mu^{\alpha^*})|}
{\epsilon^2 \Delta_n} 
\Biggl( 
\Biggl|
\sum_{i=1}^n 
\bigl( \mathcal M_i(\lambda;\widehat {\mathbf x})^2 
-\mathcal M_i(\lambda; {\mathbf x})^2 \bigr)
\Biggr|
+ \sum_{i=1}^n \mathcal M_i(\lambda; {\mathbf x})^2
\Biggr)
\\
&\quad
+n \biggl| \frac{\dd^2}{\dd \mu^2} 
\log \bigl(\mu^{\widehat \alpha-\alpha^*} \bigr) \biggr|
\\
&= 
F(2 \lambda \Delta_n) \cdot
\frac{|\widehat \alpha (\widehat \alpha -1) \mu^{\widehat \alpha -2} 
-\alpha^* (\alpha^* -1) \mu^{\alpha^*-2}|}
{\epsilon^2 \Delta_n} 
\\
&\qquad \times \Biggl( 
\Biggl|
\sum_{i=1}^n 
\bigl( \mathcal M_i(\lambda;\widehat {\mathbf x})^2 
-\mathcal M_i(\lambda; {\mathbf x})^2 \bigr)
\Biggr|
+ \sum_{i=1}^n \mathcal M_i(\lambda; {\mathbf x})^2
\Biggr)
\\
&\quad
+\frac{n|\widehat \alpha -\alpha^*|}{\mu^2}.
\end{align*}
We hence find from \eqref{est_f_alpha} that for some $\gamma_3 \in (0, \infty)$, 
\begin{align*}
(\widehat {\mathbb W}_{4,n}^\epsilon)_{2,2}
&= \frac{1}{n} \sup_{(\lambda,\mu) \in \Xi_\nu}
\bigl|
\pd_\mu^2 V_n^{\epsilon} (\lambda,\mu; \widehat {\mb x}, \widehat \alpha)
-\pd_\mu^2 V_n^{\epsilon} (\lambda,\mu; \widehat {\mb x}, \alpha^*)
\bigr|
\\
&\lesssim |\widehat \alpha -\alpha^*| 
(\epsilon^{-2} \mathcal X_n +\epsilon^{-2} \mathcal C_n)
+ |\widehat \alpha -\alpha^*|
\end{align*}
on $\Omega_{\gamma_3}$.
Since $\epsilon^{-2} \mathcal C_n = \Op(1 \lor (n \epsilon^2)^{-1})$, 
we find $(\widehat {\mathbb W}_{4,n}^\epsilon)_{2,2} = \op(1)$ 
under $\epsilon^{-2} \mathcal X_n = \op(1)$ and
\begin{equation}\label{alpha-4}
(1 \lor (n \epsilon^2)^{-1})(\widehat \alpha -\alpha^*) = \op(1).
\end{equation}
By Theorem \ref{th1}, 
we can interpret \eqref{alpha-4} as $\frac{(n \epsilon^{2})^{-1}}{R_1} = \oo(1)$.

Since we see from \eqref{eq_d2V} that
\begin{align*}
&\bigl|
\pd_\mu \pd_\lambda V_n^{\epsilon} (\lambda,\mu; \widehat {\mb x}, \widehat \alpha)
-\pd_\mu \pd_\lambda V_n^{\epsilon} (\lambda,\mu; \widehat {\mb x}, \alpha^*)
\bigr|
\\
&\le
\frac{|\pd_\mu \pd_\lambda
(F_n(\lambda,\mu;\widehat \alpha) -F_n(\lambda,\mu;\alpha^*))|}
{\epsilon^2 \Delta_n} 
\sum_{i=1}^n \mathcal M_i(\lambda;\widehat {\mathbf x})^2
\\
&\quad
+\frac{2 \ee^{-\lambda \Delta_n} 
|\pd_\mu (F_n(\lambda,\mu;\widehat \alpha) -F_n(\lambda,\mu;\alpha^*))|}{\epsilon^2} 
\Biggl|
\sum_{i=1}^n \mathcal M_i(\lambda;\widehat {\mathbf x}) \widehat x(t_{i-1}^n)
\Biggr|
\\
&\quad
+n \biggl| \pd_\mu \pd_\lambda \log \biggl( \frac{F_n(\lambda,\mu;\widehat \alpha)}
{F_n(\lambda,\mu;\alpha^*)}\biggr) \biggr|
\\
&\le
\frac{|\frac{\dd}{\dd \lambda}(F(2 \lambda \Delta_n))| 
|\frac{\dd}{\dd \mu} (\mu^{\widehat \alpha} -\mu^{\alpha^*})|}
{\epsilon^2 \Delta_n} 
\Biggl( 
\Biggl|
\sum_{i=1}^n 
\bigl( \mathcal M_i(\lambda;\widehat {\mathbf x})^2 
-\mathcal M_i(\lambda; {\mathbf x})^2 \bigr)
\Biggr|
+ \sum_{i=1}^n \mathcal M_i(\lambda; {\mathbf x})^2
\Biggr)
\\
&\quad
+\frac{2 \ee^{-\lambda \Delta_n} 
F(2 \lambda \Delta_n) |\frac{\dd}{\dd \mu} (\mu^{\widehat \alpha} -\mu^{\alpha^*})|}
{\epsilon^2} 
\\
&\qquad \times \Biggl( 
\Biggl|
\sum_{i=1}^n \bigl( 
\mathcal M_i(\lambda;\widehat {\mathbf x}) \widehat x(t_{i-1}^n)
-\mathcal M_i(\lambda; {\mathbf x}) x(t_{i-1}^n) \bigr)
\Biggr|
+\Biggl| \sum_{i=1}^n \mathcal M_i(\lambda; {\mathbf x}) x(t_{i-1}^n) \Biggr|
\Biggr)
\\
&= 
\frac{|\frac{\dd}{\dd \lambda}(F(2 \lambda \Delta_n))|}{\Delta_n} 
\cdot
\frac{|\widehat \alpha \mu^{\widehat \alpha -1} -\alpha^* \mu^{\alpha^*-1}|}
{\epsilon^2} 
\\
&\qquad \times \Biggl( 
\Biggl|
\sum_{i=1}^n 
\bigl( \mathcal M_i(\lambda;\widehat {\mathbf x})^2 
-\mathcal M_i(\lambda; {\mathbf x})^2 \bigr)
\Biggr|
+ \sum_{i=1}^n \mathcal M_i(\lambda; {\mathbf x})^2
\Biggr)
\\
&\quad
+2 \ee^{-\lambda \Delta_n} F(2 \lambda \Delta_n) \cdot
\frac{|\widehat \alpha \mu^{\widehat \alpha -1} -\alpha^* \mu^{\alpha^*-1}|}
{\epsilon^2} 
\\
&\qquad \times \Biggl( 
\Biggl|
\sum_{i=1}^n \bigl( 
\mathcal M_i(\lambda;\widehat {\mathbf x}) \widehat x(t_{i-1}^n)
-\mathcal M_i(\lambda; {\mathbf x}) x(t_{i-1}^n) \bigr)
\Biggr|
+\Biggl| \sum_{i=1}^n \mathcal M_i(\lambda; {\mathbf x}) x(t_{i-1}^n) \Biggr|
\Biggr),
\end{align*}
we obtain by \eqref{est_f_alpha},
\begin{align*}
(\widehat {\mathbb W}_{4,n}^\epsilon)_{1,2}
&= \frac{\epsilon}{\sqrt{n}} 
\sup_{(\lambda,\mu) \in \Xi_{\nu}} \bigl|
\pd_\mu \pd_\lambda V_n^{\epsilon} (\lambda,\mu; \widehat {\mb x}, \widehat \alpha)
-\pd_\mu \pd_\lambda V_n^{\epsilon} (\lambda,\mu; \widehat {\mb x}, \alpha^*)
\bigr|
\\
& \lesssim \frac{1}{\sqrt{n} \epsilon} |\widehat \alpha -\alpha^*| 
(\mathcal X_n +\mathcal C_n +\mathcal Y_n +\mathcal D_n)
\end{align*}
on $\Omega_{\gamma_2}$.
Since $\frac{1}{\sqrt{n} \epsilon} \mathcal C_n 
= \Op(\frac{\epsilon}{\sqrt{n}} \lor \frac{1}{n^{3/2} \epsilon})$
and $\frac{1}{\sqrt{n} \epsilon} \mathcal D_n 
= \Op(1 \lor \frac{1}{\sqrt{n} \epsilon })$,
we have $(\widehat {\mathbb W}_{4,n}^\epsilon)_{1,2} = \op(1)$
under $\frac{1}{\sqrt{n} \epsilon} (\mathcal X_n +\mathcal Y_n) = \op(1)$ and
\begin{equation}\label{alpha-5}
\biggl( \frac{\epsilon}{\sqrt{n}} \lor \frac{1}{n^{3/2} \epsilon}
\lor 1 \lor \frac{1}{\sqrt{n} \epsilon} \biggr)
(\widehat \alpha -\alpha^*) 
= \biggl( 1 \lor \frac{1}{\sqrt{n} \epsilon} \biggr)
(\widehat \alpha -\alpha^*) = \op(1).
\end{equation}
By Theorem \ref{th1}, 
we can translate \eqref{alpha-5} as $\frac{(n \epsilon^{2})^{-1/2}}{R_1} = \oo(1)$.

We therefore obtain 
\begin{equation*}
\widehat {\mathbb W}_{4,n}^\epsilon 
\le \sum_{i,j \in \{1,2\}} (\widehat {\mathbb W}_{4,n}^\epsilon)_{i,j}
= \op(1)
\end{equation*}
under $(\frac{1}{n} \lor \epsilon^{-2} \lor \frac{1}{\sqrt{n} \epsilon}) \mathcal X_n 
= \epsilon^{-2} \mathcal X_n = \op(1)$, 
$(\frac{1}{n} \lor \frac{1}{\epsilon \sqrt{n}}) \mathcal Y_n 
= \frac{1}{\epsilon \sqrt{n}} \mathcal Y_n = \op(1)$, 
$\frac{1}{n} \mathcal Z_n^* = \op(1)$
and $\frac{(n \epsilon^{2})^{-1} \lor (n \epsilon^2)^{-1/2}}{R_1}
= \frac{(n \epsilon^2)^{-1}}{R_1} = \oo(1)$.
\end{enumerate}
\end{proof}

We define
\begin{equation}\label{def_A1--B2}
\left\{
\begin{split}
\mathcal A_{1,n}^* &= \Biggl|
\sum_{i=1}^n
\bigl( 
\mathcal M_i(\lambda^*;\widehat {\mathbf x}) -\mathcal M_i(\lambda^*; {\mathbf x})
\bigr)\mathcal M_i(\lambda^*; {\mathbf x})
\Biggr|,
\\
\mathcal A_{2,n}^* &= \Biggl|
\sum_{i=1}^n
\bigl( 
\mathcal M_i(\lambda^*;\widehat {\mathbf x}) -\mathcal M_i(\lambda^*; {\mathbf x})
\bigr) x(t_{i-1}^n)
\Biggr|,
\\
\mathcal B_{1,n}^* &= \Biggl|
\sum_{i=1}^n
\bigl( 
\widehat x(t_{i-1}^n) -x(t_{i-1}^n)
\bigr)\mathcal M_i(\lambda^*; {\mathbf x})
\Biggr|,
\\
\mathcal B_{2,n}^* &= \Biggl|
\sum_{i=1}^n
\bigl( 
\widehat x(t_{i-1}^n) -x(t_{i-1}^n)
\bigr) x(t_{i-1}^n)
\Biggr|.
\end{split}
\right.
\end{equation}
The following lemmas control $\mathcal X_n$, $\mathcal Y_n$, $\mathcal X_n^*$, $\mathcal Y_n^*$, $\mathcal Z_n^*$
given in \eqref{def_XY} and \eqref{def_XYZ*}
with $\mathcal A_n$, $\mathcal A_n^*$, $\mathcal B_n^*$, 
$\mathcal A_{j,n}^*$, $\mathcal B_{j,n}^*$ ($j \in \{1,2\}$)
given in \eqref{def_A--E} and \eqref{def_A1--B2}.
\begin{lem}\label{lem_est_XY}
Let $\{ r_{n,\epsilon} \}$ be a positive sequence depending on $n$ or $\epsilon$.
We then have the following two assertions.
\begin{enumerate}
\item[(1)]
If $r_{n,\epsilon} \mathcal A_n = \op(1)$, 
$\epsilon^2 r_{n,\epsilon}^2 \mathcal A_n = \op(1)$
and $\frac{r_{n,\epsilon}^2}{n} \mathcal A_n = \op(1)$,
then $r_{n,\epsilon} \mathcal X_n = \op(1)$.

\item[(2)]
If $r_{n,\epsilon}^2 \mathcal A_n \mathcal B_n^* = \op(1)$,
$r_{n,\epsilon}^2 (\sqrt{n \epsilon^2} \lor 1) \mathcal A_n = \op(1)$
and $r_{n,\epsilon}^2 (\epsilon^2 \lor n^{-1}) \mathcal B_n^* = \op(1)$,
then $r_{n,\epsilon} \mathcal Y_n = \op(1)$.
\end{enumerate}
\end{lem}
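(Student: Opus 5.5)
Both assertions come from expanding $\widehat{\mathcal M}_i=\mathcal M_i(\lambda;\widehat{\mb x})$ around $\mathcal M_i=\mathcal M_i(\lambda;\mb x)$ and applying the Schwarz inequality, together with the bounds $\mathcal C_n=\Op(\epsilon^2\lor n^{-1})$, $\mathcal D_n=\Op(\sqrt{n\epsilon^2}\lor 1)$ from \eqref{est_CD} and $\mathcal E_n^*=\Op(n)$ from \eqref{est_CDE*}. Throughout, write $\widehat x_{i-1}=\widehat x(t_{i-1}^n)$ and $x_{i-1}=x(t_{i-1}^n)$.

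\emph{Step (1).} Using \eqref{decomp-1} in the form $\widehat{\mathcal M}_i^2-\mathcal M_i^2=(\widehat{\mathcal M}_i-\mathcal M_i)^2+2(\widehat{\mathcal M}_i-\mathcal M_i)\mathcal M_i$ and Schwarz uniformly in $\lambda\in\Xi_\lambda$, we get $\mathcal X_n\le \mathcal A_n+2\sqrt{\mathcal A_n\mathcal C_n}$. Then $r_{n,\epsilon}\mathcal A_n=\op(1)$ handles the first term. For the second, $r_{n,\epsilon}\sqrt{\mathcal A_n\mathcal C_n}=\sqrt{r_{n,\epsilon}^2\mathcal A_n\cdot\Op(\epsilon^2\lor n^{-1})}$, which is $\op(1)$ by the hypotheses $\epsilon^2r_{n,\epsilon}^2\mathcal A_n=\op(1)$ and $r_{n,\epsilon}^2\mathcal A_n/n=\op(1)$, after splitting according to which of $\epsilon^2$, $n^{-1}$ dominates.

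\emph{Step (2).} Write
\[
\widehat{\mathcal M}_i\widehat x_{i-1}-\mathcal M_ix_{i-1}=(\widehat{\mathcal M}_i-\mathcal M_i)(\widehat x_{i-1}-x_{i-1})+\mathcal M_i(\widehat x_{i-1}-x_{i-1})+(\widehat{\mathcal M}_i-\mathcal M_i)x_{i-1}.
\]
Sum over $i$, take the supremum over $\lambda$, and apply Schwarz to each term. This gives
\[
\mathcal Y_n\le\sqrt{\mathcal A_n\mathcal B_n^*}+\sqrt{\mathcal C_n\mathcal B_n^*}+\sqrt{\mathcal A_n\mathcal E_n^*}.
\]
The first term is $\op(r_{n,\epsilon}^{-1})$ by the first hypothesis. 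The second is $\op(r_{n,\epsilon}^{-1})$ because $r_{n,\epsilon}^2(\epsilon^2\lor n^{-1})\mathcal B_n^*=\op(1)$.

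\emph{Main obstacle.} The third term only gives $r_{n,\epsilon}^2\,n\,\mathcal A_n$, which is larger than the hypothesis $r_{n,\epsilon}^2(\sqrt{n\epsilon^2}\lor1)\mathcal A_n$. I would handle it by reusing the identity behind \eqref{est_CD}: $\mathcal M_i(\lambda;\mb x)=\mathcal M_i(\lambda^*;\mb x)+(\ee^{-\lambda^*\Delta_n}-\ee^{-\lambda\Delta_n})x_{i-1}$, and the same identity with $\widehat{\mb x}$. This shows $\widehat{\mathcal M}_i-\mathcal M_i$ is the $\lambda^*$-residual difference plus $\OO(\Delta_n)(\widehat x_{i-1}-x_{i-1})$. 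The $\OO(\Delta_n)$ piece, paired with $x_{i-1}$, is bounded by $\Delta_n\sqrt{\mathcal B_n^*\mathcal E_n^*}=\Op(\sqrt{\mathcal B_n^*/n})$, which is covered by the $n^{-1}\mathcal B_n^*$ hypothesis.

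The $\lambda^*$ piece is $\mathcal A_{2,n}^*$-type. Here I would bound the cross term by $\sqrt{\mathcal A_n}$ times $\mathcal D_n$-type size $\sqrt{n\epsilon^2}\lor1$ instead of $\sqrt{\mathcal E_n^*}$. Concretely, I would again apply Schwarz, but against the decomposition of $x_{i-1}$ into its deterministic part $\ee^{-\lambda^*t_{i-1}}x_0$ and its $\epsilon$-small stochastic part, tuned so that the squared bound becomes $(\sqrt{n\epsilon^2}\lor1)\mathcal A_n$. If this does not give the exact constant, I would fall back to absorbing the excess into the hypothesis on $\mathcal A_n\mathcal B_n^*$. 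This is the delicate step.
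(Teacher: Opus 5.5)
Your Step (1) is exactly the paper's argument. In Step (2), your three-term decomposition and the Schwarz bounds $\sqrt{\mathcal A_n\mathcal B_n^*}$ and $\sqrt{\mathcal C_n\mathcal B_n^*}$ also coincide with the paper's.

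Your diagnosis of the middle term is correct, and it exposes a slip in the paper's proof. There, $r_{n,\epsilon}\sup_{\lambda}\bigl|\sum_i(\mathcal M_i(\lambda;\widehat{\mb x})-\mathcal M_i(\lambda;\mb x))x(t_{i-1}^n)\bigr|$ is bounded by $\sqrt{r_{n,\epsilon}^2\mathcal A_n\mathcal D_n}$, which is not a Schwarz inequality. Schwarz pairs $\mathcal A_n$ with $\sum_i x(t_{i-1}^n)^2=\mathcal E_n^*=\Op(n)$. It does not pair it with $\mathcal D_n=\sup_\lambda|\sum_i\mathcal M_i(\lambda;\mb x)x(t_{i-1}^n)|$, which is a different cross term of order $\sqrt{n\epsilon^2}\lor 1$. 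So the paper does not justify the hypothesis $r_{n,\epsilon}^2(\sqrt{n\epsilon^2}\lor 1)\mathcal A_n=\op(1)$ either.

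Your proposed repair does not close the gap. The identity $\mathcal M_i(\lambda;\widetilde{\mb x})=\mathcal M_i(\lambda^*;\widetilde{\mb x})+(\ee^{-\lambda^*\Delta_n}-\ee^{-\lambda\Delta_n})\widetilde x(t_{i-1}^n)$ for $\widetilde{\mb x}\in\{\mb x,\widehat{\mb x}\}$ is fine. Your bound $\Op(\sqrt{\mathcal B_n^*/n})$ for the $\OO(\Delta_n)$ piece is also fine. What remains, however, is exactly $\mathcal A_{2,n}^*$.

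The hypotheses involve only sums of squares, so they give only the $\ell^2$-size of the error sequence $\mathcal M_i(\lambda^*;\widehat{\mb x})-\mathcal M_i(\lambda^*;\mb x)$. From that alone you cannot bound its pairing with $x(t_{i-1}^n)$ by anything better than $\sqrt{\mathcal A_n^*\mathcal E_n^*}$.

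Splitting $x(t_{i-1}^n)$ into deterministic and stochastic parts does not help. The deterministic part $\ee^{-\lambda^* t_{i-1}^n}x_0$ is of order one for every $i$ (and nonzero by [A2]), so it alone carries the full factor $\sqrt n$. Nothing prevents the error sequence from being, up to constants, aligned with this smooth profile, and then Schwarz is sharp.

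Absorbing the excess into $\mathcal A_n\mathcal B_n^*$ is also impossible. The offending factor is $x(t_{i-1}^n)$ itself, not $\widehat x(t_{i-1}^n)-x(t_{i-1}^n)$. Note that the paper's own treatment of $\mathcal A_{2,n}^*$ in Lemma \ref{lem_est_AB2}-(3) pays exactly this factor $n$ for the $S_{1,i}$-part.

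What your Step (2) does prove is assertion (2) with the middle hypothesis replaced by $r_{n,\epsilon}^2\,n\,\mathcal A_n=\op(1)$, and that is the version you should state. It costs nothing downstream. The assertion is used only with $r_{n,\epsilon}=1/n$ in (G25) and with $r_{n,\epsilon}=1/(\epsilon\sqrt n)$ in (G43). There $r_{n,\epsilon}^2 n\mathcal A_n$ becomes $\mathcal A_n/n$ and $\epsilon^{-2}\mathcal A_n$ respectively, and both are already guaranteed by (G22), equivalently (G42).
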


\begin{lem}\label{lem_est_XYZ*}
Let $\{ r_{n,\epsilon} \}$ be a positive sequence depending on $n$ or $\epsilon$.
We then have the following three assertions.
\begin{enumerate}
\item[(1)]
If $r_{n,\epsilon} \mathcal A_n^* = \op(1)$ 
and $r_{n,\epsilon} \mathcal A_{1,n}^* = \op(1)$, 
then $r_{n,\epsilon} \mathcal X_n^* = \op(1)$.

\item[(2)]
If $r_{n,\epsilon}^2 \mathcal A_n^* \mathcal B_n^* = \op(1)$,
$r_{n,\epsilon} \mathcal A_{2,n}^* = \op(1)$ 
and $r_{n,\epsilon} \mathcal B_{1,n}^* = \op(1)$, 
then $r_{n,\epsilon} \mathcal Y_n^* = \op(1)$.

\item[(3)]
If $r_{n,\epsilon} \mathcal B_n^* = \op(1)$ 
and $r_{n,\epsilon} \mathcal B_{2,n}^* = \op(1)$,
then $r_{n,\epsilon} \mathcal Z_n^* = \op(1)$.
\end{enumerate}
\end{lem}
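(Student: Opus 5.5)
The statement to prove is Lemma \ref{lem_est_XYZ*}. The plan is purely algebraic: expand each quantity around the true process, then bound every cross term by what is assumed. Set $a_i = \mathcal M_i(\lambda^*;\widehat{\mb x}) - \mathcal M_i(\lambda^*;{\mb x})$, $b_i = \widehat x(t_{i-1}^n) - x(t_{i-1}^n)$, $M_i = \mathcal M_i(\lambda^*;{\mb x})$ and $x_i = x(t_{i-1}^n)$. All three assertions then follow from the product decomposition \eqref{decomp-1} applied termwise, together with the triangle and Schwarz inequalities.

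For (1), I will write $\mathcal M_i(\lambda^*;\widehat{\mb x})^2 - M_i^2 = a_i^2 + 2a_iM_i$. Summing over $i$ and taking absolute values gives
\begin{equation*}
\mathcal X_n^* \le \mathcal A_n^* + 2\mathcal A_{1,n}^*.
\end{equation*}
Multiplying by $r_{n,\epsilon}$ then yields $r_{n,\epsilon}\mathcal X_n^* = \op(1)$ under the two hypotheses.

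For (2), by \eqref{decomp-1} I will write
\begin{equation*}
\mathcal M_i(\lambda^*;\widehat{\mb x})\widehat x(t_{i-1}^n) - M_i x_i = a_i b_i + a_i x_i + M_i b_i.
\end{equation*}
The sum of the first term is controlled by Schwarz: $|\sum_i a_i b_i| \le (\mathcal A_n^* \mathcal B_n^*)^{1/2}$. The sums of the remaining two terms are exactly $\mathcal A_{2,n}^*$ and $\mathcal B_{1,n}^*$. This gives
\begin{equation*}
r_{n,\epsilon}\mathcal Y_n^* \le (r_{n,\epsilon}^2 \mathcal A_n^* \mathcal B_n^*)^{1/2} + r_{n,\epsilon}\mathcal A_{2,n}^* + r_{n,\epsilon}\mathcal B_{1,n}^*,
\end{equation*}
and each summand is $\op(1)$ by assumption.

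For (3), I will write $\widehat x(t_{i-1}^n)^2 - x_i^2 = b_i^2 + 2b_ix_i$. This gives $\mathcal Z_n^* \le \mathcal B_n^* + 2\mathcal B_{2,n}^*$, and the conclusion follows after multiplying by $r_{n,\epsilon}$.

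I do not expect any genuine obstacle here. The only care needed is in (2): the mixed term $\sum_i a_ib_i$ should be bounded by the geometric mean, as above, rather than by $\mathcal A_n^* + \mathcal B_n^*$, so that it matches the hypothesis $r_{n,\epsilon}^2\mathcal A_n^*\mathcal B_n^* = \op(1)$. The fact that $\op(1)$ is preserved under square roots and finite sums is standard.
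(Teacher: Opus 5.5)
Your proposal is correct and follows essentially the same route as the paper: you expand each difference via \eqref{decomp-1}, bound the mixed term $\sum_i a_i b_i$ by Schwarz as $(\mathcal A_n^* \mathcal B_n^*)^{1/2}$, and identify the remaining sums with $\mathcal A_{1,n}^*$, $\mathcal A_{2,n}^*$, $\mathcal B_{1,n}^*$ and $\mathcal B_{2,n}^*$. This yields exactly the paper's bounds $r_{n,\epsilon}\mathcal X_n^* \le r_{n,\epsilon}\mathcal A_n^* + 2r_{n,\epsilon}\mathcal A_{1,n}^*$, the three-term bound for $r_{n,\epsilon}\mathcal Y_n^*$, and $r_{n,\epsilon}\mathcal Z_n^* \le r_{n,\epsilon}\mathcal B_n^* + 2r_{n,\epsilon}\mathcal B_{2,n}^*$.
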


\begin{proof}[\bf{Proof of Lemma \ref{lem_est_XY}}]
Recall the definitions of $\mathcal X_n$ and $\mathcal Y_n$ given in \eqref{def_XY}
and $\mathcal A_n$, $\mathcal B_n^*$, $\mathcal C_n$ and $\mathcal D_n$ given in 
\eqref{def_A--E}.
\begin{enumerate}
\item[(1)]
We have by \eqref{decomp-1} and the Schwarz inequality,
\begin{align*}
r_{n,\epsilon} \mathcal X_n 
&\le
r_{n,\epsilon} \sup_{\lambda \in \Xi_{\lambda}} \sum_{i=1}^n 
\bigl( 
\mathcal M_i(\lambda;\widehat {\mathbf x}) -\mathcal M_i(\lambda; {\mathbf x})
\bigr)^2
\\
&\quad+2 r_{n,\epsilon} \sup_{\lambda \in \Xi_{\lambda}} 
\Biggl|
\sum_{i=1}^n
\bigl( 
\mathcal M_i(\lambda;\widehat {\mathbf x}) -\mathcal M_i(\lambda; {\mathbf x})
\bigr)\mathcal M_i(\lambda; {\mathbf x})
\Biggr|
\\
&\le r_{n,\epsilon} \mathcal A_n 
+2 \sqrt{r_{n,\epsilon}^2 \mathcal A_n \mathcal C_n}
\\
&= r_{n,\epsilon} \mathcal A_n 
+2 \sqrt{r_{n,\epsilon}^2(\epsilon^2 \lor n^{-1}) \mathcal A_n 
(\epsilon^2 \lor n^{-1})^{-1} \mathcal C_n}.
\end{align*}
Since it follows from \eqref{est_CD} that 
$(\epsilon^2 \lor n^{-1})^{-1} \mathcal C_n = \Op(1)$, 
we obtain $r_{n,\epsilon} \mathcal X_n = \op(1)$
nuder $r_{n,\epsilon} \mathcal A_n = \op(1)$, 
$\epsilon^2 r_{n,\epsilon}^2 \mathcal A_n = \op(1)$
and $\frac{r_{n,\epsilon}^2}{n} \mathcal A_n = \op(1)$.

\item[(2)]
We find from \eqref{decomp-1} and the Schwarz inequality that
\begin{align*}
r_{n,\epsilon} \mathcal Y_n 
&\le
r_{n,\epsilon} \sup_{\lambda \in \Xi_{\lambda}} 
\Biggl| \sum_{i=1}^n 
\bigl( 
\mathcal M_i(\lambda;\widehat {\mathbf x}) -\mathcal M_i(\lambda; {\mathbf x})
\bigr)\bigl( \widehat x(t_{i-1}^n) -x(t_{i-1}^n) \bigr)
\Biggr|
\\
&\quad+ r_{n,\epsilon} \sup_{\lambda \in \Xi_{\lambda}} 
\Biggl|
\sum_{i=1}^n
\bigl( 
\mathcal M_i(\lambda;\widehat {\mathbf x}) -\mathcal M_i(\lambda; {\mathbf x})
\bigr) x(t_{i-1}^n)
\Biggr|
\\
&\quad+ r_{n,\epsilon} \sup_{\lambda \in \Xi_{\lambda}} 
\Biggl|
\sum_{i=1}^n
\bigl( \widehat x(t_{i-1}^n) -x(t_{i-1}^n) \bigr) \mathcal M_i(\lambda; {\mathbf x})
\Biggr|
\\
&\le \sqrt{r_{n,\epsilon}^2 \mathcal A_n \mathcal B_n^*}
+ \sqrt{r_{n,\epsilon}^2 (\sqrt{n \epsilon^2} \lor 1) \mathcal A_n 
(\sqrt{n \epsilon^2} \lor 1)^{-1}\mathcal D_n}
\\
&\quad+ 
\sqrt{r_{n,\epsilon}^2 (\epsilon^2 \lor n^{-1}) \mathcal B_n^* 
(\epsilon^2 \lor n^{-1})^{-1}\mathcal C_n}.
\end{align*}
Since $(\epsilon^2 \lor n^{-1})^{-1} \mathcal C_n = \Op(1)$ and 
$(\sqrt{n \epsilon^2} \lor 1)^{-1}\mathcal D_n = \Op(1)$, 
we have $r_{n,\epsilon} \mathcal Y_n = \op(1)$
under $r_{n,\epsilon}^2 \mathcal A_n \mathcal B_n^* = \op(1)$,
$r_{n,\epsilon}^2 (\sqrt{n \epsilon^2} \lor 1) \mathcal A_n = \op(1)$
and $r_{n,\epsilon}^2 (\epsilon^2 \lor n^{-1}) \mathcal B_n^* = \op(1)$.
\end{enumerate}
\end{proof}

\begin{proof}[\bf{Proof of Lemma \ref{lem_est_XYZ*}}]
Recall the definitions of $\mathcal X_n^*$, $\mathcal Y_n^*$ 
and $\mathcal Z_n^*$ given in \eqref{def_XYZ*}
and $\mathcal A_n^*$, $\mathcal B_n^*$, $\mathcal A_{j,n}^*$  and $\mathcal B_{j,n}^*$ 
($j \in \{ 1,2 \}$) given in \eqref{def_A--E} or \eqref{def_A1--B2}.

\begin{enumerate}
\item[(1)]
It follows from \eqref{decomp-1} and the Schwarz inequality that
\begin{align*}
r_{n,\epsilon} \mathcal X_n^*
&\le
r_{n,\epsilon} \sum_{i=1}^n 
\bigl( 
\mathcal M_i(\lambda^*;\widehat {\mathbf x}) -\mathcal M_i(\lambda^*; {\mathbf x})
\bigr)^2
\\
&\quad+2 r_{n,\epsilon} 
\Biggl|
\sum_{i=1}^n
\bigl( 
\mathcal M_i(\lambda^*;\widehat {\mathbf x}) -\mathcal M_i(\lambda^*; {\mathbf x})
\bigr)\mathcal M_i(\lambda^*; {\mathbf x})
\Biggr|
\\
&= r_{n,\epsilon} \mathcal A_n^* +2 r_{n,\epsilon} \mathcal A_{1,n}^*.
\end{align*}
Therefore, we obtain $r_{n,\epsilon} \mathcal X_n^* = \op(1)$
under $r_{n,\epsilon} \mathcal A_n^* = \op(1)$ 
and $r_{n,\epsilon} \mathcal A_{1,n}^* = \op(1)$.

\item[(2)]
We see from \eqref{decomp-1} and the Schwarz inequality that
\begin{align*}
r_{n,\epsilon} \mathcal Y_n^*
&\le 
r_{n,\epsilon} \Biggl| 
\sum_{i=1}^n \bigl( 
\mathcal M_i(\lambda^*; \widehat {\mb x})
-\mathcal M_i(\lambda^*; {\mb x}) \bigr)
\bigl( \widehat x(t_{i-1}^n) -x(t_{i-1}^n) \bigr) 
\Biggr|
\\
&\quad+
r_{n,\epsilon} \Biggl| 
\sum_{i=1}^n \bigl( 
\mathcal M_i(\lambda^*; \widehat {\mb x})
-\mathcal M_i(\lambda^*; {\mb x}) \bigr) x(t_{i-1}^n)
\Biggr|
\\
&\quad+
r_{n,\epsilon} \Biggl| 
\sum_{i=1}^n \mathcal M_i(\lambda^*; {\mb x})
\bigl( \widehat x(t_{i-1}^n) -x(t_{i-1}^n) \bigr) 
\Biggr|
\\
&\le \sqrt{r_{n,\epsilon}^2 \mathcal A_n^* \mathcal B_n^*} 
+ r_{n,\epsilon} \mathcal A_{2,n}^* 
+ r_{n,\epsilon} \mathcal B_{1,n}^*.
\end{align*}
We thus obtain $r_{n,\epsilon} \mathcal Y_n^* = \op(1)$
under $r_{n,\epsilon}^2 \mathcal A_n^* \mathcal B_n^* = \op(1)$,
$r_{n,\epsilon} \mathcal A_{2,n}^* = \op(1)$ 
and $r_{n,\epsilon} \mathcal B_{1,n}^* = \op(1)$.

\item[(3)]
We see from \eqref{decomp-1} and the Schwarz inequality that
\begin{align*}
r_{n,\epsilon} \mathcal Z_n^*
&\le
r_{n,\epsilon} \sum_{i=1}^n 
\bigl( \widehat x(t_{i-1}^n) -x(t_{i-1}^n) \bigr)^2
\\
&\quad+2 r_{n,\epsilon} 
\Biggl|
\sum_{i=1}^n
\bigl( \widehat x(t_{i-1}^n) -x(t_{i-1}^n) \bigr) x(t_{i-1}^n)
\Biggr|
\\
&= r_{n,\epsilon} \mathcal B_n^* +2 r_{n,\epsilon} \mathcal B_{2,n}^*.
\end{align*}
We hence obtain $r_{n,\epsilon} \mathcal Z_n^* = \op(1)$
under $r_{n,\epsilon} \mathcal B_n^* = \op(1)$
and $r_{n,\epsilon} \mathcal B_{2,n}^* = \op(1)$.
\end{enumerate}
\end{proof}

The following lemmas provide sufficient conditions to control 
$\mathcal A_n$, $\mathcal A_n^*$, $\mathcal B_n^*$, 
$\mathcal A_{j,n}^*$ and $\mathcal B_{j,n}^*$ ($j \in \{1, 2\}$)
given in \eqref{def_A--E} and \eqref{def_A1--B2}.
\begin{lem}[Proposition 4.2 in \cite{TKU2026}]\label{lem_est_AB1}
Let $\alpha, \alpha_0 \in (0,3)$, and assume that [A1]$_{\alpha_0}$, 
[A2] and [B]$_{\alpha,\alpha_0}$ hold.
Let $\{ r_{n,\epsilon} \}$ be a positive sequence depending on $n$ or $\epsilon$.
We then have the following two assertions.
\begin{itemize}
\item[(1)]
If $\frac{n r_{n,\epsilon}}{M_{(1)}^{2\alpha_0 \tand 2}} = \oo(1)$,
$\frac{n r_{n,\epsilon} \Delta_n^{\alpha_0 \tand 2}}
{(R_{\alpha, \alpha_0}^{\mathrm{coef}})^2} = \oo(1)$,
$\frac{n \epsilon^2 r_{n,\epsilon}}{M_{(1)}^{2\alpha \tand 2}} = \oo(1)$
and $\frac{n \epsilon^2 r_{n,\epsilon} \Delta_n^{\alpha \tand 1}}
{(R_{\alpha, \alpha_0}^{\mathrm{coef}})^2} = \oo(1)$, 
then $r_{n,\epsilon} \mathcal A_n = \op(1)$.

\item[(2)]
If $\frac{n r_{n,\epsilon}}{M_{(1)}^{2\alpha_0 \tand 2}} = \oo(1)$, 
$\frac{n r_{n,\epsilon}}{(R_{\alpha, \alpha_0}^{\mathrm{coef}})^2} = \oo(1)$ and
$\frac{n \epsilon^2 r_{n,\epsilon}}{M_{(1)}^{2\alpha \tand 2}} = \oo(1)$, 
then $r_{n,\epsilon} \mathcal B_n^* = \op(1)$.
\end{itemize}
\end{lem}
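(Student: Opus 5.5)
The plan is to follow the proof of Proposition 4.2 in \cite{TKU2026}. The only place where the unknown damping parameter enters is the rate at which $(\widehat\kappa,\widehat\eta)$ converges. That rate is no longer $R_{\alpha,\alpha_0}^{\mathrm{damp}}$ but $R_{\alpha,\alpha_0}^{\mathrm{coef}}$, by Theorem \ref{th2} (with $\widehat\kappa = \widehat\theta_1/\widehat\theta_2$ and $\widehat\eta = \widehat\eta_1/\widehat\theta_2$ and the delta method).

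Write $e(t) = \widehat x(t) - x(t)$ for the error of the approximate coordinate process \eqref{approx_cp0}. Then
\begin{equation*}
\mathcal M_i(\lambda;\widehat{\mb x}) - \mathcal M_i(\lambda;{\mb x})
= \bigl(e(t_i^n) - e(t_{i-1}^n)\bigr) + (1 - \ee^{-\lambda\Delta_n})\, e(t_{i-1}^n).
\end{equation*}
Since $\sup_{\lambda\in\Xi_\lambda}(1-\ee^{-\lambda\Delta_n})^2 \lesssim \Delta_n^2$, this gives
\begin{equation*}
\mathcal A_n \lesssim \sum_{i=1}^n \bigl(e(t_i^n) - e(t_{i-1}^n)\bigr)^2 + \Delta_n^2\, \mathcal B_n^*.
\end{equation*}
So both assertions reduce to bounding the increment sum $\sum_i (\delta_i e)^2$ and the level sum $\mathcal B_n^* = \sum_i e(t_{i-1}^n)^2$. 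We then split $e = e^{\mathrm{disc}} + e^{\mathrm{est}}$.
\begin{itemize}
\item[(a)] $e^{\mathrm{disc}}(t)$ is the spatial discretization error. It comes from replacing $X_t(y,z)$ on $D_{j,k}$ by the grid value $X_t(y_j,z_k)$, with the true weights $e^{(1)}_{l}(\cdot;\kappa^*)$, $e^{(2)}_{l}(\cdot;\eta^*)$.
\item[(b)] $e^{\mathrm{est}}(t)$ is the estimation error. It comes from replacing $(\kappa^*,\eta^*)$ by $(\widehat\kappa,\widehat\eta)$ in the weights $\delta_j^{(1)}g_{l_1}\,\delta_k^{(2)}g_{l_2}$.
\end{itemize}

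For (a), expand $X_t$ in the eigenbasis and use the spectral representation. For each coordinate, the sine-Riemann-sum error is $\lesssim \min(1, l/M_{(1)})$. Combined with the decay $\mu_{l}^{-\alpha}\lambda_l^{-1}$ of the stochastic part and \textbf{[A1]}$_{\alpha_0}$ for the deterministic part, this should give
\begin{align*}
\EE\bigl[e^{\mathrm{disc}}(t)^2\bigr] &\lesssim M_{(1)}^{-(2\alpha_0 \tand 2)} + \epsilon^2 M_{(1)}^{-(2\alpha \tand 2)},
\\
\EE\bigl[(\delta_i e^{\mathrm{disc}})^2\bigr] &\lesssim \text{the same bound},
\end{align*}
uniformly in $i$ and $t$. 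Summing over $n$ indices yields the terms $n r_{n,\epsilon}/M_{(1)}^{2\alpha_0\tand 2}$ and $n\epsilon^2 r_{n,\epsilon}/M_{(1)}^{2\alpha\tand 2}$ in both (1) and (2). Markov's inequality then turns these into $\op(1)$ statements.

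For (b), Taylor-expand the weights in $(\kappa,\eta)$ around $(\kappa^*,\eta^*)$. The map $a \mapsto g_l(x;a)$ is smooth, uniformly on compacts. This gives
\begin{equation*}
|e^{\mathrm{est}}(t)| \lesssim \bigl(|\widehat\kappa-\kappa^*| + |\widehat\eta-\eta^*|\bigr)\, Q(X_t),
\end{equation*}
where $Q$ is a linear functional of the grid data ($\pd_{\kappa}$- and $\pd_{\eta}$-weighted sums). The same bound holds for increments, with $Q(X_{t_i^n} - X_{t_{i-1}^n})$ in place of $Q(X_t)$. The needed moment bounds on $Q$ are:
\begin{itemize}
\item[(i)] For $\mathcal B_n^*$: $\EE[Q(X_t)^2] = \OO(1)$, since the initial value dominates. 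This yields the condition $n r_{n,\epsilon}/(R^{\mathrm{coef}}_{\alpha,\alpha_0})^2 = \oo(1)$ in (2).
\item[(ii)] For $\mathcal A_n$: the temporal increment moment
\begin{equation*}
\EE\bigl[Q(X_{t_i^n} - X_{t_{i-1}^n})^2\bigr] \lesssim \Delta_n^{\alpha_0\tand 2} + \epsilon^2 \Delta_n^{\alpha\tand 1}.
\end{equation*}
Combined with $(\widehat\kappa-\kappa^*)^2 + (\widehat\eta-\eta^*)^2 = \Op((R^{\mathrm{coef}}_{\alpha,\alpha_0})^{-2})$, this yields the two $R^{\mathrm{coef}}$ conditions in (1). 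The residual term $\Delta_n^2\mathcal B_n^*$ is absorbed, since $\Delta_n^2 \le \Delta_n^{\alpha_0\tand 2}$.
\end{itemize}
Because $\widehat\kappa$ and $\widehat\eta$ are random and depend on the same data, we never take expectations of products with them. Instead we factor: bound $\sup$ over a shrinking neighbourhood of the deterministic weight derivatives, and multiply the $\Op$ rate of the estimator by a deterministic-weight moment bound. This is legitimate on an event of probability tending to one.

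The main obstacle is the increment estimate in (ii), which must produce the sharp exponent $\Delta_n^{\alpha\tand 1}$ and $\Delta_n^{\alpha_0\tand 2}$ rather than a crude $\OO(1)$. I would compute it mode by mode. For the stochastic part, the mode-$(l_1,l_2)$ increment variance is
\begin{equation*}
\epsilon^2 \mu_{l_1,l_2}^{-\alpha}\,\frac{1-\ee^{-\lambda_{l_1,l_2}\Delta_n}}{\lambda_{l_1,l_2}} \lesssim \epsilon^2 \mu_{l_1,l_2}^{-\alpha}\bigl(\Delta_n \land \lambda_{l_1,l_2}^{-1}\bigr).
\end{equation*}
The key point is that the $\pd_\kappa$-weighted spatial projection of $e_{l_1,l_2}$ is bounded in $l$. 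Summing over $l\in\mathbb N^2$ then gives $\epsilon^2\Delta_n^{\alpha\tand 1}$, with the logarithm appearing at $\alpha = 1$. The deterministic part is handled analogously using $\|A_\theta^{(1+\alpha_0)/2}X_0\|<\infty$. Everything else is bookkeeping that mirrors \cite{TKU2026}.
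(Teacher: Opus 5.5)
Your proposal is correct and is essentially the argument the paper relies on. The lemma is imported from Proposition 4.2 of \cite{TKU2026}, and the only change is that $(\widehat\kappa,\widehat\eta)$ now converges at rate $R_{\alpha,\alpha_0}^{\mathrm{coef}}$ by Theorem \ref{th2}. Your split into a $\widehat{\bs\kappa}$-estimation error and a spatial discretization error is the same $S_{1,i},S_{2,i}$ / $T_{1,i},T_{2,i}$ decomposition used in the proof of Lemma \ref{lem_est_AB2}, with the same moment bounds: $\Delta_n^{\alpha_0\tand 2}+\epsilon^2\Delta_n^{\alpha\tand 1}$ for temporal increments, $\OO(1)$ for levels, and $1/M_{(1)}^{2\alpha_0\tand 2}+\epsilon^2/M_{(1)}^{2\alpha\tand 2}$ for discretization. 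Your extra term $\Delta_n^2\mathcal B_n^*$ is correctly absorbed, since $\Delta_n^2\lesssim\Delta_n^{\alpha_0\tand 2}$.

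One precision concerns the random factor multiplying $|\widehat{\bs\kappa}-\bs\kappa^*|^2$. It should come from the Schwarz inequality and \eqref{int_h} as $\sum_{j,k}X(\mb y_{j,k})^2\int_{D_{j,k}}\dd\mb y$, which is a quadratic functional with deterministic weights, not a linear one. That is what makes the expectation-then-Markov step legitimate despite $\widehat{\bs\kappa}$ depending on the data.
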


\begin{lem}\label{lem_est_AB2}
Let $\alpha, \alpha_0 \in (0,3)$, and assume that [A1]$_{\alpha_0}$, 
[A2] and [B]$_{\alpha,\alpha_0}$ hold.
Let $\{ r_{n,\epsilon} \}$ be a positive sequence depending on $n$ or $\epsilon$.
We then have the following five assertions.
\begin{enumerate}
\item[(1)]
If $\frac{n \epsilon^2 r_{n,\epsilon} \Delta_n^{\alpha \tand 1}}
{(R_{\alpha, \alpha_0}^{\mathrm{coef}})^2} = \oo(1)$
and $\frac{n \epsilon^2 r_{n,\epsilon}}{M_{(1)}^{2 \alpha \tand 2}} = \oo(1)$,
then $r_{n,\epsilon} \mathcal A_n^* = \op(1)$.

\item[(2)]
If $\frac{n \epsilon^4 r_{n,\epsilon}^2 \Delta_n^{\alpha \tand 1}}
{(R_{\alpha, \alpha_0}^{\mathrm{coef}})^2} = \oo(1)$
and $\frac{\epsilon^4 r_{n,\epsilon}^2}{M_{(1)}^{2\alpha \tand 2}} = \oo(1)$, 
then $r_{n,\epsilon} \mathcal A_{1,n}^* = \op(1)$.

\item[(3)]
If $\frac{n^2 \epsilon^2 r_{n,\epsilon}^2 \Delta_n^{\alpha \tand 1}}
{(R_{\alpha, \alpha_0}^{\mathrm{coef}})^2} = \oo(1)$
and $\frac{n \epsilon^2 r_{n,\epsilon}^2}{M_{(1)}^{2\alpha \tand 2}} = \oo(1)$, 
then $r_{n,\epsilon} \mathcal A_{2,n}^* = \op(1)$.

\item[(4)]
If $\frac{n \epsilon^2 r_{n,\epsilon}^2}{
(R_{\alpha, \alpha_0}^{\mathrm{coef}})^2} = \oo(1)$,
$\frac{\epsilon^4 r_{n,\epsilon}^2}{M_{(1)}^{2\alpha \tand 2}} = \oo(1)$ and
$\frac{\epsilon^2 r_{n,\epsilon}^2}{M_{(1)}^{2\alpha_0 \tand 2}} = \oo(1)$, 
then $r_{n,\epsilon} \mathcal B_{1,n}^* = \op(1)$.

\item[(5)]
If $\frac{n^2 r_{n,\epsilon}^2}{(R_{\alpha, \alpha_0}^{\mathrm{coef}})^2} = \oo(1)$, 
$\frac{n r_{n,\epsilon}}{M_{(1)}} = \oo(1)$, 
$\frac{n \epsilon^2 r_{n,\epsilon}^2}{M_{(1)}^{2\alpha \tand 2}} = \oo(1)$ and
$\frac{n^2 r_{n,\epsilon}^2}{M_{(1)}^{2 \alpha_0 \tand 2}} = \oo(1)$, 
then $r_{n,\epsilon} \mathcal B_{2,n}^* = \op(1)$.
\end{enumerate}
\end{lem}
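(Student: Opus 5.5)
The plan for Lemma \ref{lem_est_AB2} is to write the approximation error of the coordinate process explicitly and split it according to its three sources, following the proof of Proposition 4.2 in \cite{TKU2026} (which gives Lemma \ref{lem_est_AB1}).

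\textbf{The decomposition.} With $\ell = (\mf l_1, \mf l_2)$ we write
\begin{equation*}
\widehat x(t) - x(t) = \bigl(\widehat x(t) - \widetilde x(t)\bigr) + \bigl(\widetilde x(t) - x(t)\bigr),
\end{equation*}
where $\widetilde x$ is \eqref{approx_cp} with $(\widehat\kappa,\widehat\eta)$ replaced by $(\kappa^*,\eta^*)$.
\begin{itemize}
\item The first difference is controlled by a first-order Taylor expansion of $g_l(\cdot;a)$ in $a$, together with $|\widehat\kappa-\kappa^*|+|\widehat\eta-\eta^*| = \Op((R_{\alpha,\alpha_0}^{\mathrm{coef}})^{-1})$ from Theorem \ref{th2}. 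It is $\Op(R_{\alpha,\alpha_0}^{-\mathrm{coef}})$ times a weighted spatial Riemann sum of $X_t$.
\item The second difference is the spatial discretization error, i.e. a Riemann-sum error of $\iint X_t e_\ell \ee^{\kappa^* y + \eta^* z}$. Its deterministic part (from $X_0$) is of order $M_{(1)}^{-(\alpha_0 \tand 1)}$. Its stochastic part has second moment of order $\epsilon^2 M_{(1)}^{-(2\alpha \tand 2)}$.
\end{itemize}
Passing to the increments $\mathcal M_i(\lambda^*;\cdot)$, the stochastic part contributes per-step second moments of order $\epsilon^2\Delta_n^{\alpha\tand1}$ (Riemann-sum error factor $M_{(1)}^{-(2\alpha\tand2)}$, or $\Delta_n^{\alpha\tand1}$ for the $\widehat\kappa$-part), exactly as in \cite{TKU2026}. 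The deterministic part of the increment is smooth in $t$ and contributes only lower-order terms.

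\textbf{Claims (1) and (2).} For (1), summing the per-step bounds over $i$ gives
\begin{equation*}
\mathcal A_n^* = \Op\Bigl(n\epsilon^2 \Delta_n^{\alpha\tand1}(R^{\mathrm{coef}})^{-2} + n\epsilon^2 M_{(1)}^{-(2\alpha\tand2)}\Bigr),
\end{equation*}
which yields (1). For (2), we use Cauchy--Schwarz on $\mathcal A_{1,n}^*$ together with $\mathcal C_n^* = \Op(\epsilon^2)$ from \eqref{est_CDE*}:
\begin{equation*}
\bigl(r\mathcal A_{1,n}^*\bigr)^2 \le r^2\,\mathcal A_n^*\,\mathcal C_n^* = \Op\bigl(r^2\epsilon^2\mathcal A_n^*\bigr).
\end{equation*}
The conditions in (2) are exactly $r^2\epsilon^2\mathcal A_n^* = \op(1)$.

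\textbf{Claims (3)--(5).} These follow the same pattern, using $\mathcal E_n^* = \Op(n)$ and the decomposition $x = \text{(deterministic)} + \epsilon\cdot\text{(OU part)}$.
\begin{itemize}
\item For (3), Cauchy--Schwarz gives $(r\mathcal A_{2,n}^*)^2 \le r^2\mathcal A_n^*\mathcal E_n^*$, which produces the $n^2\epsilon^2 r^2$ and $n\epsilon^2 r^2$ conditions.
\item For (4), $\mathcal B_{1,n}^*$ is a martingale-type sum, because $\mathcal M_i(\lambda^*;\mb x)$ is a $\GG_{i,\ell}$-martingale difference with variance $\sim\epsilon^2\Delta_n$. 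The contributions of $\widehat x - x$ that are $\GG_{i-1,\ell}$-measurable are then bounded in $L^2$ by $\epsilon^2\mathcal B_n^*$-type quantities. This gives the $\epsilon^2 r^2/M_{(1)}^{2\alpha_0\tand2}$ and $\epsilon^4 r^2/M_{(1)}^{2\alpha\tand2}$ conditions and $n\epsilon^2r^2/(R^{\mathrm{coef}})^2$.
\item For (5), we bound $|\mathcal B_{2,n}^*|$ directly by $\sum_i|\widehat x-x||x|$. The deterministic spatial-error part contributes $nM_{(1)}^{-1}$, since $x(t_{i-1})$ is bounded; the rest is handled by Cauchy--Schwarz with $\mathcal E_n^*$.
\end{itemize}

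\textbf{Main obstacle.} The hard step is (4). The factor $\widehat\kappa-\kappa^*$ depends on the whole path, including $w_\ell$, so the martingale structure is not available for it. I would first try to handle the $\widehat\kappa$-part crudely: pull out the random factor $|\widehat\kappa-\kappa^*|$, then apply Cauchy--Schwarz to the remaining deterministic-coefficient sums using $\mathcal C_n^*=\Op(\epsilon^2)$, and accept the condition $n\epsilon^2r^2/(R^{\mathrm{coef}})^2=\oo(1)$. The martingale argument would then be reserved for the spatial-discretization part, whose coefficients are deterministic.
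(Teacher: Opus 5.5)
Your arguments for (1) and (4) follow the paper's: you pull $|\widehat{\bs\kappa}-\bs\kappa^*|^2$ out and use Cauchy--Schwarz for the estimation-error part, and conditional moments for the spatial part of (4). However, (2) and (3) do not follow from what you wrote. With your own bound on $\mathcal A_n^*$, Cauchy--Schwarz on the whole sum only gives
\begin{gather*}
r_{n,\epsilon}^2\mathcal A_n^*\mathcal C_n^*=\Op\Bigl(\frac{n\epsilon^4r_{n,\epsilon}^2\Delta_n^{\alpha\tand 1}}{(R_{\alpha,\alpha_0}^{\mathrm{coef}})^2}+\frac{n\epsilon^4r_{n,\epsilon}^2}{M_{(1)}^{2\alpha\tand 2}}\Bigr),
\\
r_{n,\epsilon}^2\mathcal A_n^*\mathcal E_n^*=\Op\Bigl(\frac{n^2\epsilon^2r_{n,\epsilon}^2\Delta_n^{\alpha\tand 1}}{(R_{\alpha,\alpha_0}^{\mathrm{coef}})^2}+\frac{n^2\epsilon^2r_{n,\epsilon}^2}{M_{(1)}^{2\alpha\tand 2}}\Bigr).
\end{gather*}
In both cases the spatial-discretization term is a factor $n$ larger than the hypothesis ($\epsilon^4r_{n,\epsilon}^2/M_{(1)}^{2\alpha\tand 2}=\oo(1)$, resp.\ $n\epsilon^2r_{n,\epsilon}^2/M_{(1)}^{2\alpha\tand 2}=\oo(1)$). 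Your statement that these are ``exactly'' the conditions in (2) and (3) is therefore an arithmetic slip.

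The slip matters downstream. With $r_{n,\epsilon}=\sqrt n/\epsilon^2$ as in (G32), your version of (2) would need $n^2/M_{(1)}^{2\alpha^*\tand 2}=\oo(1)$, which is stronger than [C2]$_{\alpha^*,\alpha_0}$-(iii).

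The missing step is to split the increment error as $S_{1,i}+S_{2,i}$ ($\widehat{\bs\kappa}$-error plus Riemann-sum error), apply Cauchy--Schwarz only to $S_{1,i}$, and treat $S_{2,i}\mathcal M_i(\lambda^*;\mb x)$ and $S_{2,i}x(t_{i-1}^n)$ by conditional moments. $S_{2,i}$ is a fixed linear functional of the centred increment field $Y_i^n$, which is independent of the past. Hence the conditional mean is $\EE[\mathcal M_i(\lambda^*;\mb x)^2]\sum_{j,k}\int_{D_{j,k}}(e_\ell(\mb y_{j,k})-e_\ell(\mb y))h_\ell(\mb y;\bs\kappa^*)\dd\mb y=\OO(\epsilon^2\Delta_n/M_{(1)})$ in (2) and $0$ in (3). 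The conditional second moments are of order $\epsilon^4\Delta_n/M_{(1)}^{2\alpha\tand 2}$ and $x(t_{i-1}^n)^2\epsilon^2/M_{(1)}^{2\alpha\tand 2}$ respectively.

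The centred terms are orthogonal, so summing over $i$ gives exactly the stated conditions. (Note that $\epsilon^2 r_{n,\epsilon}/M_{(1)}=\oo(1)$ follows from the second hypothesis of (2).) This is the paper's argument \eqref{2-1}--\eqref{3-2} via Lemma 5.1 of \cite{TKU2025arXiv2}, i.e.\ the same device you reserve for (4). The conditioning should be on the past of all the $w_l$, not only $w_\ell$, because $S_{2,i}$ involves every mode.

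Your remark that the deterministic part of the increment ``contributes only lower-order terms'' also needs a real argument, and smallness alone will not do. Every hypothesis in (1) and (3) carries a factor $\epsilon^2$. So for arbitrary $r_{n,\epsilon}$, an $\epsilon$-free contribution to $\mathcal A_n^*$ or $\mathcal A_{2,n}^*$ is not controlled and must be shown to vanish.

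The paper avoids such terms by writing $\mathcal M_i(\lambda_\ell^*;\widehat{\mb x}_\ell)=\sum_{j,k}Y_i^n(\mb y_{j,k})\int_{D_{j,k}}h_\ell(\mb y;\widehat{\bs\kappa})\dd\mb y$. For the grid-based $\widehat{\mb x}_\ell$, however, this identity omits the cross-mode term $\sum_{j,k}\bigl[(\ee^{-\Delta_n A_\theta}-\ee^{-\lambda_\ell^*\Delta_n})X_{t_{i-1}^n}\bigr](\mb y_{j,k})\int_{D_{j,k}}h_\ell(\mb y;\widehat{\bs\kappa})\dd\mb y$. That term vanishes for the exact integral against $h_\ell(\cdot;\bs\kappa^*)$, but not for the grid sum. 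Its $X_0$-part is zero when $X_0$ is a multiple of $e_\ell$, as in the simulations, but not in general. It is precisely the piece you wave away, so on either route it needs a genuine estimate or an additional hypothesis.
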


\begin{proof}[\bf{Proof of Lemma \ref{lem_est_AB2}}]
Let $R = R_{\alpha, \alpha_0}^{\mathrm{coef}}$.
We define $Y_i^n = X_{t_i^n} -\ee^{-\Delta_n A_\theta} X_{t_{i-1}^n}$
and notice that
\begin{equation*}
\mathcal M_i(\lambda_l^*; {\mb x}_l)
= x_l(t_i^n) -\ee^{-\lambda_l^* \Delta_n} x_l(t_{i-1}^n)
= \epsilon \mu_l^{-\alpha/2} 
\int_{t_{i-1}^n}^{t_i^n} \ee^{-\lambda_l^* (t_i^n -s)} \dd w_l(s)
\end{equation*}
and
\begin{equation}\label{eq_Y}
Y_i^n = \sum_{l \in \mathbb N^2} \langle Y_i^n, e_l \rangle e_l 
= \sum_{l \in \mathbb N^2} \mathcal M_i(\lambda_l^*; {\mb x}_l) e_l.
\end{equation}
We also note that
\begin{equation}\label{E_M2}
\EE \bigl[ \mathcal M_i(\lambda_l^*; {\mb x}_l)^2 \bigr] 
= \frac{\epsilon^2(1-\ee^{-2\lambda_l^* \Delta_n})}{2 (\lambda_l^*) \mu_l^{\alpha}} 
\lesssim \frac{\epsilon^2}{(\lambda_l^*)^{1+\alpha}} \land \epsilon^2 \Delta_n,
\quad
\EE \bigl[ \mathcal M_i(\lambda_l^*; {\mb x}_l)^4 \bigr] 
\lesssim \epsilon^4 \Delta_n^2.
\end{equation}
Let $D_{j,k} = (y_{j-1}, y_j] \times (z_{k-1}, z_k]$,
$\mb y_{j,k} = (y_j, z_k)^\TT$,
$\widehat {\bs \kappa} = (\widehat \kappa, \widehat \eta)^\TT$, 
$\bs \kappa = (\kappa^*, \eta^*)^\TT$ 
and $h_l(\mb y; {\bs \kappa}) = e_l(\mb y; {\bs \kappa}) \exp({\bs \kappa}^\TT \mb y)$
for $\mb y = (y, z)^\TT \in D$ and $\bs \kappa = (\kappa, \eta)^\TT \in \mathbb R^2$. 
Since we find from \eqref{approx_cp0} that 
\begin{align*}
\mathcal M_i(\lambda_{\ell}^*; {\mb x}_{\ell})
&= \langle Y_i^n, e_{\ell} \rangle 
= \int_D Y_i^n(\mb y) h_{\ell}(\mb y; {\bs \kappa}^*) \dd \mb y 
= \sum_{j,k} \int_{D_{j,k}} Y_i^n(\mb y) h_{\ell}(\mb y; {\bs \kappa}^*) \dd \mb y,
\\
\mathcal M_i(\lambda_{\ell}^*; \widehat {\mb x}_{\ell}) 
&= \widehat x_{\ell}(t_i^n) 
-\ee^{-\lambda_{\ell}^* \Delta_n} \widehat x_{\ell}(t_{i-1}^n)
= \sum_{j,k} \int_{D_{j,k}} Y_i^n(\mb y_{j,k}) 
h_{\ell}(\mb y;\widehat {\bs \kappa}) \dd \mb y,
\end{align*}
we decompose $\mathcal M_i(\lambda_{\ell}^*; \widehat {\mb x}_{\ell}) 
-\mathcal M_i(\lambda_{\ell}^*; {\mb x}_{\ell})$ as follows.
\begin{align}
\mathcal M_i(\lambda_{\ell}^*; \widehat {\mb x}_{\ell}) 
-\mathcal M_i(\lambda_{\ell}^*; {\mb x}_{\ell})
&= \sum_{j,k} Y_i^n(\mb y_{j,k}) 
\int_{D_{j,k}} (h_{\ell}(\mb y;\widehat {\bs \kappa}) -h_{\ell}(\mb y; \bs \kappa^*)) 
\dd \mb y
\nonumber
\\
&\quad+ \sum_{j,k} \int_{D_{j,k}} 
(Y_i^n(\mb y_{j,k}) -Y_i^n(\mb y)) h_{\ell}(\mb y; {\bs \kappa}^*) \dd \mb y
\nonumber
\\
&=: S_{1,i} +S_{2,i}.
\label{decomp_M}
\end{align}
Since we see from \eqref{approx_cp0} that
\begin{align*}
x(t_{i-1}^n)
&= \langle X_{t_{i-1}^n}, e_{\ell} \rangle 
= \int_D X_{t_{i-1}^n}(\mb y) h_{\ell}(\mb y; {\bs \kappa}^*) \dd \mb y 
= \sum_{j,k} \int_{D_{j,k}} X_{t_{i-1}^n}(\mb y) 
h_{\ell}(\mb y; {\bs \kappa}^*) \dd \mb y,
\\
\widehat x(t_{i-1}^n)
&= \sum_{j,k} \int_{D_{j,k}} X_{t_{i-1}^n}(\mb y_{j,k}) 
h_{\ell}(\mb y;\widehat {\bs \kappa}) \dd \mb y,
\end{align*}
we also decompose $\widehat x(t_{i-1}^n) -x(t_{i-1}^n)$ as follows.
\begin{align}
\widehat x(t_{i-1}^n) -x(t_{i-1}^n)
&= \sum_{j,k} X_{t_{i-1}^n}(\mb y_{j,k}) 
\int_{D_{j,k}} (h_{\ell}(\mb y; \widehat {\bs \kappa}) 
-h_{\ell}(\mb y; \bs \kappa^*)) \dd \mb y
\nonumber
\\
&\quad+ \sum_{j,k} \int_{D_{j,k}} 
(X_{t_{i-1}^n}(\mb y_{j,k}) -X_{t_{i-1}^n}(\mb y)) 
h_{\ell}(\mb y; \bs \kappa^*) \dd \mb y
\nonumber
\\
&=: T_{1,i} +T_{2,i}.
\label{decomp_x}
\end{align}

\begin{itemize}
\item[(1)]
It follows from \eqref{def_A--E} and \eqref{decomp_M} that
\begin{equation*}
r_{n,\epsilon} \mathcal A_n^*
= r_{n,\epsilon} \sum_{i=1}^n (S_{1,i} +S_{2,i})^2
\le 2 r_{n,\epsilon} \sum_{i=1}^n ( S_{1,i}^2 +S_{2,i}^2 ).
\end{equation*}
By the Schwarz inequality and the Taylor expansion, we have
\begin{align*}
S_{1,i}^2 
&= \Biggl( \sum_{j,k} Y_i^n(\mb y_{j,k}) 
\int_{D_{j,k}} (h_{\ell}(\mb y;\widehat {\bs \kappa}) -h_{\ell}(\mb y; \bs \kappa^*)) 
\dd \mb y
\Biggr)^2
\\
&\le 
\sum_{j,k} (Y_i^n(\mb y_{j,k}))^2 
\biggl( 
\int_{D_{j,k}} (h_{\ell}(\mb y;\widehat {\bs \kappa}) -h_{\ell}(\mb y; \bs \kappa^*))
\dd \mb y
\biggr)^2
\sum_{j,k} 1
\\
&\le 
\sum_{j,k} (Y_i^n(\mb y_{j,k}))^2 
\int_{D_{j,k}} (h_{\ell}(\mb y;\widehat {\bs \kappa}) -h_{\ell}(\mb y; \bs \kappa^*))^2
\dd \mb y
\int_{D_{j,k}} \dd \mb y
\sum_{j,k} 1
\\
&=
\sum_{j,k} (Y_i^n(\mb y_{j,k}))^2 
\int_{D_{j,k}} 
(h_{\ell}(\mb y;\widehat {\bs \kappa})-h_{\ell}(\mb y;\bs \kappa^*))^2 \dd \mb y
\\
&\le \sum_{j,k} (Y_i^n(\mb y_{j,k}))^2 
\int_{D_{j,k}} 
\biggl| 
\int_0^1 \pd_{\bs \kappa} h_{\ell}(\mb y;\widehat {\bs \kappa}_u) \dd u
\biggr|^2 |\widehat {\bs \kappa} -{\bs \kappa}^*|^2 \dd \mb y
\end{align*}
with 
$\widehat {\bs \kappa}_u = {\bs \kappa}^* +u(\widehat {\bs \kappa} -{\bs \kappa}^*)$ 
and 
\begin{align*}
r_{n,\epsilon} \sum_{i=1}^n S_{1,i}^2 
&\le 
\frac{r_{n,\epsilon}}{R^2} \sum_{i=1}^n
\sum_{j,k} (Y_i^n(\mb y_{j,k}))^2 
\int_{D_{j,k}} 
\biggl| 
\int_0^1 \pd_{\bs \kappa} h_{\ell}(\mb y;\widehat {\bs \kappa}_u) \dd u
\biggr|^2 \dd \mb y
|R (\widehat {\bs \kappa} -{\bs \kappa}^*)|^2
\\
&=: K_{n} |R (\widehat {\bs \kappa} -{\bs \kappa}^*)|^2.
\end{align*}
Since ${\bs \kappa}^*$ and $\widehat {\bs \kappa}$ belong to 
the compact convex subset $\Xi_{\bs \kappa}$ of $\mathbb R^2$, 
$\widehat {\bs \kappa}_u \in \Xi_{\bs \kappa}$ for all $u \in [0,1]$.
We then obtain
\begin{equation}\label{int_h}
\biggl| 
\int_0^1 \pd_{\bs \kappa} h_{\ell}(\mb y;\widehat {\bs \kappa}_u) \dd u
\biggr|
\le 
\int_0^1 \bigl| \pd_{\bs \kappa} h_{\ell}(\mb y;\widehat {\bs \kappa}_u) 
\bigr| \dd u
\le
\sup_{(\mb y, {\bs \kappa}) \in \overline D \times \Xi_{\bs \kappa}} 
\bigl| \pd_{\bs \kappa} h_{\ell}(\mb y;{\bs \kappa}) \bigr|
\lesssim 1
\end{equation}
and
\begin{equation*}
K_n \lesssim \frac{r_{n,\epsilon}}{R^2} 
\sum_{i=1}^n \sum_{j,k} (Y_i^n(\mb y_{j,k}))^2 \int_{D_{j,k}} \dd \mb y.
\end{equation*}
Therefore, we see from \eqref{eq_Y}, 
the independence of 
$\{ \mathcal M_i(\lambda_{l}^*; {\mb x}_{l}) \}_{l \in \mathbb N^2}$,
\eqref{E_M2} and Lemma 5.4 in \cite{TKU2025arXiv2} that 
\begin{align*}
\EE \bigl[ (Y_i^n(\textbf{y}))^2 \bigr]
&= \sum_{l_1,l_2 \in \mathbb N^2} 
\EE \bigl[ \mathcal M_i(\lambda_{l_1}^*; {\mb x}_{l_1})
\mathcal M_i(\lambda_{l_2}^*; {\mb x}_{l_2}) \bigr] 
e_{l_1}(\mb y) e_{l_2}(\mb y)
\\
&= \sum_{l \in \mathbb N^2} 
\EE \bigl[ \mathcal M_i(\lambda_{l}^*; {\mb x}_{l})^2 \bigr] e_l(\mb y)^2
\\
&=
\epsilon^2 \sum_{l \in \mathbb N^2} 
\frac{1-\ee^{-2\lambda_l^* \Delta_n}}{2 \lambda_l^* \mu_l^{\alpha}} e_l(\mb y)^2
\\
&= \OO( \epsilon^2 \Delta_n^{\alpha \tand 1})
\end{align*}
uniformly in $\mb y \in \overline D$, and see that 
\begin{equation*}
\EE[|K_n|] 
\lesssim 
\frac{r_{n,\epsilon}}{R^2}
\sum_{i=1}^n \sum_{j,k} \EE \bigl[(Y_i^n(\mb y_{j,k}))^2 \bigr]
\int_{D_{j,k}} \dd \mb y
= \OO \biggl( 
\frac{n \epsilon^2 r_{n,\epsilon} \Delta_n^{\alpha \tand 1}}{R^2}
\biggr),
\end{equation*}
which together with Theorem \ref{th2} yields 
\begin{equation}\label{est_S1}
r_{n,\epsilon} \sum_{i=1}^n S_{1,i}^2 
= \Op \biggl( \frac{n \epsilon^2 r_{n,\epsilon} \Delta_n^{\alpha \tand 1}}{R^2} \biggr)
= \op(1).
\end{equation}

We also find from \eqref{eq_Y}, \eqref{E_M2} and Lemma 5.4 in \cite{TKU2025arXiv2} that
\begin{align*}
&\EE \bigl[ (Y_i^n(\mb y) -Y_i^n(\mb z))^2 \bigr] 
\\
&= \sum_{l_1,l_2 \in \mathbb N^2} 
\EE[ \mathcal M_i(\lambda_{l_1}^*; {\mb x}_{l_1}) 
\mathcal M_i(\lambda_{l_2}^*; {\mb x}_{l_2}) ] 
(e_{l_1}(\mb y) -e_{l_1}(\mb z))(e_{l_2}(\mb y) -e_{l_2}(\mb z))
\\
&=
\sum_{l \in \mathbb N^2} 
\EE \bigl[ \mathcal M_i(\lambda_{l}^*; {\mb x}_{l})^2  \bigr] 
(e_l(\mb y) -e_l(\mb z))^2
\\
&=
\epsilon^2 \sum_{l \in \mathbb N^2} 
\frac{1-\ee^{-2\lambda_l^* \Delta_n}}{2 \lambda_l^* \mu_l^{\alpha}} 
(e_l(\mb y) -e_l(\mb z))^2
\\
&= 
\OO \biggl(
\frac{\epsilon^2}{M_{(1)}^{2\alpha \tand 2}}
\biggr) 
\end{align*}
uniformly in $\mb y = (y^{(1)},y^{(2)})$, $\mb z = (z^{(1)},z^{(2)})$ 
with $|y^{(k)} -z^{(k)}| \le 1/M_k$, $k \in \{ 1, 2 \}$.
We hence obtain by the Schwarz inequality, 
\begin{align*}
r_{n,\epsilon} \sum_{i=1}^n S_{2,i}^2
&= r_{n,\epsilon} \sum_{i=1}^n 
\Biggl(
\sum_{j,k} \int_{D_{j,k}} 
(Y_i^n(\mb y_{j,k}) -Y_i^n(\mb y)) h_{\ell}(\mb y; {\bs \kappa}^*) \dd \mb y
\Biggr)^2
\\
&\lesssim 
r_{n,\epsilon} \sum_{i=1}^n \sum_{j,k} \int_{D_{j,k}} 
(Y_i^n(\mb y_{j,k}) -Y_i^n(\mb y))^2
\dd \mb y
\sum_{j,k} \int_{D_{j,k}} \dd \mb y
\\
&=\Op \biggl(
\frac{n \epsilon^2 r_{n,\epsilon}}{M_{(1)}^{2 \alpha \tand 2}}
\biggr) 
\\
&= \op(1).
\end{align*}

\item[(2)]
Let $\mathcal M_{i,l} = \mathcal M_i(\lambda_l^*; {\mb x}_l)$.
By \eqref{def_A1--B2}, \eqref{decomp_M} and the Schwarz inequality, we have
\begin{align*}
r_{n,\epsilon} \mathcal A_{1,n}^*
&=
r_{n,\epsilon} \Biggl| \sum_{i=1}^n (S_{1,i} +S_{2,i}) 
\mathcal M_{i,\ell} \Biggr|
\\
&\le
\sqrt{r_{n,\epsilon}^2 
\sum_{i=1}^n S_{1,i}^2 \sum_{i=1}^n \mathcal M_{i,\ell}^2}
+r_{n,\epsilon} \Biggl| \sum_{i=1}^n S_{2,i} \mathcal M_{i,\ell} \Biggr|
\\
&=
\sqrt{\epsilon^2 r_{n,\epsilon}^2 \sum_{i=1}^n S_{1,i}^2 
\times \epsilon^{-2} \mathcal C_n^*}
+r_{n,\epsilon} 
\Biggl| \sum_{i=1}^n S_{2,i} \mathcal M_{i,\ell} \Biggr|.
\end{align*}
We see from \eqref{est_CDE*} that $\epsilon^{-2} \mathcal C_n^* = \Op(1)$
and from \eqref{est_S1} and 
$\frac{n \epsilon^4 r_{n,\epsilon}^2 \Delta_n^{\alpha \tand 1}}{R_2^2} = \oo(1)$ that
\begin{equation*}
\epsilon^2 r_{n,\epsilon}^2 \sum_{i=1}^n S_{1,i}^2
= \Op \biggl( 
\frac{n \epsilon^4 r_{n,\epsilon}^2 \Delta_n^{\alpha \tand 1}}{R_2^2}
\biggr)
= \op(1).
\end{equation*}
Therefore, all that remains is to show that 
\begin{equation}\label{2-1}
r_{n,\epsilon} \sum_{i=1}^n 
\bigl| \EE[S_{2,i} \mathcal M_{i,\ell} | \GG_{i-1,\ell} ] \bigr| = \op(1),
\end{equation}
\begin{equation}\label{2-2}
r_{n,\epsilon}^2 \sum_{i=1}^n \EE[(S_{2,i} \mathcal M_{i,\ell})^2 | \GG_{i-1,\ell} ] = \op(1)
\end{equation}
hold under $\frac{\epsilon^4 r_{n,\epsilon}^2}{M_{(1)}^{2\alpha \tand 2}} = \oo(1)$
(see Lemma 5.1 in \cite{TKU2025arXiv2}). 

We first show \eqref{2-1}. 
Since it follows from \eqref{eq_Y} that
\begin{align*}
S_{2,i} \mathcal M_{i,\ell}
&= \sum_{j,k} 
\int_{D_{j,k}} (Y_i^n (\mb y_{j,k}) -Y_i^n (\mb y)) h_{\ell}(\mb y) \dd \mb y 
\mathcal M_{i,\ell}
\\
&= \sum_{j,k} 
\int_{D_{j,k}} \sum_{l \in \mathbb N^2}
\mathcal M_{i,\ell} \mathcal M_{i,l}
(e_l(\mb y_{j,k}) -e_l(\mb y)) h_{\ell}(\mb y) \dd \mb y
\end{align*}
and
\begin{equation*}
\EE[ \mathcal M_{i,\ell} \mathcal M_{i,l} | \GG_{i-1,\ell} ] = 
\begin{cases}
\EE[ \mathcal M_{i,\ell}^2] 
& l = \ell,
\\
\EE[ \mathcal M_{i,\ell} ] \EE[ \mathcal M_{i,l} ] 
= 0,
& l \neq \ell,
\end{cases}
\end{equation*}
it holds from \eqref{E_M2} that
\begin{align*}
\bigl| \EE[S_{2,i} \mathcal M_{i,\ell} | \GG_{i-1,\ell} ] \bigr|
&= \Biggl| \sum_{j,k} 
\int_{D_{j,k}} \sum_{l \in \mathbb N^2}
\EE[ \mathcal M_{i,\ell} \mathcal M_{i,l} | \GG_{i-1,\ell} ]
(e_l(\mb y_{j,k}) -e_l(\mb y)) h_{\ell}(\mb y) \dd \mb y 
\Biggr|
\\
&= \Biggl|
\EE[\mathcal M_{i,\ell}^2]
\sum_{j,k} 
\int_{D_{j,k}} 
(e_{\ell}(\mb y_{j,k}) -e_{\ell}(\mb y)) h_{\ell}(\mb y) \dd \mb y 
\Biggr|
\\
&\lesssim
\EE[\mathcal M_{i,\ell}^2]
\sum_{j,k} \int_{D_{j,k}} 
|e_{\ell}(\mb y_{j,k}) -e_{\ell}(\mb y)| \dd \mb y 
\\
&= \OO \biggl( \frac{\epsilon^2 \Delta_n}{M_{(1)}} \biggr) 
\end{align*}
and
\begin{equation*}
r_{n,\epsilon} \sum_{i=1}^n 
\bigl| \EE[S_{2,i} \mathcal M_{i,\ell} | \GG_{i-1,\ell} ] \bigr|
= \OO \biggl( \frac{r_{n,\epsilon} n \epsilon^2 \Delta_n}{M_{(1)}} \biggr)
= \OO \biggl( \frac{\epsilon^2 r_{n,\epsilon}}{M_{(1)}} \biggr)
= \oo(1).
\end{equation*}

We next show \eqref{2-2}. We have
\begin{align*}
\EE \bigl[ (S_{2,i} \mathcal M_{i,\ell})^2 \bigr] 
&\le 
\sum_{j,k} 
\int_{D_{j,k}} 
\EE \Biggl[
\biggl(
\sum_{l \in \mathbb N^2}
\mathcal M_{i,\ell} \mathcal M_{i,l} (e_l(\mb y_{j,k}) -e_l(\mb y)) h_{\ell}(\mb y) 
\biggr)^2
\Biggr] \dd \mb y
\\
&= 
\sum_{j,k} \int_{D_{j,k}}
\sum_{l_1, l_2 \in \mathbb N^2}
\EE \bigl[ \mathcal M_{i,\ell}^2 \mathcal M_{i,l_1} \mathcal M_{i,l_2} \bigr]
\\
&\quad \times 
(e_{l_1}(\mb y_{j,k}) -e_{l_1}(\mb y)) 
(e_{l_2}(\mb y_{j,k}) -e_{l_2}(\mb y)) h_{\ell}(\mb y)^2 \dd \mb y
\end{align*}
and
\begin{equation*}
\EE \bigl[ \mathcal M_{i,\ell}^2
\mathcal M_{i,l_1} \mathcal M_{i,l_2} \bigr]
=
\begin{cases}
\EE \bigl[ \mathcal M_{i,\ell}^4 \bigr], 
& l_1 = l_2 = \ell,
\\
\EE \bigl[ \mathcal M_{i,\ell}^2 \bigr] 
\EE \bigl[ \mathcal M_{i,l_1}^2 \bigr], 
& l_1 = l_2 \neq \ell,
\\
0, & \text{otherwise}.
\end{cases}
\end{equation*}
By \eqref{E_M2} and Lemma 5.4 in \cite{TKU2025arXiv2}, we obtain
\begin{align*}
&\EE \bigl[ (S_{2,i} \mathcal M_{i,\ell})^2 \bigr] 
\\
&\le 
\sum_{j,k} \int_{D_{j,k}}
\sum_{l_1, l_2 \in \mathbb N^2}
\EE \bigl[ \mathcal M_{i,\ell}^2 \mathcal M_{i,l_1} \mathcal M_{i,l_2} \bigr]
\\
&\qquad \times (e_{l_1}(\mb y_{j,k}) -e_{l_1}(\mb y)) 
(e_{l_2}(\mb y_{j,k}) -e_{l_2}(\mb y)) h_{\ell}(\mb y)^2 \dd \mb y
\\
&\lesssim
\EE \bigl[ \mathcal M_{i,\ell}^4 \bigr]
\sum_{j,k} \int_{D_{j,k}} 
(e_{\ell}(\mb y_{j,k}) -e_{\ell}(\mb y))^2 h_{\ell}(\mb y)^2 \dd \mb y
\\
&\quad+
\EE \bigl[ \mathcal M_{i,\ell}^2 \bigr]
\sum_{j,k} \int_{D_{j,k}}
\sum_{l \in \mathbb N^2}
\EE \bigl[ \mathcal M_{i,l}^2 \bigr]
(e_l(\mb y_{j,k}) -e_l(\mb y))^2 h_{\ell}(\mb y)^2 \dd \mb y
\\
&\lesssim
\epsilon^4 \Delta_n^2 
\sum_{j,k} \int_{D_{j,k}} (e_{\ell}(\mb y_{j,k}) -e_{\ell}(\mb y))^2 \dd \mb y
\\
&\qquad+ \epsilon^4 \Delta_n \sum_{j,k} \int_{D_{j,k}}
\sum_{l \in \mathbb N^2}
\frac{(e_l(\mb y_{j,k}) -e_l(\mb y))^2}{(\lambda_l^*)^{1+\alpha}} \dd \mb y
\\
&= \OO \biggl(\frac{\epsilon^4 \Delta_n^2}{M_{(1)}^2} \biggr)
+\OO \biggl(
\frac{\epsilon^4 \Delta_n}{M_{(1)}^{2\alpha \tand 2}}
\biggr) 
\\
&= \OO \biggl(
\frac{\epsilon^4 \Delta_n}{M_{(1)}^{2\alpha \tand 2}}
\biggr) 
\end{align*}
uniformly in $i$ and
\begin{equation*}
r_{n,\epsilon}^2 \sum_{i=1}^n 
\EE[(S_{2,i} \mathcal M_{i,\ell})^2 | \GG_{i-1,\ell} ]
= \OO \biggl(
\frac{\epsilon^4 r_{n,\epsilon}^2}{M_{(1)}^{2\alpha \tand 2}}
\biggr)
= \oo(1).
\end{equation*}

\item[(3)]
It holds from \eqref{def_A1--B2}, \eqref{decomp_M} and the Schwarz inequality that 
\begin{align*}
r_{n,\epsilon} \mathcal A_{2,n}^*
&\le 
\sqrt{r_{n,\epsilon}^2 
\sum_{i=1}^n S_{1,i}^2 \sum_{i=1}^n x(t_{i-1}^n)^2}
+r_{n,\epsilon} 
\Biggl| \sum_{i=1}^n S_{2,i} x(t_{i-1}^n) \Biggr|
\\
&= 
\sqrt{n r_{n,\epsilon}^2 
\sum_{i=1}^n S_{1,i}^2 \times \frac{1}{n} \mathcal E_n^*}
+r_{n,\epsilon} 
\Biggl| \sum_{i=1}^n S_{2,i} x(t_{i-1}^n) \Biggr|.
\end{align*}
It then follows from \eqref{est_CDE*} that $\frac{1}{n} \mathcal E_n^* = \Op(1)$ 
and from \eqref{est_S1} that
\begin{equation*}
n r_{n,\epsilon}^2 \sum_{i=1}^n S_{1,i}^2
= \Op \biggl( 
\frac{n^2 \epsilon^2 r_{n,\epsilon}^2 \Delta_n^{\alpha \tand 1}}{R_2^2} \biggr)
= \op(1).
\end{equation*}
Hence, we will show 
\begin{equation}\label{3-1}
r_{n,\epsilon} \sum_{i=1}^n 
\bigl| \EE[S_{2,i} x(t_{i-1}^n) | \GG_{i-1,\ell} ] \bigr| = \op(1),
\end{equation}
\begin{equation}\label{3-2}
r_{n,\epsilon}^2 \sum_{i=1}^n \EE[(S_{2,i} x(t_{i-1}^n))^2 | \GG_{i-1,\ell} ] = \op(1)
\end{equation}
under $\frac{n \epsilon^2 r_{n,\epsilon}^2}{M_{(1)}^{2\alpha \tand 2}} = \oo(1)$
(see Lemma 5.1 in \cite{TKU2025arXiv2}).

We first show \eqref{3-1}. 
Since we have by \eqref{eq_Y},
\begin{align*}
S_{2,i} x_{\ell}(t_{i-1}^n)
&= \sum_{j,k} 
\int_{D_{j,k}} (Y_i^n (\mb y_{j,k}) -Y_i^n (\mb y)) h_{\ell}(\mb y) \dd \mb y 
x_{\ell}(t_{i-1}^n)
\\
&= \sum_{j,k} 
\int_{D_{j,k}} \sum_{l \in \mathbb N^2}
x_{\ell}(t_{i-1}^n) \mathcal M_{i,l}
(e_l(\mb y_{j,k}) -e_l(\mb y)) h_{\ell}(\mb y) \dd \mb y
\end{align*}
and 
\begin{equation*}
\EE[ x_{\ell}(t_{i-1}^n) \mathcal M_{i,l} | \GG_{i-1,\ell} ] 
= x_{\ell}(t_{i-1}^n) \EE[ \mathcal M_{i,l} ]
= 0,
\end{equation*}
it holds that
\begin{align*}
\bigl| \EE[S_{2,i} x_{\ell}(t_{i-1}^n) | \GG_{i-1,\ell} ] \bigr|
&= \Biggl| \sum_{j,k} 
\int_{D_{j,k}} \sum_{l \in \mathbb N^2}
\EE[ x_{\ell}(t_{i-1}^n) \mathcal M_{i,l} | \GG_{i-1,\ell} ]
(e_l(\mb y_{j,k}) -e_l(\mb y)) h_{\ell}(\mb y) \dd \mb y 
\Biggr|
\\
&= 0
\end{align*}
and
\begin{equation*}
r_{n,\epsilon} \sum_{i=1}^n 
\bigl| \EE[S_{2,i} \mathcal M_i(\lambda^*; {\mb x}) | \GG_{i-1,\ell} ] \bigr|
= 0.
\end{equation*}

We next show \eqref{3-2}. We obtain by the Schwarz inequality,
\begin{align*}
\EE \bigl[ (S_{2,i} x_{\ell}(t_{i-1}^n))^2 \bigr] 
&= \EE \Biggl[ \Biggl( \sum_{j,k} 
\int_{D_{j,k}} \sum_{l \in \mathbb N^2}
x_{\ell}(t_{i-1}^n) \mathcal M_{i,l}
(e_l(\mb y_{j,k}) -e_l(\mb y)) h_{\ell}(\mb y) \dd \mb y
\Biggr)^2 \Biggr]
\\
&\le 
\sum_{j,k} 
\int_{D_{j,k}} 
\EE \Biggl[
\biggl(
\sum_{l \in \mathbb N^2}
x_{\ell}(t_{i-1}^n)
\mathcal M_{i,l} (e_l(\mb y_{j,k}) -e_l(\mb y)) h_{\ell}(\mb y) 
\biggr)^2
\Biggr] \dd \mb y
\\
&= 
\sum_{j,k} \int_{D_{j,k}}
\sum_{l_1, l_2 \in \mathbb N^2}
\EE \bigl[ x_{\ell}(t_{i-1}^n)^2 \mathcal M_{i,l_1} \mathcal M_{i,l_2} \bigr]
\\
&\qquad \times 
(e_{l_1}(\mb y_{j,k}) -e_{l_1}(\mb y)) 
(e_{l_2}(\mb y_{j,k}) -e_{l_2}(\mb y)) h_{\ell}(\mb y)^2 \dd \mb y
\end{align*}
and
\begin{equation*}
\EE \bigl[ x_{\ell}(t_{i-1}^n)^2 \mathcal M_{i,l_1} \mathcal M_{i,l_2} \bigr]
=
\begin{cases}
\EE \bigl[ x_{\ell}(t_{i-1}^n)^2 \bigr] \EE \bigl[ \mathcal M_{i,l_1}^2 \bigr], 
& l_1 = l_2,
\\
0, & \text{otherwise}.
\end{cases}
\end{equation*}
By \eqref{E_M2} and Lemma 5.4 in \cite{TKU2025arXiv2}, we obtain
\begin{align*}
\EE \bigl[ (S_{2,i} x_{\ell}(t_{i-1}^n))^2 \bigr] 
&\le 
\sum_{j,k} \int_{D_{j,k}}
\sum_{l_1, l_2 \in \mathbb N^2}
\EE \bigl[ x_{\ell}(t_{i-1}^n)^2 \mathcal M_{i,l_1} \mathcal M_{i,l_2} \bigr]
\\
&\qquad \times 
(e_{l_1}(\mb y_{j,k}) -e_{l_1}(\mb y)) 
(e_{l_2}(\mb y_{j,k}) -e_{l_2}(\mb y)) h_{\ell}(\mb y)^2 \dd \mb y
\\
&= 
\EE \bigl[ x_{\ell}(t_{i-1}^n)^2 \bigr]
\sum_{j,k} \int_{D_{j,k}}
\sum_{l \in \mathbb N^2}
\EE \bigl[ \mathcal M_{i,l}^2 \bigr]
(e_l(\mb y_{j,k}) -e_l(\mb y))^2 h_{\ell}(\mb y)^2 \dd \mb y
\\
&\lesssim
\epsilon^2 \sum_{j,k} \int_{D_{j,k}}
\sum_{l \in \mathbb N^2}
\frac{(e_l(\mb y_{j,k}) -e_l(\mb y))^2}{(\lambda_l^*)^{1+\alpha}} \dd \mb y
\\
&\le 
\epsilon^2 \max_{j,k} \sup_{\mb y, \mb z \in D_{j,k}}
\sum_{l \in \mathbb N^2}
\frac{(e_l(\mb y) -e_l(\mb z))^2}{(\lambda_l^*)^{1+\alpha}}
\\
&= \OO \biggl(
\frac{\epsilon^2}{M_{(1)}^{2\alpha \tand 2}}
\biggr) 
\end{align*}
uniformly in $i$ and 
\begin{equation*}
r_{n,\epsilon}^2 \sum_{i=1}^n 
\EE[(S_{2,i} x_{\ell}(t_{i-1}^n))^2 | \GG_{i-1,\ell} ]
= \Op \biggl(
\frac{n \epsilon^2 r_{n,\epsilon}^2}{M_{(1)}^{2\alpha \tand 2}}
\biggr)
= \op(1).
\end{equation*}

\item[(4)]
We find from \eqref{def_A1--B2}, \eqref{decomp_x} and the Schwarz inequality that
\begin{align*}
r_{n,\epsilon} \mathcal B_{1,n}^*
&\le 
\sqrt{r_{n,\epsilon}^2 
\sum_{i=1}^n T_{1,i}^2 \sum_{i=1}^n \mathcal M_{i,\ell}^2}
+r_{n,\epsilon} 
\Biggl| \sum_{i=1}^n T_{2,i} \mathcal M_{i,\ell} \Biggr|
\\
&=
\sqrt{\epsilon^2 r_{n,\epsilon}^2 \sum_{i=1}^n T_{1,i}^2 
\times \epsilon^{-2} \mathcal C_n^*}
+r_{n,\epsilon} 
\Biggl| \sum_{i=1}^n T_{2,i} \mathcal M_{i,\ell} \Biggr|.
\end{align*}
We find from \eqref{est_CDE*} that $\epsilon^{-2} \mathcal C_n^* = \Op(1)$.
By the Schwarz inequality and the Taylor expansion, we have
\begin{align*}
T_{1,i}^2 
&= \Biggl( \sum_{j,k} \int_{D_{j,k}} X_{t_{i-1}^n}(\mb y_{j,k}) 
(h_{\ell}(\mb y; \widehat {\bs \kappa}) -h_{\ell}(\mb y; \bs \kappa^*)) \dd \mb y
\Biggr)^2
\\
&\le \sum_{j,k} \int_{D_{j,k}} (X_{t_{i-1}^n}(\mb y_{j,k}))^2
(h_{\ell}(\mb y;\widehat {\bs \kappa})-h_{\ell}(\mb y;\bs \kappa^*))^2 \dd \mb y
\\
&\le \sum_{j,k} (X_{t_{i-1}^n}(\mb y_{j,k}))^2 
\int_{D_{j,k}} 
\biggl| 
\int_0^1 \pd_{\bs \kappa} h_{\ell}(\mb y;\widehat {\bs \kappa}_u) \dd u
\biggr|^2 |\widehat {\bs \kappa} -\bs \kappa^*|^2 \dd \mb y
\end{align*}
and 
\begin{align*}
r_{n,\epsilon} \sum_{i=1}^n T_{1,i}^2 
&\le 
\frac{r_{n,\epsilon}}{R^2} \sum_{i=1}^n
\sum_{j,k} (X_{t_{i-1}^n}(\mb y_{j,k}))^2 
\int_{D_{j,k}} 
\biggl| 
\int_0^1 \pd_{\bs \kappa} h_{\ell}(\mb y;\widehat {\bs \kappa}_u) \dd u
\biggr|^2 \dd \mb y
|R (\widehat {\bs \kappa} -\bs \kappa^*)|^2
\\
&=: L_{n} |R (\widehat {\bs \kappa} -\bs \kappa^*)|^2.
\end{align*}
Since it holds from \eqref{int_h} that
\begin{equation*}
L_n \lesssim \frac{r_{n,\epsilon}}{R^2} 
\sum_{i=1}^n \sum_{j,k} (X_{t_{i-1}^n}(\mb y_{j,k}))^2
\int_{D_{j,k}} \dd \mb y
\end{equation*}
and from Lemma \ref{lem_est_x} below that
\begin{align*}
\EE \bigl[ (X_{t_{i-1}^n}(\mb y))^2 \bigr]
&= \sum_{l_1,l_2 \in \mathbb N^2} 
\EE \bigl[ x_{l_1}(t_{i-1}^n) x_{l_2}(t_{i-1}^n) \bigl] 
e_{l_1}(\mb y) e_{l_2}(\mb y)
\\
&= \sum_{l \in \mathbb N^2} \EE \bigl[ x_l(t_{i-1}^n)^2 \bigr] e_l(\mb y)^2
\\
&\quad+
\sum_{l_1,l_2 \in \mathbb N^2, l_1 \neq l_2} \EE \bigl[ x_{l_1}(t_{i-1}^n) \bigr] \EE \bigl[ x_{l_2}(t_{i-1}^n) \bigr] e_{l_1}(\mb y) e_{l_2}(\mb y)
\\
&= \sum_{l \in \mathbb N^2} \VV \bigl[ x_l(t_{i-1}^n) \bigr] e_l(\mb y)^2
+\biggl( \sum_{l \in \mathbb N^2} \EE \bigl[ x_l(t_{i-1}^n) \bigr] e_l(\mb y) \biggr)^2
\\
&\lesssim \sum_{l \in \mathbb N^2} 
\biggl( 
\frac{\epsilon^2}{(\lambda_l^*)^{1+\alpha}} +\frac{1}{(\lambda_l^*)^{1+\alpha_0}} 
\biggr)
\\
&= \OO(1)
\end{align*}
uniformly in $\mb y \in \overline D$, we have
\begin{equation*}
\EE[|L_n|] 
\lesssim 
\frac{r_{n,\epsilon}}{R^2}
\sum_{i=1}^n \sum_{j,k} \EE \bigl[(X_{t_{i-1}^n}(\mb y_{j,k}))^2 \bigr]
\int_{D_{j,k}} \dd \mb y
= \OO \biggl( \frac{n r_{n,\epsilon}}{R^2} \biggr)
\end{equation*}
and
\begin{equation}\label{est_T1}
r_{n,\epsilon} \sum_{i=1}^n T_{1,i}^2 
= \Op \biggl( \frac{n r_{n,\epsilon}}{R^2} \biggr).
\end{equation}
In particular, we have
\begin{equation*}
\epsilon^2 r_{n,\epsilon}^2 \sum_{i=1}^n T_{1,i}^2 
= \Op \biggl( \frac{n \epsilon^2 r_{n,\epsilon}^2}{R^2} \biggr) = \op(1)
\end{equation*}
under $\frac{n \epsilon^2 r_{n,\epsilon}^2}{R^2} = \oo(1)$.

Therefore, all that remains is to show that 
\begin{equation}\label{4-1}
r_{n,\epsilon} \sum_{i=1}^n 
\bigl| \EE[T_{2,i} \mathcal M_{i,\ell} | \GG_{i-1,\ell} ] \bigr| = \op(1),
\end{equation}
\begin{equation}\label{4-2}
r_{n,\epsilon}^2 \sum_{i=1}^n \EE[(T_{2,i} \mathcal M_{i,\ell})^2 | \GG_{i-1,\ell} ] = \op(1)
\end{equation}
under 
$\frac{\epsilon^4 r_{n,\epsilon}^2}{M_{(1)}^{2\alpha \tand 2}} = \oo(1)$ and
$\frac{\epsilon^2 r_{n,\epsilon}^2}{M_{(1)}^{2\alpha_0 \tand 2}} = \oo(1)$
(see Lemma 5.1 in \cite{TKU2025arXiv2}).

We show \eqref{4-1}. Since
\begin{align*}
T_{2,i} \mathcal M_{i,\ell}
&= \sum_{j,k} 
\int_{D_{j,k}} (X_{t_{i-1}^n} (\mb y_{j,k}) -X_{t_{i-1}^n} (\mb y)) 
h_{\ell}(\mb y) \dd \mb y 
\mathcal M_{i,\ell}
\\
&= \sum_{j,k} 
\int_{D_{j,k}} \sum_{l \in \mathbb N^2}
\mathcal M_{i,\ell} x_l(t_{i-1}^n)
(e_l(\mb y_{j,k}) -e_l(\mb y)) h_{\ell}(\mb y) \dd \mb y
\end{align*}
and
\begin{align*}
\EE[ \mathcal M_{i,\ell} x_l(t_{i-1}^n) | \GG_{i-1,\ell} ] &= 
\begin{cases}
x_l(t_{i-1}^n) \EE[ \mathcal M_{i,\ell}], & l = \ell,
\\
\EE[ x_l(t_{i-1}^n) ] \EE[ \mathcal M_{i,\ell}], & l \neq \ell
\end{cases}
\\
&= 0,
\end{align*}
it holds that
\begin{align*}
\bigl| \EE[T_{2,i} \mathcal M_{i,\ell} | \GG_{i-1,\ell} ] \bigr|
&= \Biggl| \sum_{j,k} 
\int_{D_{j,k}} \sum_{l \in \mathbb N^2}
\EE[ \mathcal M_{i,\ell} x_l(t_{i-1}^n) | \GG_{i-1,\ell} ]
(e_l(\mb y_{j,k}) -e_l(\mb y)) h_{\ell}(y) \dd \mb y 
\Biggr|
\\
&= 0
\end{align*}
and
\begin{equation*}
r_{n,\epsilon} \sum_{i=1}^n 
\bigl| \EE[T_{2,i} \mathcal M_{i,\ell} | \GG_{i-1,\ell} ] \bigr| = 0.
\end{equation*}

We next show \eqref{4-2}. We have
\begin{align*}
\EE \bigl[ (T_{2,i} \mathcal M_{i,\ell})^2 \bigr] 
&\le 
\sum_{j,k} 
\int_{D_{j,k}} 
\EE \Biggl[
\biggl(
\sum_{l \in \mathbb N^2}
\mathcal M_{i,\ell} x_l(t_{i-1}^n) (e_l(\mb y_{j,k}) -e_l(\mb y)) h_{\ell}(\mb y) 
\biggr)^2
\Biggr] \dd \mb y
\\
&= 
\sum_{j,k} \int_{D_{j,k}}
\sum_{l_1, l_2 \in \mathbb N^2}
\EE \bigl[ \mathcal M_{i,\ell}^2
x_{l_1}(t_{i-1}^n) x_{l_2}(t_{i-1}^n) \bigr]
\\
&\qquad \times 
(e_{l_1}(\mb y_{j,k}) -e_{l_1}(\mb y)) 
(e_{l_2}(\mb y_{j,k}) -e_{l_2}(\mb y)) h_{\ell}(\mb y)^2 \dd \mb y
\end{align*}
and
\begin{equation*}
\EE \bigl[ \mathcal M_{i,\ell}^2
x_{l_1}(t_{i-1}^n) x_{l_2}(t_{i-1}^n) \bigr]
=
\begin{cases}
\EE \bigl[ \mathcal M_{i,\ell}^2 \bigr] \EE \bigl[ x_{l_1}(t_{i-1}^n)^2 \bigr], 
& l_1 = l_2,
\\
\EE \bigl[ \mathcal M_{i,\ell}^2 \bigr] 
\EE \bigl[ x_{l_1}(t_{i-1}^n) \bigr] \EE \bigl[ x_{l_2}(t_{i-1}^n) \bigr], 
& l_1 \neq l_2.
\end{cases}
\end{equation*}
By \eqref{E_M2}, Lemma \ref{lem_est_x} below and Lemma 5.4 in \cite{TKU2025arXiv2}, we obtain
\begin{align*}
\EE \bigl[ (T_{2,i} \mathcal M_{i,\ell})^2 \bigr] 
&\le 
\sum_{j,k} \int_{D_{j,k}}
\sum_{l_1, l_2 \in \mathbb N^2}
\EE \bigl[ \mathcal M_{i,\ell}^2 x_{l_1}(t_{i-1}^n) x_{l_2}(t_{i-1}^n) \bigr]
\\
&\qquad \times 
(e_{l_1}(\mb y_{j,k}) -e_{l_1}(\mb y)) 
(e_{l_2}(\mb y_{j,k}) -e_{l_2}(\mb y)) h_{\ell}(\mb y)^2 \dd \mb y
\\
&= 
\EE \bigl[ \mathcal M_{i,\ell}^2 \bigr]
\sum_{j,k} \int_{D_{j,k}} 
\sum_{l \in \mathbb N^2} \EE[x_l(t_{i-1}^n)^2]
(e_{\ell}(\mb y_{j,k}) -e_{\ell}(\mb y))^2 h_{\ell}(\mb y)^2 \dd \mb y
\\
&\quad+
\EE \bigl[ \mathcal M_{i,\ell}^2 \bigr]
\sum_{j,k} \int_{D_{j,k}}
\sum_{l_1,l_2 \in \mathbb N^2, l_1 \neq l_2}
\EE[x_{l_1}(t_{i-1}^n)] \EE[x_{l_2}(t_{i-1}^n)]
\\
&\qquad \times
(e_{l_1}(\mb y_{j,k}) -e_{l_1}(\mb y))
(e_{l_2}(\mb y_{j,k}) -e_{l_2}(\mb y)) h_{\ell}(\mb y)^2 \dd \mb y
\\
&= 
\EE \bigl[ \mathcal M_{i,\ell}^2 \bigr]
\sum_{j,k} \int_{D_{j,k}} 
\sum_{l \in \mathbb N^2} \VV[x_l(t_{i-1}^n)]
(e_{\ell}(\mb y_{j,k}) -e_{\ell}(\mb y))^2 h_{\ell}(\mb y)^2 \dd \mb y
\\
&\quad+
\EE \bigl[ \mathcal M_{i,\ell}^2 \bigr]
\sum_{j,k} \int_{D_{j,k}}
\biggl( \sum_{l \in \mathbb N^2}
\EE[x_l(t_{i-1}^n)] (e_l(\mb y_{j,k}) -e_l(\mb y)) h_{\ell}(\mb y)
\biggr)^2
\dd \mb y
\\
&\lesssim
\epsilon^4 \Delta_n
\sum_{j,k} \int_{D_{j,k}} \sum_{l \in \mathbb N^2}
\frac{(e_l(\mb y_{j,k}) -e_l(\mb y))^2}{(\lambda_l^*)^{1+\alpha}} \dd \mb y
\\
&\quad + \epsilon^2 \Delta_n \sum_{j,k} \int_{D_{j,k}}
\sum_{l \in \mathbb N^2}
\frac{(e_l(\mb y_{j,k}) -e_l(\mb y))^2}{(\lambda_l^*)^{1+\alpha_0}} \dd \mb y
\\
&\le
\epsilon^4 \Delta_n
\max_{j,k} \sup_{\mb y, \mb z \in D_{j,k}} \sum_{l \in \mathbb N^2}
\frac{(e_l(\mb y) -e_l(\mb z))^2}{(\lambda_l^*)^{1+\alpha}}
\\
&\quad + \epsilon^2 \Delta_n 
\max_{j,k} \sup_{\mb y, \mb z \in D_{j,k}} \sum_{l \in \mathbb N^2}
\frac{(e_l(\mb y) -e_l(\mb z))^2}{(\lambda_l^*)^{1+\alpha_0}}
\\
&= 
\OO \biggl(
\frac{\epsilon^4 \Delta_n}{M_{(1)}^{2\alpha \tand 2}}
\biggr) 
+\OO \biggl(
\frac{\epsilon^2 \Delta_n}{M_{(1)}^{2\alpha_0 \tand 2}}
\biggr) 
\end{align*}
uniformly in $i$ and see from 
$\frac{\epsilon^4 r_{n,\epsilon}^2}{M_{(1)}^{2\alpha \tand 2}} = \oo(1)$ and
$\frac{\epsilon^2 r_{n,\epsilon}^2}{M_{(1)}^{2\alpha_0 \tand 2}} = \oo(1)$ that
\begin{equation*}
r_{n,\epsilon}^2 \sum_{i=1}^n 
\EE[(T_{2,i} \mathcal M_i(\lambda^*; {\mb x}))^2 | \GG_{i-1,\ell} ]
= \Op \biggl(
\frac{\epsilon^4 r_{n,\epsilon}^2}{M_{(1)}^{2\alpha \tand 2}}
\biggr) 
+\Op \biggl(
\frac{\epsilon^2 r_{n,\epsilon}^2}{M_{(1)}^{2\alpha_0 \tand 2}}
\biggr) 
= \oo(1).
\end{equation*}

\item[(5)]
By \eqref{def_A1--B2}, \eqref{decomp_x} and the Schwarz inequality, we obtain
\begin{align*}
r_{n,\epsilon} \mathcal B_{2,n}^*
&\le 
\sqrt{r_{n,\epsilon}^2 
\sum_{i=1}^n T_{1,i}^2 \sum_{i=1}^n x(t_{i-1}^n)^2}
+r_{n,\epsilon} 
\Biggl| \sum_{i=1}^n T_{2,i} x(t_{i-1}^n) \Biggr|
\\
&= 
\sqrt{n r_{n,\epsilon}^2 
\sum_{i=1}^n T_{1,i}^2 \times \frac{1}{n} \mathcal E_n^*}
+r_{n,\epsilon} 
\Biggl| \sum_{i=1}^n T_{2,i} x(t_{i-1}^n) \Biggr|.
\end{align*}
We see from \eqref{est_CDE*} that $\frac{1}{n} \mathcal E_n^* = \Op(1)$
and from \eqref{est_T1} and $\frac{n^2 r_{n,\epsilon}^2}{R^2} = \oo(1)$ that
\begin{equation*}
n r_{n,\epsilon}^2 \sum_{i=1}^n T_{1,i}^2
= \Op \biggl( \frac{n^2 r_{n,\epsilon}^2}{R^2} \biggr)
= \op(1).
\end{equation*}

We hence prove 
\begin{equation}\label{5-1}
r_{n,\epsilon} \sum_{i=1}^n 
\bigl| \EE[T_{2,i} x(t_{i-1}^n) | \GG_{i-1,\ell} ] \bigr| = \op(1),
\end{equation}
\begin{equation}\label{5-2}
r_{n,\epsilon}^2 \sum_{i=1}^n \EE[(T_{2,i} x(t_{i-1}^n))^2 | \GG_{i-1,\ell} ] = \op(1)
\end{equation}
under
$\frac{n r_{n,\epsilon}}{M_{(1)}} = \oo(1)$, 
$\frac{n \epsilon^2 r_{n,\epsilon}^2}{M_{(1)}^{2\alpha \tand 2}} = \oo(1)$ and
$\frac{n r_{n,\epsilon}^2}{M_{(1)}^{2 \alpha_0 \tand 2}} = \oo(1)$ 
(see Lemma 5.1 in \cite{TKU2025arXiv2}). 

We show \eqref{5-1}. 
Since
\begin{align*}
T_{2,i} x_{\ell}(t_{i-1}^n)
&= \sum_{j,k} 
\int_{D_{j,k}} (X_{t_{i-1}^n} (\mb y_{j,k}) -X_{t_{i-1}^n} (\mb y)) 
h_{\ell}(\mb y) \dd \mb y 
x_{\ell}(t_{i-1}^n)
\\
&= \sum_{j,k} 
\int_{D_{j,k}} \sum_{l \in \mathbb N^2}
x_{\ell}(t_{i-1}^n) x_l(t_{i-1}^n)
(e_l(\mb y_{j,k}) -e_l(\mb y)) h_{\ell}(\mb y) \dd \mb y
\end{align*}
and
\begin{equation*}
\EE[ x_{\ell}(t_{i-1}^n) x_l(t_{i-1}^n) | \GG_{i-1,\ell} ] = 
\begin{cases}
x_{\ell}(t_{i-1}^n)^2, & l = \ell,
\\
x_{\ell}(t_{i-1}^n) \EE[x_{l}(t_{i-1}^n)], & l \neq \ell,
\end{cases}
\end{equation*} 
it holds that
\begin{align*}
&\bigl| \EE[T_{2,i} x_{\ell}(t_{i-1}^n) | \GG_{i-1,\ell} ] \bigr|
\\
&= \Biggl| \sum_{j,k} 
\int_{D_{j,k}} \sum_{l \in \mathbb N^2}
\EE[ x_{\ell}(t_{i-1}^n) x_l(t_{i-1}^n) | \GG_{i-1,\ell} ]
(e_l(\mb y_{j,k}) -e_l(\mb y)) h_{\ell}(\mb y) \dd \mb y 
\Biggr|
\\
&= \Biggl| 
\sum_{j,k} 
\int_{D_{j,k}} x_{\ell}(t_{i-1}^n)^2 (e_l(\mb y_{j,k}) -e_l(\mb y)) 
h_{\ell}(\mb y) \dd \mb y 
\\
&\qquad+\sum_{j,k} 
\int_{D_{j,k}} \sum_{l \in \mathbb N^2 \setminus \{ \ell \}}
x_{\ell}(t_{i-1}^n) \EE[ x_l(t_{i-1}^n) ]
(e_l(\mb y_{j,k}) -e_l(\mb y)) h_{\ell}(\mb y) \dd \mb y 
\Biggr|
\\
&\lesssim 
x_{\ell}(t_{i-1}^n)^2
\sum_{j,k} 
\int_{D_{j,k}} |e_l(\mb y_{j,k}) -e_l(\mb y)| \dd \mb y 
\\
&\quad+ \Biggl| 
\sum_{j,k} 
\int_{D_{j,k}} \sum_{l \in \mathbb N^2 \setminus \{ \ell \}}
x_{\ell}(t_{i-1}^n) \EE[ x_l(t_{i-1}^n) ]
(e_l(\mb y_{j,k}) -e_l(\mb y)) h_{\ell}(\mb y) \dd \mb y 
\Biggr|
\\
&=: \Op \biggl(\frac{1}{M_{(1)}} \biggr) +\mathcal Q_i.
\end{align*}
It follows from $\frac{n r_{n,\epsilon}}{M_{(1)}} = \oo(1)$ that
\begin{equation*}
r_{n,\epsilon} \sum_{i=1}^n \frac{1}{M_{(1)}} 
= \frac{n r_{n,\epsilon}}{M_{(1)}} 
= \oo(1).
\end{equation*}
Since it also follows from the Schwarz inequality that
\begin{equation*}
\EE \Biggl[ \biggl( r_{n,\epsilon} \sum_{i=1}^n \mathcal Q_i \biggr)^2 \Biggr]
\le n r_{n,\epsilon}^2 \sum_{i=1}^n \EE[\mathcal Q_i^2],
\end{equation*}
it suffices to show
\begin{equation*}
n r_{n,\epsilon}^2 \sum_{i=1}^n \EE[\mathcal Q_i^2] = \oo(1).
\end{equation*}
We obtain by the Schwarz inequality and Lemma \ref{lem_est_x} below and Lemma 5.4 in \cite{TKU2025arXiv2},
\begin{align*}
\EE[\mathcal Q_i^2] &=
\EE \Biggl[ \biggl( 
\sum_{j,k} \int_{D_{j,k}} x_{\ell} (t_{i-1}^n)
\sum_{l \in \mathbb N^2 \setminus \{ \ell \}} 
\EE[ x_l(t_{i-1}^n) ] 
(e_l(\mb y_{j,k}) -e_l(\mb y)) h_{\ell}(\mb y) \dd \mb y 
\biggr)^2 \Biggr] 
\\
&\le
\EE \Biggl[ 
\sum_{j,k} \int_{D_{j,k}} x_{\ell}(t_{i-1}^n)^2
\biggl(
\sum_{l \in \mathbb N^2 \setminus \{ \ell \}} \EE[ x_l(t_{i-1}^n) ]
(e_l(\mb y_{j,k}) -e_l(\mb y)) h_{\ell}(\mb y) \biggr)^2 \dd \mb y 
\\
&\qquad \times
\sum_{j,k} \int_{D_{j,k}} \dd \mb y 
\Biggr] 
\\
&=
\EE[ x_{\ell}(t_{i-1}^n)^2 ]
\sum_{j,k} \int_{D_{j,k}} 
\biggl(
\sum_{l \in \mathbb N^2 \setminus \{ \ell \}} \EE[x_l(t_{i-1}^n)]
(e_l(\mb y_{j,k}) -e_l(\mb y)) h_{\ell}(\mb y) \biggr)^2 \dd \mb y
\\
&\lesssim \sum_{j,k} \int_{D_{j,k}}
\sum_{l \in \mathbb N^2}
\frac{(e_l(\mb y_{j,k}) -e_l(\mb y))^2}{(\lambda_l^*)^{1+\alpha_0}} \dd \mb y
\\
&\le \max_{j,k} \sup_{\mb y, \mb z \in D_{j,k}}
\sum_{l \in \mathbb N^2}
\frac{(e_l(\mb y) -e_l(\mb z))^2}{(\lambda_l^*)^{1+\alpha_0}}
\\
&= 
\OO \biggl(
\frac{1}{M_{(1)}^{2\alpha_0 \tand 2}}
\biggr) 
\end{align*}
uniformly in $i$.
We hence obtain under $\frac{n^2 r_{n,\epsilon}^2}{M_{(1)}^{2\alpha_0 \tand 2}} = \oo(1)$,
\begin{equation*}
n r_{n,\epsilon}^2 \sum_{i=1}^n \EE[\mathcal Q_i^2] 
= \OO \biggl(
\frac{n^2 r_{n,\epsilon}^2}{M_{(1)}^{2\alpha_0 \tand 2}}
\biggr) 
= \oo(1).
\end{equation*}

We next show \eqref{5-2}. We have
\begin{align*}
\EE \bigl[ (T_{2,i} x_{\ell}(t_{i-1}^n))^2 \bigr] 
&\le 
\sum_{j,k} 
\int_{D_{j,k}} 
\EE \Biggl[
\biggl(
\sum_{l \in \mathbb N^2}
x_{\ell}(t_{i-1}^n)
x_l(t_{i-1}^n) (e_l(\mb y_{j,k}) -e_l(\mb y)) h_{\ell}(\mb y) 
\biggr)^2
\Biggr] \dd \mb y
\\
&= 
\sum_{j,k} \int_{D_{j,k}}
\sum_{l_1, l_2 \in \mathbb N^2}
\EE \bigl[ x_{\ell}(t_{i-1}^n)^2
x_{l_1}(t_{i-1}^n) x_{l_2}(t_{i-1}^n) \bigr]
\\
&\qquad \times 
(e_{l_1}(\mb y_{j,k}) -e_{l_1}(\mb y)) 
(e_{l_2}(\mb y_{j,k}) -e_{l_2}(\mb y)) h_{\ell}(\mb y)^2 \dd \mb y
\end{align*}
and
\begin{equation*}
\EE \bigl[ x_{\ell}(t_{i-1}^n)^2
x_{l_1}(t_{i-1}^n) x_{l_2}(t_{i-1}^n) \bigr] = 
\begin{cases}
\EE[x_{\ell}(t_{i-1}^n)^4], & l_1 = l_2 = \ell,
\\
\EE[x_{\ell}(t_{i-1}^n)^2] \EE[x_{l_1}(t_{i-1}^n)^2], 
& l_1 = l_2 \neq \ell,
\\
\EE[x_{\ell}(t_{i-1}^n)^3] \EE[x_{l_2}(t_{i-1}^n)], 
& l_1 = \ell \neq l_2,
\\
\EE[x_{\ell}(t_{i-1}^n)^3] \EE[x_{l_1}(t_{i-1}^n)], 
& l_2 = \ell \neq l_1,
\\
\EE[x_{\ell}(t_{i-1}^n)^2] \EE[x_{l_1}(t_{i-1}^n)] \EE[x_{l_2}(t_{i-1}^n)], 
& \text{otherwise}.
\end{cases}
\end{equation*}
By Lemma \ref{lem_est_x} below and Lemma 5.4 in \cite{TKU2025arXiv2}, we obtain
\begin{align*}
&\EE \bigl[ (T_{2,i} x_{\ell}(t_{i-1}^n))^2 \bigr] 
\\
&\le 
\sum_{j,k} \int_{D_{j,k}}
\sum_{l_1, l_2 \in \mathbb N^2}
\EE \bigl[ x_{\ell}(t_{i-1}^n)^2
x_{l_1}(t_{i-1}^n) x_{l_2}(t_{i-1}^n) \bigr]
\\
&\qquad \times 
(e_{l_1}(\mb y_{j,k}) -e_{l_1}(\mb y)) 
(e_{l_2}(\mb y_{j,k}) -e_{l_2}(\mb y)) h_{\ell}(\mb y)^2 \dd \mb y
\\
&\lesssim
\EE [ x_{\ell}(t_{i-1}^n)^4 ]
\sum_{j,k} \int_{D_{j,k}} 
(e_{\ell}(\mb y_{j,k}) -e_{\ell}(\mb y))^2 h_{\ell}(\mb y)^2 \dd \mb y
\\
&\quad+
\bigl| \EE [ x_{\ell}(t_{i-1}^n)^3] \bigr|
\sum_{j,k} \int_{D_{j,k}}
\bigl| (e_{\ell}(\mb y_{j,k}) -e_{\ell}(\mb y)) h_{\ell}(\mb y) \bigr|
\\
& \qquad \times
\biggl| \sum_{l \in \mathbb N^2}
\EE[x_l(t_{i-1}^n)] (e_l(\mb y_{j,k}) -e_l(\mb y) ) 
h_{\ell}(\mb y) \biggr| \dd \mb y
\\
&\quad+
\EE [ x_{\ell}(t_{i-1}^n)^2]
\sum_{j,k} \int_{D_{j,k}}
\sum_{l \in \mathbb N^2}
\EE[x_l(t_{i-1}^n)^2]
(e_l(\mb y_{j,k}) -e_l(\mb y))^2 h_{\ell}(\mb y)^2 \dd \mb y
\\
&\quad+
\EE [ x_{\ell}(t_{i-1}^n)^2]
\sum_{j,k} \int_{D_{j,k}}
\biggl( \sum_{l \in \mathbb N^2}
\bigl| \EE[x_l(t_{i-1}^n)] 
(e_l(\mb y_{j,k}) -e_l(\mb y)) h_{\ell}(\mb y) \bigr|
\biggr)^2
\dd \mb y
\\
&\lesssim
\sum_{j,k} \int_{D_{j,k}} (e_{\ell}(\mb y_{j,k}) -e_{\ell}(\mb y))^2 \dd \mb y
\\
&\quad+ \sum_{j,k} \int_{D_{j,k}}
|e_{\ell}(\mb y_{j,k}) -e_{\ell}(\mb y)|
\biggl( \sum_{l \in \mathbb N^2} 
\frac{(e_l(\mb y_{j,k}) -e_l(\mb y))^2}{(\lambda_l^*)^{1+\alpha_0}} \biggr)^{1/2} \dd \mb y
\\
&\quad 
+ \sum_{j,k} \int_{D_{j,k}}
\sum_{l \in \mathbb N^2}
\biggl(
\frac{1}{(\lambda_l^*)^{1+\alpha_0}}
+\frac{\epsilon^2}{(\lambda_l^*)^{1+\alpha}} 
\biggr) (e_l(\mb y_{j,k}) -e_l(\mb y))^2 \dd \mb y
\\
&\quad 
+ \sum_{j,k} \int_{D_{j,k}}
\sum_{l \in \mathbb N^2}
\frac{(e_l(\mb y_{j,k}) -e_l(\mb y))^2}{(\lambda_l^*)^{1+\alpha_0}} \dd \mb y
\\
&\le
\max_{j,k} \sup_{\mb y, \mb z \in D_{j,k}} (e_{\ell}(\mb y) -e_{\ell}(\mb z))^2
\\
&\quad+ \max_{j,k} \sup_{\mb y, \mb z \in D_{j,k}}
|e_{\ell}(\mb y) -e_{\ell}(\mb z)|
\biggl( 
\max_{j,k} \sup_{\mb y, \mb z \in D_{j,k}}
\sum_{l \in \mathbb N^2} 
\frac{(e_l(\mb y) -e_l(\mb z))^2}{(\lambda_l^*)^{1+\alpha_0}} \biggr)^{1/2}
\\
&\quad 
+ \max_{j,k} \sup_{\mb y, \mb z \in D_{j,k}}
\sum_{l \in \mathbb N^2}
\biggl(
\frac{1}{(\lambda_l^*)^{1+\alpha_0}}
+\frac{\epsilon^2}{(\lambda_l^*)^{1+\alpha}} 
\biggr) (e_l(\mb y) -e_l(\mb z))^2
\\
&\quad 
+ \max_{j,k} \sup_{\mb y, \mb z \in D_{j,k}}
\sum_{l \in \mathbb N^2}
\frac{(e_l(\mb y) -e_l(\mb z))^2}{(\lambda_l^*)^{1+\alpha_0}}
\\
&= \OO \biggl(\frac{1}{M_{(1)}^2} \biggr)
+\OO \biggl( \frac{1}{M_{(1)}} 
\cdot \frac{1}{(M_{(1)}^{2\alpha_0 \tand 2})^{1/2}} \biggr) 
+\OO \biggl( \frac{1}{M_{(1)}^{2\alpha_0 \tand 2}} 
+ \frac{\epsilon^2}{M_{(1)}^{2\alpha \tand 2}} \biggr) 
+\OO \biggl( \frac{1}{M_{(1)}^{2\alpha_0 \tand 2}} \biggr) 
\\
&= \OO \biggl( \frac{1}{M_{(1)}^{2\alpha_0 \tand 2}} +\frac{\epsilon^2}{M_{(1)}^{2\alpha \tand 2}} \biggr) 
\end{align*}
uniformly in $i$ and
\begin{equation*}
r_{n,\epsilon}^2 \sum_{i=1}^n 
\EE[(T_{2,i} x_{\ell}(t_{i-1}^n))^2 | \GG_{i-1,\ell} ]
= \OO \biggl(
\frac{n r_{n,\epsilon}^2}{M_{(1)}^{2\alpha_0 \tand 2}}
+\frac{n \epsilon^2 r_{n,\epsilon}^2}{M_{(1)}^{2\alpha \tand 2}}
\biggr)
= \oo(1).
\end{equation*}
\end{itemize}
\end{proof}

\begin{rmk}\label{rmk_AB}
As seen in the proof of Lemma \ref{lem_est_AB2},
$\mathcal A_n$, $\mathcal A_n^*$, $\mathcal B_n^*$, 
$\mathcal A_{j,n}^*$ and $\mathcal B_{j,n}^*$ $(j \in \{1, 2\})$
given in \eqref{def_A--E} and \eqref{def_A1--B2}
can be estimated 
by using the estimation error terms $S_{1,i}$ and $T_{1,i}$ of $\bs \kappa$ given in \eqref{decomp_M} and \eqref{decomp_x}  
and the spatial discretization error terms $S_{2,i}$ and $T_{2,i}$ given in \eqref{decomp_M} and \eqref{decomp_x}.
The sufficient conditions for controlling the former and latter errors correspond to the conditions involving 
$R_{\alpha, \alpha_0}^{\mathrm{coef}}$ and $M_{(1)}$, respectively, in Lemmas \ref{lem_est_AB1} and \ref{lem_est_AB2}.
\end{rmk}

Finally, we will prove \eqref{cons} and \eqref{asym_norm}.
\begin{proof}[\bf{Proof of \eqref{cons}}]
Recall the definitions of $\widehat {\mathbb V}_{j,n}^\epsilon$ and $\widehat {\mathbb W}_{j,n}^\epsilon$
($j \in \{1,\ldots,4\}$) given in \eqref{hat_V} and \eqref{hat_W}.
We first show that (G1) and (G2) hold under [C1]$_{\alpha^*,\alpha_0}$-(i), (ii), 
[C2]$_{\alpha^*,\alpha_0}$-(i), (ii) and [C3]$_{\alpha^*,\alpha_0}$-(i) 
in order to obtain $(\widehat \lambda, \widehat \mu) \pto (\lambda^*, \mu^*)$.

We first consider (G1).
We set
\begin{equation*}
{\mathbb U}_{1,n}^\epsilon
= \sup_{(\lambda,\mu) \in \Xi_{\nu}}
\Bigl|
\epsilon^2 
\bigl( 
V_n^{\epsilon} (\lambda,\mu; {\mb x},\alpha^*)
-V_n^{\epsilon} (\lambda^*,\mu; {\mb x},\alpha^*)
\bigr) 
- V_1(\lambda,\mu,\lambda^*; \alpha^*)
\Bigr|. 
\end{equation*}
Since it can be decomposed as
\begin{align*}
&\epsilon^2 \bigl( 
V_n^{\epsilon} (\lambda,\mu; \widehat {\mb x},\widehat \alpha)
-V_n^{\epsilon} (\lambda^*,\mu; \widehat {\mb x},\widehat \alpha)
\bigr) 
- V_1(\lambda,\mu,\lambda^*; \alpha^*)
\\
&= 
\epsilon^2 \Bigl( 
\bigl( 
V_n^{\epsilon} (\lambda,\mu; \widehat {\mb x},\widehat \alpha)
-V_n^{\epsilon} (\lambda^*,\mu; \widehat {\mb x},\widehat \alpha)
\bigr) 
-\bigl( 
V_n^{\epsilon} (\lambda,\mu; \widehat {\mb x},\alpha^*)
-V_n^{\epsilon} (\lambda^*,\mu; \widehat {\mb x},\alpha^*)
\bigr) 
\Bigr)
\\
&\quad+\epsilon^2 \Bigl( 
\bigl( 
V_n^{\epsilon} (\lambda,\mu; \widehat {\mb x},\alpha^*)
-V_n^{\epsilon} (\lambda^*,\mu; \widehat {\mb x},\alpha^*)
\bigr) 
-\bigl( 
V_n^{\epsilon} (\lambda,\mu; {\mb x}, \alpha^*)
-V_n^{\epsilon} (\lambda^*,\mu; {\mb x}, \alpha^*)
\bigr) 
\Bigr)
\\
&\quad+\epsilon^2
\bigl( 
V_n^{\epsilon} (\lambda,\mu; {\mb x}, \alpha^*)
-V_n^{\epsilon} (\lambda^*,\mu; {\mb x}, \alpha^*)
\bigr) 
- V_1(\lambda,\mu,\lambda^*; \alpha^*)
\\
&= 
\epsilon^2 \Bigl( 
\bigl( 
V_n^{\epsilon} (\lambda,\mu; \widehat {\mb x},\widehat \alpha)
-V_n^{\epsilon} (\lambda,\mu; \widehat {\mb x},\alpha^*)
\bigr) 
-\bigl( 
V_n^{\epsilon} (\lambda^*,\mu; \widehat {\mb x},\widehat \alpha)
-V_n^{\epsilon} (\lambda^*,\mu; \widehat {\mb x},\alpha^*)
\bigr) 
\Bigr)
\\
&\quad+\epsilon^2 \Bigl( 
\bigl( 
V_n^{\epsilon} (\lambda,\mu; \widehat {\mb x},\alpha^*)
-V_n^{\epsilon} (\lambda,\mu; {\mb x}, \alpha^*)
\bigr) 
-\bigl( 
V_n^{\epsilon} (\lambda^*,\mu; \widehat {\mb x},\alpha^*)
-V_n^{\epsilon} (\lambda^*,\mu; {\mb x}, \alpha^*)
\bigr) 
\Bigr)
\\
&\quad+\epsilon^2
\bigl( 
V_n^{\epsilon} (\lambda,\mu; {\mb x}, \alpha^*)
-V_n^{\epsilon} (\lambda^*,\mu; {\mb x}, \alpha^*)
\bigr) 
- V_1(\lambda,\mu,\lambda^*; \alpha^*),
\end{align*}
we obtain
\begin{align*}
\widehat {\mathbb U}_{1,n}^\epsilon
&= \sup_{(\lambda,\mu) \in \Xi_{\nu}}
\Bigl|
\epsilon^2 
\bigl( 
V_n^{\epsilon} (\lambda,\mu; \widehat {\mb x},\widehat \alpha)
-V_n^{\epsilon} (\lambda^*,\mu; \widehat {\mb x},\widehat \alpha)
\bigr) 
- V_1(\lambda,\mu,\lambda^*; \alpha^*)
\Bigr| 
\\
&\le 
2 \epsilon^2 
\sup_{(\lambda,\mu) \in \Xi_{\nu}}
\bigl|
V_n^{\epsilon} (\lambda,\mu; \widehat {\mb x}, \widehat \alpha)
-V_n^{\epsilon} (\lambda,\mu; \widehat {\mb x},\alpha^*)
\bigr|
\\
&\quad+ 2 \epsilon^2 
\sup_{(\lambda,\mu) \in \Xi_{\nu}}
\bigl|
V_n^{\epsilon} (\lambda,\mu; \widehat {\mb x}, \alpha^*)
-V_n^{\epsilon} (\lambda,\mu; {\mb x},\alpha^*)
\bigr|
\\
&\quad+\sup_{(\lambda,\mu) \in \Xi_{\nu}}
\Bigl|
\epsilon^2 
\bigl( 
V_n^{\epsilon} (\lambda,\mu; {\mb x},\alpha^*)
-V_n^{\epsilon} (\lambda^*,\mu; {\mb x},\alpha^*)
\bigr) 
- V_1(\lambda,\mu,\lambda^*; \alpha^*)
\Bigr| 
\\
&= 2(\widehat {\mathbb W}_{1,n}^\epsilon +\widehat {\mathbb V}_{1,n}^\epsilon)
+{\mathbb U}_{1,n}^\epsilon.
\end{align*}
Since it follows from (R8) in Proposition \ref{propR2} that 
${\mathbb U}_{1,n}^\epsilon = \op(1)$, 
(G1) holds under 
\begin{equation}\label{VW-1}
\widehat {\mathbb V}_{1,n}^\epsilon = \op(1),
\quad
\widehat {\mathbb W}_{1,n}^\epsilon = \op(1).
\end{equation}
Therefore, we find from Lemmas \ref{lem_est_V}-(1) and \ref{lem_est_W}-(1) that 
\eqref{VW-1} holds under $n \mathcal X_n = \op(1)$ and
\begin{description}
\item[(G11)]
$\frac{n \epsilon^2}{R_{\alpha^*,\alpha_0}^{\mathrm{damp}}} = \oo(1)$ 
\end{description}
and from Lemma \ref{lem_est_XY}-(1) with $r_{n,\epsilon} = n$ that
$n \mathcal X_n = \op(1)$ holds under 
\begin{description}
\item[(G12)]
$n \mathcal A_n = \op(1)$ and $(n \epsilon)^2 \mathcal A_n = \op(1)$.
\end{description}
Eventually, it suffices to show that (G11) and (G12) hold in order to obtain (G1).

Next, we consider (G2).
We define
\begin{equation*}
{\mathbb U}_{2,n}^\epsilon
= \sup_{\mu \in \Xi_{\mu}}
\biggl|
\frac{1}{n}
\bigl( 
V_n^{\epsilon} (\widehat \lambda,\mu; {\mb x},\alpha^*)
-V_n^{\epsilon} (\widehat \lambda,\mu^*; {\mb x},\alpha^*)
\bigr) 
- V_2(\mu,\mu^*; \alpha^*)
\biggr|.
\end{equation*}
We then have
\begin{align*}
&\frac{1}{n} \bigl( 
V_n^{\epsilon} (\widehat \lambda,\mu; \widehat {\mb x},\widehat \alpha)
-V_n^{\epsilon} (\widehat \lambda,\mu^*; \widehat {\mb x},\widehat \alpha)
\bigr) 
- V_2(\mu,\mu^*; \alpha^*)
\\
&= 
\frac{1}{n} \Bigl( 
\bigl( 
V_n^{\epsilon} (\widehat \lambda,\mu; \widehat {\mb x},\widehat \alpha)
-V_n^{\epsilon} (\widehat \lambda,\mu; \widehat {\mb x},\alpha^*)
\bigr) 
-\bigl( 
V_n^{\epsilon} (\widehat \lambda,\mu^*; \widehat {\mb x},\widehat \alpha)
-V_n^{\epsilon} (\widehat \lambda,\mu^*; \widehat {\mb x},\alpha^*)
\bigr) 
\Bigr)
\\
&\quad+\frac{1}{n} \Bigl( 
\bigl( 
V_n^{\epsilon} (\widehat \lambda,\mu; \widehat {\mb x},\alpha^*)
-V_n^{\epsilon} (\widehat \lambda,\mu; {\mb x}, \alpha^*)
\bigr) 
-\bigl( 
V_n^{\epsilon} (\widehat \lambda,\mu^*; \widehat {\mb x},\alpha^*)
-V_n^{\epsilon} (\widehat \lambda,\mu^*; {\mb x}, \alpha^*)
\bigr) 
\Bigr)
\\
&\quad+\frac{1}{n}
\bigl( 
V_n^{\epsilon} (\widehat \lambda,\mu; {\mb x}, \alpha^*)
-V_n^{\epsilon} (\widehat \lambda,\mu^*; {\mb x}, \alpha^*)
\bigr) 
- V_2(\mu,\mu^*; \alpha^*)
\end{align*}
and
\begin{align*}
\widehat {\mathbb U}_{2,n}^\epsilon
&= \sup_{\mu \in \Xi_{\mu}}
\biggl|
\frac{1}{n}
\bigl( 
V_n^{\epsilon} (\widehat \lambda,\mu; \widehat {\mb x},\widehat \alpha)
-V_n^{\epsilon} (\widehat \lambda,\mu^*; 
\widehat {\mb x},\widehat \alpha)
\bigr) 
- V_2(\mu,\mu^*; \alpha^*)
\biggr| 
\\
&\le 
\frac{2}{n}
\sup_{(\lambda,\mu) \in \Xi_{\nu}}
\bigl|
V_n^{\epsilon} (\lambda,\mu; \widehat {\mb x}, \widehat \alpha)
-V_n^{\epsilon} (\lambda,\mu; \widehat {\mb x},\alpha^*)
\bigr|
\\
&\quad+ \frac{2}{n}
\sup_{(\lambda,\mu) \in \Xi_{\nu}}
\bigl|
V_n^{\epsilon} (\lambda,\mu; \widehat {\mb x}, \alpha^*)
-V_n^{\epsilon} (\lambda,\mu; {\mb x},\alpha^*)
\bigr|
\\
&\quad+\sup_{\mu \in \Xi_{\mu}}
\biggl|
\frac{1}{n}
\bigl( 
V_n^{\epsilon} (\widehat \lambda,\mu; {\mb x},\alpha^*)
-V_n^{\epsilon} (\widehat \lambda,\mu^*; {\mb x},\alpha^*)
\bigr) 
- V_2(\mu,\mu^*; \alpha^*)
\biggr| 
\\
&= 2(\widehat {\mathbb W}_{2,n}^\epsilon +\widehat {\mathbb V}_{2,n}^\epsilon)
+{\mathbb U}_{2,n}^\epsilon.
\end{align*}
Since it holds from (R9) in Proposition \ref{propR2} that
${\mathbb U}_{2,n}^\epsilon = \op(1)$ under 
$\epsilon^{-1}(\widehat \lambda -\lambda^*) = \Op(1)$, 
we first show $\epsilon^{-1}(\widehat \lambda -\lambda^*) = \Op(1)$. 
By the Taylor expansion, we have
\begin{equation*}
-\epsilon 
\pd_\lambda V_n^{\epsilon} (\lambda^*,\widehat \mu; 
\widehat {\mb x},\widehat \alpha)
= \epsilon^2 \int_0^1 \pd_\lambda^2 
V_n^{\epsilon} (\widehat \lambda_u,\widehat \mu; 
\widehat {\mb x}, \widehat \alpha) \dd u
\epsilon^{-1} (\widehat \lambda -\lambda^*)
\end{equation*}
with $\widehat \lambda_u = \lambda^* +u(\widehat \lambda -\lambda^*)$.
We define $G(\lambda, \mu) = G(\lambda, \mu; x_0, \alpha^*)$ for $\lambda, \mu \in (0,\infty)$, and
\begin{equation*}
{\mathbb P}_{n}^\epsilon 
= \epsilon \sup_{\mu \in \Xi_{\mu}} 
\bigl| \pd_\lambda V_n^{\epsilon} (\lambda^*, \mu; {\mb x},\alpha^*) \bigr|,
\quad
{\mathbb Q}_{n}^\epsilon 
= \sup_{(\lambda,\mu) \in \Xi_{\nu}} 
\bigl| \epsilon^2 \pd_\lambda^2 V_n^{\epsilon} (\lambda, \mu; {\mb x},\alpha^*) 
-2 G(\lambda^*,\mu) \bigr|.
\end{equation*}
We then obtain
\begin{align*}
&\epsilon 
\bigl| \pd_\lambda V_n^{\epsilon} (\lambda^*,\widehat \mu; 
\widehat {\mb x},\widehat \alpha) \bigr|
\\
&\le 
\epsilon
\Bigl| \pd_\lambda V_n^{\epsilon} (\lambda^*,\widehat \mu; 
\widehat {\mb x},\widehat \alpha)
-\pd_\lambda V_n^{\epsilon} (\lambda^*,\widehat \mu; \widehat {\mb x}, \alpha^*)
\Bigr|
\\
&\quad+\epsilon
\Bigl| 
\pd_\lambda V_n^{\epsilon} (\lambda^*,\widehat \mu; \widehat {\mb x}, \alpha^*)
-\pd_\lambda V_n^{\epsilon} (\lambda^*,\widehat \mu; {\mb x}, \alpha^*)
\Bigr|
+ \epsilon
\bigl| 
\pd_\lambda V_n^{\epsilon} (\lambda^*,\widehat \mu; {\mb x}, \alpha^*)
\bigr|
\\
&\le 
\epsilon
\sup_{\mu \in \Xi_{\mu}} \Bigl| \pd_\lambda V_n^{\epsilon} (\lambda^*, \mu; 
\widehat {\mb x},\widehat \alpha)
-\pd_\lambda V_n^{\epsilon} (\lambda^*, \mu; \widehat {\mb x}, \alpha^*)
\Bigr|
\\
&\quad+\epsilon
\sup_{\mu \in \Xi_{\mu}} \Bigl| 
\pd_\lambda V_n^{\epsilon} (\lambda^*, \mu; \widehat {\mb x}, \alpha^*)
-\pd_\lambda V_n^{\epsilon} (\lambda^*, \mu; {\mb x}, \alpha^*)
\Bigr|
+ \epsilon
\sup_{\mu \in \Xi_{\mu}}
\bigl| 
\pd_\lambda V_n^{\epsilon} (\lambda^*, \mu; {\mb x}, \alpha^*)
\bigr|
\\
&=
(\widehat {\mathbb W}_{3,n}^\epsilon)_1 +(\widehat {\mathbb V}_{3,n}^\epsilon)_1 
+ {\mathbb P}_{n}^\epsilon
\end{align*}
and
\begin{align*}
&\sup_{u \in [0,1]}
\bigl| 
\epsilon^2 \pd_\lambda^2 
V_n^{\epsilon} (\widehat \lambda_u,\widehat \mu; \widehat {\mb x}, \widehat \alpha)
-2 G(\lambda^*,\widehat \mu)
\bigr|
\\
&\le
\epsilon^2 
\sup_{u \in [0,1]}
\Bigl| 
\pd_\lambda^2 
V_n^{\epsilon} (\widehat \lambda_u,\widehat \mu; \widehat {\mb x}, \widehat \alpha)
-\pd_\lambda^2 
V_n^{\epsilon} (\widehat \lambda_u,\widehat \mu; \widehat {\mb x}, \alpha^*)
\Bigr|
\\
&\quad+
\epsilon^2 
\sup_{u \in [0,1]}
\Bigl| 
\pd_\lambda^2 
V_n^{\epsilon} (\widehat \lambda_u,\widehat \mu; \widehat {\mb x}, \alpha^*)
-\pd_\lambda^2 
V_n^{\epsilon} (\widehat \lambda_u,\widehat \mu; {\mb x}, \alpha^*)
\Bigr|
\\
&\quad+
\sup_{u \in [0,1]}
\bigl| 
\epsilon^2 \pd_\lambda^2 
V_n^{\epsilon} (\widehat \lambda_u,\widehat \mu; {\mb x}, \alpha^*)
-2 G(\lambda^*,\widehat \mu)
\bigr|
\\
&\le
\epsilon^2 
\sup_{(\lambda, \mu) \in \Xi_{\nu}}
\Bigl| 
\pd_\lambda^2 
V_n^{\epsilon} (\lambda, \mu; \widehat {\mb x}, \widehat \alpha)
-\pd_\lambda^2 
V_n^{\epsilon} (\lambda, \mu; \widehat {\mb x}, \alpha^*)
\Bigr|
\\
&\quad+
\epsilon^2 
\sup_{(\lambda, \mu) \in \Xi_{\nu}}
\Bigl| 
\pd_\lambda^2 
V_n^{\epsilon} (\lambda, \mu; \widehat {\mb x}, \alpha^*)
-\pd_\lambda^2 
V_n^{\epsilon} (\lambda, \mu; {\mb x}, \alpha^*)
\Bigr|
\\
&\quad+
\sup_{(\lambda, \mu) \in \Xi_{\nu}}
\bigl| 
\epsilon^2 \pd_\lambda^2 
V_n^{\epsilon} (\lambda, \mu; {\mb x}, \alpha^*)
-2 G(\lambda^*,\mu)
\bigr|
\\
&=
(\widehat {\mathbb W}_{4,n}^\epsilon)_{1,1} 
+(\widehat {\mathbb V}_{4,n}^\epsilon)_{1,1} 
+ {\mathbb Q}_{n}^\epsilon.
\end{align*}
Since (R10) and (R11) in Proposition \ref{propR2} yield 
${\mathbb P}_{n}^\epsilon = \Op(1)$ and ${\mathbb Q}_{n}^\epsilon = \op(1)$, 
we obtain $\epsilon^{-1}(\widehat \lambda -\lambda^*) = \Op(1)$ under
\begin{equation}\label{VW-21}
(\widehat {\mathbb V}_{3,n}^\epsilon)_1 = \Op(1),
\quad
(\widehat {\mathbb W}_{3,n}^\epsilon)_1 = \Op(1),
\end{equation}
\begin{equation}\label{VW-22}
(\widehat {\mathbb V}_{4,n}^\epsilon)_{1,1} = \op(1),
\quad
(\widehat {\mathbb W}_{4,n}^\epsilon)_{1,1} = \op(1).
\end{equation}
We therefore obtain (G2) under
\begin{equation}\label{VW-23}
\widehat {\mathbb V}_{2,n}^\epsilon = \op(1),
\quad
\widehat {\mathbb W}_{2,n}^\epsilon = \op(1),
\end{equation}
\eqref{VW-21} and \eqref{VW-22}. 
It follows from Lemmas \ref{lem_est_V}-(2) and \ref{lem_est_W}-(2) that
\eqref{VW-23} under $\epsilon^{-2} \mathcal X_n = \op(1)$ and
\begin{description}
\item[(G21)]
$\frac{(n \epsilon^2)^{-1}}{R_{\alpha^*,\alpha_0}^{\mathrm{damp}}} = \oo(1)$
\end{description}
and from Lemma \ref{lem_est_XY}-(1) with $r_{n,\epsilon} = \epsilon^{-2}$ that $\epsilon^{-2} \mathcal X_n = \op(1)$ under 
\begin{description}
\item[(G22)]
$\epsilon^{-2} \mathcal A_n = \op(1)$ 
and $\frac{1}{n \epsilon^4} \mathcal A_n = \op(1)$.
\end{description}
We also see from Lemmas \ref{lem_est_V}-(3) and \ref{lem_est_W}-(3) that
\eqref{VW-21} holds under $\epsilon^{-1} \mathcal X_n^* = \op(1)$ 
and $\epsilon^{-1} \mathcal Y_n^* = \op(1)$,
and from Lemma \ref{lem_est_XYZ*}-(1) and (2) with $r_{n,\epsilon} = \epsilon^{-1}$ that
$\epsilon^{-1} \mathcal X_n^* = \op(1)$ and 
$\epsilon^{-1} \mathcal Y_n^* = \op(1)$ hold under
\begin{description}
\item[(G23)]
$\epsilon^{-1} \mathcal A_n^* = \op(1)$, $\epsilon^{-1} \mathcal A_{1,n}^* = \op(1)$,

\item[(G24)]
$\epsilon^{-2} \mathcal A_n^* \mathcal B_n^* = \op(1)$,
$\epsilon^{-1} \mathcal A_{2,n}^* = \op(1)$ and
$\epsilon^{-1} \mathcal B_{1,n}^* = \op(1)$.
\end{description}
We further see from Lemmas \ref{lem_est_V}-(4) and \ref{lem_est_W}-(4) that
\eqref{VW-22} holds under 
$\frac{1}{n} \mathcal X_n = \op(1)$, 
$\frac{1}{n} \mathcal Y_n = \op(1)$
and $\frac{1}{n} \mathcal Z_n^* = \op(1)$, 
and from Lemma \ref{lem_est_XY}-(2) and \ref{lem_est_XYZ*}-(3) 
with $r_{n,\epsilon} = \frac{1}{n}$ that 
$\frac{1}{n} \mathcal Y_n = \op(1)$ and
$\frac{1}{n} \mathcal Z_n^* = \op(1)$ hold under
\begin{description}
\item[(G25)]
$\frac{1}{n^2} \mathcal A_n \mathcal B_n^* = \op(1)$,
$\frac{1}{n^2} (\sqrt{n \epsilon^2} \lor 1) \mathcal A_n = \op(1)$,
$\frac{1}{n^2} (\epsilon^2 \lor n^{-1}) \mathcal B_n^* = \op(1)$,

\item[(G26)]
$\frac{1}{n} \mathcal B_n^* = \op(1)$ 
and $\frac{1}{n} \mathcal B_{2,n}^* = \op(1)$.
\end{description}
Note that $\frac{1}{n} \mathcal X_n = \op(1)$ holds under (G22) 
since $\frac{1}{n} \le \epsilon^{-2}$.
Therefore, we need to show that (G21)--(G26) hold in order to prove (G2).

Since (G11) and (G21) are exactly the condition [C3]$_{\alpha^*,\alpha_0}$-(i), 
it suffices to show that (G12) and (G22)--(G26) hold 
under [C1]$_{\alpha^*,\alpha_0}$-(i), (ii), [C2]$_{\alpha^*,\alpha_0}$-(i) and (ii).
\begin{proof}[\bf{Proof of (G12) and (G22)}]
\renewcommand{\qedsymbol}{}
Notice that $a^s b^{1-s} \le a \lor b$ for $a, b \in (0,\infty)$ and $s \in [0,1]$.
Since it can be expressed that
\begin{equation}\label{eps_n-1}
n = \bigl\{(n \epsilon)^2 \bigr\}^{2/3} 
\biggl( \frac{1}{n \epsilon^4} \biggr)^{1/3},
\quad
\epsilon^{-2} = \bigl\{(n \epsilon)^2 \bigr\}^{1/3} 
\biggl( \frac{1}{n \epsilon^4} \biggr)^{2/3},
\end{equation}
we have $n \lor \epsilon^{-2} \le (n \epsilon)^2 \lor \frac{1}{n \epsilon^4}$. 
Therefore, (G12) and (G22) reduce to 
$(n \epsilon)^2 \mathcal A_n = \op(1)$ and
$\frac{1}{n \epsilon^4} \mathcal A_n = \op(1)$.
We see from Lemma \ref{lem_est_AB1}-(1) 
with $r_{n,\epsilon} = (n \epsilon)^2$ and $r_{n,\epsilon} = \frac{1}{n \epsilon^4}$
that the two conditions hold under 
\begin{equation*}
\frac{n^3 \epsilon^2}{M_{(1)}^{2\alpha_0 \tand 2}} = \oo(1),
\quad
\frac{n^3 \epsilon^2 \Delta_n^{\alpha_0 \tand 2}}
{(R_{\alpha^*, \alpha_0}^{\mathrm{coef}})^2} = \oo(1),
\quad
\frac{n^3 \epsilon^4}{M_{(1)}^{2\alpha^* \tand 2}} = \oo(1),
\quad
\frac{n^3 \epsilon^4 \Delta_n^{\alpha^* \tand 1}}
{(R_{\alpha^*, \alpha_0}^{\mathrm{coef}})^2} = \oo(1)
\end{equation*}
and
\begin{equation*}
\frac{\epsilon^{-4}}{M_{(1)}^{2\alpha_0 \tand 2}} = \oo(1),
\quad
\frac{\epsilon^{-4} \Delta_n^{\alpha_0 \tand 2}}
{(R_{\alpha^*, \alpha_0}^{\mathrm{coef}})^2} = \oo(1),
\quad
\frac{\epsilon^{-2}}{M_{(1)}^{2\alpha^* \tand 2}} = \oo(1),
\quad
\frac{\epsilon^{-2} \Delta_n^{\alpha^* \tand 1}}
{(R_{\alpha^*, \alpha_0}^{\mathrm{coef}})^2} = \oo(1),
\end{equation*}
which constitute all the conditions [C1]$_{\alpha^*,\alpha_0}$-(i), (ii), 
[C2]$_{\alpha^*,\alpha_0}$-(i) and (ii) except for the last one of [C1]$_{\alpha^*,\alpha_0}$-(ii).
\end{proof}

\begin{proof}[\bf{Proof of (G23)}]
\renewcommand{\qedsymbol}{}
We see from Lemma \ref{lem_est_AB2}-(1) and (2) 
with $r_{n,\epsilon} = \epsilon^{-1}$ that (G23) holds under
\begin{equation*}
\frac{n \epsilon \Delta_n^{\alpha^* \tand 1}}
{(R_{\alpha^*, \alpha_0}^{\mathrm{coef}})^2} = \oo(1),
\quad
\frac{n \epsilon}{M_{(1)}^{2 \alpha^* \tand 2}} = \oo(1),
\quad
\frac{n \epsilon^2 \Delta_n^{\alpha^* \tand 1}}
{(R_{\alpha^*, \alpha_0}^{\mathrm{coef}})^2} = \oo(1),
\quad
\frac{\epsilon^2}{M_{(1)}^{2\alpha^* \tand 2}} = \oo(1).
\end{equation*}
Since $n \epsilon^2 \le n \epsilon 
= \epsilon (n^3 \epsilon^4)^{1/3} (\epsilon^{-2})^{2/3} 
\le \epsilon (n^3 \epsilon^4 \lor \epsilon^{-2})
\le n^3 \epsilon^4 \lor \epsilon^{-2}$, 
these balance conditions are dominated by [C1]$_{\alpha^*,\alpha_0}$-(ii) and [C2]$_{\alpha^*,\alpha_0}$-(ii). 
\end{proof}

\begin{proof}[\bf{Proof of (G24)}]
\renewcommand{\qedsymbol}{}
Since $\epsilon^{-2} \mathcal A_n^* \mathcal B_n^* 
= (\epsilon^{-2} \mathcal A_n^*) \cdot \mathcal B_n^*$,
(G24) follows under
\begin{equation*}
\epsilon^{-2} \mathcal A_n^* = \op(1),
\quad
\mathcal B_n^* = \op(1),
\quad
\epsilon^{-1} \mathcal A_{2,n}^* = \op(1),
\quad
\epsilon^{-1} \mathcal B_{1,n}^* = \op(1).
\end{equation*}
We find from Lemma \ref{lem_est_AB2}-(1) with $r_{n,\epsilon} = \epsilon^{-2}$ that 
$\epsilon^{-2} \mathcal A_n^* = \op(1)$ holds under
\begin{equation*}
\frac{n \Delta_n^{\alpha^* \tand 1}}
{(R_{\alpha^*, \alpha_0}^{\mathrm{coef}})^2} = \oo(1),
\quad
\frac{n}{M_{(1)}^{2 \alpha^* \tand 2}} = \oo(1),
\end{equation*}
which always hold under [C1]$_{\alpha^*,\alpha_0}$-(ii) and [C2]$_{\alpha^*,\alpha_0}$-(ii)
since $n = (n^3 \epsilon^4)^{1/3} (\epsilon^{-2})^{2/3} 
\le n^3 \epsilon^4 \lor \epsilon^{-2}$.

By Lemma \ref{lem_est_AB1}-(2) with $r_{n,\epsilon} = 1$, we obtain $\mathcal B_{n}^* = \op(1)$ under
\begin{equation*}
\frac{n}{M_{(1)}^{2\alpha_0 \tand 2}} = \oo(1),
\quad
\frac{n}{(R_{\alpha^*, \alpha_0}^{\mathrm{coef}})^2} = \oo(1),
\quad
\frac{n \epsilon^2}{M_{(1)}^{2\alpha^* \tand 2}} = \oo(1).
\end{equation*}
We see from $n \le n^2 = (n^3 \epsilon^2)^{2/3} (\epsilon^{-4})^{1/3} 
\le n^3 \epsilon^2 \lor \epsilon^{-4}$ that
$\frac{n}{M_{(1)}^{2\alpha_0 \tand 2}} = \oo(1)$ holds under [C2]$_{\alpha^*,\alpha_0}$-(i),
and from $n \le n^2 \Delta_n^{\alpha \tand 1}$ that
$\frac{n}{(R_{\alpha^*, \alpha_0}^{\mathrm{coef}})^2} = \oo(1)$ under [C1]$_{\alpha^*,\alpha_0}$-(ii).
We also find from $n \epsilon^2 \le n \epsilon$ and 
the proof of (G23) that
$\frac{n \epsilon^2}{M_{(1)}^{2\alpha^* \tand 2}} = \oo(1)$ holds under [C2]$_{\alpha^*,\alpha_0}$-(i).

It holds from Lemma \ref{lem_est_AB2}-(3) with $r_{n,\epsilon} = \epsilon^{-1}$ that
$\epsilon^{-1} \mathcal A_{2,n}^* = \op(1)$ under
$\frac{n^2 \Delta_n^{\alpha^* \tand 1}}
{(R_{\alpha^*, \alpha_0}^{\mathrm{coef}})^2} = \oo(1)$
and $\frac{n}{M_{(1)}^{2\alpha^* \tand 2}} = \oo(1)$.
The former is exactly the last condition of [C1]$_{\alpha^*,\alpha_0}$-(ii), 
and we has already shown that the latter holds 
in the proof of $\epsilon^{-2} \mathcal A_n^* = \op(1)$.

We see from Lemma \ref{lem_est_AB2}-(4) with $r_{n,\epsilon} = \epsilon^{-1}$ that 
$\epsilon^{-1} \mathcal B_{1,n}^* = \op(1)$ holds under
\begin{equation*}
\frac{n}{(R_{\alpha^*, \alpha_0}^{\mathrm{coef}})^2} = \oo(1),
\quad
\frac{\epsilon^2}{M_{(1)}^{2\alpha^* \tand 2}} = \oo(1),
\quad
\frac{1}{M_{(1)}^{2\alpha_0 \tand 2}} = \oo(1),
\end{equation*}
which follow under [C1]$_{\alpha^*,\alpha_0}$-(ii) from the proof of $\mathcal B_{n}^* = \op(1)$.
\end{proof}

\begin{proof}[\bf{Proof of (G25)}]
\renewcommand{\qedsymbol}{}
Since $\frac{1}{n^2} \mathcal A_n \mathcal B_n^* 
= ((\epsilon^2 \lor n^{-1})^{-1} \mathcal A_n )
(\frac{1}{n^2} (\epsilon^2 \lor n^{-1}) \mathcal B_n^*)$, (G25) holds under 
\begin{equation*}
\biggl( \frac{\epsilon}{n \sqrt{n}} \lor \frac{1}{n^2} 
\lor \frac{1}{\epsilon^2 \lor n^{-1}} \biggr) \mathcal A_n = \op(1),
\quad
\biggl( \frac{\epsilon^2}{n^2} \lor \frac{1}{n^3} \biggr) 
\mathcal B_n^* = \op(1).
\end{equation*}
We find from $\frac{\epsilon}{n \sqrt{n}} \lor \frac{1}{n^2} 
\le 1 \le \frac{1}{\epsilon^2 \lor n^{-1}} 
= \epsilon^{-2} \land n \le (n \epsilon)^2 \lor \frac{1}{n \epsilon^4}$ 
(see \eqref{eps_n-1}) that the former holds under [C1]$_{\alpha^*,\alpha_0}$-(i), (ii), 
[C2]$_{\alpha^*,\alpha_0}$-(i) and (ii), 
and from $\frac{\epsilon^2}{n^2} \lor \frac{1}{n^3} \le 1$ and the proof of (G24) that the latter holds.
\end{proof}

\begin{proof}[\bf{Proof of (G26)}]
\renewcommand{\qedsymbol}{}
$\frac{1}{n} \mathcal B_n^* = \op(1)$ holds by the proof of (G24).
We see from Lemma \ref{lem_est_AB2}-(5) with $r_{n,\epsilon} = \frac{1}{n}$ that 
$\frac{1}{n} \mathcal B_{2,n}^* = \op(1)$ holds under 
\begin{equation*}
\frac{1}{(R_{\alpha^*, \alpha_0}^{\mathrm{coef}})^2} = \oo(1),
\quad 
\frac{1}{M_{(1)}} = \oo(1), 
\quad
\frac{\epsilon^2}{n M_{(1)}^{2\alpha^* \tand 2}} = \oo(1),
\quad
\frac{1}{M_{(1)}^{2 \alpha_0 \tand 2}} = \oo(1),
\end{equation*}
which are clearly always true.
\end{proof}

Therefore, we obtain 
$(\widehat \lambda_{\ell}, \widehat \mu_{\ell}) = (\widehat \lambda, \widehat \mu) 
\pto (\lambda^*, \mu^*) = (\lambda_{\ell}^*, \mu_{\ell}^*)$,
which together with \eqref{estimator_lam_mu}, Theorem \ref{th2} and \eqref{true_values} yields
\begin{align*}
\widehat \theta_0 
&= -\widehat \lambda_{\ell}
+\widehat \theta_2 \biggl( \frac{\widehat \kappa^2 + \widehat \eta^2}{4} +\pi^2 |\ell|^2 \biggr)
\pto -\lambda_{\ell}^* +\theta_2^*\biggl( \frac{(\kappa^*)^2 +(\eta^*)^2}{4} +\pi^2 |\ell|^2 \biggr)
= \theta_0^*,
\\
\widehat \mu_0 
&= \widehat \mu_{\ell} -\pi^2 |\ell|^2
\pto \mu_{\ell}^* -\pi^2 |\ell|^2
= \mu_0^*.
\end{align*}
\end{proof}

\begin{proof}[\bf{Proof of \eqref{asym_norm}}]
We prove that (G1)--(G4) hold under [C1]$_{\alpha^*,\alpha_0}$-(i), (ii), [C2]$_{\alpha^*,\alpha_0}$, [C3]$_{\alpha^*,\alpha_0}$ in order to obtain 
\begin{equation}\label{asym_lam_mu}
\begin{pmatrix}
\epsilon^{-1} (\widehat \lambda -\lambda^*)
\\
\sqrt{n} (\widehat \mu -\mu^*)
\end{pmatrix}
\dto \mathrm{N} (0, \mathcal I).
\end{equation}
Since we have already shown that (G1) and (G2) hold true 
under [C1]$_{\alpha^*,\alpha_0}$-(i), (ii), [C2]$_{\alpha^*,\alpha_0}$-(i), (ii) 
and [C3]$_{\alpha^*,\alpha_0}$-(i) in the proof of \eqref{cons}, 
we will show that the additional conditions [C2]$_{\alpha^*,\alpha_0}$-(iii) and [C3]$_{\alpha^*,\alpha_0}$-(ii)
are necessary in order to obtain (G3) and (G4).

We consider (G3). Since we have
\begin{align*}
\widehat {\mathbb U}_{3,n}^\epsilon
&:= 
\Bigl| 
L_n^{\epsilon} (\lambda^*,\mu^*; \widehat {\mb x},\widehat \alpha) 
-L_n^{\epsilon} (\lambda^*,\mu^*; {\mb x},\alpha^*)
\Bigr|
\\
&\le 
\Bigl| 
L_n^{\epsilon} (\lambda^*,\mu^*; \widehat {\mb x},\widehat \alpha) 
-L_n^{\epsilon} (\lambda^*,\mu^*; \widehat {\mb x},\alpha^*)
\Bigr|
\\
&\quad+\Bigl| 
L_n^{\epsilon} (\lambda^*,\mu^*; \widehat {\mb x}, \alpha^*) 
-L_n^{\epsilon} (\lambda^*,\mu^*; {\mb x},\alpha^*)
\Bigr|
\\
&\le 
\sup_{\mu \in \Xi_{\mu}} \Bigl| 
L_n^{\epsilon} (\lambda^*,\mu; \widehat {\mb x},\widehat \alpha) 
-L_n^{\epsilon} (\lambda^*,\mu; \widehat {\mb x},\alpha^*)
\Bigr|
\\
&\quad
+\sup_{\mu \in \Xi_{\mu}} \Bigl| 
L_n^{\epsilon} (\lambda^*,\mu; \widehat {\mb x}, \alpha^*) 
-L_n^{\epsilon} (\lambda^*,\mu; {\mb x},\alpha^*)
\Bigr|
\\
&= \widehat {\mathbb W}_{3,n}^\epsilon + \widehat {\mathbb V}_{3,n}^\epsilon 
\end{align*}
and (R12) in Proposition \ref{propR2}, we find that (G3) holds under
\begin{equation}\label{VW-3}
\widehat {\mathbb V}_{3,n}^\epsilon = \op(1),
\quad
\widehat {\mathbb W}_{3,n}^\epsilon = \op(1).
\end{equation}
Therefore, we see from Lemmas \ref{lem_est_V}-(3) and \ref{lem_est_W}-(3) that
\eqref{VW-3} holds under $\frac{\sqrt{n}}{\epsilon^2} \mathcal X_n^* =\op(1)$, 
$\epsilon^{-1} \mathcal Y_n^* = \op(1)$ and 
\begin{description}
\item[(G31)]
$\frac{\sqrt{n}}{R_{\alpha^*,\alpha_0}^{\mathrm{damp}}} = \oo(1)$,
\end{description}
and from Lemma \ref{lem_est_XYZ*}-(1) with $r_{n,\epsilon} = \frac{\sqrt{n}}{\epsilon^{2}}$ 
and (2) with $r_{n,\epsilon} = \epsilon^{-1}$ that 
$\frac{\sqrt{n}}{\epsilon^2} \mathcal X_n^* = \op(1)$ 
and $\epsilon^{-1} \mathcal Y_n^* = \op(1)$ hold under
\begin{description}
\item[(G32)]
$\frac{\sqrt{n}}{\epsilon^2} \mathcal A_n^* = \op(1)$, 
$\frac{\sqrt{n}}{\epsilon^2} \mathcal A_{1,n}^* = \op(1)$,

\item[(G33)]
$\epsilon^{-2} \mathcal A_n^* \mathcal B_n^* = \op(1)$,
$\epsilon^{-1} \mathcal A_{2,n}^* = \op(1)$ 
and $\epsilon^{-1} \mathcal B_{1,n}^* = \op(1)$.
\end{description}
Notice that (G33) is the same condition as (G24).
Hence, we need to show (G31)--(G33) in order to obtain (G3).

We next consider (G4). We define
\begin{equation*}
{\mathbb U}_{4,n}^\epsilon
= \sup_{u \in [0,1]}
\bigl| 
K_n^{\epsilon} (\widehat \lambda_u, \widehat \mu_u; {\mb x},\alpha^*) 
-2 \mathcal I^{-1}
\bigr|.
\end{equation*}
Since it holds that
\begin{align*}
\widehat {\mathbb U}_{4,n}^\epsilon
&= \sup_{u \in [0,1]}
\bigl| 
K_n^{\epsilon} (\widehat \lambda_u, \widehat \mu_u; 
\widehat {\mb x},\widehat \alpha) -2 \mathcal I^{-1}
\bigr|
\\
&\le 
\sup_{u \in [0,1]}
\Bigl| 
K_n^{\epsilon} (\widehat \lambda_u, \widehat \mu_u; 
\widehat {\mb x}, \widehat \alpha) 
- K_n^{\epsilon} (\widehat \lambda_u, \widehat \mu_u; 
\widehat {\mb x}, \alpha^*) 
\Bigr|
\\
&\quad+ \sup_{u \in [0,1]}
\Bigl| 
K_n^{\epsilon} (\widehat \lambda_u, \widehat \mu_u; 
\widehat {\mb x}, \alpha^*) 
- K_n^{\epsilon} (\widehat \lambda_u, \widehat \mu_u; {\mb x}, \alpha^*)
\Bigr|
\\
&\quad+ \sup_{u \in [0,1]}
\bigl| 
K_n^{\epsilon} (\widehat \lambda_u, \widehat \mu_u; {\mb x},\alpha^*) 
-2 \mathcal I^{-1}
\bigr|
\\
&\le
\sup_{(\lambda, \mu) \in \Xi_{\nu}}
\Bigl| 
K_n^{\epsilon} (\lambda, \mu; \widehat {\mb x}, \widehat \alpha) 
- K_n^{\epsilon} (\lambda, \mu; \widehat {\mb x}, \alpha^*) 
\Bigr|
\\
&\quad+ \sup_{(\lambda, \mu) \in \Xi_{\nu}} \Bigl| 
K_n^{\epsilon} (\lambda, \mu; \widehat {\mb x}, \alpha^*) 
- K_n^{\epsilon} (\lambda, \mu; {\mb x}, \alpha^*)
\Bigr|
\\
&\quad+\sup_{u \in [0,1]}
\bigl| 
K_n^{\epsilon} (\widehat \lambda_u, \widehat \mu_u; {\mb x},\alpha^*) 
-2 \mathcal I^{-1}
\bigr|
\\
&= \widehat {\mathbb W}_{4,n}^\epsilon + \widehat {\mathbb V}_{4,n}^\epsilon 
+{\mathbb U}_{4,n}^\epsilon
\end{align*}
and from (R13) in Proposition \ref{propR2} 
that ${\mathbb U}_{4,n}^\epsilon = \op(1)$ holds
under $\epsilon^{-1}(\widehat \lambda -\lambda^*) = \Op(1)$ 
and $\widehat \mu \pto \mu^*$, 
we obtain (G4) under
\begin{equation}\label{VW-4}
\widehat {\mathbb V}_{4,n}^\epsilon = \op(1),
\quad
\widehat {\mathbb W}_{4,n}^\epsilon = \op(1).
\end{equation}
We find from Lemmas \ref{lem_est_V}-(4) and \ref{lem_est_W}-(4) that
\eqref{VW-4} holds under
$\epsilon^{-2} \mathcal X_n = \op(1)$, 
$\frac{1}{\epsilon \sqrt{n}} \mathcal Y_n = \op(1)$, 
$\frac{1}{n} \mathcal Z_n^* = \op(1)$ and 
\begin{description}
\item[(G41)]
$\frac{(n \epsilon^2)^{-1/2}}{R_{\alpha^*,\alpha_0}^{\mathrm{damp}}} = \oo(1)$,
\end{description}
and from Lemmas \ref{lem_est_XY}-(1) with $r_{n,\epsilon} = \epsilon^{-2}$, 
(2) with $r_{n,\epsilon} = \frac{1}{\epsilon \sqrt{n}}$ 
and \ref{lem_est_XYZ*}-(3) with $r_{n,\epsilon} = \frac{1}{n}$ that
$\epsilon^{-2} \mathcal X_n = \op(1)$, 
$\frac{1}{\epsilon \sqrt{n}} \mathcal Y_n = \op(1)$
and $\frac{1}{n} \mathcal Z_n^* = \op(1)$ hold under
\begin{description}
\item[(G42)]
$\epsilon^{-2} \mathcal A_n = \op(1)$, 
$\frac{1}{n \epsilon^4} \mathcal A_n = \op(1)$,

\item[(G43)]
$\frac{1}{n \epsilon^2} \mathcal A_n \mathcal B_n^* = \op(1)$,
$\frac{1}{n \epsilon^2} (\sqrt{n \epsilon^2} \lor 1) \mathcal A_n = \op(1)$, 
$\frac{1}{n \epsilon^2} (\epsilon^2 \lor n^{-1}) \mathcal B_n^* = \op(1)$,

\item[(G44)]
$\frac{1}{n} \mathcal B_n^* = \op(1)$ and $\frac{1}{n} \mathcal B_{2,n}^* = \op(1)$.
\end{description}
Note that (G42) and (G44) are the same conditions as (G22) and (G26), respectively.
Consequently, it suffices to prove that (G41)--(G44) hold in order to show (G4).

(G31) is precisely [C3]$_{\alpha^*,\alpha_0}$-(ii). 
Since $(n \epsilon^2)^{-1/2} = (n \epsilon^2)^{1/4} \{(n \epsilon^2)^{-1}\}^{3/4}
\le n \epsilon^2 \lor (n \epsilon^2)^{-1}$,
(G41) always holds when (G11) and (G21) hold. 
(G33), (G42) and (G44) are the same conditions as (G24), (G22) and (G26), respectively,
and we have already shown that all of them hold under [C1]$_{\alpha^*,\alpha_0}$-(i), (ii), 
[C2]$_{\alpha^*,\alpha_0}$-(i), (ii) and [C3]$_{\alpha^*,\alpha_0}$-(i).
Therefore, all that remains is to show that (G32) and (G43) hold 
under [C1]$_{\alpha^*,\alpha_0}$-(i), (ii) and [C2]$_{\alpha^*,\alpha_0}$.
\begin{proof}[\bf{Proof of (G32)}]
\renewcommand{\qedsymbol}{}
We see from Lemma \ref{lem_est_AB2}-(1) and (2) with $r_{n,\epsilon} = \frac{\sqrt{n}}{\epsilon^2}$ 
that (G32) holds under 
$\frac{n \sqrt{n} \Delta_n^{\alpha \tand 1}}
{(R_{\alpha^*, \alpha_0}^{\mathrm{coef}})^2} = \oo(1)$,
$\frac{n \sqrt{n}}{M_{(1)}^{2 \alpha^* \tand 2}} = \oo(1)$, 
$\frac{n^2 \Delta_n^{\alpha^* \tand 1}}
{(R_{\alpha^*, \alpha_0}^{\mathrm{coef}})^2} = \oo(1)$
and $\frac{n}{M_{(1)}^{2\alpha^* \tand 2}} = \oo(1)$, 
which reduce to 
\begin{equation*}
\frac{n^2 \Delta_n^{\alpha \tand 1}}
{(R_{\alpha^*, \alpha_0}^{\mathrm{coef}})^2} = \oo(1),
\quad
\frac{n \sqrt{n}}{M_{(1)}^{2 \alpha^* \tand 2}} = \oo(1).
\end{equation*}
These are precisely the last condition of [C1]$_{\alpha^*,\alpha_0}$-(ii) and [C2]$_{\alpha^*,\alpha_0}$-(iii), respectively.
\end{proof}

\begin{proof}[\bf{Proof of (G43)}]
\renewcommand{\qedsymbol}{}
Since $\frac{1}{n \epsilon^2} \mathcal A_n \mathcal B_n^* 
= (\frac{1}{n \epsilon^2} \mathcal A_n) \cdot \mathcal B_n^*$, 
(G43) holds under
\begin{equation*}
\Bigl( \frac{1}{n \epsilon^2} \lor \frac{1}{\sqrt{n \epsilon^2}} \Bigr) 
\mathcal A_n = \op(1),
\quad 
\Bigl( 1 \lor \frac{1}{n} \lor \frac{1}{(n \epsilon)^2} \Bigr) \mathcal B_n^* = \op(1).
\end{equation*}
Since
$\frac{1}{n \epsilon^2} \lor \frac{1}{\sqrt{n \epsilon^2}} 
\le \epsilon^{-2} \le (n \epsilon)^2 \lor \frac{1}{n \epsilon^4}$ 
(see \eqref{eps_n-1}), 
we find from the proofs of (G12) and (G22) that the former holds. 
We have already shown that
$(1 \lor \frac{1}{n}) \mathcal B_n^* = \mathcal B_n^* =  \op(1)$ holds 
in the proof of (G24).
We see from Lemma \ref{lem_est_AB1}-(2) with $r_{n,\epsilon} = \frac{1}{(n \epsilon)^2}$ that
$\frac{1}{(n \epsilon)^2} \mathcal B_n^* = \op(1)$ holds under
\begin{equation*}
\frac{(n \epsilon^2)^{-1}}{M_{(1)}^{2\alpha_0 \tand 2}} = \oo(1),
\quad 
\frac{(n \epsilon^2)^{-1}}{(R_{\alpha^*, \alpha_0}^{\mathrm{coef}})^2} = \oo(1),
\quad
\frac{1}{n M_{(1)}^{2\alpha^* \tand 2}} = \oo(1).
\end{equation*}
These conditions hold true under [C1]$_{\alpha^*,\alpha_0}$-(ii) and [C2]$_{\alpha^*,\alpha_0}$-(i) since
$(n \epsilon^2)^{-1} \le \epsilon^{-4}$ and $(n \epsilon^2)^{-1} \le \epsilon^{-2} \Delta_n^{\alpha^* \tand 1}$.
\end{proof}

We thus have \eqref{asym_lam_mu}.
Since it follows from Theorem \ref{th2} and [C1]$_{\alpha^*,\alpha_0}$-(iii) that
\begin{align*}
\epsilon^{-1} (\widehat \theta_0 -\theta_0^*)
&= -\epsilon^{-1} (\widehat \lambda -\lambda^*) 
\\
&\quad+\frac{\epsilon^{-1}}{R_{\alpha^*, \alpha_0}^{\mathrm{coef}}} 
\cdot R_{\alpha^*, \alpha_0}^{\mathrm{coef}}
\Biggl\{ \widehat \theta_2 \biggl( \frac{\widehat \kappa^2 + \widehat \eta^2}{4} +\pi^2 |\ell|^2 \biggr)
-\theta_2^*\biggl( \frac{(\kappa^*)^2 +(\eta^*)^2}{4} +\pi^2 |\ell|^2 \biggr)
\Biggr\}
\\
&= -\epsilon^{-1} (\widehat \lambda -\lambda^*) +\op(1)
\\
&= (-1, 0)
\begin{pmatrix}
\epsilon^{-1} (\widehat \lambda -\lambda^*)
\\
\sqrt{n} (\widehat \mu -\mu^*)
\end{pmatrix}
+\op(1)
\end{align*}
and
\begin{equation*}
\sqrt{n} (\widehat \mu_0 -\mu_0^*)
= \sqrt{n} (\widehat \mu -\mu^*)
= (0, 1)
\begin{pmatrix}
\epsilon^{-1} (\widehat \lambda -\lambda^*)
\\
\sqrt{n} (\widehat \mu -\mu^*)
\end{pmatrix},
\end{equation*}
we obtain by \eqref{asym_lam_mu},
\begin{equation*}
\begin{pmatrix}
\epsilon^{-1}(\widehat \theta_0 - \theta_0^*)
\\
\sqrt n(\widehat \mu_0-\mu_0^*)
\end{pmatrix}
= 
\begin{pmatrix}
-1 & 0
\\
0 & 1
\end{pmatrix}
\begin{pmatrix}
\epsilon^{-1} (\widehat \lambda -\lambda^*)
\\
\sqrt{n} (\widehat \mu -\mu^*)
\end{pmatrix}
+\op(1)
\dto \mathrm{N} (0, \mathcal I).
\end{equation*}
\end{proof}

\subsection{Auxiliary results}\label{sec5-4}
We define the estimator of the damping parameter $\alpha$ in the simple form
\begin{equation*}
\widetilde \alpha = \frac{\log(S_1) -\log(\widetilde S_2)}{\log(4)},
\quad
S_1 = \sum_{k=1}^{m_2-1} \sum_{j=1}^{m_1-1} \sum_{i=1}^{N-3} (\widehat T_{i,j,k} X)^2,
\quad
\widetilde S_2 = \sum_{k=1}^{m_2} \sum_{j=1}^{m_1} \sum_{i=1}^N (T_{i,j,k}X)^2
\end{equation*}
instead of $\widehat \alpha$ given in \eqref{est_alpha}. We then obtain the following result.
\begin{subthm}\label{lem_tilde_alpha}
Let $\alpha_0 \in (0,3)$ and $\delta/\sqrt{\Delta} \equiv r \in (0,\infty)$.
It holds that under [A1]$_{\alpha_0}$ and [B]$_{\alpha^*,\alpha_0}$, 
\begin{equation*}
\bigl( R_{\alpha^*,\alpha_0}^{\mathrm{damp}} \land \sqrt{m} \bigr)
(\widetilde \alpha -\alpha^*) = \Op(1)
\end{equation*}
as $m \to \infty$, $N \to \infty$ and $\epsilon \to 0$.
\end{subthm}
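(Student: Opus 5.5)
Take $\mathcal Z_1$ as in the proof of Theorem \ref{th1} and set $\widetilde{\mathcal Z}_2 = \widetilde S_2/(\epsilon^2 m N \Delta^{\alpha^*})$. Then
\begin{equation*}
\widetilde \alpha -\alpha^* = \frac{1}{\log(4)} \log\biggl(\frac{\mathcal Z_1}{\widetilde{\mathcal Z}_2}\biggr),
\end{equation*}
since $S_1 = 4^{\alpha^*}\epsilon^2 mN\Delta^{\alpha^*}\mathcal Z_1$. The Taylor expansion \eqref{taylor} and the $\Op(1)$ control of its integral remainder carry over unchanged. It therefore suffices to show $(R_1\land\sqrt m)(\mathcal Z_1/\widetilde{\mathcal Z}_2 -1)=\Op(1)$, and by \eqref{eq-z1/z2} this reduces to comparing both $\mathcal Z_1$ and $\widetilde{\mathcal Z}_2$ with one common deterministic limit $c>0$.

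Choose $c = \phi_{r,\alpha^*}(\theta_2)(1-2b)^{-2}\int_{[b,1-b]^2}\ee^{-\kappa y-\eta z}\dd y\dd z$. For $\widetilde{\mathcal Z}_2$, apply \eqref{eq-pf-2} with $\underline j=\underline k=0$, $\overline j=m_1$, $\overline k=m_2$ to get $\EE[\widetilde{\mathcal Z}_2]=c+\OO(R_1^{-1})$. The variance bound from \eqref{cov-2}, exactly as in the proof of (E2), gives $R_1(\widetilde{\mathcal Z}_2-\EE[\widetilde{\mathcal Z}_2])=\Op(1)$, and hence $R_1(\widetilde{\mathcal Z}_2-c)=\Op(1)$.

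For $\mathcal Z_1$, (E1) already gives $R_1(\mathcal Z_1-g^{(\mathrm m)}_{r,\alpha^*}(\vartheta))=\Op(1)$. The remaining task is to bound the deterministic gap $g^{(\mathrm m)}_{r,\alpha^*}(\vartheta)-c$. By the product representation of $g^{(\mathrm m)}_{r,\alpha^*}$ obtained from \eqref{eq-ff}, each factor has the form
\begin{equation*}
\frac{1}{2}\biggl(\int_b^{1-b}+\int_{\widetilde y_1}^{\widetilde y_{m_1-1}}\biggr)\ee^{-\kappa y}\dd y = \int_b^{1-b}\ee^{-\kappa y}\dd y-\frac12\biggl(\int_b^{\widetilde y_1}+\int_{\widetilde y_{m_1-1}}^{1-b}\biggr)\ee^{-\kappa y}\dd y .
\end{equation*}
The boundary integrals span intervals of length $\delta$, so each factor differs from its limit by $\OO(\delta)=\OO(m^{-1/2})$. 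The gap is therefore $\OO(m^{-1/2})$, and this boundary effect is exactly what $S_2$ was built to cancel. Consequently $(R_1\land\sqrt m)(\mathcal Z_1-c)=\Op(1)$.

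Combining the two pieces in \eqref{eq-z1/z2} gives $(R_1\land\sqrt m)|\mathcal Z_1/\widetilde{\mathcal Z}_2-1|=\Op(1)$, since the denominator tends in probability to $c>0$. The integral remainder in \eqref{taylor} is $\Op(1)$ because the ratio tends to $1$ in probability, and this yields the lemma. The only new ingredient beyond Theorem \ref{th1} is the $\OO(\delta)$ boundary mismatch, which is elementary. Verifying that no further hidden bias arises in $\EE[\mathcal Z_1]$ is already covered by \eqref{eq-Z2}.
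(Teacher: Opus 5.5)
Your proposal is correct and essentially matches the paper's proof. Both reduce via \eqref{taylor} and \eqref{eq-z1/z2} to comparing $\mathcal Z_1$ and $\widetilde{\mathcal Z}_2$ with a common deterministic centering, using \eqref{eq-pf-2}, the variance bound from \eqref{cov-2}, and the $\OO(\delta)=\OO(m^{-1/2})$ boundary mismatch between $g^{(\mathrm m)}_{r,\alpha^*}(\vartheta)$ and the full integral. The only cosmetic difference is the choice of centering: the paper centers at $g^{(\mathrm m)}_{r,\alpha^*}(\vartheta)$ and charges the $\OO(\delta)$ gap to $\widetilde{\mathcal Z}_2$, whereas you center at the $m$-independent limit $c$ and charge the gap to $\mathcal Z_1$.
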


\begin{proof}
We define
\begin{equation*}
\widetilde {\mathcal Z}_2 = \frac{\widetilde S_2}{\epsilon^2 m N \Delta^{\alpha^*}},
\quad
\widetilde R_1 = R_1 \land \sqrt{m}.
\end{equation*}
We obtain 
\begin{equation*}
R_1 \bigl( \widetilde {\mathcal Z}_2 -\EE [\widetilde {\mathcal Z}_2] \bigr) = \Op(1)
\end{equation*}
in the same way as in the proof of \eqref{eq-Z1-1}.
Since we have 
\begin{align*}
\int_{b}^{1-b} \ee^{-\kappa y} \dd y
+ \int_{\widetilde y_1}^{\widetilde y_{m_1-1}} \ee^{-\kappa y} \dd y 
&= 2\int_{b}^{1-b} \ee^{-\kappa y} \dd y
- \int_{b}^{\widetilde y_1} \ee^{-\kappa y} \dd y
- \int_{\widetilde y_{m_1-1}}^{1-b} \ee^{-\kappa y} \dd y 
\\
&= 2\int_{b}^{1-b} \ee^{-\kappa y} \dd y + \OO(\delta)
\end{align*}
and
\begin{align*}
g_{r,\alpha^*}^{(\mathrm{m})}(\vartheta)
&=
\frac{\phi_{r,\alpha^*}(\theta_2)}{4(1-2b)^2} \biggl(
\int_{b}^{1-b} \ee^{-\kappa y} \dd y
+ \int_{\widetilde y_1}^{\widetilde y_{m_1-1}} \ee^{-\kappa y} \dd y 
\biggr) 
\\
&\quad \times
\biggl(
\int_{b}^{1-b} \ee^{-\eta z} \dd z
+ \int_{\widetilde z_1}^{\widetilde z_{m_2-1}} \ee^{-\eta z} \dd z 
\biggr) 
\\
&= 
\frac{\phi_{r,\alpha^*}(\theta_2)}{(1-2b)^2}
\int_{b}^{1-b} \ee^{-\kappa y} \dd y
\int_{b}^{1-b} \ee^{-\eta z} \dd z
+ \OO(\delta),
\end{align*}
we see from \eqref{eq-pf-2} that
\begin{align*}
\EE [ \widetilde {\mathcal Z}_2 ] &= 
\frac{1}{\epsilon^2 m N \Delta^{\alpha^*}}
\sum_{k=1}^{m_2} \sum_{j=1}^{m_1} \sum_{i=1}^N \EE \bigl[ (T_{i,j,k}X)^2 \bigr]
\\
&= 
\frac{\phi_{r,\alpha^*}(\theta_2)}{(1-2b)^2}
\int_{b}^{1-b} \ee^{-\kappa y} \dd y
\int_{b}^{1-b} \ee^{-\eta z} \dd z
+\OO (R_1^{-1})
\\
&= g_{r,\alpha^*}^{(\mathrm{m})}(\vartheta) +\OO (R_1^{-1} \lor \delta)
\\
&= g_{r,\alpha^*}^{(\mathrm{m})}(\vartheta) +\OO (\widetilde R_1^{-1}).
\end{align*}
We thus obtain 
\begin{align*}
\widetilde R_1 \bigl( \widetilde {\mathcal Z}_2 -g_{r,\alpha^*}^{(\mathrm{m})}(\vartheta) \bigr)
&= \widetilde R_1 
\bigl( \widetilde {\mathcal Z}_2 -\EE [ \widetilde {\mathcal Z}_2 ] \bigr)
+\widetilde R_1  \bigl( \EE [\mathcal Z_1] 
-g_{r,\alpha^*}^{(\mathrm{m})}(\vartheta) \bigr)
\\
&= \Op(1) +\OO(1)
\\
&= \Op(1),
\end{align*}
which together with 
$\widetilde R_1 \bigl| \mathcal Z_1 -g_{r,\alpha^*}^{(\mathrm{m})}(\vartheta) \bigr|
\le R_1 \bigl| \mathcal Z_1 -g_{r,\alpha^*}^{(\mathrm{m})}(\vartheta) \bigr| = \Op(1)$
yields the desired result.
\end{proof}

We give the following lemma which yields the dominant conditions 
for the sufficient conditions [C1]$_{\alpha^*,\alpha_0}$, [C2]$_{\alpha^*,\alpha_0}$ and [C3]$_{\alpha^*,\alpha_0}$ 
under the balance conditions between each $n$ and $\epsilon$.

\begin{subthm}\label{lem_bal}
Let $\alpha, \alpha_0 \in (0,3)$ and $\delta/\sqrt{\Delta} \equiv r \in (0,\infty)$.
\begin{enumerate}
\item[(1)]
[C1]$_{\alpha,\alpha_0}$-(i) and [C2]$_{\alpha,\alpha_0}$-(i) hold under one of the following conditions.
\begin{enumerate}
\item[a)]
$\frac{1}{n \epsilon^2} = \OO(1)$, 
$\frac{n^3 \epsilon^2 \Delta_n^{\alpha_0 \tand 2}}{(R_{\alpha,\alpha_0}^{\mathrm{coef}})^2} = \oo(1)$
and $\frac{n^3 \epsilon^2}{M_{(1)}^{2\alpha_0 \tand 2}} = \oo(1)$,

\item[b)]
$n \epsilon^2 = \oo(1)$, 
$\frac{\epsilon^{-4} \Delta_n^{\alpha_0 \tand 2}}{(R_{\alpha,\alpha_0}^{\mathrm{coef}})^2} = \oo(1)$
and $\frac{\epsilon^{-4}}{M_{(1)}^{2\alpha_0 \tand 2}} = \oo(1)$.
\end{enumerate}

\item[(2)]
[C1]$_{\alpha,\alpha_0}$-(ii) holds under one of the following conditions.
\begin{enumerate}
\item[a)]
$\frac{1}{n \epsilon^4} = \OO(1)$ and
$\frac{n^3 \epsilon^4 \Delta_n^{\alpha \tand 1}}{(R_{\alpha,\alpha_0}^{\mathrm{coef}})^2} = \oo(1)$,

\item[b)]
$n \epsilon^4 = \oo(1)$, $\frac{1}{n \epsilon} = \OO(1)$ and
$\frac{n^2 \Delta_n^{\alpha \tand 1}}{(R_{\alpha,\alpha_0}^{\mathrm{coef}})^2} = \oo(1)$,

\item[c)]
$n \epsilon = \oo(1)$ and
$\frac{\epsilon^{-2} \Delta_n^{\alpha \tand 1}}{(R_{\alpha,\alpha_0}^{\mathrm{coef}})^2} = \oo(1)$. 
\end{enumerate}

\item[(3)]
[C2]$_{\alpha,\alpha_0}$-(ii) holds under one of the following conditions.
\begin{enumerate}
\item[a)]
$\frac{1}{n \epsilon^2} = \OO(1)$ and
$\frac{n^3 \epsilon^4}{M_{(1)}^{2\alpha \tand 2}} = \oo(1)$,

\item[b)]
$n \epsilon^2 = \oo(1)$ and
$\frac{\epsilon^{-2}}{M_{(1)}^{2\alpha \tand 2}} = \oo(1)$.
\end{enumerate}

\item[(4)]
[C2]$_{\alpha,\alpha_0}$-(ii) and (iii) hold under one of the following conditions.
\begin{enumerate}
\item[a)]
$\frac{1}{n \epsilon^{8/3}} = \OO(1)$ and
$\frac{n^3 \epsilon^4}{M_{(1)}^{2\alpha \tand 2}} = \oo(1)$,

\item[b)]
$n \epsilon^{8/3} = \oo(1)$, $\frac{1}{n \epsilon^{4/3}} = \OO(1)$ and
$\frac{n \sqrt{n}}{M_{(1)}^{2\alpha \tand 2}} = \oo(1)$,

\item[c)]
$n \epsilon^{4/3} = \oo(1)$ and
$\frac{\epsilon^{-2}}{M_{(1)}^{2\alpha \tand 2}} = \oo(1)$.
\end{enumerate}

\item[(5)]
[C3]$_{\alpha,\alpha_0}$-(i) holds under one of the following conditions.
\begin{enumerate}
\item[a)]
$\frac{1}{n \epsilon^2} = \OO(1)$ and
$\frac{n \epsilon^2}{R_{\alpha,\alpha_0}^{\mathrm{damp}}} = \oo(1)$,

\item[b)]
$n \epsilon^2 = \oo(1)$ and
$\frac{(n \epsilon^2)^{-1}}{R_{\alpha,\alpha_0}^{\mathrm{damp}}} = \oo(1)$.
\end{enumerate}

\item[(6)]
[C3]$_{\alpha,\alpha_0}$ holds under one of the following conditions.
\begin{enumerate}
\item[a)]
$\frac{1}{n \epsilon^4} = \OO(1)$ and
$\frac{n \epsilon^2}{R_{\alpha,\alpha_0}^{\mathrm{damp}}} = \oo(1)$,

\item[b)]
$n \epsilon^4 = \oo(1)$, $\frac{1}{n \epsilon^{4/3}} = \oo(1)$
and $\frac{\sqrt{n}}{R_{\alpha,\alpha_0}^{\mathrm{damp}}} = \oo(1)$,

\item[c)]
$n \epsilon^{4/3} = \oo(1)$ and
$\frac{(n \epsilon^2)^{-1}}{R_{\alpha,\alpha_0}^{\mathrm{damp}}} = \oo(1)$.
\end{enumerate}
\end{enumerate}
\end{subthm}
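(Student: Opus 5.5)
Lemma \ref{lem_bal} is purely algebraic: each sufficient condition in [C1]--[C3] is a finite family of statements $a_i(n,\epsilon)\,c_i = \oo(1)$, where $a_i$ is a monomial $n^{p_i}\epsilon^{q_i}$ and $c_i$ is a common rate factor. In [C1] this factor is $\Delta_n^{\alpha_0 \tand 2}/(R^{\mathrm{coef}}_{\alpha,\alpha_0})^2$ or $\Delta_n^{\alpha\tand 1}/(R^{\mathrm{coef}}_{\alpha,\alpha_0})^2$; in [C2] it is a power of $M_{(1)}^{-1}$; in [C3] it is $1/R^{\mathrm{damp}}_{\alpha,\alpha_0}$. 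I would therefore show that, under the stated balance regime, the listed monomial dominates all the others within each group. Then the single stated condition implies the whole group.

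The key device is the one already used in the proofs of (G12)--(G24), namely $a^s b^{1-s} \le a \lor b$ together with monotone comparisons. The comparisons to check are as follows.

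(1) Compare $n^3\epsilon^2$ and $\epsilon^{-4}$. Their ratio is $(n\epsilon^2)^3$, so if $1/(n\epsilon^2)=\OO(1)$ then $\epsilon^{-4}\lesssim n^3\epsilon^2$, and if $n\epsilon^2=\oo(1)$ the reverse holds. The same comparison handles the [C1]-(i) pair (with factor $\Delta_n^{\alpha_0\tand2}/(R^{\mathrm{coef}})^2$) and the [C2]-(i) pair (with factor $M_{(1)}^{-(2\alpha_0\tand2)}$) simultaneously.

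(2) For [C1]-(ii) the three monomials are $n^3\epsilon^4$, $n^2$ and $\epsilon^{-2}$. We have $n^2/(n^3\epsilon^4) = (n\epsilon^4)^{-1}$ and $\epsilon^{-2}/n^2 = (n\epsilon)^{-2}$. This gives the three regimes: $n\epsilon^4$ bounded below, then $n\epsilon^4\to0$ with $n\epsilon$ bounded below, then $n\epsilon\to0$. In case a) the bound $\epsilon^{-2}\le n^2$ still holds because $n\epsilon^4\gtrsim1$ forces $n\epsilon\gtrsim1$.

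(3) For [C2]-(ii) the ratio is $\epsilon^{-2}/(n^3\epsilon^4)=(n\epsilon^2)^{-3}$.

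(4) For [C2]-(ii),(iii) the exponent thresholds come from $n^{3/2}/(n^3\epsilon^4)=(n\epsilon^{8/3})^{-3/2}$ and $\epsilon^{-2}/n^{3/2}=(n\epsilon^{4/3})^{-3/2}$. The ordering $\epsilon^{8/3}\le\epsilon^{4/3}$ makes the regimes consistent.

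(5) For [C3]-(i) the ratio $(n\epsilon^2)^{-1}/(n\epsilon^2)=(n\epsilon^2)^{-2}$ gives the split.

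(6) For [C3] the monomials are $n\epsilon^2$, $\sqrt n$ and $(n\epsilon^2)^{-1}$. Here $\sqrt n/(n\epsilon^2)=(n\epsilon^4)^{-1/2}$ and $(n\epsilon^2)^{-1}/\sqrt n = (n\epsilon^{4/3})^{-3/2}$, which again give three regimes. In regime a), $(n\epsilon^2)^{-1}\lesssim n\epsilon^2$ because $n\epsilon^4\gtrsim1$ and $\epsilon<1$ imply $n\epsilon^2\gtrsim1$.

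The only delicate point is checking that, in the intermediate regimes, the dominant monomial also controls the two extreme ones. For instance, in (2b) I must check both $n^3\epsilon^4\lesssim n^2$ (true since $n\epsilon^4\to0$) and $\epsilon^{-2}\lesssim n^2$ (true since $n\epsilon\gtrsim1$). Beyond this bookkeeping I expect no real obstacle. Each case reduces to multiplying the given $\oo(1)$ statement by a bounded factor $a_i/a_{\mathrm{dom}}=\OO(1)$.
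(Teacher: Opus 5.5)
Your proposal is correct and follows essentially the same route as the paper. The paper writes each group's maximum as an intermediate monomial times a maximum of powers of $n\epsilon^a$ (e.g.\ $n^3\epsilon^4 \lor \epsilon^{-2} \lor n^2 = n^2\bigl(n\epsilon^4 \lor (n\epsilon)^{-2} \lor 1\bigr)$), reads off the dominant term in each regime, and handles the non-adjacent comparisons with the fact that $n\epsilon^a \gtrsim 1$ forces $n\epsilon^b \gtrsim 1$ for $b<a$, which is exactly your ratio bookkeeping (the interpolation inequality $a^s b^{1-s} \le a \lor b$ you mention is not actually needed).
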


\begin{proof}
Note that the following holds for all $a > b > 0$.
\begin{equation}\label{eps_ab}
\left\{
\begin{split}
\frac{1}{n \epsilon^b} &= \oo(1) \text{ under } \frac{1}{n \epsilon^a} = \OO(1),
\\
n \epsilon^a &= \oo(1) \text{ under } n \epsilon^b = \OO(1).
\end{split}
\right.
\end{equation}
\begin{enumerate}
\item[(1)]
It holds from $n^3 \epsilon^2 \lor \epsilon^{-4} = n^2 (n \epsilon^2 \lor \frac{1}{(n \epsilon^2)^2})$ that 
\begin{equation*}
n^3 \epsilon^2 \lor \epsilon^{-4} = 
\begin{cases}
\OO(n^3 \epsilon^2) & \text{under $\frac{1}{n \epsilon^2} = \OO(1)$},
\\
\OO(\epsilon^{-4}) & \text{under $n \epsilon^2 = \oo(1)$}.
\end{cases}
\end{equation*}

\item[(2)]
Since $n^3 \epsilon^4 \lor \epsilon^{-2} \lor n^2 = n^2 (n \epsilon^4 \lor \frac{1}{(n \epsilon)^2} \lor 1)$, we see from \eqref{eps_ab} with $(a, b)= (4, 1)$ that
\begin{equation*}
n^3 \epsilon^4 \lor \epsilon^{-2} \lor n^2 = 
\begin{cases}
\OO(n^3 \epsilon^4) & \text{under $\frac{1}{n \epsilon^4} = \OO(1)$},
\\
\OO(n^2) & \text{under $n \epsilon^4 = \oo(1)$ and $\frac{1}{n \epsilon} = \OO(1)$},
\\
\OO(\epsilon^{-2}) & \text{under $n \epsilon = \oo(1)$}.
\end{cases}
\end{equation*}

\item[(3)]
Since $n^3 \epsilon^4 \lor \epsilon^{-2} = (n \epsilon)^2 (n \epsilon^2 \lor \frac{1}{(n \epsilon^2)^2})$, we have
\begin{equation*}
n^3 \epsilon^4 \lor \epsilon^{-2} = 
\begin{cases}
\OO(n^3 \epsilon^4) & \text{under $\frac{1}{n \epsilon^2} = \OO(1)$},
\\
\OO(\epsilon^{-2}) & \text{under $n \epsilon^2 = \oo(1)$}.
\end{cases}
\end{equation*}

\item[(4)]
Since $n^3 \epsilon^4 \lor \epsilon^{-2} \lor n^{3/2} = 
n^{3/2} ( (n \epsilon^{8/3})^{3/2} \lor \frac{1}{(n \epsilon^{4/3})^{3/2}} \lor 1 )$, 
we obtain by \eqref{eps_ab} with $(a, b)= (8/3, 4/3)$,
\begin{equation*}
n^3 \epsilon^4 \lor \epsilon^{-2} \lor n^{3/2} = 
\begin{cases}
\OO(n^3 \epsilon^4) & \text{under $\frac{1}{n \epsilon^{8/3}} = \OO(1)$},
\\
\OO(n^{3/2}) & \text{under $n \epsilon^{8/3} = \oo(1)$ 
and $\frac{1}{n \epsilon^{4/3}} = \OO(1)$},
\\
\OO(\epsilon^{-2}) & \text{under $n \epsilon^{4/3} = \oo(1)$}.
\end{cases}
\end{equation*}

\item[(5)]
It is obvious.

\item[(6)]
 It follows from 
$n \epsilon^2 \lor \frac{1}{n \epsilon^2} \lor n^{1/2} 
= n^{1/2} ( (n \epsilon^4)^{1/2} \lor \frac{1}{(n \epsilon^{4/3})^{3/2}} \lor 1 )$
and \eqref{eps_ab} with $(a, b)= (4, 4/3)$ that 
\begin{equation*}
n \epsilon^2 \lor \frac{1}{n \epsilon^2} \lor n^{1/2} = 
\begin{cases}
\OO(n \epsilon^2) & \text{under $\frac{1}{n \epsilon^4} = \OO(1)$},
\\
\OO(n^{1/2}) & \text{under $n \epsilon^4 = \oo(1)$ 
and $\frac{1}{n \epsilon^{4/3}} = \OO(1)$},
\\
\OO((n \epsilon^2)^{-1}) & \text{under $n \epsilon^{4/3} = \oo(1)$}.
\end{cases}
\end{equation*}
\end{enumerate}
\end{proof}

For the function $\phi_{r,\alpha}(\theta_2)$ given in \eqref{phi}, we obtain the following result.
\begin{subthm}\label{lem_phi}
For $r \in (0, \infty)$ and $l \in \{ 0, 1, 2 \}$,
the function $\Xi_{\alpha} \times \Xi_{\theta_2} \ni (\alpha, \theta_2) \mapsto \pd_{\theta_2}^l \phi_{r,\alpha}(\theta_2)$
belongs to $C^{1,0}(\Xi_{\alpha} \times \Xi_{\theta_2})$.
\end{subthm}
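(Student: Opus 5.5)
We need to show that $(\alpha,\theta_2)\mapsto \pd_{\theta_2}^l\phi_{r,\alpha}(\theta_2)$ is differentiable in $\alpha$, with the derivative jointly continuous on $\Xi_\alpha\times\Xi_{\theta_2}$. The plan is to write
\begin{equation*}
\phi_{r,\alpha}(\theta_2)=\frac{2}{\pi}\,\theta_2^{\alpha-1}\,\Psi_r(\alpha,\theta_2),
\qquad
\Psi_r(\alpha,\theta_2)=\int_0^\infty \frac{1-\ee^{-x^2}}{x^{1+2\alpha}}\,K_r\Bigl(\frac{x}{\sqrt{\theta_2}}\Bigr)\dd x,
\end{equation*}
where $K_r(s)=J_0(\sqrt2 rs)-2J_0(rs)+1$. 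The prefactor $\theta_2^{\alpha-1}$ is smooth in $(\alpha,\theta_2)$ on $\mathbb R\times(0,\infty)$. By the Leibniz rule it therefore suffices to show that $\pd_{\theta_2}^p\Psi_r$ ($p\le 2$) exists, that $\pd_\alpha\pd_{\theta_2}^p\Psi_r$ exists, and that both are continuous in $(\alpha,\theta_2)$. We obtain these by differentiating under the integral sign, justified through dominated convergence with a single integrable majorant that is uniform over the compact set $\Xi_\alpha\times\Xi_{\theta_2}$.

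For the majorant, first consider $\theta_2$-derivatives. Since $\pd_{\theta_2}K_r(x/\sqrt{\theta_2})=-\tfrac12\theta_2^{-3/2}xK_r'(x/\sqrt{\theta_2})$, each $\theta_2$-derivative introduces terms of the form $s^jK_r^{(j)}(s)$ with $s=x/\sqrt{\theta_2}$, multiplied by smooth bounded functions of $\theta_2\in\Xi_{\theta_2}\subset(0,\infty)$. Each $\alpha$-derivative introduces a factor $-2\log x$. We then need bounds on $s^jK_r^{(j)}(s)$ for $j\le2$ near $0$ and near $\infty$.

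Near $s=0$, the series \eqref{Bessel} gives $K_r(s)=\bigl(-\tfrac{2r^2}{4}+2\tfrac{r^2}{4}\bigr)s^2+\OO(s^4)=\OO(s^4)$, because the $s^2$ terms cancel. Consequently $s^jK_r^{(j)}(s)=\OO(s^4)$ as well. Combined with $1-\ee^{-x^2}\le x^2$, the integrand together with any power of $|\log x|$ is then $\lesssim x^{5-2\alpha}|\log x|^q$ on $(0,1]$. This is integrable as long as $\alpha<3$, and the bound is uniform for $\alpha\le\sup\Xi_\alpha<3$.

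Near $\infty$, $J_0$ and $J_0'$ are bounded and decay like $s^{-1/2}$, but $sK_r'$ and $s^2K_r''$ can grow like $s^{1/2}$ and $s^{3/2}$. In the worst case, for $l=2$, the integrand is therefore $\lesssim x^{-1-2\alpha+3/2}|\log x|^q$ on $[1,\infty)$. This is integrable only when $2\alpha>1/2$, which fails for small $\alpha\in\Xi_\alpha$.

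This tail for higher $\theta_2$-derivatives is the main obstacle. To handle it, I would avoid differentiating the Bessel function directly. Instead, I would substitute $x=\sqrt{\theta_2}\,u$ to move the $\theta_2$-dependence into the smooth, rapidly decaying factor:
\begin{equation*}
\Psi_r=\theta_2^{-\alpha}\int_0^\infty \frac{1-\ee^{-\theta_2u^2}}{u^{1+2\alpha}}K_r(u)\dd u .
\end{equation*}
Now $\pd_{\theta_2}^p(1-\ee^{-\theta_2u^2})=\pm u^{2p}\ee^{-\theta_2u^2}$ for $p\ge1$. For $p\ge1$ the integrand decays like a Gaussian at infinity, with $\theta_2\ge\inf\Xi_{\theta_2}>0$, and near $0$ it is $\OO(u^{2p+3-2\alpha})$, which is integrable. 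For $p=0$ we use $|1-\ee^{-\theta_2u^2}|\le1$, $K_r$ bounded, and $u^{-1-2\alpha}$ integrable at $\infty$ uniformly for $\alpha\ge\inf\Xi_\alpha>0$; near $0$ we use $1-\ee^{-\theta_2u^2}\le\theta_2u^2$ together with $K_r=\OO(u^4)$. Factors of $|\log u|^q$ coming from $\pd_\alpha$ do not affect integrability at either end, since there are strict margins on both sides of the compact set $\Xi_\alpha$.

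With these uniform majorants, dominated convergence gives differentiability in $\alpha$ and joint continuity of $\pd_\alpha\pd_{\theta_2}^p\Psi_r$ and $\pd_{\theta_2}^p\Psi_r$. Combining this with the smooth prefactors $\theta_2^{\alpha-1}\theta_2^{-\alpha}=\theta_2^{-1}$ via the product rule completes the argument.
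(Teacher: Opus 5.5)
Your proposal is correct and, once you switch to the substitution $x=\sqrt{\theta_2}\,u$, it is essentially the paper's proof. That substitution yields exactly the paper's representation $\phi_{r,\alpha}(\theta_2)=\frac{2}{\theta_2\pi}\int_0^\infty q_r(u;\alpha,\theta_2)\dd u$. You then use the same majorants to differentiate under the integral: $u^{5-2\alpha}$ near $0$, $u^{-1-2\alpha}$ or Gaussian decay near $\infty$, with the $\log$ factors absorbed by margins inside $(0,3)$.

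The remaining differences are minor. You obtain joint continuity directly from dominated convergence, whereas the paper uses a mean-value Lipschitz bound with one more derivative. Also, in your discarded direct route the absolute-integrability threshold should read $2\alpha>3/2$, not $2\alpha>1/2$; this does not affect your conclusion.
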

\begin{proof}
Since we have 
\begin{align*}
\phi_{r,\alpha} (\theta_2)
&= \frac{2}{\theta_2 \pi} 
\int_0^\infty \frac{1-\ee^{-\theta_2 x^2}}{x^{1+2\alpha}} 
\bigl( J_0(\sqrt{2}r x) -2J_0(r x)+1 \bigr) \dd x
\\
&=: \frac{2}{\theta_2 \pi} \int_0^\infty q_{r}(x;\alpha, \theta_2) \dd x
\\
&=: \frac{2}{\theta_2 \pi} \overline \phi_{r,\alpha}(\theta_2),
\end{align*}
we will show the following three statements in order to arrive at the desired result.
\begin{enumerate}
\item[(a)]
For any $\alpha \in (0,3)$ and $r \in (0, \infty)$,
the function $\Xi_{\theta_2} \ni \theta_2 \mapsto \overline \phi_{r,\alpha}(\theta_2)$ 
is three times differentiable.

\item[(b)]
For any $\theta_2 \in \Xi_{\theta_2}$, $r \in (0, \infty)$ and $l \in \{ 0, 1, 2 \}$,
the function $\Xi_{\alpha} \ni \alpha \mapsto \pd_{\theta_2}^l 
\overline \phi_{r,\alpha}(\theta_2)$ is twice differentiable.

\item[(c)]
For $r \in (0, \infty)$, $k \in \{0, 1 \}$ and $l \in \{ 0, 1, 2 \}$, 
the function $\Xi_{\alpha} \times \Xi_{\theta_2} \ni (\alpha, \theta_2) 
\mapsto \pd_{\alpha}^k \pd_{\theta_2}^l \overline \phi_{r,\alpha}(\theta_2)$ is continuous.

\end{enumerate}

We show (a). Note that 
\begin{equation*}
q_{r}(x;\alpha,\theta_2)
= \frac{1-\ee^{-\theta_2 x^2}}{x^{1+2\alpha}} 
\bigl( J_0(\sqrt{2}r x) -2J_0(r x)+1 \bigr)
\end{equation*}
and
\begin{equation*}
\pd_{\theta_2}^l q_{r}(x;\alpha,\theta_2)
= (-1)^{l+1} \frac{\ee^{-\theta_2 x^2}}{x^{1 -2l +2\alpha}} 
\bigl( J_0(\sqrt{2}r x) -2J_0(r x)+1 \bigr)
\end{equation*}
for $l \in \{1, 2, 3 \}$.
The integral representation of the Bessel function
\begin{equation*}
J_0(x) = \frac{1}{\pi} \int_0^{\pi} \cos( x \sin(t) ) \dd t
\end{equation*}
and \eqref{Bessel} show $\sup_{x \in \mathbb R}|J_0(x)| \le 1$ and
\begin{align*}
J_0(\sqrt{2}x) -2J_0(x)+1 
&= \biggl( 
1 +\sum_{k = 1}^{\infty} \frac{(-1)^k}{(k!)^2} \Bigl( \frac{x^2}{2} \Bigr)^{k}
\biggr)
-2 \biggl(
1 +\sum_{k = 1}^{\infty} \frac{(-1)^k}{(k!)^2} \Bigl( \frac{x^2}{4} \Bigr)^{k}
\biggr)
+1
\\
&= \sum_{k=2}^{\infty} \frac{(-1)^k}{(k!)^2} 
\Bigl( \frac{1}{2^k} -\frac{2}{4^k} \Bigr) x^{2k},
\end{align*}
which yield 
\begin{equation*}
J_0(\sqrt{2} x) -2J_0(x)+1 \lesssim 
\begin{cases}
x^4 & (x \to 0),
\\
1 & (x \to \infty).
\end{cases}
\end{equation*}
Since we have
\begin{equation*}
\sup_{(x, \theta_2) \in \mathbb R \times \Xi_{\theta_2}} 
\frac{1-\ee^{-\theta_2 x^2}}{1-\ee^{-\overline{\theta_2} x^2}} 
\lesssim 1
\end{equation*}
for $\overline{\theta_2} \in (\sup(\Xi_{\theta_2}), \infty)$ and
\begin{equation*}
\sup_{(x, \theta_2) \in [c,\infty) \times \Xi_{\theta_2}} 
\frac{\ee^{-(\theta_2 -\underline{\theta_2})x^2}}{x^{1-2l+2\alpha}}
\lesssim 1
\end{equation*}
for $\underline{\theta_2} \in (0, \inf(\Xi_{\theta_2}))$, $l \in \{ 1, 2, 3 \}$, 
$\alpha \in (0,3)$ and $c \in (0,\infty)$, 
it holds that for $l \in \{1, 2, 3 \}$ and $\alpha \in (0,3)$,
\begin{align}
q_{r}(x;\alpha,\theta_2) 
&= 
\begin{cases}
\dfrac{1-\ee^{-\theta_2 x^2}}{1-\ee^{-\overline{\theta_2} x^2}}
\cdot \dfrac{1-\ee^{-\overline{\theta_2} x^2}}{x^{1+2\alpha}} 
\bigl( J_0(\sqrt{2}r x) -2J_0(r x)+1 \bigr),
\\
\dfrac{1-\ee^{-\theta_2 x^2}}{x^{1+2\alpha}} 
\bigl( J_0(\sqrt{2}r x) -2J_0(r x)+1 \bigr)
\end{cases}
\nonumber
\\
&\lesssim
\begin{cases}
x^{5-2\alpha} & (x \to 0),
\\
x^{-1-2\alpha} & (x \to \infty),
\end{cases}
\label{est_q}
\\
\pd_{\theta_2}^l q_{r}(x;\alpha,\theta_2) 
&= 
\begin{cases}
(-1)^{l+1} \dfrac{\ee^{-\theta_2 x^2}}{x^{1 -2l +2\alpha}} 
\bigl( J_0(\sqrt{2}r x) -2J_0(r x)+1 \bigr),
\\
\ee^{-\underline{\theta_2} x^2} 
\cdot (-1)^{l+1} \dfrac{\ee^{-(\theta_2 -\underline{\theta_2}) x^2}}{x^{1 -2l +2\alpha}} 
\bigl( J_0(\sqrt{2}r x) -2J_0(r x)+1 \bigr)
\end{cases}
\nonumber
\\
&\lesssim
\begin{cases}
x^{3 +2l -2\alpha} & (x \to 0),
\\
\ee^{-\underline{\theta_2} x^2} & (x \to \infty)
\end{cases}
\label{est_pd_the_q}
\end{align}
uniformly in $\theta_2 \in \Xi_{\theta_2}$.
Therefore, we see the function $(0,\infty) \ni x \mapsto \pd_{\theta_2}^l q_{r}(x;\alpha,\theta_2)$ is integrable
for any $l \in \{ 1, 2, 3 \}$, $\alpha \in (0,3)$ and $r \in (0,\infty)$, 
and find from the dominate convergence theorem that 
the function $\Xi_{\theta_2} \ni \theta_2 \mapsto \overline \phi_{r,\alpha}(\theta_2)$ is 
three times differentiable for any $\alpha \in (0,3)$ and $r \in (0,\infty)$.

We next show (b). We have
\begin{align*}
\pd_{\alpha}^k q_{r}(x;\alpha,\theta_2)
&= \frac{1-\ee^{-\theta_2 x^2}}{x^{1+2\alpha}} 
(-2 \log(x))^k \bigl( J_0(\sqrt{2}r x) -2J_0(r x)+1 \bigr)
\\
&= (-2 \log(x))^k q_r(x; \alpha, \theta_2),
\\
\pd_{\alpha}^k \pd_{\theta_2}^l q_{r}(x;\alpha,\theta_2)
&= (-1)^{l+1} \frac{\ee^{-\theta_2 x^2}}{x^{1 -2l +2\alpha}} 
(-2 \log(x))^k \bigl( J_0(\sqrt{2}r x) -2J_0(r x)+1 \bigr)
\\
&= (-2 \log(x))^k \pd_{\theta_2}^l q_r(x; \alpha, \theta_2)
\end{align*}
for $k \in \{1, 2\}$ and $l \in \{1, 2, 3\}$.
Since 
\begin{equation*}
\lim_{x \searrow 0} x^a (\log(x))^k = 0,
\quad
\lim_{x \to \infty} \frac{(\log(x))^k}{x^a} = 0
\end{equation*}
for any $a \in (0, \infty)$ and $k \in \{1, 2 \}$, we obtain
\begin{align}
\pd_{\alpha}^k q_{r}(x;\alpha,\theta_2)
&= 
\begin{cases}
\dfrac{1-\ee^{-\theta_2 x^2}}{x^{1 +2 \overline \alpha}} 
\bigl( J_0(\sqrt{2}r x) -2J_0(r x)+1 \bigr)
\cdot x^{2(\overline \alpha -\alpha)} (-2 \log(x))^k,
\\
\dfrac{1-\ee^{-\theta_2 x^2}}{x^{1 +2 \underline \alpha}} 
\bigl( J_0(\sqrt{2}r x) -2J_0(r x)+1 \bigr)
\cdot \dfrac{(-2 \log(x))^k}{x^{2(\alpha - \underline \alpha)}}  
\end{cases}
\nonumber
\\
&\lesssim
\begin{cases}
q_{r}(x;\overline \alpha,\theta_2) & (x \to 0),
\\
q_{r}(x;\underline \alpha,\theta_2) & (x \to \infty),
\end{cases}
\label{est_pd_alp_q}
\\
\pd_{\alpha}^k \pd_{\theta_2}^l q_{r}(x;\alpha,\theta_2)
&= 
\begin{cases}
(-1)^{l+1} \dfrac{\ee^{-\theta_2 x^2}}{x^{1 -2l +2 \overline \alpha}} 
\bigl( J_0(\sqrt{2}r x) -2J_0(r x)+1 \bigr)
\cdot x^{2(\overline \alpha -\alpha)} (-2 \log(x))^k,
\\
(-1)^{l+1} \dfrac{\ee^{-\theta_2 x^2}}{x^{1 -2l +2 \underline \alpha}} 
\bigl( J_0(\sqrt{2}r x) -2J_0(r x)+1 \bigr)
\cdot \dfrac{(-2 \log(x))^k}{x^{2(\alpha - \underline \alpha)}}  
\end{cases}
\nonumber
\\
&\lesssim
\begin{cases}
\pd_{\theta_2}^l q_{r}(x;\overline \alpha,\theta_2) & (x \to 0),
\\
\pd_{\theta_2}^l q_{r}(x;\underline \alpha,\theta_2) & (x \to \infty),
\end{cases}
\label{est_pd_alp_pd_the_q}
\end{align}
where $\underline \alpha \in (0, \inf(\Xi_{\alpha}))$ 
and $\overline \alpha \in (\sup(\Xi_{\alpha}), 3)$,
which together with \eqref{est_q}, \eqref{est_pd_the_q} and the dominate convergence theorem yield the desired result.

We finally prove (c).
For $(\alpha, \theta_2), (\alpha', \theta_2') \in \Xi_{\alpha} \times \Xi_{\theta_2}$, 
we define $\alpha_u = \alpha' +u(\alpha -\alpha')$ 
and $\theta_{2,u} = \theta_2' +u(\theta_2 -\theta_2')$ for any $u \in [0,1]$.
Note that $\Xi_{\alpha}$ and $\Xi_{\theta_2}$ are convex sets and $(\alpha_u, \theta_{2,u}) \in \Xi_{\alpha} \times \Xi_{\theta_2}$ for any $u \in [0,1]$.
We then obtain by the Taylor expansion and \eqref{est_q}--\eqref{est_pd_alp_pd_the_q},
\begin{align*}
\bigl| 
\pd_{\alpha}^k \pd_{\theta_2}^l \overline \phi_{r,\alpha}(\theta_2)
-\pd_{\alpha}^k \pd_{\theta_2}^l \overline \phi_{r,\alpha'}(\theta_2')
\bigr|
&\le 
\int_0^\infty 
\int_0^1
\bigl|
\pd_{\alpha}^{k+1} \pd_{\theta_2}^l q_{r}(x; \alpha_u, \theta_{2,u}) 
\bigr|
\dd u \dd x
|\alpha - \alpha'|
\\
&\quad+ \int_0^\infty 
\int_0^1
\bigl|
\pd_{\alpha}^k \pd_{\theta_2}^{l+1} q_{r}(x; \alpha_u, \theta_{2,u})
\bigr|
\dd u \dd x
|\theta_2 -\theta_2'|
\\
&\le 
\int_0^\infty 
\sup_{(\alpha, \theta_2) \in \Xi_{\alpha} \times \Xi_{\theta_2}} 
\bigl|
\pd_{\alpha}^{k+1} \pd_{\theta_2}^l q_{r}(x; \alpha, \theta_2) 
\bigr|
\dd x
|\alpha - \alpha'|
\\
&\quad+ \int_0^\infty 
\sup_{(\alpha, \theta_2) \in \Xi_{\alpha} \times \Xi_{\theta_2}} 
\bigl| \pd_{\alpha}^k \pd_{\theta_2}^{l+1} q_{r}(x; \alpha, \theta_2) \bigr|
\dd x
|\theta_2 -\theta_2'|
\\
&\lesssim
|\alpha - \alpha'| +|\theta_2 -\theta_2'|
\end{align*}
for $k \in \{0, 1\}$ and $l \in \{0,1,2\}$, which proves 
the function $\Xi_{\alpha} \times \Xi_{\theta_2} \ni (\alpha, \theta_2) 
\mapsto \pd_{\alpha}^k \pd_{\theta_2}^l \overline \phi_{r,\alpha}(\theta_2)$ is continuous.
\end{proof}

We establish the following lemma on the Ornstein-Uhlenbeck process defined by \eqref{OU-process} in order to prove Lemma \ref{lem_est_AB2}.
\begin{subthm}\label{lem_est_x}
For the Ornstein-Uhlenbeck process $\{ x_l(t) \}_{t \in [0,1]}$, $l = (l_1,l_2)$
defined by \eqref{OU-process}, it follows that under [A1]$_{\alpha_0}$,
\begin{equation*}
\EE[ x_l(t) ] \lesssim \frac{1}{\lambda_l^{(1+\alpha_0)/2}},
\quad
\EE[ x_l(t)^2 ] 
\lesssim \frac{1}{\lambda_l^{1+\alpha_0}} + \frac{\epsilon^2}{\lambda_l^{1+\alpha}},
\quad
\VV[ x_l(t) ] \lesssim \frac{\epsilon^2}{\lambda_l^{1+\alpha}}
\end{equation*}
for any $t \in [0,1]$.
It also holds that for any sequence $\{ a_l \}_{l \in \mathbb N^2}$, 
\begin{equation*}
\biggl( \sum_{l \in \mathbb N^2} a_l \EE [x_l(t)] \biggr)^2
\lesssim \sum_{l \in \mathbb N^2} \frac{a_l^2}{\lambda_l^{1+\alpha_0}}.
\end{equation*}
\end{subthm}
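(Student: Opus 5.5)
The approach is to use the explicit solution of the Ornstein--Uhlenbeck equation \eqref{OU-process},
\begin{equation*}
x_l(t) = \ee^{-\lambda_l t} x_l(0) +\epsilon \mu_l^{-\alpha/2} \int_0^t \ee^{-\lambda_l(t-s)} \dd w_l(s),
\end{equation*}
which splits $x_l(t)$ into a deterministic mean and a centered Gaussian part. Every bound in the statement then reduces to a bound on $x_l(0) = \langle X_0, e_l \rangle$ or on the Itô variance. For the variance, the Itô isometry gives
\begin{equation*}
\VV[x_l(t)] = \frac{\epsilon^2 (1-\ee^{-2\lambda_l t})}{2\lambda_l \mu_l^{\alpha}} \le \frac{\epsilon^2}{2\lambda_l \mu_l^{\alpha}}.
\end{equation*}
Since $\mu_l = \pi^2|l|^2+\mu_0$ and $\lambda_l = \theta_2^*(\pi^2|l|^2+\Gamma^*)$ are both positive, each is comparable to $|l|^2$ uniformly in $l \in \mathbb N^2$. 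Hence $\mu_l \sim \lambda_l$ and $\mu_l^{-\alpha} \lesssim \lambda_l^{-\alpha}$, which yields $\VV[x_l(t)] \lesssim \epsilon^2/\lambda_l^{1+\alpha}$.

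For the mean, $\EE[x_l(t)] = \ee^{-\lambda_l t} x_l(0)$, so $|\EE[x_l(t)]| \le |x_l(0)|$ because $\lambda_l>0$. By [A1]$_{\alpha_0}$,
\begin{equation*}
\sum_{l} \lambda_l^{1+\alpha_0} x_l(0)^2 = \|A_\theta^{(1+\alpha_0)/2} X_0\|^2 =: K < \infty,
\end{equation*}
so each term satisfies $\lambda_l^{1+\alpha_0} x_l(0)^2 \le K$. This gives $|\EE[x_l(t)]| \le \sqrt{K}\,\lambda_l^{-(1+\alpha_0)/2}$ uniformly in $l$ and $t$. The second-moment bound then follows from $\EE[x_l(t)^2] = \VV[x_l(t)] + \EE[x_l(t)]^2$.

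For the final sum inequality, apply Cauchy--Schwarz with the weights $\lambda_l^{\pm(1+\alpha_0)/2}$:
\begin{equation*}
\biggl( \sum_l a_l \EE[x_l(t)] \biggr)^2 \le \sum_l \frac{a_l^2}{\lambda_l^{1+\alpha_0}} \sum_l \lambda_l^{1+\alpha_0} \EE[x_l(t)]^2 \le K \sum_l \frac{a_l^2}{\lambda_l^{1+\alpha_0}},
\end{equation*}
again using $\EE[x_l(t)]^2 \le x_l(0)^2$. If the right-hand side diverges there is nothing to prove, so the inequality is understood whenever it is finite.

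None of these steps is a real obstacle. The only point needing care is the uniform comparability $\mu_l \sim \lambda_l$. It holds because $\mu_0 > -2\pi^2$ and $\lambda_{1,1}^* > 0$, so both sequences are bounded below by positive constants and grow like $|l|^2$. Their ratio is therefore bounded above and below uniformly in $l$.
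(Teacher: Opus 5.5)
Your proof is correct and follows the paper's argument essentially line by line. It uses the explicit OU decomposition into $\ee^{-\lambda_l t}x_l(0)$ plus a centered stochastic integral, the It\^o isometry for the variance, the termwise bound $\lambda_l^{1+\alpha_0}x_l(0)^2\le\|A_\theta^{(1+\alpha_0)/2}X_0\|^2$ from [A1]$_{\alpha_0}$, and the weighted Cauchy--Schwarz inequality for the last claim. Your explicit check that $\mu_l\sim\lambda_l$ uniformly in $l$ (needed for $\mu_l^{-\alpha}\lesssim\lambda_l^{-\alpha}$) is a step the paper uses without comment, so it is a useful addition rather than a different route.
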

\begin{proof}
Notice that
\begin{equation*}
x_{l}(t) = \ee^{-\lambda_l t} x_l(0) 
+ Z_l^{\epsilon}(t),
\quad
Z_l^{\epsilon}(t)
= \epsilon \mu_l^{-\alpha/2} \int_0^t \ee^{-\lambda_l (t-s)} \dd w_l(s).
\end{equation*}
Since it follows from [A1]$_{\alpha_0}$ that
\begin{equation*}
\lambda_l^{1+\alpha_0} x_l(0)^2 
\le \sum_{l' \in \mathbb N^2} \lambda_{l'}^{1+\alpha_0} x_{l'}(0)^2
= \| A_{\theta}^{(1+\alpha_0)/2} X_0 \|^2 
< \infty
\end{equation*}
and 
\begin{equation*}
\EE [ Z_l^{\epsilon}(t)^2 ] 
= \frac{\epsilon^2 (1 -\ee^{-2\lambda_l t})}{2 \lambda_l \mu_l^{\alpha}}
\lesssim \frac{\epsilon^2}{\lambda_l^{1+\alpha}}
\end{equation*}
for any $l \in \mathbb N^2$ and $t \in [0,\infty)$, we have
\begin{align*}
\EE[ x_l(t) ] &= \ee^{-\lambda_l t} x_l(0) 
\lesssim \frac{1}{\lambda_l^{(1+\alpha_0)/2}},
\\
\EE[ x_l(t)^2 ] &= \ee^{-2 \lambda_l t} x_l(0)^2
+ \EE [ Z_l^{\epsilon}(t)^2 ]
\lesssim 
\frac{1}{\lambda_l^{1+\alpha_0}} + \frac{\epsilon^2}{\lambda_l^{1+\alpha}},
\\
\VV[ x_l(t) ] &= \EE [ Z_l^{\epsilon}(t)^2 ]
\lesssim \frac{\epsilon^2}{\lambda_l^{1+\alpha}}
\end{align*}
for any $t \in [0,1]$.
The latter statement follows from the Schwarz inequality and [A1]$_{\alpha_0}$.
\begin{align*}
\biggl( \sum_{l \in \mathbb N^2} a_l \EE[ x_l(t) ] \biggr)^2
&= \biggl( \sum_{l \in \mathbb N^2} a_l \ee^{-\lambda_l t} x_l(0) \biggr)^2
\\
&= \biggl( \sum_{l \in \mathbb N^2} 
\frac{a_l \ee^{-\lambda_l t}}{\lambda_l^{(1+\alpha_0)/2}} 
\cdot \lambda_l^{(1+\alpha_0)/2} x_l(0) \biggr)^2
\\
&\le
\sum_{l \in \mathbb N^2} 
\frac{a_l^2 \ee^{-2\lambda_l t}}{\lambda_l^{1+\alpha_0}} 
\sum_{l \in \mathbb N^2} \lambda_l^{1+\alpha_0} x_l(0)^2
\\
&\le
\| A_\theta^{(1+\alpha_0)/2} X_0 \|^2 
\sum_{l \in \mathbb N^2} \frac{a_l^2}{\lambda_l^{1+\alpha_0}}.
\end{align*}
\end{proof}


\begin{thebibliography}{99}
\bibitem{Altmeyer_Reiss2021}
Altmeyer, R. and Reiss, M. (2021). 
\newblock Nonparametric estimation for linear SPDEs from local measurements. 
\newblock {\em The Annals of Applied Probability}, 31(1), 1--38.


\bibitem{Andersson_etal2025}
Andersson, M., Avelin, B., Garino, V., Ilmonen, P., and Viitasaari, L. (2025).
\newblock Non-parametric estimation of non-linear diffusion coefficient in parabolic SPDEs.
\newblock{\em arXiv preprint arXiv:2509.12921}.


\bibitem{Avetisian_Ralchenko2020}
Avetisian, D. and Ralchenko, K. (2020).
\newblock Ergodic properties of the solution to a fractional stochastic heat equation, with an application to diffusion parameter estimation. 
\newblock {\em Modern Stochastics: Theory and Applications}, 7(3), 339--356.


\bibitem{Baltazar-Larios_etal2024}
Baltazar-Larios, F.,  Delgado-Vences, F., and Peralta, L. (2024). 
\newblock Statistical inference for a stochastic partial differential equation related to an ecological niche.
\newblock {\em Mathematical Methods in the Applied Sciences}, 47(18), 13672--13689. 


\bibitem{Bibinger_Bossert2023}
Bibinger, M. and Bossert, P. (2023).
\newblock Efficient parameter estimation for parabolic SPDEs based on a log-linear model for realized volatilities. 
\newblock {\em Japanese Journal of Statistics and Data Science}, 6(1), 407--429.


\bibitem{Bibinger_Trabs2020}
Bibinger, M. and Trabs, M. (2020).
\newblock Volatility estimation for stochastic PDEs using high-frequency observations.
\newblock {\em Stochastic Processes and their Applications}, 130(5), 3005--3052.


\bibitem{Bossert2024}
Bossert, P. (2024).
\newblock Parameter estimation for second-order SPDEs in multiple space dimensions.
\newblock{\em Statistical Inference for Stochastic Processes}, 27(3), 485--583.


\bibitem{Chong2020}
Chong, C. (2020).
\newblock High-frequency analysis of parabolic stochastic pdes. 
\newblock{\em The Annals of Statistics}, 48(2), 1143--1167.


\bibitem{Cialenco2018}
Cialenco, I. (2018).
\newblock Statistical inference for SPDEs: an overview.
\newblock {\em Statistical Inference for Stochastic Processes}, 21(2), 309--329.


\bibitem{Cialenco_Huang2020}
Cialenco, I. and Huang, Y. (2020). 
\newblock A note on parameter estimation for discretely sampled spdes. 
\newblock{\em Stochastics and Dynamics}, 20(3), 2050016.


\bibitem{Cialenco_Kim2022}
Cialenco, I. and Kim, H.-J. (2022).
\newblock Parameter estimation for discretely sampled stochastic heat equation driven by space-only noise. 
\newblock {\em Stochastic Processes and their Applications}, 143, 1--30. 


\bibitem{DaPrato_Zabczyk2014}
Da Prato, G. and Zabczyk, J. (2014). 
\newblock {\em Stochastic Equations in Infinite Dimensions}, 2nd edition. 
\newblock Cambridge University Press.


\bibitem{Gamain_Tudor2023}
Gamain, J. and Tudor, C.A. (2023).
\newblock Exact variation and drift parameter estimation for the nonlinear fractional stochastic heat equation. 
\newblock {\em Japanese Journal of Statistics and Data Science}, 6(1), 381--406.


\bibitem{Gaudlitz_Reiss2023}
Gaudlitz, S. and Reiss, M. (2023).
\newblock Estimation for the reaction term in semi-linear SPDEs under small diffusivity.
\newblock {\em Bernoulli} 29(4), 3033--3058.


\bibitem{Gloter_Sorensen2009}
Gloter, A. and S{\o}rensen, M. (2009).
\newblock Estimation for stochastic differential equations with a small diffusion coefficient.
\newblock {\em Stochastic Processes and their Applications}, 119(3), 679--699.


\bibitem{Guy_etal2014}
Guy, R., Laredo, C., and Vergu, E. (2014).
\newblock Parametric inference for discretely observed multidimensional diffusions with small diffusion coefficient.
\newblock {\em Stochastic Processes and their Applications}, 124(1), 51--80.


\bibitem{Hildebrandt_Trabs2021}
Hildebrandt, F. and Trabs, M. (2021).
\newblock Parameter estimation for SPDEs based on discrete observations in time and space.
\newblock {\em Electronic Journal of Statistics}, 15(1), 2716--2776.


\bibitem{Hildebrandt_Trabs2023}
Hildebrandt, F. and Trabs, M. (2023). 
\newblock Nonparametric calibration for stochastic reaction-diffusion equations based on discrete observations. 
\newblock {\em Stochastic Processes and their Applications}, 162, 171--217. 


\bibitem{Hubner_etal1993}
H{\"u}bner, M., Khasminskii, R., and Rozovskii, B.L. (1993).
\newblock {\em Two Examples of Parameter Estimation for Stochastic Partial Differential Equations}, 149--160.
\newblock Springer New York.


\bibitem{Huebner_Rozovskii1995}
Huebner, M. and Rozovskii, B.L. (1995).
\newblock On asymptotic properties of maximum likelihood estimators for parabolic stochastic PDE's.
\newblock {\em Probability Theory and Related Fields}, 103(2), 143--163.


\bibitem{Janak_Reiss2024}
Jan\'{a}k, J. and Reiss, M. (2024). 
\newblock Parameter estimation for the stochastic heat equation with multiplicative noise from local measurements.
\newblock {\em Stochastic Processes and their Applications}, 175, 104385.


\bibitem{Jones_Zhang1997}
Jones, R.H. and Zhang, Y. (1997).
\newblock Models for Continuous Stationary Space-Time Processes. In {\em Modelling Longitudinal and Spatially Correlated Data}. 
\newblock Lecture Notes in Statistics, vol 122. Springer, New York.


\bibitem{Kaino_Uchida2018}
Kaino, Y. and Uchida, M. (2018).
\newblock Hybrid estimators for small diffusion processes based on reduced data.
\newblock {\em Metrika}, 81(7), 745--773.


\bibitem{Kaino_Uchida2021a}
Kaino, Y. and Uchida, M. (2021).
\newblock Parametric estimation for a parabolic linear SPDE model based on discrete observations.
\newblock {\em Journal of Statistical Planning and Inference}, 211, 190--220.


\bibitem{Kaino_Uchida2021b}
Kaino, Y. and Uchida, M. (2021).
\newblock Adaptive estimator for a parabolic linear SPDE with a small noise.
\newblock {\em Japanese Journal of Statistics and Data Science}, 4(1), 513--541.


\bibitem{Lototsky2003}
Lototsky, S.V. (2003).
\newblock Parameter estimation for stochastic parabolic equations: asymptotic properties of a two-dimensional projection-based estimator.
\newblock{\em Statistical Inference for Stochastic Processes}, 6(1), 65--87.


\bibitem{Lototsky2009}
Lototsky, S.V. (2009).
\newblock Statistical inference for stochastic parabolic equations: a spectral approach.
\newblock {\em Publicacions Matem\`{a}tiques}, 53(1), 3--45.


\bibitem{Lototsky_Rozovsky2017}
Lototsky, S.V. and Rozovsky, B.L. (2017).
\newblock {\em Stochastic Partial Differential Equations}.
\newblock Springer.


\bibitem{MahdiKhalil_Tudor2019}
Mahdi Khalil, Z. and Tudor, C. (2019). 
\newblock Estimation of the drift parameter for the fractional stochastic heat equation via power variation. 
\newblock {\em Modern Stochastics: Theory and Applications}, 6(4), 397--417.


\bibitem{Markussen2003}
Markussen, B. (2003).
\newblock Likelihood inference for a discretely observed stochastic partial differential equation. 
\newblock {\em Bernoulli}, 9(5), 745--762.


\bibitem{Mohapl2000}
Mohapl, J. (2000).
\newblock A Stochastic Advection-Diffusion Model for the Rocky Flats Soil Plutonium Data. 
\newblock {\em Annals of the Institute of Statistical Mathematics}, 52(1), 84--107.


\bibitem{North_etal2011}
North, G.R., Wang, J., and Genton, M.G. (2011).
\newblock Correlation models for temperature fields. 
\newblock {\em Journal of climate}, 24(22), 5850--5862.


\bibitem{Piterbarg_Ostrovskii1997}
Piterbarg, L. and Ostrovskii, A. (1997).
\newblock {\em Advection and diffusion in random media: implications for sea surface temperature anomalies}.
\newblock Springer Science \& Business Media.


\bibitem{Piterbarg_Rozovskii1997}
Piterbarg, L. and Rozovskii, B. (1997).
\newblock On asymptotic problems of parameter estimation in stochastic PDE's: the case of discrete time sampling. 
\newblock {\em Mathematical Methods of Statistics}, 6(2), 200--223. 


\bibitem{Sorensen_Uchida2003}
S{\o}rensen, M. and Uchida, M. (2003).
\newblock Small-diffusion asymptotics for discretely sampled stochastic differential equations.
\newblock {\em Bernoulli}, 9(6), 1051--1069.


\bibitem{Strauch_Tiepner2024}
Strauch, C. and Tiepner, A. (2024).
\newblock Nonparametric velocity estimation in stochastic convection-diffusion equations from multiple local measurements.
\newblock {\em arXiv preprint arXiv:2402.08353}.


\bibitem{TKU2023}
Tonaki, Y., Kaino, Y., and Uchida, M. (2023).
\newblock Parameter estimation for linear parabolic SPDEs in two space dimensions based on high frequency data.
\newblock {\em Scandinavian Journal of Statistics}, 50(4), 1568--1589.


\bibitem{TKU2024a}
Tonaki, Y., Kaino, Y., and Uchida, M. (2024).
\newblock Parameter estimation for a linear parabolic SPDE model in two space dimensions with a small noise.
\newblock {\em Statistical Inference for Stochastic Processes}, 27(1), 123--179.


\bibitem{TKU2025a}
Tonaki, Y., Kaino, Y., and Uchida, M. (2025).
\newblock Parametric estimation for linear parabolic SPDEs in two space dimensions based on temporal and spatial increments.
\newblock {\em Metrika}, 88(5), 601--656. 


\bibitem{TKU2025b}
Tonaki, Y., Kaino, Y., and Uchida, M. (2025).
\newblock Small diffusivity asymptotics for a linear parabolic SPDE in two space dimensions.
\newblock{\em Statistical Inference for Stochastic Processes}, 28(2), 11.


\bibitem{TKU2025arXiv2}
Tonaki, Y., Kaino, Y., and Uchida, M. (2025).
\newblock Volatility change point detection for linear parabolic SPDEs.
\newblock{\em arXiv preprint arXiv:2512.01277}.


\bibitem{TKU2026}
Tonaki, Y., Kaino, Y., and Uchida, M. (2026).
\newblock Small dispersion asymptotics for an SPDE in two space dimensions using triple increments.
\newblock{\em Journal of Statistical Planning and Inference}, 241, 106333.


\bibitem{TKU2025arXiv1}
Tonaki, Y., Kaino, Y., and Uchida, M. (2026).
\newblock Estimation for linear parabolic SPDEs in two space dimensions with unknown damping parameters.
\newblock {\em Japanese Journal of Statistics and Data Science}.
\texttt{https://doi.org/10.1007/s42081-026-00348-y}


\bibitem{Tudor2022}
Tudor, C.A. (2022). 
\newblock {\em Stochastic Partial Differential Equations with Additive Gaussian Noise}.
\newblock World Scientific.


\bibitem{Uchida2004}
Uchida, M. (2004).
\newblock Estimation for discretely observed small diffusions based on approximate martingale estimating functions.
\newblock {\em Scandinavian Journal of Statistics}, 31(4), 553--566.

\end{thebibliography}
\end{document}